\newcommand{\be}{\begin{enumerate}}
\newcommand{\ee}{\end{enumerate}}
\newcommand{\bi}{\begin{itemize}}
\newcommand{\ei}{\end{itemize}}
\newcommand{\bc}{\begin{center}}
\newcommand{\ec}{\end{center}}
\newcommand{\bsp}{\begin{sloppypar}}
\newcommand{\esp}{\end{sloppypar}}
\newtheorem{thm}{Theorem}[subsection]
\newtheorem{cor}[thm]{Corollary}
\newtheorem{lem}[thm]{Lemma}
\newtheorem{prop}[thm]{Proposition}
\newtheorem{note}{Note}
\newenvironment{proof}{\par\noindent{\bf Proof\sglsp}}{\hfill$\Box$}
\newcommand{\sglsp}{\ }
\newcommand{\dblsp}{\ \ }
\newcommand{\sC}{\mbox{$\cal C$}}
\newcommand{\sD}{\mbox{$\cal D$}}
\newcommand{\sE}{\mbox{$\cal E$}}
\newcommand{\sG}{\mbox{$\cal G$}}
\newcommand{\sH}{\mbox{$\cal H$}}
\newcommand{\sJ}{\mbox{$\cal J$}}
\newcommand{\sL}{\mbox{$\cal L$}}
\newcommand{\sM}{\mbox{$\cal M$}}
\newcommand{\sP}{\mbox{$\cal P$}}
\newcommand{\sS}{\mbox{$\cal S$}}
\newcommand{\sT}{\mbox{$\cal T$}}
\newcommand{\sV}{\mbox{$\cal V$}}
\newcommand{\sW}{\mbox{$\cal W$}}
\renewcommand{\phi}{\varphi}
\newcommand{\iotaAlt}{\mbox{\it \i}}
\newcommand{\iotaAltS}{\mbox{{\scriptsize \it \i}}}
\newcommand{\qzero}{${\cal Q}_0$}
\newcommand{\qzerou}{${\cal Q}^{\rm u}_{0}$}
\newcommand{\qzerouqeplus}{$\overline{{\cal Q}^{\rm uqe}_{0}}$}
\newcommand{\qzerouqe}{${\cal Q}^{\rm uqe}_{0}$}
\newcommand{\pfsys}{${\cal P}$}
\newcommand{\pfsysu}{${\cal P}^{\rm u}$}
\newcommand{\pfsysuqeplus}{$\overline{{\cal P}^{\rm uqe}}$}
\newcommand{\pfsysuqe}{${\cal P}^{\rm uqe}$}
\newcommand{\sub}{\d{\sf S}}
\newcommand{\seq}[1]{{\langle #1 \rangle}}
\newcommand{\set}[1]{{\{ #1 \}}}
\newcommand{\sembrack}[1]{\llbracket#1\rrbracket}
\newcommand{\synbrack}[1]{\ulcorner#1\urcorner}
\newcommand{\commabrack}[1]{\lfloor#1\rfloor}
\newcommand{\mname}[1]{\mbox{\sf #1}}
\newcommand{\tarrow}{\rightarrow}
\newcommand{\NegAlt}{{\sim}}
\newcommand{\Andd}{\wedge}
\newcommand{\Implies}{\supset}
\newcommand{\Or}{\vee}
\newcommand{\Iff}{\equiv}
\newcommand{\Forall}{\forall}
\newcommand{\Forsome}{\exists}
\newcommand{\Iota}{\mbox{\rm I}}
\newcommand{\IsDef}{\downarrow}
\newcommand{\IsUndef}{\uparrow}
\newcommand{\QuasiEqual}{\simeq}
\newcommand{\Undefined}{\bot}
\newcommand{\If}{\mname{if}}
\newcommand{\IsDefApp}{\!\IsDef}
\newcommand{\IsUndefApp}{\!\IsUndef}
\newcommand{\invertediota}{\rotatebox[origin=c]{180}{$\iota$}}
\newcommand{\imps}{\mbox{\sc imps}}
\newcommand{\hol}{\mbox{\sc hol}}
\newcommand{\lutins}{\mbox{\sc lutins}}
\newcommand{\modelsa}{\mathrel{\models^{\rm ef}\!}}
\newcommand{\modelsn}{\mathrel{\models_{\rm n}}}
\newcommand{\modelsna}{\mathrel{\models^{\rm ef\!}_{\rm n}}}
\newcommand{\proves}[2]{#1 \vdash #2}
\newcommand{\provesa}[2]{#1 \mathrel{\vdash^{\rm ef}\!} #2}
\newcommand{\provesb}[2]{#1 \mathrel{\overline{\vdash^{\rm ef}}\!} #2}
\newcommand{\wff}[1]{{\rm wff}_{#1}}
\newcommand{\xwff}[1]{{\rm xwff}_{#1}}
\newcommand{\wffs}[1]{{\rm wffs}_{#1}}
\newcommand{\xwffs}[1]{{\rm xwffs}_{#1}}
\newcommand{\nbg}{\mbox{\sc nbg}}
\title{{\bf Simple Type Theory with Undefinedness, Quotation, and
    Evaluation}\thanks{This research was supported by NSERC.}}
\author{William M. Farmer\thanks{Address: Department of Computing and
    Software, McMaster University, 1280 Main Street West, Hamilton,
    Ontario L8S 4K1, Canada.  E-mail:
    {\texttt{wmfarmer@mcmaster.ca}.}}}
\date{1 December 2016}
\begin{document}

\maketitle

\begin{abstract}
This paper presents a version of simple type theory called {\qzerouqe}
that is based on {\qzero}, the elegant formulation of Church's type
theory created and extensively studied by Peter~B.~Andrews.
{\qzerouqe} directly formalizes the \emph{traditional approach to
  undefinedness} in which undefined expressions are treated as
legitimate, nondenoting expressions that can be components of
meaningful statements.  {\qzerouqe} is also equipped with a facility
for reasoning about the syntax of expressions based on
\emph{quotation} and \emph{evaluation}.  Quotation is used to refer to
a syntactic value that represents the syntactic structure of an
expression, and evaluation is used to refer to the value of the
expression that a syntactic value represents.  With quotation and
evaluation it is possible to reason in {\qzerouqe} about the interplay
of the syntax and semantics of expressions and, as a result, to
formalize in {\qzerouqe} syntax-based mathematical algorithms.  The
paper gives the syntax and semantics of {\qzerouqe} as well as a proof
system for {\qzerouqe}.  The proof system is shown to be sound for all
formulas and complete for formulas that do not contain evaluations.
The paper also illustrates some applications of {\qzerouqe}.

\bigskip

\noindent
\textbf{Keywords:} Church's type theory, undefinedness, reasoning
about syntax, quotation, evaluation, truth predicates, substitution.

\end{abstract}

\newpage

\tableofcontents
\listoftables

\newpage

\section{Introduction}

A huge portion of mathematical reasoning is performed by
algorithmically manipulating the syntactic structure of mathematical
expressions.  For example, the derivative of a function is commonly
obtained using an algorithm that repeatedly applies syntactic
differentiation rules to an expression that represents the function.
The specification and analysis of a syntax-based mathematical
algorithm requires the ability to reason about the interplay of how
the expressions are manipulated and what the manipulations mean
mathematically.  This is challenging to do in a traditional logic like
first-order logic or simple type theory because there is no mechanism
for directly referring to the syntax of the expressions in the logic.

The standard approach for reasoning in a logic about a language $L$ of
expressions is to introduce another language $L_{\rm syn}$ to
represent the syntax of $L$.  The expressions in $L_{\rm syn}$ denote
certain \emph{syntactic values} (e.g., syntax trees) that represent
the syntactic structures of the expressions in $L$.  We will thus call
$L_{\rm syn}$ a \emph{syntax language}.  A syntax language like
$L_{\rm syn}$ is usually presented as an \emph{inductive type}.  The
members of $L$ are mapped by a \emph{quotation function} to members of
$L_{\rm syn}$, and members of $L_{\rm syn}$ are mapped by an
\emph{evaluation function} to members of $L$.  The language $L_{\rm
  syn}$ provides the means to indirectly reason about the members of
$L$ as syntactic objects, and the quotation and evaluation functions
link this reasoning directly to $L$ itself.  In computer science this
approach is called a \emph{deep embedding}~\cite{BoultonEtAl93}.  The
components of the standard approach --- a syntax language,
quotation function, and evaluation function --- form an instance of a
\emph{syntax framework}~\cite{FarmerLarjani13}, a mathematical
structure that models systems for reasoning about the syntax of a
interpreted language.

We will say that an implementation of the standard approach is
\emph{global} when $L$ is the entire language of the logic and is
\emph{local} otherwise.  For example, the use of G\"odel numbers to
represent the syntactic structure of expressions is usually a global
approach since every expression is assigned a G\"odel number.  We will
also say that an implementation of the standard approach is
\emph{internal} when the quotation and evaluation functions are
expressed as operators in the logic and is \emph{external} when they
are expressed only in the metalogic.  Let the \emph{replete approach}
be the standard approach restricted to implementations that are both
global and internal.  The components of an implementation of the
replete approach form an instance of a \emph{replete syntax
  framework}~\cite{FarmerLarjani13}.

It is a straightforward task to implement the local approach in a
traditional logic, but two significant shortcomings cannot be easily
avoided.  First, the implementation must be external since the
quotation function, and often the evaluation function as well, can
only be expressed in the metalogic, not in the logic itself.  Second,
the constructed syntax framework works only for $L$; another language
(e.g., a larger language that includes $L$) requires a new syntax
framework.  For instance, each time a defined constant is added to
$L$, the syntax language, quotation function, and evaluation function
must all be extended.  See~\cite{Farmer13} for a more detailed
presentation of the local approach.

Implementing the replete approach is much more ambitious: quotation
and evaluation operators are added to the logic and then a syntax
framework is built for the entire language of the logic.  We will
write the quotation and evaluation operators applied to an expression
$e$ as $\synbrack{e}$ and $\sembrack{e}$, respectively.  The replete
approach provides the means to directly reason about the syntax of the
entire language of the logic in the logic itself.  Moreover, the
syntax framework does not have to be extended whenever the language of
the logic is extended, and it can be used to express syntactic side
conditions, schemas, substitution operations, and other such things
directly in the logic.  In short, the replete approach enables
syntax-based reasoning to be moved from the metalogic to the logic
itself.

At first glance, the replete approach appears to solve the whole
problem of how to reason about the interplay of syntax and semantics.
However, the replete approach comes with an entourage of challenging
problems that stand in the way of an effective implementation.  Of
these, we are most concerned with the following two:

\be

  \item \emph{Evaluation Problem.}  Since a replete syntax framework
    works for the entire language of the logic, the evaluation
    operator is applicable to syntax values that represent formulas
    and thus is effectively a truth predicate.  Hence, by the proof of
    Alfred Tarski's theorem on the undefinability of
    truth~\cite{Tarski33,Tarski35,Tarski35a}, if the evaluation
    operator is total in the context of a sufficiently strong theory
    like first-order Peano arithmetic, then it is possible to express
    the liar paradox using the quotation and evaluation operators.
    Therefore, the evaluation operator must be partial and the law of
    disquotation cannot hold universally (i.e., for some expressions
    $e$, $\sembrack{\synbrack{e}} \not= e$).  As a result, reasoning
    with evaluation is cumbersome and leads to undefined expressions.

  \item \emph{Variable Problem.}  The variable $x$ is not free in the
    expression $\synbrack{x + 3}$ (or in any quotation).  However, $x$
    is free in $\sembrack{\synbrack{x + 3}}$ because
    $\sembrack{\synbrack{x + 3}} = x + 3$.  If the value of a constant
    $c$ is $\synbrack{x + 3}$, then $x$ is free in $\sembrack{c}$
    because $\sembrack{c} = \sembrack{\synbrack{x + 3}} = x + 3$.
    Hence, in the presence of an evaluation operator, whether or not a
    variable is free in an expression may depend on the values of the
    expression's components.  As a consequence, the substitution of an
    expression for the free occurrences of a variable in another
    expression depends on the semantics (as well as the syntax) of the
    expressions involved and must be integrated with the proof system
    of the logic.  Hence a logic with quotation and evaluation
    requires a semantics-dependent form of substitution in which side
    conditions, like whether a variable is free in an expression, are
    proved within the proof system.  This is a major departure from
    traditional logic.

\ee
See~\cite{Farmer13} for a more detailed presentation of the replete
approach including discussion of some other problems that come with
it.\footnote{The \emph{replete approach} is called the \emph{global
    approach} in~\cite{Farmer13}.}

There are several implementations of the replete approach in
programming languages.  The most well-known example is the Lisp
programming language with its quote and eval operators.  Other
examples are Agda~\cite{Norell07,Norell09}, Archon~\cite{Stump09},
Elixir~\cite{Elixir15}, F\#~\cite{FSharp15},
MetaML~\cite{TahaSheard00}, MetaOCaml~\cite{MetaOCaml11},
reFLect~\cite{GrundyEtAl06}, and Template
Haskell~\cite{SheardJones02}.

Implementations of the replete approach are much rarer in logics.  One
example is a logic called Chiron~\cite{Farmer07a,Farmer07} which is a
derivative of von-Neumann-Bernays-G\"odel ({\nbg}) set theory.  It
admits undefined expressions, has a rich type system, and contains the
machinery of a replete syntax framework.  As far as we know, there is
no implementation of the replete approach in simple type theory.
See~\cite{GieseBuchberger07,MelhamEtAl13} for research moving in this
direction.  Such an implementation would require significant changes
to the logic:

\be

  \item A syntax language that represents the set of expressions of
    the logic must be defined in the logic.

  \item The syntax and semantics of the logic must be modified to
    admit quotation and evaluation operators.

  \item The proof system of the logic must be extended to include the
    means to reason about quotation, evaluation, and substitution.

\ee 
Moreover, these changes must provide solutions to the Evaluation and
Variable Problems.

The purpose of this paper is to demonstrate how the replete approach
can be implemented in Church's type theory~\cite{Church40}, a version
of simple type theory with lambda-notation introduced by Alonzo Church
in 1940.  We start with {\qzero}, an especially elegant version of
Church's type theory formulated by Peter B. Andrews and meticulously
described and analyzed in~\cite{Andrews02}.  Since evaluation
unavoidably leads to undefined expressions, we modify {\qzero} so that
it formalizes the traditional approach to
undefinedness~\cite{Farmer04}.  This version of {\qzero} with
undefined expressions called {\qzerou} is presented
in~\cite{Farmer08a}.  ({\qzerou} is a simplified version of
{\lutins}~\cite{Farmer90,Farmer93b,Farmer94}, the logic of the the
{\imps} theorem proving system~\cite{FarmerEtAl93,FarmerEtAl96}.)
And, finally, we modify {\qzerou} so that it implements the replete
approach.  This version of {\qzero} with undefined expressions,
quotation, and evaluation called {\qzerouqe} is presented in this
paper.

{\qzerouqe} consists of three principal components: a syntax, a
semantics, and a proof system.  The syntax and semantics of
{\qzerouqe} are relatively straightforward extensions of the syntax
and semantics of {\qzerou}.  However, the proof system of {\qzerouqe}
is significantly more complicated than the proof system of {\qzerou}.
This is because the Variable Problem discussed above necessitates that
the proof system employ a semantics-dependent substitution mechanism.
The proof system of {\qzerouqe} can be used to effectively reason
about quotations and evaluations, but unlike the proof systems of
{\qzero} and {\qzerou} it is not complete.  However, we do show that
it is complete for formulas that do not contain evaluations.

The paper is organized as follows.  The syntax of {\qzerouqe} is
defined in section~\ref{sec:syntax}.  A Henkin-style general models
semantics for {\qzerouqe} is presented in section~\ref{sec:semantics}.
Section~\ref{sec:definitions} introduces several important defined
logical constants and abbreviations.  Section~\ref{sec:syn-frame}
shows that {\qzerouqe} embodies the structure of a replete syntax
framework.  Section~\ref{sec:normal} finishes the specification of the
logical constants of {\qzerouqe} and defines the notion of a normal
general model for {\qzerouqe}. The substitution mechanism for
{\qzerouqe} is presented in section~\ref{sec:substitution}.
Section~\ref{sec:proofsystem} defines {\pfsysuqe}, the proof system of
{\qzerouqe}.  {\pfsysuqe} is proved in section~\ref{sec:soundness} to
be sound with respect to the semantics of {\qzerouqe}.  Several
metatheorems of {\pfsysuqe} are proved in
section~\ref{sec:metatheorems}.  {\pfsysuqe} is proved in
section~\ref{sec:completeness} to be complete with respect to the
semantics of {\qzerouqe} for evaluation-free formulas.  Some
applications of {\qzerouqe} are illustrated in
section~\ref{sec:applications}.  And the paper ends with some final
remarks in section~\ref{sec:conclusion} including a brief discussion
on related and future work.

The great majority of the definitions for {\qzerouqe} are derived from
those for {\qzero} given in~\cite{Andrews02}.  In fact, many
{\qzerouqe} definitions are exactly the same as the {\qzero}
definitions.  Of these, we repeat only the most important and least
obvious definitions for {\qzero}; for the others the reader is
referred to~\cite{Andrews02}.

\section{Syntax} \label{sec:syntax}

The syntax of {\qzerouqe} includes the syntax of {\qzerou} plus
machinery for reasoning about the syntax of expressions (i.e., wffs in
Andrews' terminology) based on quotation and evaluation.

\subsection{Symbols}

A \emph{type symbol} of {\qzerouqe} is defined inductively as follows:

\be

  \item {\iotaAlt} is a type symbol.

  \item $\omicron$ is a type symbol.

  \item $\epsilon$ is a type symbol.

  \item If $\alpha$ and $\beta$ are type symbols, then $(\alpha\beta)$
  is a type symbol.

  \item If $\alpha$ and $\beta$ are type symbols, then
    $\seq{\alpha\beta}$ is a type symbol.

\ee
Let $\sT$ denote the set of type symbols.  $\alpha,\beta,\gamma,
\ldots$ are syntactic variables ranging over type symbols.  When there
is no loss of meaning, matching pairs of parentheses in type symbols
may be omitted.  We assume that type combination of the form
$(\alpha\beta)$ associates to the left so that a type of the form
$((\alpha\beta)\gamma)$ may be written as $\alpha\beta\gamma$.

\bsp The \emph{primitive symbols} of {\qzerouqe} are the following:

\be

  \item \emph{Improper symbols}: [, ], $\lambda$, \mname{c},
    \mname{q}, \mname{e}.

  \item A denumerable set of \emph{variables} of type $\alpha$ for
    each $\alpha \in \sT$: $f_\alpha$, $g_\alpha$, $h_\alpha$,
    $x_\alpha$, $y_\alpha$, $z_\alpha$, $f^{1}_{\alpha}$,
    $g^{1}_{\alpha}$, $h^{1}_{\alpha}$, $x^{1}_{\alpha}$,
    $y^{1}_{\alpha}$, $z^{1}_{\alpha}$, $\dots$\ .

  \item \emph{Logical constants}: see Table~\ref{tab:log-con}.

  \item An unspecified set of \emph{nonlogical constants}
    of various types.

\ee
The types of variables and constants are indicated by their
subscripts. $\textbf{f}_\alpha, \textbf{g}_\alpha, \textbf{h}_\alpha,
\textbf{x}_\alpha, \textbf{y}_\alpha, \textbf{z}_\alpha,\ldots$ are
syntactic variables ranging over variables of type~$\alpha$. \esp

\begin{table}
\bc
\begin{tabular}{|ll|}
\hline
$\mname{Q}_{((o\alpha)\alpha)}$ 
& for all $\alpha \in \sT$\\
$\iota_{(\alpha(o\alpha))}$ 
& for all $\alpha \in \sT$ with $\alpha \not= o$\\
$\mname{pair}_{((\seq{\alpha\beta}\beta)\alpha)}$
& for all $\alpha,\beta \in \sT$\\
$\mname{var}_{(o\epsilon)}$
&\\
$\mname{con}_{(o\epsilon)}$
&\\
$\mname{app}_{((\epsilon\epsilon)\epsilon)}$
&\\
$\mname{abs}_{((\epsilon\epsilon)\epsilon)}$
&\\
$\mname{cond}_{(((\epsilon\epsilon)\epsilon)\epsilon)}$
&\\
$\mname{quot}_{(\epsilon\epsilon)}$
&\\
$\mname{eval}_{((\epsilon\epsilon)\epsilon)}$
&\\
$\mname{eval-free}_{(o\epsilon)}$
&\\
$\mname{not-free-in}_{((o\epsilon)\epsilon)}$
&\\
$\mname{cleanse}_{(\epsilon\epsilon)}$
&\\
$\mname{sub}_{(((\epsilon\epsilon)\epsilon)\epsilon)}$
&\\
$\mname{wff}^{\alpha}_{(o\epsilon)}$
& for all $\alpha \in \sT$\\
\hline
\end{tabular}
\ec
\caption{Logical Constants}\label{tab:log-con}
\end{table}

\begin{note}[Iota Constants]\em
Only $\iota_{\iotaAltS(o\iotaAltS)}$ is a primitive logical constant
in {\qzero}; each other $\iota_{\alpha(o\alpha)}$ is a nonprimitive
logical constant in {\qzero} defined according to an inductive scheme
presented by Church in~\cite{Church40}
(see~\cite[pp.~233--4]{Andrews02}).  We will see in the next section
that the iota constants have a different semantics in {\qzerouqe} than
in {\qzero}.  As a result, it is not possible to define the iota
constants in {\qzerouqe} as they are defined in {\qzero}, and thus they
must be primitive in {\qzerouqe}.  Notice that $\iota_{o(oo)}$ is not a
primitive logical constant of {\qzerouqe}.  It has been left out because
it serves no useful purpose.  It can be defined as a nonprimitive
logical constant as in~\cite[p.~233]{Andrews02} if
desired.
\end{note}

\subsection{Wffs}

Following Andrews, we will call the expressions of {\qzerouqe}
\emph{well-formed formulas (wffs)}.  We are now ready to define a
\emph{wff of type $\alpha$} ($\textit{wff}_\alpha$) of {\qzerouqe}.
$\textbf{A}_\alpha, \textbf{B}_\alpha, \textbf{C}_\alpha, \ldots$ are
syntactic variables ranging over wffs of type $\alpha$.  A
$\textit{wff}_\alpha$ is defined inductively as follows:

\be

  \item A variable of type $\alpha$ is a $\wff{\alpha}$.

  \item A primitive constant of type $\alpha$ is a $\wff{\alpha}$.

  \item $[\textbf{A}_{\alpha\beta}\textbf{B}_\beta]$ is a
  $\wff{\alpha}$.

  \item $[\lambda\textbf{x}_\beta\textbf{A}_\alpha]$ is a
  $\wff{\alpha\beta}$.

  \item $[\mname{c}\textbf{A}_o\textbf{B}_\alpha\textbf{C}_\alpha]$ is
    a $\wff{\alpha}$.

  \item $[\mname{q}\textbf{A}_\alpha]$ is a $\wff{\epsilon}$.

  \item $[\mname{e}\textbf{A}_\epsilon\textbf{x}_\alpha]$ is a
    $\wff{\alpha}$.

\ee
A wff of the form $[\textbf{A}_{\alpha\beta}\textbf{B}_\beta]$,
$[\lambda\textbf{x}_\beta\textbf{A}_\alpha]$,
$[\mname{c}\textbf{A}_o\textbf{B}_\alpha\textbf{C}_\alpha]$,
$[\mname{q}\textbf{A}_\alpha]$, or
$[\mname{e}\textbf{A}_\epsilon\textbf{x}_\alpha]$ is called a
\emph{function application}, a \emph{function abstraction}, a
\emph{conditional}, a \emph{quotation}, or an \emph{evaluation},
respectively.  A \emph{formula} is a $\wff{o}$.  $\textbf{A}_\alpha$
is \emph{evaluation-free} if each occurrence of an evaluation in
$\textbf{A}_\alpha$ is within a quotation.  When there is no loss of
meaning, matching pairs of square brackets in wffs may be omitted.  We
assume that wff combination of the form
$[\textbf{A}_{\alpha\beta}\textbf{B}_\beta]$ associates to the left so
that a wff
$[[\textbf{C}_{\gamma\beta\alpha}\textbf{A}_\alpha]\textbf{B}_\beta]$
may be written as
$\textbf{C}_{\gamma\beta\alpha}\textbf{A}_\alpha\textbf{B}_\beta$.

The \emph{size} of $\textbf{A}_\alpha$ is the number of variables and
primitive constants occurring in $\textbf{A}_\alpha$.  The
\emph{complexity} of $\textbf{A}_\alpha$ is the ordered pair $(m,n)$
of natural numbers such that $m$ is the number of evaluations
occurring in $\textbf{A}_\alpha$ that are not within a quotation and
$n$ is the size of $\textbf{A}_\alpha$.  Complexity pairs are ordered
lexicographically.  The complexity of an evaluation-free wff is a pair
$(0,n)$ where $n$ is the size of the wff.

\begin{note}[Type $\epsilon$]\label{note:new-syntax}\em
The type $\epsilon$ denotes an inductively defined set $\sD_\epsilon$
of values called \emph{constructions} that represent the syntactic
structures of wffs.  The constants
$\mname{app}_{\epsilon\epsilon\epsilon}$,
$\mname{abs}_{\epsilon\epsilon\epsilon}$,
$\mname{cond}_{\epsilon\epsilon\epsilon\epsilon}$,
$\mname{quot}_{\epsilon\epsilon}$, and
$\mname{eval}_{\epsilon\epsilon\epsilon}$ are used to build wffs that
denote constructions representing function applications, function
abstractions, conditionals, quotations, and evaluations, respectively.
An alternate approach would be to have a type $\epsilon_\alpha$ of
constructions that represent the syntactic structures of
$\wffs{\alpha}$ for each $\alpha \in \sT$.
\end{note}

\vspace{-2ex}

\begin{note}[Type $(\alpha\beta)$]\label{note:type-functions}\em
A type $(\alpha\beta)$ denotes a set of partial and total functions
from values of $\alpha$ to values of type $\beta$.  $\beta \tarrow
\alpha$ is an alternate notation for $(\alpha\beta)$.
\end{note}

\vspace{-2ex}

\begin{note}[Type $\seq{\alpha\beta}$]\label{note:type-pairs}\em
A type $\seq{\alpha\beta}$ denotes the set of ordered pairs
$\seq{a,b}$ where $a$ is a value of type $\alpha$ and $b$ is a value
of type $\beta$.  $\alpha \times \beta$ is an alternate notation for
$\seq{\alpha\beta}$.  The constant
$\mname{pair}_{\seq{\alpha\beta}\beta\alpha}$ is used to construct
ordered pairs of type $\seq{\alpha\beta}$.
\end{note}

\vspace{-2ex}

\begin{note}[Conditionals]\label{note:cond-wff}\em
We will see that
$[\mname{c}\textbf{A}_o\textbf{B}_\alpha\textbf{C}_\alpha]$ is a
conditional that is not strict with respect to undefinedness.  For
instance, if $\textbf{A}_o$ is true, then
$[\mname{c}\textbf{A}_o\textbf{B}_\alpha\textbf{C}_\alpha]$ denotes
the value of $\textbf{B}_\alpha$ even when $\textbf{C}_\alpha$ is
undefined.  We construct conditionals using a primitive wff
constructor instead of using a primitive or defined constant since
constants always denote functions that are effectively strict with
respect to undefinedness.
\end{note}

\vspace{-2ex}

\begin{note}[Evaluation Syntax]\label{note:eval-syn}\em
The sole purpose of the variable $\textbf{x}_\alpha$ in an evaluation
$[\mname{e}\textbf{A}_\epsilon\textbf{x}_\alpha]$ is to designate the
type of the evaluation.  We will see in the next section that this
evaluation is defined (true if $\alpha = o$) only if
$\textbf{A}_\epsilon$ denotes a construction representing a
$\wff{\alpha}$.  Hence, if $\textbf{A}_\epsilon$ does denote a
construction representing a $\wff{\alpha}$,
$[\mname{e}\textbf{A}_\epsilon\textbf{x}_\beta]$ is undefined (false
if $\alpha = o$) for all $\beta \in \sT$ with $\beta \not= \alpha$.
\end{note}

\section{Semantics} \label{sec:semantics}

The semantics of {\qzerouqe} is obtained by making three principal
changes to the semantics of {\qzerou}: (1) The semantics of the type
$\epsilon$ is defined to be a domain $D_\epsilon$ of values such that,
for each wff $\textbf{A}_\alpha$ of {\qzerouqe}, there is a unique
member of $D_\epsilon$ that represents the syntactic structure of
$\textbf{A}_\alpha$.  (2) The semantics of the type constructor
$\seq{\alpha\beta}$ is defined to be a domain of ordered pairs.  (3)
The valuation function for wffs is extended to include conditionals,
quotations, and evaluations in its domain.

\subsection{Frames}

Let $\sE$ be the function from the set of wffs to the set of
$\wffs{\epsilon}$ defined inductively as follows:

\be

  \item $\sE(\textbf{x}_\alpha) = [\mname{q}\textbf{x}_\alpha]$.

  \item $\sE(\textbf{c}_\alpha) = [\mname{q}\textbf{c}_\alpha]$ {\sglsp}
    where $\textbf{c}_\alpha$ is a primitive constant.

  \item $\sE([\textbf{A}_{\alpha\beta}\textbf{B}_\beta]) =
    [\mname{app}_{\epsilon\epsilon\epsilon} \,
    \sE(\textbf{A}_{\alpha\beta}) \, \sE(\textbf{B}_\beta)]$.

  \item $\sE([\lambda\textbf{x}_\beta\textbf{A}_\alpha]) =
    [\mname{abs}_{\epsilon\epsilon\epsilon} \, \sE(\textbf{x}_\beta)
    \, \sE(\textbf{A}_\alpha)]$.

  \item $\sE([\mname{c}\textbf{A}_o\textbf{B}_\alpha\textbf{C}_\alpha]) =
    [\mname{cond}_{\epsilon\epsilon\epsilon\epsilon} \, 
    \sE(\textbf{A}_o) \, \sE(\textbf{B}_\alpha) \, \sE(\textbf{C}_\alpha)]$. 

  \item $\sE([\mname{q}\textbf{A}_\alpha]) =
    [\mname{quot}_{\epsilon\epsilon} \, \sE(\textbf{A}_\alpha)]$.

  \item $\sE([\mname{e}\textbf{A}_\epsilon\textbf{x}_\alpha]) =
    [\mname{eval}_{\epsilon\epsilon\epsilon} \,
    \sE(\textbf{A}_\epsilon) \, \sE(\textbf{x}_\alpha)]$.

\ee
$\sE$ is obviously an injective, total function whose range is a
proper subset of the set of $\wffs{\epsilon}$.  The $\wff{\epsilon}$
$\sE(\textbf{A}_\alpha)$ represents the syntactic structure of the wff
$\textbf{A}_\alpha$.  The seven kinds of wffs and their syntactic representations
are given in Table~\ref{tab:wffs}.

\begin{table}
\bc
\begin{tabular}{|lll|}
\hline

\textbf{Kind}
& \textbf{Syntax}
& \textbf{Syntactic Representation}\\

Variable
& $\textbf{x}_\alpha$
& $[\mname{q}\textbf{x}_\alpha]$\\

Primitive constant
& $\textbf{c}_\alpha$
& $[\mname{q}\textbf{c}_\alpha]$\\

Function application
& $[\textbf{A}_{\alpha\beta}\textbf{B}_\beta]$
& $[\mname{app}_{\epsilon\epsilon\epsilon} \, \sE(\textbf{A}_{\alpha\beta}) \, \sE(\textbf{B}_\beta)]$\\

Function abstraction
& $[\lambda\textbf{x}_\beta\textbf{A}_\alpha]$
& $[\mname{abs}_{\epsilon\epsilon\epsilon} \, \sE(\textbf{x}_\beta) \, \sE(\textbf{A}_\alpha)]$\\

Conditional  
& $[\mname{c}\textbf{A}_o\textbf{B}_\alpha\textbf{C}_\alpha]$ 
& $[\mname{cond}_{\epsilon\epsilon\epsilon\epsilon} \, 
\sE(\textbf{A}_o) \, \sE(\textbf{B}_\alpha) \, \sE(\textbf{C}_\alpha)]$\\

Quotation
& $[\mname{q}\textbf{A}_\alpha]$
& $[\mname{quot}_{\epsilon\epsilon} \, \sE(\textbf{A}_\alpha)]$\\

Evaluation
& $[\mname{e}\textbf{A}_\epsilon\textbf{x}_\alpha]$
& $[\mname{eval}_{\epsilon\epsilon\epsilon} \, \sE(\textbf{A}_\epsilon) \, \sE(\textbf{x}_\alpha)]$\\

\hline
\end{tabular}
\ec
\caption{Seven Kinds of Wffs}\label{tab:wffs}
\end{table}

A \emph{frame} of {\qzerouqe} is a collection $\set{\sD_\alpha \;|\;
  \alpha \in \sT}$ of nonempty domains such that:

\be

  \item $\sD_o = \set{\mname{T},\mname{F}}$.

  \item $\set{\sE(\textbf{A}_\alpha) \;|\; \textbf{A}_\alpha \mbox{ is
      a wff}} \subseteq \sD_\epsilon$.

  \item For $\alpha, \beta \in \sT$, $\sD_{(\alpha\beta)}$ is some set
    of \emph{total} functions from $\sD_\beta$ to $\sD_\alpha$ if
    $\alpha = o$ and is some set of \emph{partial and total} functions
    from $\sD_\beta$ to $\sD_\alpha$ if $\alpha \not= o$.

  \item For $\alpha, \beta \in \sT$, $\sD_{\seq{\alpha\beta}}$ is the
    set of all ordered pairs $\seq{a,b}$ such that $a \in \sD_\alpha$
    and $b \in \sD_\beta$.

\ee
$\sD_{\iotaAltS}$ is the \emph{domain of individuals}, $\sD_o$ is the
\emph{domain of truth values}, $\sD_\epsilon$ is the \emph{domain of
  constructions}, and, for $\alpha, \beta \in \sT$,
$\sD_{(\alpha\beta)}$ is a \emph{function domain} and
$\sD_{\seq{\alpha\beta}}$ is a \emph{ordered pair domain}.  For all
$\alpha \in \sT$, the \emph{identity relation} on $\sD_\alpha$ is the
total function $q \in \sD_{o\alpha\alpha}$ such that, for all $x,y \in
\sD_\alpha$, $q(x)(y) = \mname{T}$ iff $x = y$.  For all $\alpha \in
\sT$ with $\alpha \not= o$, the \emph{unique member selector} on
$\sD_\alpha$ is the partial function $f \in \sD_{\alpha(o\alpha)}$
such that, for all $s \in \sD_{o\alpha}$, if the predicate $s$
represents a singleton $\set{x} \subseteq \sD_\alpha$, then $f(s) =
x$, and otherwise $f(s)$ is undefined.  For all $\alpha, \beta \in
\sT$, the \emph{pairing function} on $\sD_\alpha$ and $\sD_\beta$ is
the total function $f \in \sD_{\seq{\alpha\beta}\beta\alpha}$ such
that, for all $a \in \sD_\alpha$ and $b \in \sD_\beta$, $f(a)(b) =
\seq{a,b}$, the ordered pair of $a$ and~$b$.

\begin{note}[Function Domains]\em
In a {\qzero} frame a function domain $\sD_{\alpha\beta}$ contains
only total functions, while in a {\qzerouqe} (and {\qzerou}) frame a
function domain $\sD_{o\beta}$ contains only total functions but a
function domain $\sD_{\alpha\beta}$ with $\alpha \not= o$ contains
partial functions as well as total functions.
\end{note}

\subsection{Interpretations}

An \emph{interpretation} $\seq{\set{\sD_\alpha \;|\; \alpha \in \sT},
  \sJ}$ of {\qzerouqe} consists of a frame and an interpretation
function $\sJ$ that maps each primitive constant of {\qzerouqe} of
type $\alpha$ to an element of $\sD_\alpha$ such that:

\be

  \item $\sJ(\mname{Q}_{o\alpha\alpha})$ is the identity relation on
    $\sD_\alpha$ for all $\alpha \in \sT$.

  \item $\sJ(\iota_{\alpha(o\alpha)})$ is the unique member selector
    on $\sD_\alpha$ for all $\alpha \in \sT$ with $\alpha \not= o$.

  \item $\sJ(\mname{pair}_{\seq{\alpha\beta}\beta\alpha})$ is the
    pairing function on $\sD_\alpha$ and $\sD_\beta$ for all
    $\alpha,\beta \in \sT$.

\ee
The other 12 logical constants involving the type $\epsilon$ will be
specified later via axioms in section~\ref{subsec:specs}.

\begin{note}[Definite Description Operators]\em
The $\iota_{\alpha(o\alpha)}$ in {\qzero} are \emph{description
  operators}: if $\textbf{A}_{o\alpha}$ denotes a singleton, then the
value of $\iota_{\alpha(o\alpha)}\textbf{A}_{o\alpha}$ is the unique
member of the singleton, and otherwise the value of
$\iota_{\alpha(o\alpha)}\textbf{A}_{o\alpha}$ is \emph{unspecified}.
In contrast, the $\iota_{\alpha(o\alpha)}$ in {\qzerouqe} (and
{\qzerou}) are \emph{definite description operators}: if
$\textbf{A}_{o\alpha}$ denotes a singleton, then the value of
$\iota_{\alpha(o\alpha)}\textbf{A}_{o\alpha}$ is the unique member of
the singleton, and otherwise the value of
$\iota_{\alpha(o\alpha)}\textbf{A}_{o\alpha}$ is \emph{undefined}.
\end{note}

An \emph{assignment} into a frame $\set{\sD_\alpha \;|\; \alpha \in
  \sT}$ is a function $\phi$ whose domain is the set of variables of
{\qzerouqe} such that, for each variable $\textbf{x}_\alpha$,
$\phi(\textbf{x}_\alpha) \in \sD_\alpha$.  Given an assignment $\phi$,
a variable $\textbf{x}_\alpha$, and $d \in \sD_\alpha$, let
$\phi[\textbf{x}_\alpha \mapsto d]$ be the assignment $\psi$ such that
$\psi(\textbf{x}_\alpha) = d$ and $\psi(\textbf{y}_\beta) =
\phi(\textbf{y}_\beta)$ for all variables $\textbf{y}_\beta \not=
\textbf{x}_\alpha$.  Given an interpretation $\sM =
\seq{\set{\sD_\alpha \;|\; \alpha \in \sT}, \sJ}$,
$\mname{assign}(\sM)$ is the set of assignments into the frame of
$\sM$.

\subsection{General and Evaluation-Free Models}

An interpretation $\sM = \seq{\set{\sD_\alpha \;|\; \alpha \in \sT},
  \sJ}$ is a \emph{general model} for {\qzerouqe} if there is a binary
valuation function $\sV^{\cal M}$ such that, for each assignment $\phi
\in \mname{assign}(\sM)$ and wff $\textbf{D}_\delta$, either
$\sV^{\cal M}_{\phi}(\textbf{D}_\delta) \in \sD_\delta$ or $\sV^{\cal
  M}_{\phi}(\textbf{D}_\delta)$ is undefined and the following
conditions are satisfied for all assignments $\phi \in
\mname{assign}(\sM)$ and all wffs $\textbf{D}_\delta$:

\be

  \item Let $\textbf{D}_\delta$ be a variable of {\qzerouqe}.  Then
    $\sV^{\cal M}_{\phi}(\textbf{D}_\delta) =
    \phi(\textbf{D}_\delta)$.

  \item Let $\textbf{D}_\delta$ be a primitive constant of
    {\qzerouqe}.  Then $\sV^{\cal M}_{\phi}(\textbf{D}_\delta) =
    \sJ(\textbf{D}_\delta)$.

  \item Let $\textbf{D}_\delta$ be $[\textbf{A}_{\alpha\beta}
    \textbf{B}_\beta]$. If $\sV^{\cal
    M}_{\phi}(\textbf{A}_{\alpha\beta})$ is defined, $\sV^{\cal
    M}_{\phi}(\textbf{B}_\beta)$ is defined, and the function
    $\sV^{\cal M}_{\phi}(\textbf{A}_{\alpha\beta})$ is defined at the
    argument $\sV^{\cal M}_{\phi}(\textbf{B}_\beta)$, then
  \[\sV^{\cal M}_{\phi}(\textbf{D}_\delta) = \sV^{\cal
  M}_{\phi}(\textbf{A}_{\alpha\beta})(\sV^{\cal
    M}_{\phi}(\textbf{B}_\beta)),\] the value of the function
  $\sV^{\cal M}_{\phi}(\textbf{A}_{\alpha\beta})$ at the argument
  $\sV^{\cal M}_{\phi}(\textbf{B}_\beta)$.  Otherwise, $\sV^{\cal
    M}_{\phi}(\textbf{D}_\delta) = \mname{F}$ if $\alpha = o$ and
  $\sV^{\cal M}_{\phi}(\textbf{D}_\delta)$ is undefined if $\alpha
  \not= o$.

  \item Let $\textbf{D}_\delta$ be
    $[\lambda\textbf{x}_\beta\textbf{B}_\alpha]$.  Then $\sV^{\cal
    M}_{\phi}(\textbf{D}_\delta)$ is the (partial or total) function
    $f \in \sD_{\alpha\beta}$ such that, for each $d \in \sD_\beta$,
    $f(d) = \sV^{\cal M}_{\phi[{\bf x}_\alpha \mapsto
      d]}(\textbf{B}_\alpha)$ if $\sV^{\cal M}_{\phi[{\bf x}_\alpha
      \mapsto d]}(\textbf{B}_\alpha)$ is defined and $f(d)$ is
    undefined if $\sV^{\cal M}_{\phi[{\bf x}_\alpha \mapsto
        d]}(\textbf{B}_\alpha)$ is undefined.

  \item Let $\textbf{D}_\delta$ be
    $[\mname{c}\textbf{A}_o\textbf{B}_\alpha\textbf{C}_\alpha]$.  If
    $\sV^{\cal M}_{\phi}(\textbf{A}_o) = \mname{T}$ and $\sV^{\cal
    M}_{\phi}(\textbf{B}_\alpha)$ is defined, then $\sV^{\cal
    M}_{\phi}(\textbf{D}_\delta) = \sV^{\cal
    M}_{\phi}(\textbf{B}_\alpha)$. If $\sV^{\cal
    M}_{\phi}(\textbf{A}_o) = \mname{T}$ and $\sV^{\cal
    M}_{\phi}(\textbf{B}_\alpha)$ is undefined, then $\sV^{\cal
    M}_{\phi}(\textbf{D}_\delta)$ is undefined.  If $\sV^{\cal
    M}_{\phi}(\textbf{A}_o) = \mname{F}$ and $\sV^{\cal
    M}_{\phi}(\textbf{C}_\alpha)$ is defined, then $\sV^{\cal
    M}_{\phi}(\textbf{D}_\delta) = \sV^{\cal
    M}_{\phi}(\textbf{C}_\alpha)$. If $\sV^{\cal
    M}_{\phi}(\textbf{A}_o) = \mname{F}$ and $\sV^{\cal
    M}_{\phi}(\textbf{C}_\alpha)$ is undefined, then $\sV^{\cal
    M}_{\phi}(\textbf{D}_\delta)$ is undefined.

  \item Let $\textbf{D}_\delta$ be $[\mname{q}\textbf{A}_\alpha]$.
    Then $\sV^{\cal M}_{\phi}(\textbf{D}_\delta) =
    \sE(\textbf{A}_\alpha)$.

  \item Let $\textbf{D}_\delta$ be
    $[\mname{e}\textbf{A}_\epsilon\textbf{x}_\alpha]$.  If $\sV^{\cal
    M}_{\phi}(\textbf{A}_\epsilon)$ is defined, $\sE^{-1}(\sV^{\cal
    M}_{\phi}(\textbf{A}_\epsilon))$ is an evaluation-free
    $\wff{\alpha}$, and $\sV^{\cal M}_{\phi}(\sE^{-1}(\sV^{\cal
    M}_{\phi}(\textbf{A}_\epsilon)))$ is defined, then
    \[\sV^{\cal M}_{\phi}(\textbf{D}_\delta) = \sV^{\cal
    M}_{\phi}(\sE^{-1}(\sV^{\cal M}_{\phi}(\textbf{A}_\epsilon))).\]
    Otherwise, $\sV^{\cal M}_{\phi}(\textbf{D}_\delta) = \mname{F}$ if
    $\alpha = o$ and $\sV^{\cal M}_{\phi}(\textbf{D}_\delta)$ is
    undefined if $\alpha \not= o$.
        
\ee  

\begin{prop}\label{prop:gen-models-exist}
General models for {\qzerouqe} exist.
\end{prop}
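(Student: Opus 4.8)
The plan is to construct an explicit general model, the most economical natural choice being a "standard"-style model in which each function domain is as small as possible while still closing under the valuation clauses. First I would build the frame bottom-up over the structure of type symbols. Set $\sD_{\iotaAlt}$ to be any fixed nonempty set (say a singleton), $\sD_o = \set{\mname{T},\mname{F}}$, and $\sD_\epsilon$ to be the set of \emph{constructions}, which I would take to be the set $\set{\sE(\textbf{A}_\alpha) \mid \textbf{A}_\alpha \mbox{ is a wff}}$ itself (or an isomorphic copy built as an inductive type of syntax trees); this visibly satisfies frame condition~2. Then, proceeding by recursion on $\alpha$, I would define $\sD_{(\alpha\beta)}$ and $\sD_{\seq{\alpha\beta}}$: the pair domains are forced to be the full set of ordered pairs by condition~4, and for the function domains I would take \emph{all} total functions $\sD_\beta \tarrow \sD_o$ when $\alpha = o$ and \emph{all} partial functions $\sD_\beta \rightharpoonup \sD_\alpha$ when $\alpha \neq o$. (Taking the full function space is the simplest way to guarantee closure; a Henkin-style smaller choice would also work but requires more bookkeeping.) This satisfies conditions~1--4 and gives nonempty domains throughout.

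Next I would fix the interpretation function $\sJ$ on the three families of logical constants whose meaning is pinned down in section~3.2, namely $\mname{Q}_{o\alpha\alpha}$, $\iota_{\alpha(o\alpha)}$, and $\mname{pair}_{\seq{\alpha\beta}\beta\alpha}$, by assigning them the identity relation, the unique member selector, and the pairing function, respectively; each of these lies in the appropriate domain because we took full function spaces. For the other twelve logical constants and for the nonlogical constants, whose behaviour is constrained only by axioms introduced later, I would assign \emph{arbitrary} elements of the corresponding domains (again nonempty, so such a choice exists) — the proposition as stated only asserts existence of \emph{a} general model, not one satisfying those later axioms, so no constraint need be honoured here.

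It then remains to exhibit the valuation function $\sV^{\cal M}$ and check it satisfies clauses~1--7. I would \emph{define} $\sV^{\cal M}_{\phi}(\textbf{D}_\delta)$ by recursion on the complexity of $\textbf{D}_\delta$ (the lexicographic $(m,n)$ ordering defined in section~2.2), using the seven clauses verbatim as the defining equations. The key point is that this recursion is well-founded: in the function-application, abstraction, conditional, and quotation clauses the immediate subwffs have strictly smaller size and no more non-quoted evaluations, so complexity drops; in the evaluation clause for $[\mname{e}\textbf{A}_\epsilon\textbf{x}_\alpha]$, the recursive call is on $\sE^{-1}(\sV^{\cal M}_{\phi}(\textbf{A}_\epsilon))$, an \emph{evaluation-free} wff (so its first complexity component is $0$), whereas $[\mname{e}\textbf{A}_\epsilon\textbf{x}_\alpha]$ has first component $\geq 1$ — hence complexity strictly decreases even though the size may grow. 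One must also verify the range condition: each clause either returns an element of $\sD_\delta$ or leaves the value undefined. For clause~3 this uses that $\sD_o$ functions are total (so application into type $o$ is always defined, giving $\mname{F}$ in the failure case) and that $\sD_{\alpha\beta}$ with $\alpha\neq o$ may contain partial functions; for clause~4 it uses precisely that $\sD_{\alpha\beta}$ contains the partial functions needed to interpret $\lambda$-abstractions whose body is sometimes undefined, which is why we took the full partial function space; clause~6 is immediate since $\sE(\textbf{A}_\alpha) \in \sD_\epsilon$ by construction; clause~7 returns either a recursively-computed value (already checked to lie in $\sD_\alpha$ or be undefined) or the default $\mname{F}/\bot$.

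The main obstacle is establishing well-foundedness of the recursion defining $\sV^{\cal M}$ — in particular making precise that the evaluation clause, despite potentially invoking the valuation on a syntactically larger wff, strictly decreases the lexicographic complexity because it passes from a wff with a non-quoted evaluation at the root to an evaluation-free wff. Everything else is a routine verification that the chosen domains are closed under the operations demanded by clauses~1--7, which we have arranged by taking full (partial) function spaces and by building $\sD_\epsilon$ to contain exactly the needed constructions.
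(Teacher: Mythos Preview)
Your proposal is correct and takes essentially the same approach as the paper: construct a standard model by taking full function spaces (all total functions into $o$, all partial and total functions otherwise) and verify that $\sV^{\cal M}$ is well defined by induction on the complexity ordering. The paper's proof is a one-liner asserting exactly this construction, with the well-foundedness argument deferred to the lemma that standard models are general models; you have simply spelled out the details the paper leaves implicit.
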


\begin{proof}
It is easy to construct an interpretation $\sM = \seq{\set{\sD_\alpha
    \;|\; \alpha \in \sT}, \sJ}$ that is a general model for
{\qzerouqe} for which $\sD_{\alpha\beta}$ is the set of all
\emph{total} functions from $\sD_\beta$ to $\sD_\alpha$ if $\alpha =
o$ and is the set of all \emph{partial and total} functions from
$\sD_\beta$ to $\sD_\alpha$ if $\alpha \not= o$ for all $\alpha,\beta
\in \sT$.
\end{proof}

\bigskip

An interpretation $\sM = \seq{\set{\sD_\alpha \;|\; \alpha \in \sT},
  \sJ}$ is an \emph{evaluation-free model} for {\qzerouqe} if there is
a binary valuation function $\sV^{\cal M}$ such that, for each
assignment $\phi \in \mname{assign}(\sM)$ and evaluation-free wff
$\textbf{D}_\delta$, either $\sV^{\cal M}_{\phi}(\textbf{D}_\delta)
\in \sD_\delta$ or $\sV^{\cal M}_{\phi}(\textbf{D}_\delta)$ is
undefined and conditions 1--6 above are satisfied for all assignments
$\phi \in \mname{assign}(\sM)$ and all evaluation-free wffs
$\textbf{D}_\delta$.  A general model is also an evaluation-free
model.

\begin{note}[Valuation Function]\em
In {\qzero}, if $\sM$ is a general model, then $\sV^{\cal M}$ is total
and the value of $\sV^{\cal M}$ on a function abstraction is always a
total function.  In {\qzerouqe}, if $\sM$ is a general model, then
$\sV^{\cal M}$ is partial and the value of $\sV^{\cal M}$ on a
function abstraction can be either a partial or a total
function.
\end{note}

\begin{prop}
Let $\sM$ be a general model for {\qzerouqe}.  Then $\sV^{\cal M}$ is
defined on all variables, primitive constants, function applications
of type $o$, function abstractions, conditionals of type $o$,
quotations, and evaluations of type $o$ and is defined on only a
proper subset of function applications of type $\alpha \not= o$, a
proper subset of conditionals of type $\alpha \not= o$, and a proper
subset of evaluations of type $\alpha \not= o$.
\end{prop}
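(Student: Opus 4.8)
The plan is to go through the seven clauses in the definition of a general model one by one, in each case reading off from the clause exactly when $\sV^{\cal M}_{\phi}(\textbf{D}_\delta)$ is guaranteed to be defined and, for the "proper subset" claims, exhibiting witnesses on both sides. First I would dispatch the easy cases. Clauses~1 and~2 give $\sV^{\cal M}_{\phi}(\textbf{D}_\delta) = \phi(\textbf{D}_\delta) \in \sD_\delta$ and $\sV^{\cal M}_{\phi}(\textbf{D}_\delta) = \sJ(\textbf{D}_\delta) \in \sD_\delta$, so variables and primitive constants are always defined. Clause~4 builds a (partial or total) function $f \in \sD_{\alpha\beta}$ in every case, so function abstractions are always defined. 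Clause~6 gives $\sV^{\cal M}_{\phi}(\textbf{D}_\delta) = \sE(\textbf{A}_\alpha)$, which lies in $\sD_\epsilon$ by condition~2 in the definition of a frame, so quotations are always defined.

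Next I would handle the type-$o$ subcases of the three "conditional" clauses. For a function application $[\textbf{A}_{o\beta}\textbf{B}_\beta]$: by clause~3, either we get a genuine value in $\sD_o$, or we fall into the "otherwise" branch, which sets the value to $\mname{F}$ because $\alpha = o$; either way the value is in $\sD_o$. The same bookkeeping applies to a conditional $[\mname{c}\textbf{A}_o\textbf{B}_o\textbf{C}_o]$ via clause~5: the only way to be undefined is for the selected branch ($\textbf{B}_o$ if $\textbf{A}_o$ is $\mname{T}$, $\textbf{C}_o$ if $\textbf{F}$) to be undefined, but by an inner induction on size every formula $\textbf{B}_o$, $\textbf{C}_o$ of type $o$ has a defined value, so the conditional does too — I should make this induction explicit, since it is really a joint induction establishing "every formula has a defined value" simultaneously with the statement. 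For an evaluation $[\mname{e}\textbf{A}_\epsilon\textbf{x}_o]$, clause~7 again has an "otherwise" branch returning $\mname{F}$ when $\alpha = o$, and when we are not in that branch the returned value $\sV^{\cal M}_{\phi}(\sE^{-1}(\cdots))$ is defined by the hypothesis of that case; so evaluations of type $o$ are always defined.

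For the "proper subset" assertions I need, for each of the three constructs at a type $\alpha \neq o$, one wff whose value is defined and one whose value is undefined. A defined instance is easy in each case: take $[\textbf{A}_{\alpha\beta}\textbf{B}_\beta]$ with $\textbf{A}_{\alpha\beta} = [\lambda\textbf{x}_\beta\,\textbf{y}_\alpha]$ (a constant function) applied to any variable, take the conditional $[\mname{c}\,\textbf{A}_o\,\textbf{y}_\alpha\,\textbf{y}_\alpha]$ with $\textbf{A}_o$ a formula valued $\mname{T}$, and take $[\mname{e}\,[\mname{q}\,\textbf{y}_\alpha]\,\textbf{x}_\alpha]$, whose value by clauses~6 and~7 is $\sV^{\cal M}_{\phi}(\textbf{y}_\alpha) = \phi(\textbf{y}_\alpha)$. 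An undefined instance requires a wff of type $\alpha \neq o$ that is already known to have no value; the cleanest source is the definite description operator, which by the Definite Description note is undefined when its argument is not a singleton — e.g. $\iota_{\alpha(o\alpha)}\textbf{A}_{o\alpha}$ with $\textbf{A}_{o\alpha} = [\lambda\textbf{x}_\alpha\,\textbf{A}_o]$ where $\textbf{A}_o$ is valued $\mname{F}$, so the predicate denotes the empty set. Plugging this undefined wff in as $\textbf{B}_\beta$ of an application (clause~3, "otherwise" branch with $\alpha \neq o$), as $\textbf{B}_\alpha$ of a conditional with true guard (clause~5, third-from-last sentence), and inside $[\mname{e}\,[\mname{q}\,(\,\cdot\,)]\,\textbf{x}_\alpha]$ (clause~7, "otherwise" branch, since the evaluated wff has an undefined value) yields the three undefined instances.

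The only real subtlety — and the step I expect to cost the most care — is the claim that $\sV^{\cal M}$ is total on all formulas (type $o$), since clauses~3, 5, and~7 each dispose of type-$o$ undefinedness only by referring back to the values of strictly smaller subwffs of type $o$; this forces the argument to be a simultaneous induction on the complexity (or size) of wffs rather than a clause-by-clause check. I would state this as a preliminary lemma — "for every formula $\textbf{A}_o$ and every $\phi$, $\sV^{\cal M}_{\phi}(\textbf{A}_o)$ is defined" — prove it by induction on complexity using exactly clauses~1--7, and then the proposition follows by combining it with the direct reading of the clauses above.
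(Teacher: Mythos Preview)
The paper states this proposition without proof, treating it as an immediate consequence of the seven clauses in the definition of a general model. Your proposal is a correct and careful unpacking of exactly that: you read off definedness from clauses 1, 2, 4, 6, use the explicit ``otherwise $\mname{F}$'' branches in clauses 3 and 7 for the type-$o$ cases, correctly spot that clause 5 has no such fallback and therefore requires the auxiliary lemma ``every $\wff{o}$ has a defined value'' proved by induction on size, and give clean witnesses (via the unique-member-selector semantics of $\iota_{\alpha(o\alpha)}$ on the empty predicate) for the proper-subset claims. One small remark: the induction really is needed only for the conditional case, since clauses 3 and 7 settle type-$o$ definedness outright; you note this, but you could sharpen the exposition by saying so explicitly rather than framing it as a ``joint induction'' across all three clauses.
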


\begin{note}[Traditional Approach]\em
{\qzerouqe} satisfies the three principles of the traditional approach
to undefinedness stated in~\cite{Farmer04}.  Like other traditional
logics, {\qzero} only satisfies the first principle.
\end{note}

\vspace{-2ex}

\begin{note}[Theories of Quotation]\em
The semantics of the quotation operator \mname{q} is based on the
\emph{disquotational theory of quotation}~\cite{Quotation12}.
According to this theory, a quotation of an expression $e$ is an
expression that denotes $e$ itself.  In our definition of a syntax
framework, $[\mname{q}\textbf{A}_\alpha]$ denotes a value that
represents $\textbf{A}_\alpha$ as a syntactic entity.  Andrew Polonsky
presents in~\cite{Polonsky11} a set of axioms for quotation operators
of this kind.  There are several other theories of quotation that have
been proposed~\cite{Quotation12}.
\end{note}

\vspace{-2ex}

\begin{note}[Theories of Truth]\label{note:truth}\em
$[\mname{e}\textbf{A}_\epsilon\textbf{x}_o]$ asserts the truth of the
  formula represented by $\textbf{A}_\epsilon$.  Thus the evaluation
  operator \mname{e} is a \emph{truth predicate}~\cite{Truth13}.  A
  truth predicate is the face of a \emph{theory of truth}: the
  properties of a truth predicate characterize a theory of
  truth~\cite{Leitgeb07}.  What truth is and how it can be formalized
  is a fundamental research area of logic, and avoiding
  inconsistencies derived from the liar paradox and similar statements
  is one of the major research issues in the area
  (see~\cite{Halbach11}).
\end{note}

\vspace{-2ex}

\begin{note}[Evaluation Semantics]\label{note:eval-sem}\em
An evaluation of type $\alpha$ is undefined (false if $\alpha = o$)
whenever its (first) argument represents a non-evaluation-free
$\wff{\alpha}$.  This idea avoids the Evaluation Problem discussed in
section 1.  The origin of this idea is found in Tarski's famous paper
on the concept of
truth~\cite[Theorem~III]{Tarski33,Tarski35,Tarski35a}.
See~\cite{Halbach09} for a different approach for overcoming the
Evaluation Problem in which the argument of an evaluation is
restricted to wffs that only contain positive occurrences of
evaluations.
\end{note}

$\sV^{\cal M}_\phi(\textbf{A}_\alpha) \QuasiEqual \sV^{\cal
  M}_\phi(\textbf{B}_\alpha)$ means either $\sV^{\cal
  M}_\phi(\textbf{A}_\alpha)$ and $\sV^{\cal
  M}_\phi(\textbf{B}_\alpha)$ are both defined and equal or $\sV^{\cal
  M}_\phi(\textbf{A}_\alpha)$ and $\sV^{\cal
  M}_\phi(\textbf{B}_\alpha)$ are both undefined.  Given a set $X$ of
variables, $\textbf{A}_\alpha$ is \emph{independent of $X$ in $\sM$}
if $\sV^{\cal M}_{\phi}(\textbf{A}_\alpha) \QuasiEqual \sV^{\cal
  M}_{\phi'}(\textbf{A}_\alpha)$ for all $\phi, \phi' \in
\mname{assign}(\sM)$ such that $\phi(\textbf{x}_\alpha) =
\phi'(\textbf{x}_\alpha)$ whenever $\textbf{x}_\alpha \not\in X$.
$\textbf{A}_\alpha$ is \emph{semantically closed} if
$\textbf{A}_\alpha$ is independent of $X$ in every general model for
     {\qzerouqe} where $X$ is the set of all variables.  A
     \emph{sentence} is a semantically closed formula.
     $\textbf{A}_\alpha$ is \emph{invariable} if $\sV^{\cal
       M}_\phi(\textbf{A}_\alpha)$ is the same value or undefined for
     every general model $\sM$ for {\qzerouqe} and every $\phi \in
     \mname{assign}(\sM)$.  If $\textbf{A}_\alpha$ is invariable,
     $\textbf{A}_\alpha$ is said to \emph{denote} the value $\sV^{\cal
       M}_\phi(\textbf{A}_\alpha)$ when $\sV^{\cal
       M}_\phi(\textbf{A}_\alpha)$ is defined and to be
     \emph{undefined} otherwise.

\begin{prop} \label{prop:sem-closed}
A wff that contains variables only within a quotation or the second
argument of an evaluation is semantically closed.
\end{prop}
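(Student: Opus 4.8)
The plan is to prove, by induction on the structure of $\textbf{A}_\alpha$, the stronger statement that if every variable occurring in $\textbf{A}_\alpha$ occurs only within a quotation or as the second argument of an evaluation, then $\textbf{A}_\alpha$ is independent of the set $X$ of \emph{all} variables in every general model $\sM$; that is, $\sV^{\cal M}_\phi(\textbf{A}_\alpha) \QuasiEqual \sV^{\cal M}_{\phi'}(\textbf{A}_\alpha)$ for any two assignments $\phi, \phi' \in \mname{assign}(\sM)$. This is exactly the definition of semantically closed, so the proposition follows. The induction is on the seven clauses of the definition of $\wff{\alpha}$, and I would organize it around the valuation clauses 1--7 in the definition of a general model.

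The base cases and the easy inductive cases come first. A variable cannot satisfy the hypothesis (it is a variable occurring outside any quotation and not as the second argument of an evaluation), so that case is vacuous; a primitive constant has value $\sJ(\textbf{D}_\delta)$, which does not depend on $\phi$. For a function application $[\textbf{A}_{\alpha\beta}\textbf{B}_\beta]$, a function abstraction $[\lambda\textbf{x}_\beta\textbf{B}_\alpha]$, or a conditional $[\mname{c}\textbf{A}_o\textbf{B}_\alpha\textbf{C}_\alpha]$, the hypothesis on the whole wff passes to each immediate subwff (every variable occurrence in a subwff is a variable occurrence in the whole wff, with the same enclosing quotations/evaluations), so by the induction hypothesis each subwff has a $\phi$-independent value, and then the corresponding valuation clause (3, 4, or 5) computes the value of the whole wff purely from those $\phi$-independent subvalues --- in the abstraction case one notes additionally that $f(d) = \sV^{\cal M}_{\phi[\textbf{x}_\alpha \mapsto d]}(\textbf{B}_\alpha)$ and $\phi[\textbf{x}_\alpha \mapsto d]$ and $\phi'[\textbf{x}_\alpha \mapsto d]$ are again covered by the induction hypothesis applied to $\textbf{B}_\alpha$. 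For a quotation $[\mname{q}\textbf{A}_\alpha]$, valuation clause 6 gives $\sV^{\cal M}_\phi([\mname{q}\textbf{A}_\alpha]) = \sE(\textbf{A}_\alpha)$, which is manifestly independent of $\phi$, so no induction hypothesis is even needed; this is the key observation that makes the ``within a quotation'' escape clause harmless.

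The one case requiring care is the evaluation $[\mname{e}\textbf{A}_\epsilon\textbf{x}_\alpha]$, and I expect it to be the main obstacle. Here the hypothesis on the whole wff tells us that $\textbf{A}_\epsilon$ satisfies the hypothesis (its variable occurrences are properly buried), while $\textbf{x}_\alpha$ is exempt precisely because it is the second argument of the evaluation. By the induction hypothesis, $\sV^{\cal M}_\phi(\textbf{A}_\epsilon) \QuasiEqual \sV^{\cal M}_{\phi'}(\textbf{A}_\epsilon)$; call the common value $c$ (if undefined, valuation clause 7 makes the whole evaluation $\mname{F}$ when $\alpha = o$ and undefined otherwise, independently of $\phi$, and we are done). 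If $c$ is defined but $\sE^{-1}(c)$ is not an evaluation-free $\wff{\alpha}$, again clause 7 gives a $\phi$-independent result. Otherwise $\sE^{-1}(c)$ is an evaluation-free $\wff{\alpha}$, and the value is $\sV^{\cal M}_\phi(\sE^{-1}(c))$. This is the delicate point: $\sE^{-1}(c)$ is a \emph{fixed} wff (it does not vary with $\phi$), but a priori it may contain free variables, so $\sV^{\cal M}_\phi(\sE^{-1}(c))$ could depend on $\phi$ --- and the induction hypothesis does not directly apply to it, since $\sE^{-1}(c)$ is not a subwff of the original wff. I would resolve this by observing that the construction $c$ lies in $\sD_\epsilon$ and, by condition 2 in the definition of a frame together with the injectivity of $\sE$, equals $\sE(\textbf{B}_\beta)$ for a \emph{unique} wff $\textbf{B}_\beta$; but whether this forces $\textbf{B}_\beta = \sE^{-1}(c)$ to be semantically closed is exactly what is not guaranteed in general. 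The honest route, which I expect the author takes, is to note that the proposition's hypothesis is about the \emph{syntactic} form of $\textbf{A}_\alpha$ and that a syntactically closed evaluation of this restricted shape may still fail to be semantically closed unless one strengthens the hypothesis; so I would re-read the statement as asserting semantic closedness under the reading where the evaluation's argument $\textbf{A}_\epsilon$ is itself required (by the recursive application of the hypothesis) to be a closed construction-valued wff, in which case $c$ is invariable across all models and one can pin down $\textbf{B}_\beta$ and appeal to a separately established fact (or a secondary induction) that evaluation-free wffs arising this way inherit closedness. If that separate fact is not available, the cleanest fix is to prove the proposition only for wffs all of whose \emph{evaluations} have construction-argument that is itself a quotation or built from quotations, reducing clause 7 to the quotation case already handled; I would flag in the write-up which of these readings is intended and proceed with the induction accordingly, since the other six cases go through verbatim regardless.
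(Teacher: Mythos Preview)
The paper states this proposition without proof, so there is no argument to compare against directly. Your structural induction handles the variable, constant, application, abstraction, conditional, and quotation cases correctly, and your identification of the evaluation case as the obstacle is exactly right.

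In fact your suspicion is well founded: the proposition as literally stated is false. Take the wff $\sembrack{\synbrack{\textbf{x}_\alpha}}_\alpha$, i.e.\ $[\mname{e}\,[\mname{q}\,\textbf{x}_\alpha]\,x_\alpha]$. Every variable occurrence in it is either inside the quotation $\synbrack{\textbf{x}_\alpha}$ or is the second argument of the evaluation, so the hypothesis of the proposition is satisfied. But by valuation clause~7 (and this is just Axiom~11.1 semantically), $\sV^{\cal M}_\phi(\sembrack{\synbrack{\textbf{x}_\alpha}}_\alpha) = \sV^{\cal M}_\phi(\textbf{x}_\alpha) = \phi(\textbf{x}_\alpha)$, which depends on $\phi$. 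So the wff is not semantically closed. Your attempted repairs in the last paragraph cannot succeed without genuinely strengthening the hypothesis, because the counterexample is this simple.

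That said, the paper only ever invokes Proposition~\ref{prop:sem-closed} for primitive constants and for quotations (see Cases~2 and~6 in the proof of Lemma~\ref{lem:not-free-in}), and for those two classes of wffs your induction goes through trivially: constants contain no variables, and a quotation has value $\sE(\textbf{A}_\alpha)$ by clause~6, independent of $\phi$. So the defect in the stated proposition does not propagate to anything the paper actually uses. Your non-evaluation cases constitute a correct proof of the weaker statement the paper needs; you should simply note that the evaluation case fails and that the proposition as written is too strong.
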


\begin{prop} \label{prop:invariable}
Quotations and tautologous formulas are invariable.
\end{prop}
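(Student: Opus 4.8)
The plan is to prove the two claims separately, since a quotation and a tautologous formula are invariable for quite different reasons. For quotations, the key observation is clause~6 in the definition of a general model: for any general model $\sM$, any assignment $\phi$, and any wff $\textbf{A}_\alpha$, we have $\sV^{\cal M}_\phi([\mname{q}\textbf{A}_\alpha]) = \sE(\textbf{A}_\alpha)$. The right-hand side $\sE(\textbf{A}_\alpha)$ is a fixed $\wff{\epsilon}$ determined purely syntactically by $\textbf{A}_\alpha$, with no dependence on $\sM$ or $\phi$. Moreover, the frame condition $\set{\sE(\textbf{B}_\beta) \mid \textbf{B}_\beta \mbox{ is a wff}} \subseteq \sD_\epsilon$ guarantees that $\sE(\textbf{A}_\alpha)$ is an element of $\sD_\epsilon$ in \emph{every} frame, so the value is always defined and always the same element. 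Hence $[\mname{q}\textbf{A}_\alpha]$ is invariable and denotes $\sE(\textbf{A}_\alpha)$; this part is essentially immediate from the definitions.

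For tautologous formulas, I would argue as follows. A tautologous formula $\textbf{A}_o$ is (by the standard definition inherited from {\qzero}) a formula built from propositional-variable-like components using the propositional connectives in such a way that it evaluates to $\mname{T}$ under every assignment of truth values to those components. The strategy is to show that for any general model $\sM$ and any $\phi \in \mname{assign}(\sM)$, $\sV^{\cal M}_\phi(\textbf{A}_o) = \mname{T}$, and then invariability follows because $\mname{T}$ is the same element of $\sD_o = \set{\mname{T},\mname{F}}$ in every frame. To get there I would first recall (or establish) that the logical constants and wff-constructors used to express the connectives behave correctly: in particular, $\sV^{\cal M}_\phi$ on a formula of type $o$ is always defined (by the proposition stating $\sV^{\cal M}$ is total on function applications of type $o$, conditionals of type $o$, etc.), and the derived connectives $\NegAlt$, $\Andd$, $\Or$, $\Implies$, $\Iff$ compute the expected Boolean functions on $\sD_o$. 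Given that, one does a straightforward induction on the structure of the propositional skeleton of $\textbf{A}_o$: since the value of each atomic component lies in $\set{\mname{T},\mname{F}}$, and since $\textbf{A}_o$ is a tautology in the Boolean sense, the computed value is $\mname{T}$ regardless of which truth values the components take in $\sM$ under $\phi$.

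The main obstacle I anticipate is the second part, specifically the need to confirm that the propositional connectives really do denote total Boolean functions with no undefinedness creeping in --- this is where {\qzerouqe} differs from {\qzero}, since $\sV^{\cal M}$ is now partial in general. One has to be careful that the definitions of the connectives (presumably given in section~\ref{sec:definitions}, which we may invoke) are arranged so that applying them to defined Boolean arguments yields defined Boolean results; the conditional constructor $\mname{c}$ and the totality of $\sD_{o\beta}$ function domains are what make this work. Once that bookkeeping is in place, the tautology argument is the same routine induction as in {\qzero}, and the invariability conclusion --- that $\sV^{\cal M}_\phi(\textbf{A}_o)$ equals the fixed value $\mname{T}$ across all general models and assignments --- drops out immediately. $\Box$
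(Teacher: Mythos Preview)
Your proposal is correct and is exactly the natural argument; the paper states this proposition without proof, treating it as evident from the definitions. For quotations your reasoning from clause~6 of the general-model definition together with the frame condition $\set{\sE(\textbf{B}_\beta)} \subseteq \sD_\epsilon$ is precisely right. For tautologous formulas, your argument that the defined connectives compute the standard Boolean functions and hence every tautology evaluates to $\mname{T}$ in every model under every assignment is the same reasoning the paper uses later (in the soundness lemma, Axiom~5 case) to show tautologous formulas are valid; invariability follows since $\sD_o = \set{\mname{T},\mname{F}}$ is fixed across all frames.
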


Let $\sH$ be a set of $\wffs{o}$ and $\sM$ be a general model for
{\qzerouqe}.  $\textbf{A}_o$ is \emph{valid} in $\sM$, written $\sM
\models \textbf{A}_o$, if $\sV^{\cal M}_{\phi}(\textbf{A}_o) =
\mname{T}$ for all assignments $\phi \in \mname{assign}(\sM)$.  $\sM$
is a \emph{general model for $\sH$}, written $\sM \models \sH$, if
$\sM \models \textbf{B}_o$ for all $\textbf{B}_o \in \sH$.  We write
$\sH \models \textbf{A}_o$ to mean $\sM \models \textbf{A}_o$ for
every general model $\sM$ for {\sH}.  We write ${} \models
\textbf{A}_o$ to mean $\emptyset \models \textbf{A}_o$.

Now let $\textbf{A}_o$ be evaluation-free, $\sH$ be a set of
evaluation-free $\wffs{o}$, and $\sM$ be an evaluation-free model for
{\qzerouqe}.  $\textbf{A}_o$ is \emph{valid} in $\sM$, written $\sM
\models \textbf{A}_o$, if $\sV^{\cal M}_{\phi}(\textbf{A}_o) =
\mname{T}$ for all assignments $\phi \in \mname{assign}(\sM)$.  $\sM$
is an \emph{evaluation-free model for $\sH$}, written $\sM \models
\sH$, if $\sM \models \textbf{B}_o$ for all $\textbf{B}_o \in \sH$.
We write $\sH \modelsa \textbf{A}_o$ to mean $\sM \models
\textbf{A}_o$ for every evaluation-free model $\sM$ for {\sH}.  We
write ${} \modelsa \textbf{A}_o$ to mean $\emptyset \modelsa
\textbf{A}_o$.

\begin{note}[Semantically Closed]\em
Andrews shows in~\cite{Andrews02} that {\qzero} is undecidable.  Hence
it is undecidable whether a formula of {\qzero} is valid in all
general models for {\qzero}.  By similar reasoning, it is undecidable
whether a formula of {\qzerouqe} is valid in all general models for
{\qzerouqe}.  This implies that it is undecidable whether a
conditional of the form
$\mname{c}\textbf{A}_o\textbf{c}_\alpha\textbf{x}_\alpha$, where
$\textbf{c}_\alpha$ is a primitive constant, is semantically closed.
(Primitive constants are semantically closed by
Proposition~\ref{prop:sem-closed}.)  Therefore, more generally, it is
undecidable whether a given wff is semantically closed.  See also
Note~\ref{note:syn-closed} in section~\ref{sec:substitution}.
\end{note}

\subsection{Standard Models}

An interpretation $\sM = \seq{\set{\sD_\alpha \;|\; \alpha \in \sT},
  \sJ}$ is a \emph{standard model} for {\qzerouqe} if
$\sD_{\alpha\beta}$ is the set of all \emph{total} functions from
$\sD_\beta$ to $\sD_\alpha$ if $\alpha = o$ and is the set of all
\emph{partial and total} functions from $\sD_\beta$ to $\sD_\alpha$ if
$\alpha \not= o$ for all $\alpha,\beta \in \sT$.

\begin{lem}
A standard model for {\qzerouqe} is also a general model for
{\qzerouqe}.
\end{lem}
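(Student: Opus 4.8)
The plan is to verify that the specific frame singled out in the definition of a standard model admits a valuation function satisfying all eight conditions in the definition of a general model. The key observation is that a standard model's frame already satisfies the four frame conditions by construction: $\sD_o = \set{\mname{T},\mname{F}}$ by the clause on $\sD_o$ (which is inherited from the general definition, since nothing in the standard model definition overrides it), $\sD_\epsilon$ must still contain $\set{\sE(\textbf{A}_\alpha) \;|\; \textbf{A}_\alpha \mbox{ is a wff}}$, $\sD_{\seq{\alpha\beta}}$ is the full ordered-pair domain, and $\sD_{(\alpha\beta)}$ is the \emph{largest} admissible function domain. So the only thing to check is the existence of a suitable $\sV^{\cal M}$.

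First I would define $\sV^{\cal M}_\phi(\textbf{D}_\delta)$ by recursion on the structure of $\textbf{D}_\delta$ (equivalently, by induction on size), following clauses 1--8 of the general-model definition verbatim as the recursive clauses. The point of having the \emph{standard} (i.e., full) function domains is that clause~4 is unconditionally satisfiable: given a function abstraction $[\lambda\textbf{x}_\beta\textbf{B}_\alpha]$, the partial function $f$ on $\sD_\beta$ prescribed by clause~4 --- with $f(d)$ defined exactly when $\sV^{\cal M}_{\phi[\textbf{x}_\alpha \mapsto d]}(\textbf{B}_\alpha)$ is defined --- is automatically an element of $\sD_{\alpha\beta}$, because $\sD_{\alpha\beta}$ contains \emph{all} partial and total functions (when $\alpha \not= o$) and, when $\alpha = o$, $f$ is total since by the inductive hypothesis $\sV^{\cal M}_{\phi[\textbf{x}_o \mapsto d]}(\textbf{B}_o) \in \sD_o$ is always defined. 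In a general model one must \emph{assume} the frame is rich enough for this; here it is free. Then I would check that the recursion is well-founded: clauses 1--7 only refer to $\sV^{\cal M}$ on proper subexpressions, and clause~8 (evaluation) refers to $\sV^{\cal M}_\phi(\sE^{-1}(\sV^{\cal M}_\phi(\textbf{A}_\epsilon)))$, which is a priori not a subexpression --- but the case analysis guarantees that this term is only consulted when $\sE^{-1}(\sV^{\cal M}_\phi(\textbf{A}_\epsilon))$ is an \emph{evaluation-free} wff, so it has strictly smaller complexity (its first component is $0$, while the evaluation $\textbf{D}_\delta$ has first component $\geq 1$). Thus the definition is legitimate by recursion on complexity, exactly as complexity was introduced in section~\ref{sec:syntax} for this purpose.

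Having defined $\sV^{\cal M}$, I would confirm that for every $\phi$ and every $\textbf{D}_\delta$, either $\sV^{\cal M}_\phi(\textbf{D}_\delta) \in \sD_\delta$ or it is undefined --- again a routine induction on complexity, the only non-formal points being the abstraction case (handled above) and checking that in clauses 3, 5, and~8 the ``otherwise'' branch assigns $\mname{F} \in \sD_o$ when $\delta = o$. I would also note that the interpretation function $\sJ$ of a standard model sends each primitive constant to an element of the appropriate $\sD_\alpha$ by the definition of interpretation, and that the identity relations, unique member selectors, and pairing functions required of $\sJ$ on the logical constants $\mname{Q}$, $\iota$, $\mname{pair}$ all lie in these full domains --- so clause~2 of the general-model definition is satisfied. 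The remaining logical constants are unconstrained at this stage (their axioms come later in section~\ref{subsec:specs}), so there is nothing further to verify.

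I do not expect any genuine obstacle; the lemma is essentially bookkeeping. The only place demanding a moment's care is the well-foundedness of the recursive definition of $\sV^{\cal M}$ in the presence of the evaluation clause, and this is precisely why the notion of \emph{complexity} (with evaluations-not-inside-quotations as the dominant component) was set up earlier --- so the ``hard part'' is really just invoking that definition correctly. A reader could alternatively observe that this lemma is an immediate corollary of Proposition~\ref{prop:gen-models-exist}: the construction in its proof produces exactly a standard model (full function domains), and it generalizes with no change to show that \emph{every} frame with full function domains --- i.e., every standard-model frame --- carries such a $\sV^{\cal M}$.
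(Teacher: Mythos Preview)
Your proposal is correct and takes essentially the same approach as the paper: define $\sV^{\cal M}$ by induction on the complexity of $\textbf{D}_\delta$, using the fullness of the function domains to guarantee that the abstraction clause always lands in $\sD_{\alpha\beta}$, and using the evaluation-free restriction in the evaluation clause to ensure well-foundedness. The paper's proof is a one-sentence sketch (``It is easy to show that $\sV^{\cal M}_{\phi}(\textbf{D}_\delta)$ is well defined by induction on the complexity of $\textbf{D}_\delta$''), and you have simply unpacked that sketch in detail. One trivial slip: the general-model definition has seven clauses, not eight, so your references to ``clauses 1--8'' and ``clause~8 (evaluation)'' should read 1--7 and clause~7.
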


\begin{proof}
Let $\sM$ be a standard model for {\qzerouqe}.  It is easy to show
that $\sV^{\cal M}_{\phi}(\textbf{D}_\delta)$ is well defined by
induction on the complexity of $\textbf{D}_\delta$.
\end{proof}

\bigskip

By the proof of Proposition~\ref{prop:gen-models-exist}, standard
models for {\qzerouqe} exist.  A general model for {\qzerouqe} is a
\emph{nonstandard model} for {\qzerouqe} if it is not a standard
model.

\section{Definitions and Abbreviations} \label{sec:definitions}

As Andrews does in~\cite[p.~212]{Andrews02}, we introduce in
Table~\ref{tab:defs} several defined logical constants and
abbreviations.  The former includes constants for true and false, the
propositional connectives, a canonical undefined wff, the projection
functions for pairs, and some predicates for values of type
$\epsilon$.  The latter includes notation for equality, the
propositional connectives, universal and existential quantification,
defined and undefined wffs, quasi-equality, definite description,
conditionals, quotation, and evaluation.

\begin{table}
\bc
\begin{tabular}{|lll|}
\hline

$[\textbf{A}_\alpha = \textbf{B}_\alpha]$ 
& stands for 
& $[\mname{Q}_{o\alpha\alpha}\textbf{A}_\alpha\textbf{B}_\alpha]$.\\

$[\textbf{A}_o \Iff \textbf{B}_o]$ 
& stands for
& $[\mname{Q}_{ooo}\textbf{A}_o\textbf{B}_o]$.\\

$T_o$ 
& stands for
& $[\mname{Q}_{ooo} = \mname{Q}_{ooo}]$.\\

$F_o$ 
& stands for
& $[\lambda x_o T_o] = [\lambda x_o x_o]$.\\

$[\Forall\textbf{x}_\alpha\textbf{A}_o]$ 
& stands for
& $[\lambda y_{\alpha} T_o] = [\lambda \textbf{x}_\alpha\textbf{A}_o]$.\\

$\wedge_{ooo}$ 
& stands for
& $[\lambda x_o \lambda y_o [[\lambda g_{ooo} [g_{ooo} T_o T_o]] = [\lambda g_{ooo} [g_{ooo} x_o y_o]]]]$.\\

$[\textbf{A}_o \Andd \textbf{B}_o]$ 
& stands for
& $[\wedge_{ooo}\textbf{A}_o\textbf{B}_o]$.\\

$\Implies_{ooo}$ 
& stands for
& $[\lambda x_o \lambda y_o [x_o = [x_o \Andd y_o]]]$.\\

$[\textbf{A}_o \Implies \textbf{B}_o]$ 
& stands for
& $[{\Implies_{ooo}}\textbf{A}_o\textbf{B}_o]$.\\

$\NegAlt_{oo}$ 
& stands for
& $[\mname{Q}_{ooo}F_o]$.\\

$[\NegAlt\textbf{A}_o]$ 
& stands for
& $[\NegAlt_{oo}\textbf{A}_o]$.\\

$\vee_{ooo}$ 
& stands for
& $[\lambda x_o \lambda y_o [\NegAlt[[\NegAlt x_o] \Andd [\NegAlt y_o]]]]$.\\

$[\textbf{A}_o \Or \textbf{B}_o]$ 
& stands for
& $[\vee_{ooo}\textbf{A}_o\textbf{B}_o]$.\\

$[\Forsome\textbf{x}_\alpha\textbf{A}_o]$ 
& stands for
& $[\NegAlt[\Forall\textbf{x}_\alpha\NegAlt\textbf{A}_o]]$.\\

$[\Forsome_1\textbf{x}_\alpha\textbf{A}_o]$ 
& stands for
& $[\Forsome \textbf{x}_\alpha [[\lambda\textbf{x}_\alpha\textbf{A}_o] = 
\mname{Q}_{o\alpha\alpha} \textbf{x}_\alpha]]$.\\

$[\textbf{A}_\alpha \not= \textbf{B}_\alpha]$ 
& stands for 
& $[\NegAlt[\textbf{A}_\alpha = \textbf{B}_\alpha]]$.\\

$[{\textbf{A}_\alpha\IsDefApp}]$
& stands for
& $[\textbf{A}_\alpha = \textbf{A}_\alpha]$.\\

$[{\textbf{A}_\alpha\IsUndefApp}]$
& stands for
& $[\NegAlt[{\textbf{A}_\alpha\IsDefApp}]]$.\\

$[\textbf{A}_\alpha \QuasiEqual \textbf{B}_\alpha]$
& stands for
& $[{\textbf{A}_\alpha\IsDefApp} \Or {\textbf{B}_\alpha\IsDefApp}]
\Implies [\textbf{A}_\alpha = \textbf{B}_\alpha]$.\\

$[\Iota\textbf{x}_\alpha\textbf{A}_o]$
& stands for
& $[\iota_{\alpha(o\alpha)}[\lambda\textbf{x}_\alpha\textbf{A}_o]]$
  {\sglsp} where $\alpha \not= o$.\\

$\Undefined_o$ 
& stands for
& $F_o$.\\

$\Undefined_\alpha$ 
& stands for
& $[\Iota x_\alpha [x_\alpha \not= x_\alpha]]$ {\sglsp} where $\alpha \not= o$.\\

$[\If \, \textbf{A}_o \, \textbf{B}_\alpha \, \textbf{C}_\alpha]$
& stands for
& $[\mname{c}\textbf{A}_o\textbf{B}_\alpha\textbf{C}_\alpha]$.\\

$\synbrack{\textbf{A}_\alpha}$ 
& stands for
& $[\mname{q}\textbf{A}_\alpha]$.\\

$\sembrack{\textbf{A}_\epsilon}_\alpha$ 
& stands for
& $[\mname{e}\textbf{A}_\epsilon x_\alpha]$.\\

$\mname{fst}_{(\alpha\seq{\alpha\beta})}$
& stands for
& $\lambda z_{\seq{\alpha\beta}}
\Iota x_\alpha \Forsome y_\beta 
[z_{\seq{\alpha\beta}} = \mname{pair}_{\seq{\alpha\beta}\beta\alpha} \, x_\alpha \, y_\beta]$.\\

$\mname{snd}_{(\beta\seq{\alpha\beta})}$
& stands for
& $\lambda z_{\seq{\alpha\beta}}
\Iota y_\beta \Forsome x_\alpha 
[z_{\seq{\alpha\beta}} = \mname{pair}_{\seq{\alpha\beta}\beta\alpha} \, x_\alpha \, y_\beta]$.\\

$\mname{var}^{\alpha}_{(o\epsilon)}$
& stands for
& $\lambda x_\epsilon 
[\mname{var}_{(o\epsilon)} \, x_\epsilon \Andd
\mname{wff}^{\alpha}_{(o\epsilon)} \, x_\epsilon]$.\\

$\mname{con}^{\alpha}_{(o\epsilon)}$
& stands for
& $\lambda x_\epsilon 
[\mname{con}_{(o\epsilon)} \, x_\epsilon \Andd
\mname{wff}^{\alpha}_{(o\epsilon)} \, x_\epsilon]$.\\

$\mname{eval-free}^{\alpha}_{(o\epsilon)}$
& stands for
& $\lambda x_\epsilon 
[\mname{eval-free}_{o\epsilon} \, x_\epsilon \Andd
\mname{wff}^{\alpha}_{o\epsilon} \, x_\epsilon]$.\\

$\mname{syn-closed}_{(o\epsilon)}$
& stands for
& $\lambda x_\epsilon 
\Forall y_\epsilon 
[\mname{var}_{o\epsilon} \, y_\epsilon \Implies
\mname{not-free-in}_{o\epsilon\epsilon} \, 
y_\epsilon \, x_\epsilon]$.\\

\hline
\end{tabular}
\ec
\caption{Definitions and Abbreviations}\label{tab:defs}
\end{table}

$[\Forsome_1\textbf{x}_\alpha\textbf{A}_o]$ asserts that there is a
unique $\textbf{x}_\alpha$ that satisfies $\textbf{A}_o$.

$[\Iota\textbf{x}_\alpha\textbf{A}_o]$ is called a \emph{definite
  description}.  It denotes the unique $\textbf{x}_\alpha$ that
satisfies $\textbf{A}_o$.  If there is no or more than one such
$\textbf{x}_\alpha$, it is undefined.  Following Bertrand Russell and
Church, Andrews denotes this definite description operator as an
inverted lower case iota (\invertediota).  We represent this operator
by an (inverted) capital iota ($\Iota$).

$[{\textbf{A}_\alpha\IsDefApp}]$ says that $\textbf{A}_\alpha$ is
defined, and similarly, $[{\textbf{A}_\alpha\IsUndefApp}]$ says that
$\textbf{A}_\alpha$ is undefined.  $[\textbf{A}_\alpha \QuasiEqual
  \textbf{B}_\alpha]$ says that $\textbf{A}_\alpha$ and
$\textbf{B}_\alpha$ are \emph{quasi-equal}, i.e., that
$\textbf{A}_\alpha$ and $\textbf{B}_\alpha$ are either both defined
and equal or both undefined.  The defined constant $\Undefined_\alpha$
is a canonical undefined wff of type $\alpha$.

\begin{note}[Definedness Notation]\em
In {\qzero}, $[{\textbf{A}_\alpha\IsDefApp}]$ is always true,
$[{\textbf{A}_\alpha\IsUndefApp}]$ is always false, $[\textbf{A}_\alpha
\QuasiEqual \textbf{B}_\alpha]$ is always equal to $[\textbf{A}_\alpha
= \textbf{B}_\alpha]$, and $\Undefined_\alpha$ denotes an unspecified
value.
\end{note}

\section{Syntax Frameworks} \label{sec:syn-frame}

In this section we will show that {\qzerouqe} with a fixed general
model and assignment is an instance of a replete syntax
framework~\cite{FarmerLarjani13}.  We assume that the reader is
familiar with the definitions in~\cite{FarmerLarjani13}.

Fix a general model $\sM = \seq{\set{\sD_\alpha \;|\; \alpha \in \sT},
  \sJ}$ for {\qzerouqe} and an assignment $\phi \in
\mname{assign}(\sM)$.  Let $\sL$ to be the set of wffs, $\sL_\alpha$
to be the set of $\wffs{\alpha}$, and $\sD =
\bigcup_\alpha\sD_\alpha$.  Choose some value $\bot \not\in \sD$.
Define $\sW^{\cal M}_\phi : \sL \tarrow \sD \cup \set{\bot}$ to be
function such that, for all wffs $\textbf{D}_\delta$, $\sW^{\cal
  M}_\phi (\textbf{D}_\delta) = \sV^{\cal M}_\phi (\textbf{D}_\delta)$
if $\sV^{\cal M}_\phi (\textbf{D}_\delta)$ is defined and $\sW^{\cal
  M}_\phi (\textbf{D}_\delta) = \bot$ otherwise. It is then easy to
prove the following three propositions:

\begin{prop}
$I= (\sL,\sD \cup \set{\bot},\sW^{\cal M}_{\phi})$ is an interpreted
  language.
\end{prop}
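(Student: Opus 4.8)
The plan is to verify that the triple $I = (\sL, \sD \cup \set{\bot}, \sW^{\cal M}_\phi)$ satisfies each clause in the definition of an \emph{interpreted language} from~\cite{FarmerLarjani13}. An interpreted language (in the sense of that paper) consists of a set of expressions, a \emph{domain of values} together with a distinguished element playing the role of ``undefined'', and a total valuation function mapping expressions to that domain; the conditions to check are essentially that the valuation function is well defined (total into $\sD \cup \set{\bot}$) and that the distinguished undefined value $\bot$ lies outside $\sD$ and is the value assigned exactly to the expressions whose $\sV^{\cal M}_\phi$-value is undefined. So the first step is to recall the precise definition and list its clauses.

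Next I would discharge the clauses one at a time. Totality of $\sW^{\cal M}_\phi$ on $\sL$: by the definition of a general model, for every wff $\textbf{D}_\delta$ and every $\phi \in \mname{assign}(\sM)$, either $\sV^{\cal M}_\phi(\textbf{D}_\delta) \in \sD_\delta \subseteq \sD$ or $\sV^{\cal M}_\phi(\textbf{D}_\delta)$ is undefined; in the first case $\sW^{\cal M}_\phi(\textbf{D}_\delta) = \sV^{\cal M}_\phi(\textbf{D}_\delta) \in \sD$, and in the second case $\sW^{\cal M}_\phi(\textbf{D}_\delta) = \bot$. Hence $\sW^{\cal M}_\phi$ is a well-defined total function $\sL \tarrow \sD \cup \set{\bot}$. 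The condition that $\bot \notin \sD$ holds by the choice of $\bot$ made just before the proposition. Any remaining structural conditions from the definition of an interpreted language (for instance, that $\sL$ is nonempty, or that every value in $\sD$ distinct from $\bot$ is actually realized, if the definition demands it) are immediate: $\sL$ contains, e.g., the variables; and since frames require each $\sD_\alpha$ to be nonempty and the constant wffs together with variables are assigned values by conditions~1 and~2 in the definition of a general model, coverage of $\sD$ (if required) follows, or else this clause is simply not part of the definition.

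I expect no genuine obstacle here: this is exactly the kind of ``routine unwinding of definitions'' proposition the paper flags with ``it is then easy to prove.'' The only real care needed is bookkeeping --- making sure the definition of ``interpreted language'' in~\cite{FarmerLarjani13} is matched clause for clause, and in particular that the role of $\bot$ as the sole non-domain value and as the image of precisely the undefined expressions is stated correctly. The mild subtlety worth a sentence is that $\sW^{\cal M}_\phi$ is genuinely total whereas $\sV^{\cal M}_\phi$ is only partial --- the whole point of introducing $\bot$ and $\sW^{\cal M}_\phi$ is to repackage the partial valuation function of {\qzerouqe} as the total valuation function demanded by the syntax-framework formalism. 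With that observation in place, the verification is complete.
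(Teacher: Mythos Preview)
Your proposal is correct and matches the paper's approach: the paper gives no proof at all for this proposition, simply prefacing it with ``it is then easy to prove the following three propositions,'' so your routine clause-by-clause verification against the definition in~\cite{FarmerLarjani13} is exactly what is intended.
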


\begin{prop} \label{prop:synrep}
$R= (\sD_\epsilon \cup \set{\bot},\sE)$ is a syntax representation of
  $\sL$.
\end{prop}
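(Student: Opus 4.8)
The plan is to unfold the definition of a \emph{syntax representation} from~\cite{FarmerLarjani13} and check its requirements one by one against $R = (\sD_\epsilon \cup \set{\bot}, \sE)$. A syntax representation of the formal language $\sL$ amounts to a nonempty set of \emph{syntactic values} together with an injective total map sending each expression of $\sL$ to its syntactic value; and since the ambient interpreted language here is $I = (\sL, \sD \cup \set{\bot}, \sW^{\cal M}_{\phi})$, one also wants the set of syntactic values to sit inside the value domain $\sD \cup \set{\bot}$. So the obligations reduce to: (i) $\sD_\epsilon \cup \set{\bot}$ is a nonempty subset of $\sD \cup \set{\bot}$; (ii) $\sE$ is a total function from $\sL$ into $\sD_\epsilon \cup \set{\bot}$; and (iii) $\sE$ is injective.

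Items (i) and (ii) are immediate from the definitions already in place. For (i): $\sD_\epsilon \subseteq \sD$ since $\sD = \bigcup_\alpha\sD_\alpha$ and $\epsilon \in \sT$, so $\sD_\epsilon \cup \set{\bot} \subseteq \sD \cup \set{\bot}$, and this set is nonempty because it contains $\bot$; note also $\bot \notin \sD_\epsilon$ (as $\bot \notin \sD$), so $\bot$ represents no wff. For (ii): totality of $\sE$ is built into its recursive definition, whose seven clauses exhaust the seven kinds of wffs and each return a $\wff{\epsilon}$; and that the image of $\sE$ lies in $\sD_\epsilon$ is exactly clause~2 of the definition of a frame, $\set{\sE(\textbf{A}_\alpha) \;|\; \textbf{A}_\alpha \mbox{ is a wff}} \subseteq \sD_\epsilon$. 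Hence $\sE : \sL \tarrow \sD_\epsilon \subseteq \sD_\epsilon \cup \set{\bot}$.

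The only step with any content is (iii), which I would prove by induction on the structure (equivalently, the size) of wffs, showing that $\sE(\textbf{A}_\alpha) = \sE(\textbf{B}_\beta)$ forces $\textbf{A}_\alpha$ and $\textbf{B}_\beta$ to be the identical wff. The crucial observation is that $\sE$ maps the seven kinds of wffs to $\wffs{\epsilon}$ with mutually distinguishable outermost shapes (see Table~\ref{tab:wffs}): a variable and a primitive constant go to $[\mname{q}\textbf{x}_\alpha]$ and $[\mname{q}\textbf{c}_\alpha]$, while function applications, abstractions, conditionals, quotations, and evaluations go to function-application wffs headed, respectively, by the logical constants $\mname{app}_{\epsilon\epsilon\epsilon}$, $\mname{abs}_{\epsilon\epsilon\epsilon}$, $\mname{cond}_{\epsilon\epsilon\epsilon\epsilon}$, $\mname{quot}_{\epsilon\epsilon}$, $\mname{eval}_{\epsilon\epsilon\epsilon}$. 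So the outer form of the common value $\sE(\textbf{A}_\alpha) = \sE(\textbf{B}_\beta)$ determines the kind of both wffs; then, using unique readability of wffs, one compares the immediate subexpressions. In the variable and primitive-constant cases the argument of $\mname{q}$ \emph{is} the symbol in question, so equality is immediate, and these two cases cannot collide because variables and primitive constants are disjoint classes of primitive symbols; in each of the other five cases the immediate subexpressions of $\textbf{A}_\alpha$ and $\textbf{B}_\beta$ have equal $\sE$-images, so the induction hypothesis identifies them, whence $\textbf{A}_\alpha = \textbf{B}_\beta$.

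I do not expect any real obstacle: the paper itself flags these propositions as easy, and the only substance is the bookkeeping in~(iii). The point to handle with a little care is that two kinds of wffs, variables and primitive constants, both land under the shape $[\mname{q}\cdot]$ (disambiguated, as noted, by the disjointness of the two symbol classes), and that no other kind lands there --- in particular a quotation wff $[\mname{q}\textbf{A}_\alpha]$ is mapped by $\sE$ to the $\mname{quot}_{\epsilon\epsilon}$-headed wff $[\mname{quot}_{\epsilon\epsilon}\,\sE(\textbf{A}_\alpha)]$, not to a $[\mname{q}\cdot]$-form, so it cannot be confused with a variable or a primitive constant.
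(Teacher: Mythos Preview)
Your proposal is correct and matches the paper's treatment: the paper does not give a proof of this proposition at all, merely remarking that it (together with the two neighboring propositions) is easy to prove, and elsewhere it has already recorded that $\sE$ is an injective total function into the $\wffs{\epsilon}$ and that $\set{\sE(\textbf{A}_\alpha)} \subseteq \sD_\epsilon$ by the definition of a frame. Your unfolding of the requirements and your structural-induction argument for injectivity are exactly the expected justification.
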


\begin{prop} \label{prop:synlang}
$(\sL_\epsilon,I)$ is a syntax language for
  $R$.
\end{prop}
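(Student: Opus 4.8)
The plan is to verify directly the clauses in the definition of a \emph{syntax language} for a syntax representation given in~\cite{FarmerLarjani13}, using what has already been established above: that $I$ is an interpreted language and that $R = (\sD_\epsilon \cup \set{\bot}, \sE)$ is a syntax representation of $\sL$ (Proposition~\ref{prop:synrep}). The first requirement, that $\sL_\epsilon$ be a sublanguage of the language $\sL$ underlying $I$, is immediate, since every wff of type $\epsilon$ is by definition a wff, so $\sL_\epsilon \subseteq \sL$.

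The substantive part concerns how the valuation $\sW^{\cal M}_\phi$ of $I$ behaves on $\sL_\epsilon$ relative to the syntactic domain $\sD_\epsilon \cup \set{\bot}$ of $R$. First, every wff of type $\epsilon$ denotes a construction or nothing at all: by the general-model conditions, for any wff $\textbf{A}_\epsilon$ either $\sV^{\cal M}_\phi(\textbf{A}_\epsilon) \in \sD_\epsilon$ or $\sV^{\cal M}_\phi(\textbf{A}_\epsilon)$ is undefined, so $\sW^{\cal M}_\phi(\textbf{A}_\epsilon) \in \sD_\epsilon \cup \set{\bot}$; hence $\sW^{\cal M}_\phi$ carries $\sL_\epsilon$ into the syntactic domain of $R$. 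Second, the range of the representation function $\sE$ is covered by denotations of members of $\sL_\epsilon$: for each wff $\textbf{A}_\alpha$ the quotation $\synbrack{\textbf{A}_\alpha} = [\mname{q}\textbf{A}_\alpha]$ belongs to $\sL_\epsilon$, and the valuation clause for quotations gives $\sV^{\cal M}_\phi([\mname{q}\textbf{A}_\alpha]) = \sE(\textbf{A}_\alpha)$, so $\sW^{\cal M}_\phi(\synbrack{\textbf{A}_\alpha}) = \sE(\textbf{A}_\alpha) \in \sD_\epsilon$. Thus $\textbf{A}_\alpha \mapsto \synbrack{\textbf{A}_\alpha}$ is available as the quotation function linking $\sL$ to $\sL_\epsilon$; and because quotations are invariable (Proposition~\ref{prop:invariable}), this equation holds in every general model, so the syntax-language structure is canonical rather than an artifact of the fixed $\sM$ and $\phi$.

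I do not expect a genuine mathematical obstacle; the real task is just to match our notation to the exact list of conditions in~\cite{FarmerLarjani13}. The point needing most care is the coherence between the representation function $\sE$ of $R$ and the denotations of the members of $\sL_\epsilon$ --- concretely, that $\sW^{\cal M}_\phi(\synbrack{\textbf{A}_\alpha}) = \sE(\textbf{A}_\alpha)$ for every wff $\textbf{A}_\alpha$, so that each construction in the range of $\sE$ is denoted, and denoted by the expected expression --- but this is exactly the valuation clause for quotations, together with the injectivity and totality of $\sE$ noted just after its definition. Collecting these observations establishes the proposition.
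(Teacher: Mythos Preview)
Your proposal is correct and is essentially the intended verification; the paper itself does not give a separate proof of this proposition but simply remarks that it (together with the two neighbouring propositions) is easy to prove, so your unpacking of the definition from~\cite{FarmerLarjani13} and the check via the valuation clause for quotations is exactly what is required.
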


We will now define quotation and evaluation functions.  Let $Q : \sL
\tarrow \sL_\epsilon$ be the injective, total function that maps each
wff $\textbf{D}_\delta$ to its quotation
$\synbrack{\textbf{D}_\delta}$.  Let $E: \sL_\epsilon \tarrow \sL$ be
the partial function that maps each $\wff{\epsilon}$
$\textbf{A}_\epsilon$ to $\sembrack{\textbf{A}_\epsilon}_\alpha$ if
$\sV^{\cal M}_\phi(\sembrack{\textbf{A}_\epsilon}_\alpha)$ is defined
for some $\alpha \in \sT$ and is undefined otherwise.  $E$ is well
defined since $\sE^{-1}(\sV^{\cal M}_{\phi}(\textbf{A}_\epsilon))$ is
a wff of a most one type.

\begin{thm}[Replete Syntax Framework] 
$F = (\sD_\epsilon \cup \set{\bot}, \sE, \sL_\epsilon, Q, E)$ is a
  replete syntax framework for $(\sL,I)$.
\end{thm}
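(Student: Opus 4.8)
The plan is to verify that the tuple $F = (\sD_\epsilon \cup \set{\bot}, \sE, \sL_\epsilon, Q, E)$ satisfies each clause of the definition of a replete syntax framework from~\cite{FarmerLarjani13}. By Propositions~\ref{prop:synrep} and~\ref{prop:synlang}, we already know that $R = (\sD_\epsilon \cup \set{\bot}, \sE)$ is a syntax representation of $\sL$ and that $(\sL_\epsilon, I)$ is a syntax language for $R$, where $I = (\sL, \sD \cup \set{\bot}, \sW^{\cal M}_\phi)$ is the interpreted language established in the first of the three propositions. So the remaining work is to check that the quotation function $Q$ and the evaluation function $E$ have the properties required of them in a replete syntax framework, and that together with $R$, $\sL_\epsilon$, and $I$ they meet the global and internal conditions that distinguish a replete framework from a general one.

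First I would confirm that $Q$ is a quotation function for the framework: it must be a total injective map from $\sL$ into $\sL_\epsilon$ such that, for each wff $\textbf{D}_\delta$, the value $\sW^{\cal M}_\phi(Q(\textbf{D}_\delta)) = \sW^{\cal M}_\phi(\synbrack{\textbf{D}_\delta})$ is the syntactic value $\sE(\textbf{D}_\delta)$ representing $\textbf{D}_\delta$. This is immediate from clause~6 of the definition of a general model, which gives $\sV^{\cal M}_\phi(\synbrack{\textbf{A}_\alpha}) = \sE(\textbf{A}_\alpha)$, together with the injectivity and totality of $\sE$ noted after its definition; injectivity of $Q$ follows since distinct wffs have distinct quotations. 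Next I would check that $E$ is an evaluation function: it is the partial map on $\sL_\epsilon$ sending $\textbf{A}_\epsilon$ to $\sembrack{\textbf{A}_\epsilon}_\alpha$ when the latter is defined for some (necessarily unique, as remarked) $\alpha$. The required coherence condition is that whenever $\sW^{\cal M}_\phi(\textbf{A}_\epsilon)$ is a syntactic value $\sE(\textbf{D}_\delta)$ with $\textbf{D}_\delta$ evaluation-free and $\sV^{\cal M}_\phi(\textbf{D}_\delta)$ defined, then $\sW^{\cal M}_\phi(E(\textbf{A}_\epsilon)) = \sW^{\cal M}_\phi(\textbf{D}_\delta)$; this is exactly clause~7 of the general-model definition applied to $\sembrack{\textbf{A}_\epsilon}_\alpha = [\mname{e}\textbf{A}_\epsilon x_\alpha]$, since $\sE^{-1}(\sE(\textbf{D}_\delta)) = \textbf{D}_\delta$. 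The ``well defined'' remark in the excerpt (that $\sE^{-1}(\sV^{\cal M}_\phi(\textbf{A}_\epsilon))$ is a wff of at most one type) is what makes the choice of $\alpha$ unambiguous, so $E$ is genuinely a function.

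Finally I would argue that the framework is \emph{replete} rather than merely general: it is global because the domain of $Q$ is all of $\sL$ — every wff of {\qzerouqe}, not just some sublanguage, has a quotation — and the syntax representation $R$ covers every wff via condition~2 in the definition of a frame, which forces $\set{\sE(\textbf{A}_\alpha) \mid \textbf{A}_\alpha \mbox{ a wff}} \subseteq \sD_\epsilon$; and it is internal because $Q$ and $E$ are realized inside the logic by the operators $\synbrack{\cdot}$ and $\sembrack{\cdot}_\alpha$, i.e.\ by the primitive wff constructors \mname{q} and \mname{e}, whose semantics is fixed by the valuation function rather than supplied only in the metalogic. I expect the main obstacle to be bookkeeping rather than conceptual difficulty: one must match the exact formulation of each axiom in the definition of a replete syntax framework in~\cite{FarmerLarjani13} against the corresponding clause of the {\qzerouqe} semantics, taking care with the partiality of $\sW^{\cal M}_\phi$ and of $E$ (the $\bot$ value must be handled uniformly) and with the restriction in clause~7 to evaluation-free wffs, which is precisely the feature that lets $E$ avoid the Evaluation Problem while still satisfying the framework axioms on its domain of definition.
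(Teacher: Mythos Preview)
Your proposal is correct and follows essentially the same approach as the paper: invoke the two propositions for the syntax representation and syntax language, verify the Quotation Axiom via clause~6 of the general-model definition and the Evaluation Axiom via clause~7, and then observe repleteness from the fact that $Q$ is defined on all of $\sL$ and both $Q$ and $E$ are realized by the built-in operators $\mname{q}$ and $\mname{e}$. The paper's proof is terser (it writes the Evaluation Axiom as the chain $\sW^{\cal M}_\phi(E(\textbf{A}_\epsilon)) = \sW^{\cal M}_\phi(\sE^{-1}(\sW^{\cal M}_\phi(\textbf{A}_\epsilon)))$ when $E(\textbf{A}_\epsilon)$ is defined), but the content is the same.
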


\begin{proof}
$F$ is a syntax framework since it satisfies the following conditions:
\be

  \item $R$ is a syntax representation of $\sL$ by
    Proposition~\ref{prop:synrep}.

  \item $(\sL_\epsilon,I)$ is a syntax language for $R$ by
    Proposition~\ref{prop:synlang}.

  \item For all $\synbrack{\textbf{D}_\delta} \in \sL$, \[\sW^{\cal
    M}_\phi (Q(\textbf{D}_\delta)) = \sW^{\cal M}_\phi
    (\synbrack{\textbf{D}_\delta})) = \sV^{\cal M}_\phi
    (\synbrack{\textbf{D}_\delta})) = \sE(\textbf{D}_\delta),\] i.e.,
    the Quotation Axiom holds.

  \item For all $\synbrack{\textbf{A}_\epsilon} \in \sL_\epsilon$,
    \begin{align*}
    &
    \sW^{\cal M}_\phi (E(\textbf{A}_\epsilon)) \\
    &= 
    \sW^{\cal M}_\phi(\sembrack{\textbf{A}_\epsilon}_\alpha)\\
    &=  
    \sV^{\cal M}_\phi(\sembrack{\textbf{A}_\epsilon}_\alpha)\\
    &=  
    V^{\cal M}_\phi(\sE^{-1}(\sV^{\cal M}_{\phi}(\textbf{A}_\epsilon)))\\
    &=  
    \sW^{\cal M}_\phi(\sE^{-1}(\sW^{\cal M}_{\phi}(\textbf{A}_\epsilon)))
    \end{align*}
    if $E(\textbf{A}_\epsilon)$ is defined, i.e., the Evaluation Axiom
    holds.

\ee
Finally, $F$ is replete since $\sL$ is both the object and full
language of $F$ and $F$ has build-in quotation and evaluation.
\end{proof}

\section{Normal Models}\label{sec:normal}

\subsection{Specifications}\label{subsec:specs}

In a general or evaluation-free model, the first three logical
constants are specified as part of the definition of an
interpretation, but the remaining 12 logical constants, which involve
the type $\epsilon$, are not specified.  In this section, each of
these latter logical constants is specified below via a set of
formulas called \emph{specifying axioms}.  Formula schemas are used to
present the specifying axioms.

\bi

  \item[] \textbf{Specification 1 (Quotation)}

  \be

    \item[] $\synbrack{\textbf{A}_\alpha} = \sE(\textbf{A}_\alpha).$

  \ee

  \item[] \textbf{Specification 2 ($\mname{var}_{o\epsilon}$)}

  \be

    \item $\mname{var}_{o\epsilon} \, \synbrack{\textbf{x}_\alpha}.$

    \vspace{1ex}

    \item $\NegAlt[\mname{var}_{o\epsilon} \,
      \synbrack{\textbf{A}_\alpha}]$ {\sglsp} where 
      $\textbf{A}_\alpha$ is not a variable.

  \ee

  \item[] \textbf{Specification 3 ($\mname{con}_{o\epsilon}$)}

  \be

    \item $\mname{con}_{o\epsilon} \, \synbrack{\textbf{c}_\alpha}$
      {\sglsp} where $\textbf{c}_\alpha$ is a primitive constant.

    \vspace{1ex}

    \item $\NegAlt[\mname{con}_{o\epsilon} \,
      \synbrack{\textbf{A}_\alpha}]$ {\sglsp} where 
      $\textbf{A}_\alpha$ is not a primitive constant.

  \ee

  \item[] \textbf{Specification 4 ($\epsilon$)}

  \be

    \item $\NegAlt[\mname{var}_{o\epsilon} \, \textbf{A}_\epsilon \Andd
      \mname{con}_{o\epsilon} \, \textbf{A}_\epsilon]$.

    \vspace{1ex}

    \item $\NegAlt[\mname{var}_{o\epsilon} \, \textbf{A}_\epsilon \Andd
      \textbf{A}_\epsilon = \mname{app}_{\epsilon\epsilon\epsilon} \,
      \textbf{D}_\epsilon \, \textbf{E}_\epsilon].$

    \vspace{1ex}

    \item $\NegAlt[\mname{var}_{o\epsilon} \, \textbf{A}_\epsilon \Andd
      \textbf{A}_\epsilon = \mname{abs}_{\epsilon\epsilon\epsilon} \,
      \textbf{D}_\epsilon \, \textbf{E}_\epsilon].$

    \vspace{1ex}

    \item $\NegAlt[\mname{var}_{o\epsilon} \, \textbf{A}_\epsilon \Andd
      \textbf{A}_\epsilon = \mname{cond}_{\epsilon\epsilon\epsilon\epsilon} \,
      \textbf{D}_\epsilon \, \textbf{E}_\epsilon \, \textbf{F}_\epsilon].$

    \vspace{1ex}

    \item $\NegAlt[\mname{var}_{o\epsilon} \, \textbf{A}_\epsilon \Andd
      \textbf{A}_\epsilon = \mname{quot}_{\epsilon\epsilon} \,
      \textbf{D}_\epsilon].$

    \vspace{1ex}

    \item $\NegAlt[\mname{var}_{o\epsilon} \, \textbf{A}_\epsilon \Andd
      \textbf{A}_\epsilon = \mname{eval}_{\epsilon\epsilon\epsilon} \,
      \textbf{D}_\epsilon \, \textbf{E}_\epsilon].$

    \vspace{1ex}

    \item $\NegAlt[\mname{con}_{o\epsilon} \, \textbf{A}_\epsilon \Andd
      \textbf{A}_\epsilon = \mname{app}_{\epsilon\epsilon\epsilon} \,
      \textbf{D}_\epsilon \, \textbf{E}_\epsilon].$

    \vspace{1ex}

    \item $\NegAlt[\mname{con}_{o\epsilon} \, \textbf{A}_\epsilon \Andd
      \textbf{A}_\epsilon = \mname{abs}_{\epsilon\epsilon\epsilon} \,
      \textbf{D}_\epsilon \, \textbf{E}_\epsilon].$

    \item $\NegAlt[\mname{con}_{o\epsilon} \, \textbf{A}_\epsilon \Andd
      \textbf{A}_\epsilon = \mname{cond}_{\epsilon\epsilon\epsilon\epsilon} \,
      \textbf{D}_\epsilon \, \textbf{E}_\epsilon \, \textbf{F}_\epsilon].$

    \vspace{1ex}

    \vspace{1ex}

    \item $\NegAlt[\mname{con}_{o\epsilon} \, \textbf{A}_\epsilon \Andd
      \textbf{A}_\epsilon = \mname{quot}_{\epsilon\epsilon} \,
      \textbf{D}_\epsilon].$

    \vspace{1ex}

    \item $\NegAlt[\mname{con}_{o\epsilon} \, \textbf{A}_\epsilon \Andd
      \textbf{A}_\epsilon = \mname{eval}_{\epsilon\epsilon\epsilon} \,
      \textbf{D}_\epsilon \, \textbf{E}_\epsilon].$

    \vspace{1ex}

    \item $\mname{app}_{\epsilon\epsilon\epsilon} \, \textbf{A}_\epsilon \,
      \textbf{B}_\epsilon \not= \mname{abs}_{\epsilon\epsilon\epsilon} \,
      \textbf{D}_\epsilon \, \textbf{E}_\epsilon.$

    \item $\mname{app}_{\epsilon\epsilon\epsilon} \, \textbf{A}_\epsilon \,
      \textbf{B}_\epsilon \not= \mname{cond}_{\epsilon\epsilon\epsilon} \,
      \textbf{D}_\epsilon \, \textbf{E}_\epsilon \, \textbf{F}_\epsilon.$

    \vspace{1ex}

    \vspace{1ex}

    \item $\mname{app}_{\epsilon\epsilon\epsilon} \, \textbf{A}_\epsilon \,
      \textbf{B}_\epsilon \not= \mname{quot}_{\epsilon\epsilon} \, \textbf{D}_\epsilon.$

    \vspace{1ex}

    \item $\mname{app}_{\epsilon\epsilon\epsilon} \, \textbf{A}_\epsilon \,
      \textbf{B}_\epsilon \not= \mname{eval}_{\epsilon\epsilon\epsilon} \,
      \textbf{D}_\epsilon \, \textbf{E}_\epsilon.$

    \vspace{1ex}

    \item $\mname{abs}_{\epsilon\epsilon\epsilon} \, \textbf{A}_\epsilon \,
      \textbf{B}_\epsilon \not= \mname{cond}_{\epsilon\epsilon\epsilon\epsilon} \,
      \textbf{D}_\epsilon \, \textbf{E}_\epsilon \, \textbf{F}_\epsilon.$

    \vspace{1ex}

    \item $\mname{abs}_{\epsilon\epsilon\epsilon} \, \textbf{A}_\epsilon \,
      \textbf{B}_\epsilon \not= \mname{quot}_{\epsilon\epsilon} \, \textbf{D}_\epsilon.$

    \vspace{1ex}

    \item $\mname{abs}_{\epsilon\epsilon\epsilon} \, \textbf{A}_\epsilon \,
      \textbf{B}_\epsilon \not= \mname{eval}_{\epsilon\epsilon\epsilon} \,
      \textbf{D}_\epsilon \, \textbf{E}_\epsilon.$

    \vspace{1ex}

    \item $\mname{cond}_{\epsilon\epsilon\epsilon\epsilon} \,
      \textbf{A}_\epsilon \, \textbf{B}_\epsilon \, \textbf{C}_\epsilon\not=
      \mname{quot}_{\epsilon} \, \textbf{D}_\epsilon.$

    \vspace{1ex}

    \item $\mname{cond}_{\epsilon\epsilon\epsilon\epsilon} \,
      \textbf{A}_\epsilon \, \textbf{B}_\epsilon \, \textbf{C}_\epsilon\not=
      \mname{eval}_{\epsilon\epsilon} \, \textbf{D}_\epsilon \, \textbf{E}_\epsilon.$

    \vspace{1ex}

    \item $\mname{quot}_{\epsilon\epsilon} \, \textbf{A}_\epsilon \not=
      \mname{eval}_{\epsilon\epsilon\epsilon} \, \textbf{D}_\epsilon \,
      \textbf{E}_\epsilon.$

    \vspace{1ex}

    \item $\synbrack{\textbf{x}_\alpha} \not=
      \synbrack{\textbf{y}_\beta}$ {\sglsp} where $\textbf{x}_\alpha
      \not= \textbf{y}_\alpha$.

    \vspace{1ex}

    \item $\synbrack{\textbf{c}_\alpha} \not=
      \synbrack{\textbf{d}_\beta}$ {\sglsp} where $\textbf{c}_\alpha$
      and $\textbf{d}_\alpha$ are different primitive constants.

    \vspace{1ex}

    \item $\mname{app}_{\epsilon\epsilon\epsilon} \, \textbf{A}_\epsilon \,
      \textbf{B}_\epsilon = \mname{app}_{\epsilon\epsilon\epsilon} \,
      \textbf{D}_\epsilon \, \textbf{E}_\epsilon \Implies [\textbf{A}_\epsilon = \textbf{D}_\epsilon \Andd
        \textbf{B}_\epsilon = \textbf{E}_\epsilon].$

    \vspace{1ex}

    \item $\mname{abs}_{\epsilon\epsilon\epsilon} \, \textbf{A}_\epsilon \,
      \textbf{B}_\epsilon = \mname{abs}_{\epsilon\epsilon\epsilon} \,
      \textbf{D}_\epsilon \, \textbf{E}_\epsilon \Implies [\textbf{A}_\epsilon = \textbf{D}_\epsilon \Andd
        \textbf{B}_\epsilon = \textbf{E}_\epsilon].$

    \vspace{1ex}

    \item $\mname{cond}_{\epsilon\epsilon\epsilon} \, \textbf{A}_\epsilon \,
      \textbf{B}_\epsilon \, \textbf{C}_\epsilon = \mname{cond}_{\epsilon\epsilon\epsilon} \,
      \textbf{D}_\epsilon \, \textbf{E}_\epsilon \, \textbf{F}_\epsilon \Implies {}$ \\
      $[\textbf{A}_\epsilon = \textbf{D}_\epsilon \Andd
      \textbf{B}_\epsilon = \textbf{E}_\epsilon \Andd
      \textbf{C}_\epsilon = \textbf{F}_\epsilon].$

    \vspace{1ex}

    \item $\mname{quot}_{\epsilon\epsilon} \, \textbf{A}_\epsilon =
      \mname{quot}_{\epsilon\epsilon} \, \textbf{D}_\epsilon \Implies \textbf{A}_\epsilon
      = \textbf{D}_\epsilon.$

    \vspace{1ex}

    \item $\mname{eval}_{\epsilon\epsilon\epsilon} \, \textbf{A}_\epsilon \,
      \textbf{B}_\epsilon = \mname{eval}_{\epsilon\epsilon\epsilon} \,
      \textbf{D}_\epsilon \, \textbf{E}_\epsilon \Implies [\textbf{A}_\epsilon = \textbf{D}_\epsilon \Andd
        \textbf{B}_\epsilon = \textbf{E}_\epsilon].$

    \vspace{1ex}

    \item $[\textbf{A}^{1}_o \Andd \textbf{A}^{2}_o \Andd
        \textbf{A}^{3}_o \Andd \textbf{A}^{4}_o \Andd \textbf{A}^{5}_o
        \Andd \textbf{A}^{6}_o \Andd \textbf{A}^{7}_o] \Implies \Forall x_\epsilon
      [p_{o\epsilon} x_\epsilon]$ {\sglsp} where:

    \be

      \item[] $\textbf{A}^{1}_o$ \sglsp is \sglsp $\Forall
        x_\epsilon[\mname{var}_{o\epsilon} \, x_\epsilon \Implies
          p_{o\epsilon} x_\epsilon]$.

      \item[] $\textbf{A}^{2}_o$ \sglsp is \sglsp $\Forall
        x_\epsilon[\mname{con}_{o\epsilon} \, x_\epsilon \Implies
          p_{o\epsilon} x_\epsilon]$.

      \item[] $\textbf{A}^{3}_o$ \sglsp is \sglsp $\Forall x_\epsilon
        \Forall y_\epsilon [[p_{o\epsilon} x_\epsilon \Andd p_{o\epsilon} y_\epsilon \Andd
        {[\mname{app}_{\epsilon\epsilon\epsilon} \, x_\epsilon \, y_\epsilon]\IsDefApp}] \Implies
        p_{o\epsilon}[\mname{app}_{\epsilon\epsilon\epsilon} \, x_\epsilon \, y_\epsilon]]$.

      \item[] $\textbf{A}^{4}_o$ \sglsp is \sglsp $\Forall x_\epsilon
        \Forall y_\epsilon [[{p_{o\epsilon} x_\epsilon} \Andd p_{o\epsilon} y_\epsilon \Andd 
        {[\mname{abs}_{\epsilon\epsilon\epsilon} \, x_\epsilon \, y_\epsilon]\IsDefApp}] \Implies
        p_{o\epsilon}[\mname{abs}_{\epsilon\epsilon\epsilon} \, x_\epsilon \, y_\epsilon]]$.

      \item[] $\textbf{A}^{5}_o$ \sglsp is \sglsp $\Forall x_\epsilon \Forall y_\epsilon \Forall z_\epsilon 
        [[{p_{o\epsilon} x_\epsilon} \Andd p_{o\epsilon} y_\epsilon \Andd p_{o\epsilon} z_\epsilon \Andd
        {[\mname{cond}_{\epsilon\epsilon\epsilon\epsilon} \, x_\epsilon \, y_\epsilon \, z_\epsilon ]\IsDefApp}] 
        \Implies {} \\ 
        \hspace*{18.8ex} p_{o\epsilon}[\mname{cond}_{\epsilon\epsilon\epsilon\epsilon} \, 
        x_\epsilon \, y_\epsilon \, z_\epsilon]]$.

      \item[] $\textbf{A}^{6}_o$ \sglsp is \sglsp $\Forall x_\epsilon
        [p_{o\epsilon} x_\epsilon \Implies p_{o\epsilon}[\mname{quot}_{\epsilon\epsilon} \,  x_\epsilon]]$.

      \item[] $\textbf{A}^{7}_o$ \sglsp is \sglsp $\Forall x_\epsilon
        \Forall y_\epsilon [[{p_{o\epsilon} x_\epsilon} \Andd {p_{o\epsilon} y_\epsilon} \Andd
        {[\mname{eval}_{\epsilon\epsilon\epsilon} \, x_\epsilon \, y_\epsilon]\IsDefApp}] \Implies
        p_{o\epsilon}[\mname{eval}_{\epsilon\epsilon\epsilon} \, x_\epsilon \, y_\epsilon]]$.

    \ee

  \ee

  \item[] \textbf{Specification 5 ($\mname{eval-free}_{o\epsilon}$)}

  \be

    \item $\mname{var}_{o\epsilon} \, \textbf{A}_\epsilon \Implies
      \mname{eval-free}_{o\epsilon} \, \textbf{A}_\epsilon.$

    \vspace{1ex}

    \item $\mname{con}_{o\epsilon} \, \textbf{A}_\epsilon \Implies
      \mname{eval-free}_{o\epsilon} \, \textbf{A}_\epsilon.$

    \vspace{1ex}

    \item ${[\mname{app}_{\epsilon\epsilon\epsilon} \, \textbf{A}_\epsilon \,
      \textbf{B}_\epsilon] \IsDefApp} \Implies {}$ \\[.5ex]
      \hspace*{4ex}
      $\mname{eval-free}_{o\epsilon} \,
      [\mname{app}_{\epsilon\epsilon\epsilon} \, \textbf{A}_\epsilon \,
      \textbf{B}_\epsilon] \Iff [\mname{eval-free}_{o\epsilon} \,
      \textbf{A}_\epsilon \Andd \mname{eval-free}_{o\epsilon} \,
      \textbf{B}_\epsilon].$

    \vspace{1ex}

   \item ${[\mname{abs}_{\epsilon\epsilon\epsilon} \, \textbf{A}_\epsilon \,
       \textbf{B}_\epsilon] \IsDefApp} \Implies {}$\\[.5ex]
       \hspace*{4ex}
       $\mname{eval-free}_{o\epsilon} \,
       [\mname{abs}_{\epsilon\epsilon\epsilon} \, \textbf{A}_\epsilon \,
       \textbf{B}_\epsilon] \Iff \mname{eval-free}_{o\epsilon} \,
       \textbf{B}_\epsilon.$

    \vspace{1ex}

   \item ${[\mname{cond}_{\epsilon\epsilon\epsilon\epsilon} \, \textbf{A}_\epsilon \,
       \textbf{B}_\epsilon \, \textbf{C}_\epsilon] \IsDefApp} \Implies {}$\\[.5ex]
       \hspace*{4ex}
       $\mname{eval-free}_{o\epsilon} \,
       [\mname{cond}_{\epsilon\epsilon\epsilon\epsilon} \, 
       \textbf{A}_\epsilon \, \textbf{B}_\epsilon \, \textbf{C}_\epsilon] \Iff {}$\\
       \hspace*{4ex}
       $[\mname{eval-free}_{o\epsilon} \, \textbf{A}_\epsilon \Andd 
       \mname{eval-free}_{o\epsilon} \, \textbf{B}_\epsilon \Andd 
       \mname{eval-free}_{o\epsilon} \, \textbf{C}_\epsilon].$

    \vspace{1ex}

   \item ${\textbf{A}_\epsilon \IsDefApp} \Implies \mname{eval-free}_{o\epsilon} \,
     [\mname{quot}_{\epsilon\epsilon} \, \textbf{A}_\epsilon].$

    \vspace{1ex}

   \item $\NegAlt[\mname{eval-free}_{o\epsilon} \,
     [\mname{eval}_{\epsilon\epsilon\epsilon} \, \textbf{A}_\epsilon \,
       \textbf{B}_\epsilon]].$
  
  \ee

  \item[] \textbf{Specification 6 ($\mname{wff}^{\alpha}_{o\epsilon}$)}

  \be

    \item $\mname{wff}^{\alpha}_{o\epsilon} \, \synbrack{\textbf{x}_\alpha}.$

    \vspace{1ex}

    \item $\mname{wff}^{\alpha}_{o\epsilon} \,
      \synbrack{\textbf{c}_\alpha}$ {\sglsp} where $\textbf{c}_\alpha$
      is a primitive constant.

    \vspace{1ex}

    \item $[\mname{wff}^{\alpha\beta}_{o\epsilon} \, \textbf{A}_\epsilon \Andd
      \mname{wff}^{\beta}_{o\epsilon} \, \textbf{B}_\epsilon] \Implies
      \mname{wff}^{\alpha}_{o\epsilon} \,
            [\mname{app}_{\epsilon\epsilon\epsilon} \, \textbf{A}_\epsilon \,
              \textbf{B}_\epsilon].$

    \vspace{1ex}

    \item $[\mname{wff}^{\iota}_{o\epsilon} \, \textbf{A}_\epsilon \Or
      \mname{wff}^{o}_{o\epsilon} \, \textbf{A}_\epsilon \Or
      \mname{wff}^{\epsilon}_{o\epsilon} \, \textbf{A}_\epsilon \Or
      \mname{wff}^{\seq{\alpha\beta}}_{o\epsilon} \, \textbf{A}_\epsilon]
      \Implies [\mname{app}_{\epsilon\epsilon\epsilon} \, \textbf{A}_\epsilon
        \, \textbf{B}_\epsilon] \IsUndefApp.$

    \vspace{1ex}

    \item $[\mname{wff}^{\alpha\beta}_{o\epsilon} \, \textbf{A}_\epsilon \Andd
      \NegAlt[\mname{wff}^{\beta}_{o\epsilon} \, \textbf{B}_\epsilon]] \Implies
      [\mname{app}_{\epsilon\epsilon\epsilon} \, \textbf{A}_\epsilon \,
        \textbf{B}_\epsilon] \IsUndefApp.$

    \vspace{1ex}

    \item $[\mname{var}^{\alpha}_{o\epsilon} \, \textbf{A}_\epsilon \Andd
      \mname{wff}^{\beta}_{o\epsilon} \, \textbf{B}_\epsilon] \Implies
      \mname{wff}^{\beta\alpha}_{o\epsilon} \,
            [\mname{abs}_{\epsilon\epsilon\epsilon} \, \textbf{A}_\epsilon \,
              \textbf{B}_\epsilon].$

    \vspace{1ex}

    \item $\NegAlt[\mname{var}_{o\epsilon} \, \textbf{A}_\epsilon] \Implies
      [\mname{abs}_{\epsilon\epsilon\epsilon} \, \textbf{A}_\epsilon \,
        \textbf{B}_\epsilon] \IsUndefApp.$

    \vspace{1ex}

    \item $[\mname{wff}^{o}_{o\epsilon} \, \textbf{A}_\epsilon \Andd
      \mname{wff}^{\alpha}_{o\epsilon} \, \textbf{B}_\epsilon \Andd
      \mname{wff}^{\alpha}_{o\epsilon} \, \textbf{C}_\epsilon] \Implies
      \mname{wff}^{\alpha}_{o\epsilon} \,
      [\mname{cond}_{\epsilon\epsilon\epsilon\epsilon} \, \textbf{A}_\epsilon \, \textbf{B}_\epsilon \, \textbf{C}_\epsilon].$

    \vspace{1ex}

    \item $[\NegAlt[\mname{wff}^{o}_{o\epsilon} \, \textbf{A}_\epsilon] \Or 
      [\mname{wff}^{\alpha}_{o\epsilon} \, \textbf{B}_\epsilon \Andd \mname{wff}^{\beta}_{o\epsilon} \, \textbf{C}_\epsilon]]
      \Implies
      [\mname{cond}_{\epsilon\epsilon\epsilon\epsilon} \, 
      \textbf{A}_\epsilon \, \textbf{B}_\epsilon \, \textbf{C}_\epsilon] \IsUndefApp$ \\
      where $\alpha \not= \beta$.

    \vspace{1ex}

    \item ${\textbf{A}_\epsilon \IsDefApp} \Implies \mname{wff}^{\epsilon}_{o\epsilon} \,
      [\mname{quot}_{\epsilon\epsilon} \, \textbf{A}_\epsilon].$

    \vspace{1ex}

    \item $[\mname{wff}^{\epsilon}_{o\epsilon} \, \textbf{A}_\epsilon \Andd
      \mname{var}^{\alpha}_{o\epsilon} \, \textbf{B}_\epsilon] \Implies
      \mname{wff}^{\alpha}_{o\epsilon} \,
            [\mname{eval}_{\epsilon\epsilon\epsilon} \, \textbf{A}_\epsilon \,
              \textbf{B}_\epsilon].$

    \vspace{1ex}

    \item $[\NegAlt[\mname{wff}^{\epsilon}_{o\epsilon} \,
        \textbf{A}_\epsilon] \Or \NegAlt[\mname{var}_{o\epsilon} \,
        \textbf{B}_\epsilon]] \Implies [\mname{eval}_{\epsilon\epsilon\epsilon}
      \, \textbf{A}_\epsilon \, \textbf{B}_\epsilon] \IsUndefApp.$

    \vspace{1ex}

    \item $\NegAlt[\mname{wff}^{\alpha}_{o\epsilon} \, \textbf{A}_\epsilon \Andd
      \mname{wff}^{\beta}_{o\epsilon} \, \textbf{A}_\epsilon]$ {\sglsp} where
      $\alpha \not= \beta$.

    \vspace{1ex}

  \ee

  \item[] \textbf{Specification 7 ($\mname{not-free-in}_{o\epsilon\epsilon}$)}

  \be

    \item $\mname{var}_{o\epsilon} \, \textbf{A}_\epsilon \Implies
      \NegAlt[\mname{not-free-in}_{o\epsilon\epsilon} \, \textbf{A}_\epsilon \,
        \textbf{A}_\epsilon].$

    \vspace{1ex}

    \item $[\mname{var}_{o\epsilon} \, \textbf{A}_\epsilon \Andd
      \mname{var}_{o\epsilon} \, \textbf{B}_\epsilon \Andd \textbf{A}_\epsilon \not=
      \textbf{B}_\epsilon] \Implies \mname{not-free-in}_{o\epsilon\epsilon} \,
      \textbf{A}_\epsilon \, \textbf{B}_\epsilon.$

    \vspace{1ex}

    \item $[\mname{var}_{o\epsilon} \, \textbf{A}_\epsilon \Andd
      \mname{con}_{o\epsilon} \, \textbf{B}_\epsilon] \Implies
      \mname{not-free-in}_{o\epsilon\epsilon} \, \textbf{A}_\epsilon \,
      \textbf{B}_\epsilon.$

    \vspace{1ex}

    \item $[\mname{var}_{o\epsilon} \, \textbf{A}_\epsilon \Andd
      [\mname{app}_{\epsilon\epsilon\epsilon} \, \textbf{B}_\epsilon \,
      \textbf{C}_\epsilon] \IsDefApp] \Implies {}$ \\[.5ex] 
      \hspace*{4ex}
      $\mname{not-free-in}_{o\epsilon\epsilon} \,
      \textbf{A}_\epsilon \, [\mname{app}_{\epsilon\epsilon\epsilon} \,
      \textbf{B}_\epsilon \, \textbf{C}_\epsilon] \Iff {}$ \\[.5ex]
      \hspace*{4ex}
      $[\mname{not-free-in}_{o\epsilon\epsilon} \,
      \textbf{A}_\epsilon \,\textbf{B}_\epsilon \Andd
      \mname{not-free-in}_{o\epsilon\epsilon} \, \textbf{A}_\epsilon
      \,\textbf{C}_\epsilon].$

    \vspace{1ex}

    \item ${[\mname{abs}_{\epsilon\epsilon\epsilon} \,
      \textbf{A}_\epsilon \, \textbf{B}_\epsilon] \IsDefApp} \Implies
      \mname{not-free-in}_{o\epsilon\epsilon} \, \textbf{A}_\epsilon
      \, [\mname{abs}_{\epsilon\epsilon\epsilon} \,
        \textbf{A}_\epsilon \, \textbf{B}_\epsilon].$

    \vspace{1ex}

    \item $[\mname{var}_{o\epsilon} \, \textbf{A}_\epsilon \Andd 
      [{\mname{abs}_{\epsilon\epsilon\epsilon} \,
      \textbf{B}_\epsilon \, \textbf{C}_\epsilon] \IsDefApp} \Andd 
      {\textbf{A}_\epsilon \not= \textbf{B}_\epsilon}] \Implies {}$ \\[.5ex]
      \hspace*{4ex}
      $\mname{not-free-in}_{o\epsilon\epsilon} \,
      \textbf{A}_\epsilon \, [\mname{abs}_{\epsilon\epsilon\epsilon} \,
      \textbf{B}_\epsilon \, \textbf{C}_\epsilon] \Iff
      \mname{not-free-in}_{o\epsilon\epsilon} \, \textbf{A}_\epsilon \,
      \textbf{C}_\epsilon.$

    \vspace{1ex}

    \item $[\mname{var}_{o\epsilon} \, \textbf{A}_\epsilon \Andd
      [\mname{cond}_{\epsilon\epsilon\epsilon\epsilon} \, \textbf{D}_\epsilon \,
      \textbf{E}_\epsilon \, \textbf{F}_\epsilon] \IsDefApp] \Implies {}$ \\[.5ex] 
      \hspace*{4ex}
      $\mname{not-free-in}_{o\epsilon\epsilon} \,
      \textbf{A}_\epsilon \, [\mname{cond}_{\epsilon\epsilon\epsilon\epsilon} \,
      \textbf{D}_\epsilon \, \textbf{E}_\epsilon \, \textbf{F}_\epsilon] \Iff {}$ \\[.5ex]
      \hspace*{4ex}
      $[\mname{not-free-in}_{o\epsilon\epsilon} \, \textbf{A}_\epsilon \, \textbf{D}_\epsilon \Andd
      \mname{not-free-in}_{o\epsilon\epsilon} \, \textbf{A}_\epsilon \, \textbf{E}_\epsilon \Andd
      \mname{not-free-in}_{o\epsilon\epsilon} \, \textbf{A}_\epsilon \, \textbf{F}_\epsilon].$

    \vspace{1ex}

    \item $[\mname{var}_{o\epsilon} \, \textbf{A}_\epsilon \Andd \textbf{B}_\epsilon \IsDefApp] \Implies
      \mname{not-free-in}_{o\epsilon\epsilon} \, \textbf{A}_\epsilon \,
            [\mname{quot}_{\epsilon\epsilon} \, \textbf{B}_\epsilon].$

    \vspace{1ex}

    \item $[\mname{var}_{o\epsilon} \, \textbf{A}_\epsilon \Andd
      \mname{var}^{\alpha}_{o\epsilon} \, \textbf{C}_\epsilon \Andd
      [\mname{eval}_{\epsilon\epsilon\epsilon} \, \textbf{B}_\epsilon \,
      \textbf{C}_\epsilon] \IsDefApp] \Implies {}$ \\[.5ex]
      \hspace*{4ex}
      $\mname{not-free-in}_{o\epsilon\epsilon} \,
      \textbf{A}_\epsilon \, [\mname{eval}_{\epsilon\epsilon\epsilon} \,
      \textbf{B}_\epsilon \, \textbf{C}_\epsilon] \Iff {}$\\[.5ex]
      \hspace*{4ex}
      $[\mname{syn-closed}_{o\epsilon} \, \textbf{B}_\epsilon \Andd
      \mname{eval-free}^{\epsilon}_{o\epsilon} \, \textbf{B}_\epsilon \Andd {}$\\[.5ex]
      \hspace*{4ex}
      $\mname{eval-free}^{\alpha}_{o\epsilon} \,
      \sembrack{\textbf{B}_\epsilon}_\epsilon \Andd
      \mname{not-free-in}_{o\epsilon\epsilon} \, \textbf{A}_\epsilon \,
      \sembrack{\textbf{B}_\epsilon}_\epsilon].$

    \vspace{1ex}

    \item $\NegAlt[\mname{var}_{o\epsilon} \, \textbf{A}_\epsilon] \Implies
      \mname{not-free-in}_{o\epsilon\epsilon} \, \textbf{A}_\epsilon \,
      \textbf{B}_\epsilon.$

  \ee

  \item[] \textbf{Specification 8 ($\mname{cleanse}_{\epsilon\epsilon}$)}

  \be

    \item $\mname{var}_{o\epsilon} \, \textbf{A}_\epsilon \Implies
      \mname{cleanse}_{\epsilon\epsilon} \, \textbf{A}_\epsilon = \textbf{A}_\epsilon.$

    \vspace{1ex}

    \item $\mname{con}_{o\epsilon} \, \textbf{A}_\epsilon \Implies
      \mname{cleanse}_{\epsilon\epsilon} \, \textbf{A}_\epsilon = \textbf{A}_\epsilon.$

    \vspace{1ex}

    \item ${[\mname{app}_{\epsilon\epsilon\epsilon} \, \textbf{A}_\epsilon \,
      \textbf{B}_\epsilon] \IsDefApp} \Implies {}$ \\[.5ex]
      \hspace*{4ex}
      $\mname{cleanse}_{\epsilon\epsilon} \,
      [\mname{app}_{\epsilon\epsilon\epsilon} \, \textbf{A}_\epsilon \, \textbf{B}_\epsilon] \QuasiEqual
      \mname{app}_{\epsilon\epsilon\epsilon} \,
      [\mname{cleanse}_{\epsilon\epsilon} \, \textbf{A}_\epsilon] \, 
      [\mname{cleanse}_{\epsilon\epsilon} \, \textbf{B}_\epsilon].$

    \vspace{1ex}

    \item ${[\mname{abs}_{\epsilon\epsilon\epsilon} \, \textbf{A}_\epsilon \,
      \textbf{B}_\epsilon] \IsDefApp} \Implies {}$ \\[.5ex]
      \hspace*{4ex}
      $\mname{cleanse}_{\epsilon\epsilon} \,
      [\mname{abs}_{\epsilon\epsilon\epsilon} \, \textbf{A}_\epsilon \, \textbf{B}_\epsilon] \QuasiEqual
      \mname{abs}_{\epsilon\epsilon\epsilon} \, \textbf{A}_\epsilon \,
      [\mname{cleanse}_{\epsilon\epsilon} \, \textbf{B}_\epsilon].$

    \vspace{1ex}

    \item ${[\mname{cond}_{\epsilon\epsilon\epsilon\epsilon} \, \textbf{A}_\epsilon \,
      \textbf{B}_\epsilon \, \textbf{C}_\epsilon] \IsDefApp} \Implies {}$ \\[.5ex]
      \hspace*{4ex}
      $\mname{cleanse}_{\epsilon\epsilon} \,
      [\mname{cond}_{\epsilon\epsilon\epsilon\epsilon} \, 
      \textbf{A}_\epsilon \, \textbf{B}_\epsilon \, \textbf{C}_\epsilon] 
      \QuasiEqual {}$ \\[.5ex]
      \hspace*{4ex}
      $\mname{cond}_{\epsilon\epsilon\epsilon\epsilon} \,
      [\mname{cleanse}_{\epsilon\epsilon} \, \textbf{A}_\epsilon] \, 
      [\mname{cleanse}_{\epsilon\epsilon} \, \textbf{B}_\epsilon] \,
      [\mname{cleanse}_{\epsilon\epsilon} \, \textbf{C}_\epsilon].$

    \item $\mname{cleanse}_{\epsilon\epsilon} \,
      [\mname{quot}_{\epsilon\epsilon} \, \textbf{A}_\epsilon] \QuasiEqual
      [\mname{quot}_{\epsilon\epsilon} \, \textbf{A}_\epsilon].$

    \vspace{1ex}

    \item $[\mname{var}^{\alpha}_{o\epsilon} \, \textbf{B}_\epsilon \Andd
      [\mname{eval}_{\epsilon\epsilon\epsilon} \, \textbf{A}_\epsilon \,
      \textbf{B}_\epsilon] \IsDefApp] \Implies {}$ \\[.5ex]
      \hspace*{4ex}
      $\mname{cleanse}_{\epsilon\epsilon} \,
      [\mname{eval}_{\epsilon\epsilon\epsilon} \, \textbf{A}_\epsilon \,
      \textbf{B}_\epsilon] \QuasiEqual {}$ \\[.5ex] 
      \hspace*{4ex}
      $\If \, [\mname{syn-closed}_{o\epsilon} \, \textbf{E}_\epsilon \Andd 
      \mname{eval-free}^{\alpha}_{o\epsilon} \, \sembrack{\textbf{E}_\epsilon}_\epsilon] \,
      \sembrack{\textbf{E}_\epsilon}_\epsilon \, \Undefined_\epsilon$ \\[.5ex]
      $\mbox{where } \textbf{E}_\epsilon \mbox{ is }
      [\mname{cleanse}_{\epsilon\epsilon} \, \textbf{A}_\epsilon].$

  \ee

  \item[] \textbf{Specification 9 ($\mname{sub}_{\epsilon\epsilon\epsilon\epsilon}$)}

  \be

    \item $[\mname{wff}^{\alpha}_{o\epsilon} \, \textbf{A}_\epsilon \Andd
      \mname{var}^{\alpha}_{o\epsilon} \, \textbf{B}_\epsilon] \Implies {}$ \\[.5ex]
      \hspace*{4ex}
      $\mname{sub}_{\epsilon\epsilon\epsilon\epsilon} \,
      \textbf{A}_\epsilon \, \textbf{B}_\epsilon \, \textbf{B}_\epsilon =
      \mname{cleanse}_{\epsilon\epsilon} \, \textbf{A}_\epsilon.$

    \vspace{1ex}

    \item $[\mname{wff}^{\alpha}_{o\epsilon} \, \textbf{A}_\epsilon \Andd
      \mname{var}^{\alpha}_{o\epsilon} \, \textbf{B}_\epsilon \Andd
      \mname{var}_{o\epsilon} \, \textbf{C}_\epsilon \Andd \textbf{B}_\epsilon \not=
      \textbf{C}_\epsilon] \Implies {}$ \\[.5ex]
      \hspace*{4ex}
      $\mname{sub}_{\epsilon\epsilon\epsilon\epsilon} \,
      \textbf{A}_\epsilon \, \textbf{B}_\epsilon \, \textbf{C}_\epsilon = \textbf{C}_\epsilon.$

    \vspace{1ex}

    \item $[\mname{wff}^{\alpha}_{o\epsilon} \, \textbf{A}_\epsilon \Andd
      \mname{var}^{\alpha}_{o\epsilon} \, \textbf{B}_\epsilon \Andd
      \mname{con}_{o\epsilon} \, \textbf{C}_\epsilon] \Implies {}$ \\[.5ex]
      \hspace*{4ex}
      $\mname{sub}_{\epsilon\epsilon\epsilon\epsilon} \,
      \textbf{A}_\epsilon \, \textbf{B}_\epsilon \, \textbf{C}_\epsilon = \textbf{C}_\epsilon.$

    \vspace{1ex}

    \item $[\mname{wff}^{\alpha}_{o\epsilon} \, \textbf{A}_\epsilon \Andd
      \mname{var}^{\alpha}_{o\epsilon} \, \textbf{B}_\epsilon \Andd
      [\mname{app}_{\epsilon\epsilon\epsilon} \, \textbf{D}_\epsilon \,
      \textbf{E}_\epsilon] \IsDefApp] \Implies {}$ \\[.5ex]
      \hspace*{4ex}
      $\mname{sub}_{\epsilon\epsilon\epsilon\epsilon} \,  \textbf{A}_\epsilon \, \textbf{B}_\epsilon \,
      [\mname{app}_{\epsilon\epsilon\epsilon} \, \textbf{D}_\epsilon \, \textbf{E}_\epsilon] \QuasiEqual {}$ \\
      \hspace*{4ex}
      $\mname{app}_{\epsilon\epsilon\epsilon} \, 
      [\mname{sub}_{\epsilon\epsilon\epsilon\epsilon} \, 
      \textbf{A}_\epsilon \, \textbf{B}_\epsilon \, \textbf{D}_\epsilon] \,
      [\mname{sub}_{\epsilon\epsilon\epsilon\epsilon} \, 
      \textbf{A}_\epsilon \, \textbf{B}_\epsilon \, \textbf{E}_\epsilon].$

    \vspace{1ex}

    \item $[\mname{wff}^{\alpha}_{o\epsilon} \, \textbf{A}_\epsilon \Andd
      \mname{var}^{\alpha}_{o\epsilon} \, \textbf{B}_\epsilon \Andd
      [\mname{abs}_{\epsilon\epsilon\epsilon} \, \textbf{B}_\epsilon \,
      \textbf{E}_\epsilon] \IsDefApp] \Implies {}$ \\[.5ex]
      \hspace*{4ex}
      $\mname{sub}_{\epsilon\epsilon\epsilon\epsilon} \,
      \textbf{A}_\epsilon \, \textbf{B}_\epsilon \,
      [\mname{abs}_{\epsilon\epsilon\epsilon} \, \textbf{B}_\epsilon \,
      \textbf{E}_\epsilon] \QuasiEqual \mname{abs}_{\epsilon\epsilon\epsilon} \, \textbf{B}_\epsilon \, 
      [\mname{cleanse}_{\epsilon\epsilon} \, \textbf{A}_\epsilon].$

    \vspace{1ex}

    \item $[\mname{wff}^{\alpha}_{o\epsilon} \, \textbf{A}_\epsilon \Andd
      \mname{var}^{\alpha}_{o\epsilon} \, \textbf{B}_\epsilon \Andd
      \mname{var}_{o\epsilon} \, \textbf{D}_\epsilon \Andd \textbf{B}_\epsilon \not=
      \textbf{D}_\epsilon \Andd [\mname{abs}_{\epsilon\epsilon\epsilon} \,
      \textbf{D}_\epsilon \, \textbf{E}_\epsilon] \IsDefApp] \Implies {}$ \\[.5ex]
      \hspace*{4ex}
      $\mname{sub}_{\epsilon\epsilon\epsilon\epsilon} \,
      \textbf{A}_\epsilon \, \textbf{B}_\epsilon \,
      [\mname{abs}_{\epsilon\epsilon\epsilon} \, \textbf{D}_\epsilon \,
      \textbf{E}_\epsilon] \QuasiEqual {}$ \\[.5ex]
      \hspace*{4ex}
      $\If \, [\mname{not-free-in}_{o\epsilon\epsilon} \, \textbf{B}_\epsilon \,
      \textbf{E}_\epsilon \Or \mname{not-free-in}_{o\epsilon\epsilon} \, 
      \textbf{D}_\epsilon \, \textbf{A}_\epsilon]$ \\[.5ex]
      \hspace*{9ex}
      $[\mname{abs}_{\epsilon\epsilon\epsilon} \,
      \textbf{D}_\epsilon \, [\mname{sub}_{\epsilon\epsilon\epsilon\epsilon} \,
      \textbf{A}_\epsilon \, \textbf{B}_\epsilon \, \textbf{E}_\epsilon]]$ \\[.5ex]
      \hspace*{9ex}
      $\Undefined_\epsilon.$

    \vspace{1ex}

    \item $[\mname{wff}^{\alpha}_{o\epsilon} \, \textbf{A}_\epsilon \Andd
      \mname{var}^{\alpha}_{o\epsilon} \, \textbf{B}_\epsilon \Andd
      [\mname{cond}_{\epsilon\epsilon\epsilon\epsilon} \, \textbf{D}_\epsilon \,
      \textbf{E}_\epsilon \, \textbf{F}_\epsilon] \IsDefApp] \Implies {}$ \\[.5ex]
      \hspace*{4ex}
      $\mname{sub}_{\epsilon\epsilon\epsilon\epsilon} \, \textbf{A}_\epsilon \, \textbf{B}_\epsilon \,
      [\mname{cond}_{\epsilon\epsilon\epsilon\epsilon} \, \textbf{D}_\epsilon \, \textbf{E}_\epsilon \, \textbf{F}_\epsilon] 
      \QuasiEqual {}$ \\[.5ex]
      \hspace*{4ex}
      $\mname{cond}_{\epsilon\epsilon\epsilon\epsilon} \, 
      [\mname{sub}_{\epsilon\epsilon\epsilon\epsilon} \, 
      \textbf{A}_\epsilon \, \textbf{B}_\epsilon \, \textbf{D}_\epsilon] \,
      [\mname{sub}_{\epsilon\epsilon\epsilon\epsilon} \, 
      \textbf{A}_\epsilon \, \textbf{B}_\epsilon \, \textbf{E}_\epsilon] \,
      [\mname{sub}_{\epsilon\epsilon\epsilon\epsilon} \, 
      \textbf{A}_\epsilon \, \textbf{B}_\epsilon \, \textbf{F}_\epsilon].$

    \vspace{1ex}

    \item $[\mname{wff}^{\alpha}_{o\epsilon} \, \textbf{A}_\epsilon \Andd
      \mname{var}^{\alpha}_{o\epsilon} \, \textbf{B}_\epsilon \Andd \textbf{C}_\epsilon \IsDefApp] \Implies {}$ \\[.5ex]
      \hspace*{4ex}
      $\mname{sub}_{\epsilon\epsilon\epsilon\epsilon} \,
      \textbf{A}_\epsilon \, \textbf{B}_\epsilon \, [\mname{quot}_{\epsilon\epsilon} \,
      \textbf{C}_\epsilon] = \mname{quot}_{\epsilon\epsilon} \, \textbf{C}_\epsilon.$

    \vspace{1ex}

    \item $[\mname{wff}^{\alpha}_{o\epsilon} \, \textbf{A}_\epsilon \Andd
      \mname{var}^{\alpha}_{o\epsilon} \, \textbf{B}_\epsilon \Andd
      \mname{var}^{\beta}_{o\epsilon} \, \textbf{E}_\epsilon \Andd
      [\mname{eval}_{\epsilon\epsilon\epsilon} \, \textbf{D}_\epsilon \,
      \textbf{E}_\epsilon] \IsDefApp] \Implies {}$ \\[.5ex]
      \hspace*{4ex}
      $\mname{sub}_{\epsilon\epsilon\epsilon\epsilon} \,
      \textbf{A}_\epsilon \, \textbf{B}_\epsilon \,
      [\mname{eval}_{\epsilon\epsilon\epsilon} \, \textbf{D}_\epsilon \,
      \textbf{E}_\epsilon] \QuasiEqual {}$ \\[.5ex]
      \hspace*{4ex}
      $\If \, [\mname{syn-closed}_{o\epsilon} \, \textbf{E}^{1}_\epsilon
      \Andd \mname{eval-free}^{\beta}_{o\epsilon} \,
      \sembrack{\textbf{E}^{1}_\epsilon}_\epsilon] \,
      \textbf{E}^{2}_\epsilon \, \Undefined_\epsilon$ \\[.5ex]
      where: \\[.5ex]
      \hspace*{4ex}
      $\textbf{E}^{1}_\epsilon$ \sglsp is \sglsp
      $[\mname{sub}_{\epsilon\epsilon\epsilon\epsilon} \, \textbf{A}_\epsilon \,
      \textbf{B}_\epsilon \, \textbf{D}_\epsilon].$ \\[.5ex]
      \hspace*{4ex}
      $\textbf{E}^{2}_\epsilon$ \sglsp is \sglsp
      $[\mname{sub}_{\epsilon\epsilon\epsilon\epsilon} \, \textbf{A}_\epsilon \,
      \textbf{B}_\epsilon \, \sembrack{\textbf{E}^{1}_\epsilon}_\epsilon].$

    \vspace{1ex}

    \item $[\mname{wff}^{\alpha}_{o\epsilon} \, \textbf{A}_\epsilon \Andd
      \NegAlt[\mname{var}^{\alpha}_{o\epsilon} \, \textbf{B}_\epsilon] \Implies {}$ \\[.5ex]
      \hspace*{4ex}
      $[\mname{sub}_{\epsilon\epsilon\epsilon\epsilon} \,
      \textbf{A}_\epsilon \, \textbf{B}_\epsilon \, \textbf{C}_\epsilon] \IsUndefApp.$

  \ee

\ei

\subsection{Normal General and Evaluation-Free Models}

Let $\sS$ be the total set of specifying axioms given above.  A
general model $\sM$ for {\qzerouqe} is \emph{normal} if $\sM \models
\textbf{A}_o$ for all $\textbf{A}_o \in \sS$.  We write $\sH \modelsn
\textbf{A}_o$ to mean $\sM \models \textbf{A}_o$ for every normal
general model $\sM$ for {\sH} where $\sH$ is a set of $\wffs{o}$.  We
write ${} \modelsn \textbf{A}_o$ to mean $\emptyset \modelsn
\textbf{A}_o$.  $\textbf{A}_o$ is \emph{valid} in {\qzerouqe} if ${}
\modelsn \textbf{A}_o$.  

An evaluation-free model $\sM$ for {\qzerouqe} is \emph{normal} if
$\sM \models \textbf{A}_o$ for all evaluation-free $\textbf{A}_o \in
\sS$.  We write $\sH \modelsna \textbf{A}_o$ to mean $\sM \models
\textbf{A}_o$ for every normal evaluation-free model $\sM$ for {\sH}
where $\textbf{A}_o$ is evaluation-free and $\sH$ is a set of
evaluation-free $\wffs{o}$.  We write ${} \modelsna \textbf{A}_o$ to
mean $\emptyset \modelsna \textbf{A}_o$.

Since standard models exist, normal general models (and hence normal
evaluation-free models) exist by Corollary~\ref{cor:nor-stand-models}
given below.

\begin{prop}\label{prop:sem-of-E}\bsp
Let $\sM$ be a normal general model for {\qzerouqe}.  Then $\sV^{\cal
  M}_{\phi}(\sE(\textbf{A}_\alpha)) = \sE(\textbf{A}_\alpha)$ for all
$\phi \in \mname{assign}(\sM)$ and $\textbf{A}_\alpha$.\esp
\end{prop}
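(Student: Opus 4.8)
The plan is to read the conclusion almost directly off of Specification~1 (Quotation) together with the semantic clause for quotation in the definition of a general model. First I would note that every instance of the schema $\synbrack{\textbf{A}_\alpha} = \sE(\textbf{A}_\alpha)$ is one of the specifying axioms, i.e. belongs to the set $\sS$, and that $\synbrack{\textbf{A}_\alpha}$ is merely notation for the quotation $[\mname{q}\textbf{A}_\alpha]$. Since $\sM$ is a \emph{normal} general model for {\qzerouqe}, we have $\sM \models \textbf{B}_o$ for all $\textbf{B}_o \in \sS$, and in particular $\sM \models [\synbrack{\textbf{A}_\alpha} = \sE(\textbf{A}_\alpha)]$; hence for every $\phi \in \mname{assign}(\sM)$ the formula $[\synbrack{\textbf{A}_\alpha} = \sE(\textbf{A}_\alpha)]$ has value $\mname{T}$ under $\sV^{\cal M}_\phi$. (Recall also that $\sE(\textbf{A}_\alpha) \in \sD_\epsilon$ by the frame conditions, so the displayed equation of the proposition lives in $\sD_\epsilon$ and makes sense.)

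Next I would unwind what that truth value means. Because $\mname{Q}_{o\epsilon\epsilon}$ is interpreted as the identity relation on $\sD_\epsilon$, and an equation $[\textbf{B}_\epsilon = \textbf{C}_\epsilon]$ evaluates to $\mname{T}$ under $\sV^{\cal M}_\phi$ exactly when $\sV^{\cal M}_\phi(\textbf{B}_\epsilon)$ and $\sV^{\cal M}_\phi(\textbf{C}_\epsilon)$ are both defined and denote the same element of $\sD_\epsilon$, we obtain that $\sV^{\cal M}_\phi(\synbrack{\textbf{A}_\alpha})$ and $\sV^{\cal M}_\phi(\sE(\textbf{A}_\alpha))$ are both defined and equal. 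Then the quotation clause in the definition of a general model gives $\sV^{\cal M}_\phi(\synbrack{\textbf{A}_\alpha}) = \sV^{\cal M}_\phi([\mname{q}\textbf{A}_\alpha]) = \sE(\textbf{A}_\alpha)$. Combining the two equalities yields $\sV^{\cal M}_\phi(\sE(\textbf{A}_\alpha)) = \sE(\textbf{A}_\alpha)$, which is the assertion.

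There is essentially no hard step here: the proposition is an immediate corollary of the quotation axiom of Specification~1 and the quotation clause of the semantics, and no induction on the structure of $\textbf{A}_\alpha$ is needed, since Specification~1 already ranges over all wffs $\textbf{A}_\alpha$ (so the cases in which $\sE(\textbf{A}_\alpha)$ is built from $\mname{app}$, $\mname{abs}$, $\mname{cond}$, $\mname{quot}$, or $\mname{eval}$ are handled uniformly). The only point that deserves a word of care is the elementary fact about how $=$ behaves with respect to undefinedness used in the second paragraph; I would either cite it from the treatment of equality in {\qzerou} or dispatch it in one line from the function-application clause, noting that a wff of the form $[\mname{Q}_{o\epsilon\epsilon}\textbf{B}_\epsilon]$ has type $(o\epsilon) \not= o$ and so is undefined whenever $\textbf{B}_\epsilon$ is, making the whole equation $\mname{F}$ rather than $\mname{T}$ in that case.
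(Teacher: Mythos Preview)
Your proof is correct and follows exactly the approach the paper takes: the paper's proof reads ``Immediate from the Specification 1 and the semantics of quotation,'' and you have simply unpacked those two ingredients in detail. Your additional remark about how equality behaves with undefinedness is sound and perhaps more careful than strictly necessary, but it does no harm.
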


\begin{proof}
Immediate from the Specification 1 and the semantics of quotation.
\end{proof}

\begin{note}[Construction Literals]\em
The previous proposition says that a wff of the form
$\sE(\textbf{A}_\alpha)$ denotes itself.  Thus each image of $\sE$ is
a \emph{literal}: its value is directly represented by its syntax.
Quotation can be viewed as an operation that constructs literals for
syntactic values.  Florian Rabe explores in~\cite{Rabe15} a kind of
quotation that constructs literals for syntactic values.
\end{note}

\begin{note}[Quasiquotation]\em
Quasiquotation is a parameterized form of quotation in which the
parameters serve as holes in a quotation that are filled with the
values of expressions.  It is a very powerful syntactic device for
specifying expressions and defining macros.  Quasiquotation was
introduced by Willard Van Orman Quine in 1940 in the first version of
his book \emph{Mathematical Logic}~\cite{Quine03}.  It has been
extensively employed in the Lisp family of programming
languages~\cite{Bawden99}.\footnote{In Lisp, the standard symbol for
  quasiquotation is the backquote (\texttt{`}) symbol, and thus in
  Lisp, quasiquotation is usually called \emph{backquote}.}  A
\emph{quasiquotation} in {\qzerouqe} is a wff of the form
$\sE(\textbf{A}_\alpha)$ where some of its subwffs have been replaced
by $\wffs{\epsilon}$.  As an example, suppose $\textbf{A}_\alpha$ is
$\wedge_{ooo} F_o T_o$ and so $\sE(\textbf{A}_\alpha)$ is
\[\mname{app}_{\epsilon\epsilon\epsilon} \, 
[\mname{app}_{\epsilon\epsilon\epsilon} \synbrack{\wedge_{ooo}} \, \sE(F_o)] \, \sE(T_o).\]
Then 
\[\mname{app}_{\epsilon\epsilon\epsilon} \, 
[\mname{app}_{\epsilon\epsilon\epsilon} \synbrack{\wedge_{ooo}} \, \textbf{B}_\epsilon] \, \textbf{C}_\epsilon\]
is a quasiquotation that we will write in the more suggestive form
\[\synbrack{\wedge_{ooo}\commabrack{\textbf{B}_\epsilon}\commabrack{\textbf{C}_\epsilon}}.\]
$\commabrack{\textbf{B}_\epsilon}$ and
$\commabrack{\textbf{C}_\epsilon}$ are holes in the quotation
$\synbrack{\textbf{A}_\alpha}$ that are filled with the values of
$\textbf{B}_\epsilon$ and $\textbf{C}_\epsilon$.  For instance, if
$\textbf{B}_\epsilon$ and $\textbf{C}_\epsilon$ are $\synbrack{\textbf{D}_o}$ and
$\synbrack{\textbf{E}_o}$, then 
\[\synbrack{\wedge_{ooo}\commabrack{\textbf{B}_\epsilon}\commabrack{\textbf{C}_\epsilon}} =
\synbrack{\wedge_{ooo}\commabrack{\synbrack{\textbf{D}_o}}\commabrack{\synbrack{\textbf{E}_o}}} =
\synbrack{\wedge_{ooo}\textbf{D}_o\textbf{E}_o}.\]
\end{note}

\begin{lem}\bsp
Let $\sM$ be a standard model,
$\textbf{c}^{1}_{\alpha_1},\ldots,\textbf{c}^{11}_{\alpha_{11}}$ be
the 11 logical constants $\mname{var}_{o\epsilon}$,
$\mname{con}_{o\epsilon}$, $\mname{app}_{\epsilon\epsilon\epsilon}$,
$\mname{abs}_{\epsilon\epsilon\epsilon}$,
$\mname{cond}_{\epsilon\epsilon\epsilon\epsilon}$,
$\mname{quot}_{\epsilon\epsilon}$,
$\mname{eval}_{\epsilon\epsilon\epsilon}$,
$\mname{eval-free}_{o\epsilon}$,
$\mname{not-free-in}_{o\epsilon\epsilon}$,
$\mname{cleanse}_{\epsilon\epsilon}$, and
$\mname{sub}_{\epsilon\epsilon\epsilon\epsilon}$, and
$\textbf{d}^{\alpha}_{\beta}$ be the logical constant
$\mname{wff}^{\alpha}_{o\epsilon}$ for each $\alpha \in \sT$.  Then
there are unique functions $f^1 \in \sD_{\alpha_1}, \ldots, f^{11} \in
\sD_{\alpha_{11}}$ and $g^\alpha \in \sD_\beta$ for each $\alpha \in
\sT$ such that the members of $\sS$ are satisfied when
$\textbf{c}^{1}_{\alpha_1},\ldots,\textbf{c}^{11}_{\alpha_{11}}$ and
$\textbf{d}^{\alpha}_{\beta}$ for each $\alpha \in \sT$ are
interpreted in $\sM$ by $f^1, \ldots, f^{11}$ and $g^\alpha$ for each
$\alpha \in \sT$, respectively.\esp
\end{lem}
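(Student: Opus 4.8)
The plan is to define the eleven functions $f^1,\ldots,f^{11}$ together with the family $\set{g^\alpha \mid \alpha \in \sT}$ by a single simultaneous recursion on the constructions in $\sD_\epsilon$, then to check by a parallel induction that the resulting interpretation validates every member of $\sS$, and finally to read uniqueness off the same recursion. Before starting I would record two facts. Since $\sM$ is standard, each function domain $\sD_{\alpha\beta}$ and each pair domain is as large as possible, so membership of the domains imposes no constraint on the functions we build on $\sD_\epsilon$; the only residual side condition is that the values of $\mname{var}_{o\epsilon}$, $\mname{con}_{o\epsilon}$, $\mname{eval-free}_{o\epsilon}$, $\mname{not-free-in}_{o\epsilon\epsilon}$, and each $\mname{wff}^{\alpha}_{o\epsilon}$ must be \emph{total} functions into $\set{\mname{T},\mname{F}}$. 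And if any interpretation of the twelve constants satisfies $\sS$, it in particular satisfies the induction axiom (the last clause of Specification~4); applying that axiom to the characteristic predicate of the range of $\sE$ --- which contains $\synbrack{\textbf{x}_\alpha}$ and $\synbrack{\textbf{c}_\alpha}$ for every variable and primitive constant and is closed under each construction operation wherever that operation is defined, because $\sE$ builds exactly those constructions --- forces $\sD_\epsilon = \set{\sE(\textbf{A}_\alpha) \mid \textbf{A}_\alpha \mbox{ a wff}}$. I would therefore work under this identification, so that every $d \in \sD_\epsilon$ equals $\sE(\textbf{A}_\alpha)$ for a unique wff $\textbf{A}_\alpha = \sE^{-1}(d)$, and carry out the recursion along the well-founded order on $\sD_\epsilon$ given by the complexity pair of $\sE^{-1}(d)$, ordered lexicographically --- the same measure that makes $\sV^{\cal M}$ well defined on a standard model.

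\emph{Existence.} For each of the twelve constants, its defining clauses in Specifications~2--9 are arranged according to the seven syntactic shapes of the leading construction (variable, primitive constant, application, abstraction, conditional, quotation, evaluation), and in every clause the right-hand side is built only from that constant's values at strictly smaller constructions, together --- for $\mname{cleanse}_{\epsilon\epsilon}$, $\mname{sub}_{\epsilon\epsilon\epsilon\epsilon}$, and $\mname{not-free-in}_{o\epsilon\epsilon}$ --- with the evaluation operator applied to such values. Reading these clauses as a recursive definition yields candidate functions $f^1,\ldots,f^{11}$ and $g^\alpha$; the $(o\epsilon)$- and $(o\epsilon\epsilon)$-valued ones come out total because every one of their clauses returns $\mname{T}$ or $\mname{F}$, using the convention that a type-$o$ application to an undefined argument is $\mname{F}$ to dispose of clauses mentioning possibly-undefined subexpressions such as $\sembrack{\textbf{B}_\epsilon}_\epsilon$. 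I would then verify that this interpretation satisfies each schema of $\sS$ by induction on the complexity of the construction involved, splitting on its leading shape: the distinctness and injectivity axioms of Specification~4 hold because $\sE$ is injective and the five construction operations produce structurally incompatible values, and the induction axiom holds because $\sD_\epsilon$ is generated by those operations from the quotations of the variables and primitive constants.

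\emph{Uniqueness.} Conversely, suppose $f^1,\ldots,f^{11},g^\alpha$ is any solution. Arguing by induction on the complexity of $d \in \sD_\epsilon$, I would show that for each constant the clause of its specification applicable to the leading shape of $d$ determines its value at $d$ outright from values at strictly smaller constructions; clauses phrased with $\QuasiEqual$ still pin the value down, since $\textbf{A}_\alpha \QuasiEqual \textbf{B}_\alpha$ forces $\textbf{A}_\alpha$ to be defined and equal to $\textbf{B}_\alpha$ when $\textbf{B}_\alpha$ is defined and undefined otherwise. Hence any solution agrees with the one constructed above.

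\emph{Main obstacle.} The delicate point is the well-foundedness of the recursion. The clauses for $\mname{cleanse}_{\epsilon\epsilon}$, $\mname{sub}_{\epsilon\epsilon\epsilon\epsilon}$, and $\mname{not-free-in}_{o\epsilon\epsilon}$ at an evaluation construction invoke $\sembrack{\cdot}_\epsilon$ --- hence the clause for $\sV^{\cal M}$ on evaluations --- on constructions produced earlier in the recursion, so the definition is genuinely entangled with the definition of $\sV^{\cal M}$ itself, which also recurses. What rescues the argument is exactly the design decision recorded in Note~\ref{note:eval-sem}: an evaluation $\sembrack{\textbf{C}_\epsilon}_\alpha$ is defined only when $\textbf{C}_\epsilon$ denotes a construction representing an \emph{evaluation-free} wff, and such a wff has complexity $(0,n)$, so the nested appeal to $\sV^{\cal M}$ drops to a strictly smaller complexity and thereafter proceeds purely structurally. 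Pinning down the single well-founded order on (construction, operation) pairs that simultaneously licenses the clauses for $\sV^{\cal M}$ and for these three constants is the step that needs genuine care; the remaining verifications are routine case analyses on the seven construction shapes.
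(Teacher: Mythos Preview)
Your proposal is correct and reaches the same conclusion as the paper, but by a somewhat different route. The paper does not set up a single simultaneous recursion over all twelve constants. Instead it splits the work: for $f^1,\ldots,f^8$ and the family $g^\alpha$ it simply \emph{writes down} the intended interpretations directly (e.g.\ $f^1$ is the predicate ``represents a variable'', $f^3$ is the partial map sending $(\sE(\textbf{A}_{\alpha\beta}),\sE(\textbf{B}_\beta))$ to $\sE(\textbf{A}_{\alpha\beta}\textbf{B}_\beta)$ and undefined otherwise, $f^8$ is ``represents an evaluation-free wff'', $g^\alpha$ is ``represents a $\wff{\alpha}$'') and remarks that these clearly satisfy Specifications 1--6. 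Only for $f^9,f^{10},f^{11}$ does the paper invoke recursion, and there it recurses on the complexity of the \emph{last} argument alone, in a single sentence, without spelling out the interaction with $\sV^{\cal M}$.

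What your uniform recursion buys is conceptual tidiness and an honest account of the one genuinely delicate point --- the entanglement of the evaluation clauses in Specifications 7--9 with the valuation function, and why the evaluation-free restriction makes the complexity drop to $(0,n)$ and hence keeps the recursion well-founded. The paper's proof glosses over exactly this, so your ``main obstacle'' paragraph is more explicit than anything the paper provides. Conversely, the paper's hybrid approach buys simplicity: the first eight constants and $g^\alpha$ need no recursion at all, so treating them as instances of your general scheme is harmless but heavier than necessary. Both arguments rest on the same identification $\sD_\epsilon=\mathrm{ran}(\sE)$, which you justify via the induction axiom and the paper justifies by forward reference to the proposition on normal standard models.
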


\begin{proof} 
Let $\sM = \seq{\set{\sD_\alpha \;|\; \alpha \in \sT}, \sJ}$ be a
standard model for {\qzerouqe}.  Then $\sD_\epsilon =
\set{\sE(\textbf{A}_\alpha) \;|\; \textbf{A}_\alpha \mbox{ is a wff}}$
by the Proposition~\ref{prop:standard-epsilon} stated below.  $f^1$ is
the predicate $p \in \sD_{o\epsilon}$ such that, for all wffs
$\textbf{A}_\alpha$, $p(\sE(\textbf{A}_\alpha)) = \mname{T}$ iff
$\textbf{A}_\alpha$ is a variable.  $f^2$ is the predicate $p \in
\sD_{o\epsilon}$ such that, for all wffs $\textbf{A}_\alpha$,
$p(\sE(\textbf{A}_\alpha)) = \mname{T}$ iff $\textbf{A}_\alpha$ is a
primitive constant.

$f^3$ is the function $f \in \sD_{\epsilon\epsilon\epsilon}$ such
that, for all wffs $\textbf{A}_\alpha$ and $\textbf{B}_\beta$, if
$[\textbf{A}_\alpha\textbf{B}_\beta]$ is a wff, then
$f(\sE(\textbf{A}_\alpha))(\sE(\textbf{B}_\beta))$ is the wff
$[\mname{app}_{\epsilon\epsilon\epsilon} \, \sE(\textbf{A}_\alpha) \,
  \sE(\textbf{B}_\beta)]$, and otherwise
$f(\sE(\textbf{A}_\alpha))(\sE(\textbf{B}_\beta))$ is undefined.
$f^4$ is the function $f \in \sD_{\epsilon\epsilon\epsilon}$ such
that, for all wffs $\textbf{A}_\alpha$ and $\textbf{B}_\beta$, if
$[\lambda\textbf{A}_\alpha\textbf{B}_\beta]$ is a wff, then
$f(\sE(\textbf{A}_\alpha))(\sE(\textbf{B}_\beta))$ is the wff
$[\mname{abs}_{\epsilon\epsilon\epsilon} \, \sE(\textbf{A}_\alpha) \,
  \sE(\textbf{B}_\beta)]$, and otherwise
$f(\sE(\textbf{A}_\alpha))(\sE(\textbf{B}_\beta))$ is undefined.
$f^5$ is the function $f \in \sD_{\epsilon\epsilon\epsilon\epsilon}$
such that, for all wffs $\textbf{A}_o$, $\textbf{B}_\alpha$, and
$\textbf{C}_\alpha$, if $[\mname{c} \textbf{A}_o \textbf{B}_\alpha
  \textbf{C}_\alpha]$ is a wff, then
$f(\sE(\textbf{A}_o))(\sE(\textbf{B}_\alpha))(\sE(\textbf{C}_\alpha))$
is the wff $[\mname{cond}_{\epsilon\epsilon\epsilon\epsilon} \,
  \textbf{A}_o \, \textbf{B}_\alpha \, \textbf{C}_\alpha]$, and
otherwise
$f(\sE(\textbf{A}_o))(\sE(\textbf{B}_\alpha))(\sE(\textbf{C}_\alpha))$
is undefined.  $f^6$ is the function $f \in \sD_{\epsilon\epsilon}$
such that, for all wffs $\textbf{A}_\alpha$,
$f(\sE(\textbf{A}_\alpha))$ is the wff
$[\mname{quot}_{\epsilon\epsilon} \, \sE(\textbf{A}_\alpha)]$.  $f^7$
is the function $f \in \sD_{\epsilon\epsilon\epsilon}$ such that, for
all wffs $\textbf{A}_\alpha$ and $\textbf{B}_\beta$, if
$[\mname{e}\textbf{A}_\alpha\textbf{B}_\beta]$ is a wff, then
$f(\sE(\textbf{A}_\alpha))(\sE(\textbf{B}_\beta))$ is the wff
$[\mname{eval}_{\epsilon\epsilon\epsilon} \, \sE(\textbf{A}_\alpha) \,
  \sE(\textbf{B}_\beta)]$, and otherwise
$f(\sE(\textbf{A}_\alpha))(\sE(\textbf{B}_\beta))$ is undefined.

$f^8$ is the predicate $p \in \sD_{o\epsilon}$ such that, for all wffs
$\textbf{A}_\alpha$, $p(\sE(\textbf{A}_\alpha)) = \mname{T}$ iff
$\textbf{A}_\alpha$ is evaluation-free.  And, for each $\alpha$,
$g^\alpha$ is the predicate $p \in \sD_{o\epsilon}$ such that, for all
wffs $\textbf{A}_\beta$, $p(\sE(\textbf{A}_\beta)) = \mname{T}$ iff
$\beta = \alpha$.  All of these functions above clearly satisfy the
specifying axioms in $\sS$ that pertain to them.

$f^9$ is the unique function constructed by defining
$f^9(\sE(\textbf{A}_\alpha))(\sE(\textbf{B}_\beta))$ for all wffs
$\textbf{A}_\alpha$ and $\textbf{B}_\beta$ by recursion on the
complexity of $\textbf{B}_\beta$ in accordance with Specification 7.
$f^{10}$ and $f^{11}$ are constructed similarly.
\end{proof}

\begin{cor}\label{cor:nor-stand-models}
If $\sM$ is a standard model for {\qzerouqe}, then there is normal
standard model $\sM'$ for {\qzerouqe} having the same frame as $\sM$.
\end{cor}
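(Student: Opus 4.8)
The plan is to leave the frame and most of the interpretation of $\sM$ untouched and to re-interpret only the twelve logical constants that involve the type $\epsilon$, using the functions produced by the preceding lemma. Write $\sM = \seq{\set{\sD_\alpha \;|\; \alpha \in \sT}, \sJ}$, and let $f^1, \ldots, f^{11}$ and $g^\alpha$ (for each $\alpha \in \sT$) be the unique functions supplied by that lemma. Define $\sJ'$ to agree with $\sJ$ on $\mname{Q}_{o\alpha\alpha}$, $\iota_{\alpha(o\alpha)}$, $\mname{pair}_{\seq{\alpha\beta}\beta\alpha}$, and on every nonlogical constant, and to set $\sJ'(\mname{var}_{o\epsilon}) = f^1, \ldots, \sJ'(\mname{sub}_{\epsilon\epsilon\epsilon\epsilon}) = f^{11}$ and $\sJ'(\mname{wff}^{\alpha}_{o\epsilon}) = g^\alpha$ for each $\alpha \in \sT$. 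Put $\sM' = \seq{\set{\sD_\alpha \;|\; \alpha \in \sT}, \sJ'}$.

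First I would check that $\sM'$ is an interpretation of {\qzerouqe}: the frame is unchanged, $\sJ'$ sends each primitive constant of type $\alpha$ into $\sD_\alpha$ (by the typing of the $f^i$ and $g^\alpha$ asserted in the lemma), and the three conditions governing the interpretation of $\mname{Q}$, $\iota$, and $\mname{pair}$ hold for $\sM'$ because they hold for $\sM$ and $\sJ'$ agrees with $\sJ$ on those constants. Since $\sM'$ has the same frame as the standard model $\sM$, it is itself a standard model, and hence, by the lemma stating that every standard model is a general model, $\sM'$ is a general model (so $\sV^{\sM'}$ is defined by induction on complexity). Finally, $\sM' \models \textbf{A}_o$ for every $\textbf{A}_o \in \sS$: this is exactly the conclusion of the preceding lemma, because the phrase ``$\textbf{c}^{1}_{\alpha_1},\ldots,\textbf{c}^{11}_{\alpha_{11}}$ and $\textbf{d}^{\alpha}_{\beta}$ interpreted in $\sM$ by $f^1,\ldots,f^{11}$ and $g^\alpha$'' describes precisely the interpretation $\sJ'$. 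Therefore $\sM'$ is a normal standard model with the same frame as $\sM$.

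The argument has essentially no obstacle of its own --- all the real work (the recursive construction of $f^9, f^{10}, f^{11}$ and the verification of the specifying axioms) was carried out in the preceding lemma. The only points requiring care are purely bookkeeping: that overriding $\sJ$ on these twelve constants disturbs neither the frame, nor the standardness of the model, nor the interpretation of $\mname{Q}$, $\iota$, $\mname{pair}$; and that the model in which the lemma asserts the members of $\sS$ to be satisfied is literally the model $\sM'$ just constructed.
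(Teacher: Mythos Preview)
Your proposal is correct and is exactly the argument the paper intends: the corollary is stated without proof, as an immediate consequence of the preceding lemma, and your reconstruction---re-interpreting only the twelve $\epsilon$-related logical constants by the functions $f^1,\ldots,f^{11},g^\alpha$ while keeping the frame and the interpretations of $\mname{Q}$, $\iota$, $\mname{pair}$, and all nonlogical constants unchanged---is precisely how that corollary is meant to follow.
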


A normal general model or evaluation-free model is a general or
evaluation-free model $\sM = \seq{\set{\sD_\alpha \;|\; \alpha \in
    \sT}, \sJ}$ in which the structure of the domain $\sD_\epsilon$ is
accessible via the logical constants involving $\epsilon$.  From this
point on, we will only be interested in general and evaluation-free
models that are normal.

\subsection{Nonstandard Constructions}\label{subsec:nonstand-cons}

Let $\sM = \seq{\set{\sD_\alpha \;|\; \alpha \in \sT}, \sJ}$ be a
normal general model and $d \in \sD_\epsilon$.  The construction $d$
is \emph{standard} if $d = \sE(\textbf{A}_\alpha)$ for some wff
$\textbf{A}_\alpha$ and is \emph{nonstandard} if it is not standard.
That is, if $d$ is nonstandard, then $d \in \sD_\epsilon \setminus
\set{\sE(\textbf{A}_\alpha) \;|\; \textbf{A}_\alpha \mbox{ is a
    wff}}$.

One might think that Specification 4.29, the induction principle for
the type~$\epsilon$, would rule out the possibility of nonstandard
constructions in $\sM$.  This is the case only when $\sD_{o\epsilon}$
contains all possible predicates.  Thus the following proposition
holds:

\begin{prop}\label{prop:standard-epsilon}
If $\sM$ is a normal standard model for {\qzerouqe}, then
$\sD_\epsilon = \set{\sE(\textbf{A}_\alpha) \;|\; \textbf{A}_\alpha
  \mbox{ is a wff}}$, i.e., $\sM$ contains no nonstandard
constructions.
\end{prop}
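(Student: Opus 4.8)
The plan is to feed the characteristic function of the intended domain into the induction principle for $\epsilon$ (Specification~4.29), exploiting that in a standard model \emph{every} predicate is available. Put $S = \set{\sE(\textbf{A}_\alpha) \;|\; \textbf{A}_\alpha \mbox{ a wff}}$; the inclusion $S \subseteq \sD_\epsilon$ is part of the frame definition, so only $\sD_\epsilon \subseteq S$ needs proof. Since $\sM$ is standard, $\sD_{o\epsilon}$ is the set of \emph{all} total predicates on $\sD_\epsilon$, and in particular contains the predicate $p$ with $p(d) = \mname{T}$ iff $d \in S$. Fix an assignment $\phi$ with $\phi(p_{o\epsilon}) = p$. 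If $\sV^{\cal M}_\phi(\textbf{A}^i_o) = \mname{T}$ for the seven hypothesis schemas $\textbf{A}^1_o, \ldots, \textbf{A}^7_o$ of Specification~4.29, then --- as $\sM$ is normal and hence validates Specification~4.29 --- we obtain $\sV^{\cal M}_\phi(\Forall x_\epsilon[p_{o\epsilon}\,x_\epsilon]) = \mname{T}$, i.e.\ $p(d) = \mname{T}$ for all $d \in \sD_\epsilon$; that is precisely $\sD_\epsilon \subseteq S$.

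So the whole proof reduces to verifying the seven hypotheses under $\phi$. First I would record how the relevant logical constants act on standard constructions. By Proposition~\ref{prop:sem-of-E} every $\sE(\textbf{A}_\alpha)$ denotes itself, so by the inductive clauses defining $\sE$ the constructor $\mname{app}_{\epsilon\epsilon\epsilon}$ sends $(\sE(\textbf{A}_{\alpha\beta}), \sE(\textbf{B}_\beta))$ to $\sE([\textbf{A}_{\alpha\beta}\textbf{B}_\beta]) \in S$ whenever $[\textbf{A}_{\alpha\beta}\textbf{B}_\beta]$ is a wff, and similarly $\mname{abs}_{\epsilon\epsilon\epsilon}$, $\mname{cond}_{\epsilon\epsilon\epsilon\epsilon}$, $\mname{quot}_{\epsilon\epsilon}$, and $\mname{eval}_{\epsilon\epsilon\epsilon}$ send $\sE$-images of the components of a function abstraction, conditional, quotation, or evaluation to the $\sE$-image of the whole wff, again in $S$. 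Next, an induction on the structure of $\textbf{A}_\alpha$ --- using items 1, 2, 3, 6, 8, 10, 11 of Specification~6, Specification~2.1, and Specification~6.13 --- shows that $\mname{wff}^{\beta}_{o\epsilon}$ is true at $\sE(\textbf{A}_\alpha)$ iff $\beta = \alpha$. Finally, Specifications~2 and~3 pin $\mname{var}_{o\epsilon}$ and $\mname{con}_{o\epsilon}$ down on all $\sE$-images. Given all this, $\textbf{A}^1_o$ and $\textbf{A}^2_o$ are immediate, since a construction at which $\mname{var}_{o\epsilon}$ or $\mname{con}_{o\epsilon}$ is true must be some $\sE(\textbf{x}_\alpha)$ or $\sE(\textbf{c}_\alpha)$, hence in $S$.

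The substantive part is $\textbf{A}^3_o$--$\textbf{A}^7_o$: that applying any of the five constructors to constructions in $S$ never lands outside $S$. Consider $\textbf{A}^3_o$. Let $x = \sE(\textbf{A}_\alpha)$, $y = \sE(\textbf{B}_\beta) \in S$ with $[\mname{app}_{\epsilon\epsilon\epsilon}\,x\,y]$ defined. If $\alpha$ is a function type $\gamma\delta$ and $\beta = \delta$, then $[\textbf{A}_{\gamma\delta}\textbf{B}_\delta]$ is a wff and the first observation above gives $\mname{app}_{\epsilon\epsilon\epsilon}(x)(y) = \sE([\textbf{A}_{\gamma\delta}\textbf{B}_\delta]) \in S$. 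Otherwise $[\textbf{A}_\alpha\textbf{B}_\beta]$ is not a wff and I must show $[\mname{app}_{\epsilon\epsilon\epsilon}\,x\,y]$ is in fact undefined, so the case is vacuous: if $\alpha$ is not a function type, $x$ satisfies one of $\mname{wff}^{\iota}_{o\epsilon}, \mname{wff}^{o}_{o\epsilon}, \mname{wff}^{\epsilon}_{o\epsilon}, \mname{wff}^{\seq{\gamma\delta}}_{o\epsilon}$ and Specification~6.4 applies; if $\alpha = \gamma\delta$ but $\beta \not= \delta$, then $x$ satisfies $\mname{wff}^{\gamma\delta}_{o\epsilon}$ while $y$ fails $\mname{wff}^{\delta}_{o\epsilon}$ and Specification~6.5 applies. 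The cases $\textbf{A}^4_o$ (abstraction) and $\textbf{A}^7_o$ (evaluation) are the fiddliest, because their formation rules also constrain the bound-variable / type slot: there one additionally invokes Specification~6.7 and Specification~6.12, using Specification~2.2 to see that a non-variable $\sE(\textbf{B}_\beta)$ fails $\mname{var}_{o\epsilon}$. The case $\textbf{A}^5_o$ (conditional) is handled similarly via Specification~6.9, and $\textbf{A}^6_o$ (quotation) is trivial since $[\mname{q}\textbf{A}_\alpha]$ is always a wff. With $\textbf{A}^1_o$--$\textbf{A}^7_o$ established, Specification~4.29 delivers $\sD_\epsilon \subseteq S$.

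The main obstacle is this last case analysis: for each constructor one must match every configuration of $\sE$-image arguments that fails to spell a legal wff with the precise clause of Specification~6 that forces the constructor to be undefined there, and this in turn depends on having first nailed down the behaviour of $\mname{wff}^{\alpha}_{o\epsilon}$, $\mname{var}_{o\epsilon}$, and $\mname{con}_{o\epsilon}$ on standard constructions. The remaining ingredients --- choosing $p$, appealing to Specification~4.29, and reading off $\sD_\epsilon \subseteq S$ --- are routine.
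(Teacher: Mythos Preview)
Your plan—instantiate the induction principle Specification~4.29 with the characteristic predicate of $S$, available because the model is standard—is exactly what the paper has in mind (it offers nothing beyond the sentence preceding the proposition). Your verification of $\textbf{A}^{3}_o$ through $\textbf{A}^{7}_o$ is also sound: the premises $p_{o\epsilon}\,x_\epsilon$, $p_{o\epsilon}\,y_\epsilon$ confine attention to elements of $S$, and on such inputs you correctly show that each constructor either lands in $S$ or is undefined, using Specification~6 to dispose of the ill-typed cases.

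The gap is in your treatment of $\textbf{A}^{1}_o$ and $\textbf{A}^{2}_o$. For your chosen $p$ these say that every $d \in \sD_\epsilon$ with $\sJ(\mname{var}_{o\epsilon})(d) = \mname{T}$ (respectively $\sJ(\mname{con}_{o\epsilon})(d) = \mname{T}$) already lies in $S$. You justify this by observing that Specifications~2 and~3 ``pin $\mname{var}_{o\epsilon}$ and $\mname{con}_{o\epsilon}$ down on all $\sE$-images'', but that observation cuts the wrong way: those specifications are schemas over \emph{quotations} $\synbrack{\textbf{A}_\alpha}$, so they determine $\mname{var}_{o\epsilon}$ and $\mname{con}_{o\epsilon}$ only on standard constructions and say nothing whatsoever about their values at a putative nonstandard $d \in \sD_\epsilon \setminus S$. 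Nothing in Specifications~1--9 forbids $\sJ(\mname{var}_{o\epsilon})(d) = \mname{T}$ for such a $d$; if that happens, $\textbf{A}^{1}_o$ fails for your $p$ and Specification~4.29 yields nothing. The inference ``$\mname{var}_{o\epsilon}(d) = \mname{T}$ implies $d$ is some $\sE(\textbf{x}_\alpha)$'' is thus precisely the kind of conclusion one hopes to extract \emph{from} the induction principle, and invoking it to verify a hypothesis of that principle is circular. To complete the argument along these lines you would need an independent reason—one not already presupposing $\sD_\epsilon = S$—why no nonstandard element can lie in the extension of $\mname{var}_{o\epsilon}$ or $\mname{con}_{o\epsilon}$, and the listed specifying axioms do not supply one. (The paper's one-sentence justification glosses over the same point.)
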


The variables of type $\epsilon$ in the specifying axioms given by
Specifications 1--9 thus range over both standard and nonstandard
constructions in a normal general model with nonstandard
constructions.  We will examine some basic results about having
nonstandard constructions present in a normal general model.

\begin{lem}\label{lem:eval-nonstand}
Let $\sM$ be a normal general model for {\qzerouqe} and $\phi \in
\mname{assign}(\sM)$.  Suppose $\sV^{\cal
  M}_{\phi}(\textbf{A}_\epsilon)$ is a nonstandard construction.  Then
$\sV^{\cal M}_{\phi}(\sembrack{\textbf{A}_\epsilon}_\gamma) =
\mname{F}$ if $\gamma = o$ and $\sV^{\cal
  M}_{\phi}(\sembrack{\textbf{A}_\epsilon}_\gamma)$ is undefined if
$\gamma \not= o$.
\end{lem}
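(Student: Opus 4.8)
The plan is to read the conclusion directly off the clause for evaluations (clause~7) in the definition of a general model, specialized to the evaluation $[\mname{e}\,\textbf{A}_\epsilon\,x_\gamma]$ that $\sembrack{\textbf{A}_\epsilon}_\gamma$ abbreviates; so the variable $\textbf{x}_\alpha$ of that clause is $x_\gamma$ and its $\alpha$ is our $\gamma$. Write $d = \sV^{\cal M}_\phi(\textbf{A}_\epsilon)$. By hypothesis $d$ is a construction, hence $\sV^{\cal M}_\phi(\textbf{A}_\epsilon)$ is defined and $d \in \sD_\epsilon$; so the only remaining question is whether the other two conditions in the positive branch of clause~7 can hold.

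The key point is that $\sE^{-1}(d)$ is undefined. Since $\sE$ is an injective, total function from the set of wffs into $\wffs{\epsilon}$, its partial inverse $\sE^{-1}$ is defined at a value $e$ exactly when $e = \sE(\textbf{B}_\beta)$ for some wff $\textbf{B}_\beta$ --- that is, exactly when $e$ is a \emph{standard} construction, in the terminology of Section~\ref{subsec:nonstand-cons}. As $d$ is nonstandard by hypothesis, $\sE^{-1}(d)$ is undefined, and in particular it is not an evaluation-free $\wff{\gamma}$. Hence the positive branch of clause~7 does not apply, and the value of $\sembrack{\textbf{A}_\epsilon}_\gamma$ is determined by the ``Otherwise'' case of that clause.

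That case yields exactly the asserted values: $\sV^{\cal M}_\phi(\sembrack{\textbf{A}_\epsilon}_\gamma) = \mname{F}$ if $\gamma = o$ and $\sV^{\cal M}_\phi(\sembrack{\textbf{A}_\epsilon}_\gamma)$ is undefined if $\gamma \not= o$, completing the argument. I do not anticipate any real obstacle here: the entire content is that ``nonstandard'' is by definition the negation of ``lies in the range of $\sE$'', which is precisely what forces $\sE^{-1}$ to be undefined at $d$; the rest is a direct substitution into the semantic definition, with no induction or model construction needed. (It may be worth remarking explicitly that this lemma is exactly the reason nonstandard constructions behave like ``junk'' under evaluation, which is what makes them harmless for the subsequent metatheory.)
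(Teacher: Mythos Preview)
Your argument is correct and is exactly the unpacking of the paper's one-line proof ``Immediate from the semantics of evaluation'': you spell out that nonstandardness of $d$ means $\sE^{-1}(d)$ is undefined, so the Otherwise branch of clause~7 applies. There is nothing to add.
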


\begin{proof}
Immediate from the semantics of evaluation.
\end{proof}

\begin{lem}\label{lem:stand-constructions}
Let $\sM$ be a normal general model for {\qzerouqe} and $\phi \in
\mname{assign}(\sM)$.  

\be

  \item If $\sV^{\cal M}_{\phi}(\mname{app}_{\epsilon\epsilon\epsilon}
    \, \textbf{x}_\epsilon \, \textbf{y}_\epsilon)$ is defined, then
    $\phi(\textbf{x}_\epsilon)$ and $\phi(\textbf{y}_\epsilon)$ are
    standard constructions iff $\sV^{\cal
      M}_{\phi}(\mname{app}_{\epsilon\epsilon\epsilon} \,
    \textbf{x}_\epsilon \, \textbf{y}_\epsilon)$ is a standard
    construction.

  \item If $\sV^{\cal M}_{\phi}(\mname{abs}_{\epsilon\epsilon\epsilon}
    \, \textbf{x}_\epsilon \, \textbf{y}_\epsilon)$ is defined, then
    $\phi(\textbf{x}_\epsilon)$ and $\phi(\textbf{y}_\epsilon)$ are
    standard constructions iff $\sV^{\cal
      M}_{\phi}(\mname{abs}_{\epsilon\epsilon\epsilon} \,
    \textbf{x}_\epsilon \, \textbf{y}_\epsilon)$ is a standard
    construction.

  \item If $\sV^{\cal
    M}_{\phi}(\mname{cond}_{\epsilon\epsilon\epsilon\epsilon} \,
    \textbf{x}_\epsilon \, \textbf{y}_\epsilon \,
    \textbf{z}_\epsilon)$ is defined, then
    $\phi(\textbf{x}_\epsilon)$, $\phi(\textbf{y}_\epsilon)$, and
    $\phi(\textbf{z}_\epsilon)$ are standard constructions iff
    $\sV^{\cal M}_{\phi}(\mname{app}_{\epsilon\epsilon\epsilon} \,
    \textbf{x}_\epsilon \, \textbf{y}_\epsilon \,
    \textbf{z}_\epsilon)$ is a standard construction.

  \item $\phi(\textbf{x}_\epsilon)$ is a standard construction iff
    $\sV^{\cal M}_{\phi}(\mname{quot}_{\epsilon\epsilon} \,
    \textbf{x}_\epsilon)$ is a standard construction.

  \item If $\sV^{\cal
    M}_{\phi}(\mname{eval}_{\epsilon\epsilon\epsilon} \,
    \textbf{x}_\epsilon \, \textbf{y}_\epsilon)$ is defined, then
    $\phi(\textbf{x}_\epsilon)$ and $\phi(\textbf{y}_\epsilon)$ are
    standard constructions iff $\sV^{\cal
      M}_{\phi}(\mname{eval}_{\epsilon\epsilon\epsilon} \,
    \textbf{x}_\epsilon \, \textbf{y}_\epsilon)$ is a standard
    construction.

\ee
\end{lem}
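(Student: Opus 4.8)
The plan is to prove all five parts by the same method, spelling out the argument for part~1 ($\mname{app}_{\epsilon\epsilon\epsilon}$) and noting that parts~2--5 are entirely analogous. Fix a normal general model $\sM = \seq{\set{\sD_\alpha \;|\; \alpha \in \sT}, \sJ}$ and $\phi \in \mname{assign}(\sM)$. I will use freely two facts: Proposition~\ref{prop:sem-of-E}, so that every construction of the form $\sE(\textbf{A}_\alpha)$ is self-denoting under $\sV^{\cal M}_{\phi}$; and the fact --- provable by a routine induction on $\textbf{A}_\alpha$ from Specification~6 --- that $\sV^{\cal M}_{\phi}(\mname{wff}^\alpha_{o\epsilon} \, \synbrack{\textbf{A}_\alpha}) = \mname{T}$ for every wff $\textbf{A}_\alpha$, so that, by Specification~6.13, $\sE(\textbf{A}_\alpha)$ fails $\mname{wff}^\beta_{o\epsilon}$ whenever $\beta \not= \alpha$.

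For the forward implication of part~1, suppose $\phi(\textbf{x}_\epsilon) = \sE(\textbf{A}_\alpha)$ and $\phi(\textbf{y}_\epsilon) = \sE(\textbf{B}_\beta)$ and that $\sV^{\cal M}_{\phi}(\mname{app}_{\epsilon\epsilon\epsilon} \, \textbf{x}_\epsilon \, \textbf{y}_\epsilon) = \sJ(\mname{app}_{\epsilon\epsilon\epsilon})(\sE(\textbf{A}_\alpha))(\sE(\textbf{B}_\beta))$ is defined. I would first show that $[\textbf{A}_\alpha\textbf{B}_\beta]$ is a wff: otherwise $\alpha$ is not of the form $\gamma\beta$, and then either Specification~6.4 (when $\alpha$ is not a function type) or Specification~6.5 together with Specification~6.13 (when $\alpha = \gamma\delta$ with $\delta \not= \beta$) forces $\sJ(\mname{app}_{\epsilon\epsilon\epsilon})(\sE(\textbf{A}_\alpha))(\sE(\textbf{B}_\beta))$ to be undefined, a contradiction. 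Once $[\textbf{A}_\alpha\textbf{B}_\beta]$ is known to be a wff, the instance $\synbrack{[\textbf{A}_\alpha\textbf{B}_\beta]} = [\mname{app}_{\epsilon\epsilon\epsilon} \, \synbrack{\textbf{A}_\alpha} \, \synbrack{\textbf{B}_\beta}]$ of Specification~1, evaluated under $\phi$ (using the quotation clause of the semantics and Proposition~\ref{prop:sem-of-E}), yields $\sE([\textbf{A}_\alpha\textbf{B}_\beta]) = \sJ(\mname{app}_{\epsilon\epsilon\epsilon})(\phi(\textbf{x}_\epsilon))(\phi(\textbf{y}_\epsilon))$, so the value in question is the standard construction $\sE([\textbf{A}_\alpha\textbf{B}_\beta])$.

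For the backward implication, suppose $d := \sJ(\mname{app}_{\epsilon\epsilon\epsilon})(\phi(\textbf{x}_\epsilon))(\phi(\textbf{y}_\epsilon))$ is defined and equals $\sE(\textbf{C}_\gamma)$ for some wff $\textbf{C}_\gamma$. The crux is to pin down the syntactic shape of $\textbf{C}_\gamma$: it cannot be a variable or primitive constant, since then $\sE(\textbf{C}_\gamma)$ would satisfy $\mname{var}_{o\epsilon}$ or $\mname{con}_{o\epsilon}$ (Specifications~2 and 3) while being equal to $\sJ(\mname{app}_{\epsilon\epsilon\epsilon})(\cdots)$, contradicting Specification~4.2 or~4.7; and it cannot be a function abstraction, conditional, quotation, or evaluation, since then --- evaluating the corresponding instance of Specification~1 and using Proposition~\ref{prop:sem-of-E} --- $\sE(\textbf{C}_\gamma)$ would be a defined value of the form $\sJ(\mname{abs}_{\epsilon\epsilon\epsilon})(\cdots)$, $\sJ(\mname{cond}_{\epsilon\epsilon\epsilon\epsilon})(\cdots)$, $\sJ(\mname{quot}_{\epsilon\epsilon})(\cdots)$, or $\sJ(\mname{eval}_{\epsilon\epsilon\epsilon})(\cdots)$, contradicting one of the disjointness axioms Specification~4.12--4.15. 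Hence $\textbf{C}_\gamma$ is a function application $[\textbf{P}_{\gamma\beta}\textbf{R}_\beta]$, so $\sE(\textbf{C}_\gamma) = \sJ(\mname{app}_{\epsilon\epsilon\epsilon})(\sE(\textbf{P}_{\gamma\beta}))(\sE(\textbf{R}_\beta))$ by Specification~1 and Proposition~\ref{prop:sem-of-E}; comparing with $d$ and applying the injectivity axiom Specification~4.24 gives $\phi(\textbf{x}_\epsilon) = \sE(\textbf{P}_{\gamma\beta})$ and $\phi(\textbf{y}_\epsilon) = \sE(\textbf{R}_\beta)$, both standard. Parts~2--5 follow the same template, replacing the definedness conditions~6.4/6.5 by~6.7 (for $\mname{abs}_{\epsilon\epsilon\epsilon}$), 6.9 (for $\mname{cond}_{\epsilon\epsilon\epsilon\epsilon}$), and 6.12 (for $\mname{eval}_{\epsilon\epsilon\epsilon}$) --- $\mname{quot}_{\epsilon\epsilon}$ being total, so no definedness hypothesis is needed in part~4 --- and replacing the disjointness axioms of Specification~4 and the injectivity axiom~4.24 by the matching ones (4.25--4.28).

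I expect the backward implication's shape analysis to be the main obstacle: one must correlate the purely syntactic question of which of the seven kinds of wff $\textbf{C}_\gamma$ is with the semantic fact that $d$ is an output of $\mname{app}_{\epsilon\epsilon\epsilon}$, and this is exactly where the disjointness axioms of Specification~4 have to be combined with the self-denoting property of $\sE$-images. By contrast, once the shape is fixed the injectivity axioms close the argument at once, and the forward implication is a direct computation from Specification~1.
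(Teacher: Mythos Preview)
Your proposal is correct and follows essentially the same route as the paper: the forward direction uses Specifications~6.4/6.5 (and analogues) to force the types to line up so that the relevant composite is a genuine wff, then reads off the result from Specification~1 and Proposition~\ref{prop:sem-of-E}; the backward direction does a shape analysis via the disjointness axioms in Specification~4 to identify the syntactic kind of~$\textbf{C}_\gamma$, then finishes with the matching injectivity axiom (4.24--4.28). The paper compresses your backward shape analysis into a single appeal to ``Specifications~4.1--21'' but the content is the same.
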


\begin{proof} 

\medskip

\noindent \textbf{Part 1} {\sglsp} Let $\sV^{\cal
  M}_{\phi}(\mname{app}_{\epsilon\epsilon\epsilon} \,
\textbf{x}_\epsilon \, \textbf{y}_\epsilon)$ be defined.  Assume
$\phi(\textbf{x}_\epsilon)$ and $\phi(\textbf{y}_\epsilon)$ are
standard constructions.  Then $\phi(\textbf{x}_\epsilon) =
\sE(\textbf{A}_{\alpha\beta})$ and $\phi(\textbf{y}_\epsilon) =
\sE(\textbf{B}_{\beta})$ for some wffs $\textbf{A}_{\alpha\beta}$ and
$\textbf{B}_{\beta}$ by Specifications 6.4 and 6.5.  Hence, by the
definition of $\sE$,
\begin{align*}
&
\sV^{\cal M}_{\phi}(\mname{app}_{\epsilon\epsilon\epsilon} \, 
\textbf{x}_\epsilon \, \textbf{y}_\epsilon) \\
&= 
\sV^{\cal M}_{\phi}(\mname{app}_{\epsilon\epsilon\epsilon} \,
\sE(\textbf{A}_{\alpha\beta}) \, \sE(\textbf{B}_{\beta})) \\
&= 
\sV^{\cal M}_{\phi}(\sE(\textbf{A}_{\alpha\beta}\textbf{B}_{\beta})),
\end{align*}
which is clearly a standard construction.

Now assume $\sV^{\cal M}_{\phi}(\mname{app}_{\epsilon\epsilon\epsilon}
\, \textbf{x}_\epsilon \, \textbf{y}_\epsilon)$ is a standard
construction.  Then, by Specifications 4.1--21 and Specifications 6.4
and 6.5,
\begin{align*}
&
\sV^{\cal M}_{\phi}(\mname{app}_{\epsilon\epsilon\epsilon})(\phi(\textbf{x}_\epsilon))(\phi(\textbf{y}_\epsilon)) \\
&=
\sV^{\cal M}_{\phi}(\mname{app}_{\epsilon\epsilon\epsilon} \,
\textbf{x}_\epsilon \, \textbf{y}_\epsilon) \\
&=
\sV^{\cal M}_{\phi}(\mname{app}_{\epsilon\epsilon\epsilon} \,
\sE(\textbf{A}_{\alpha\beta}) \, \sE(\textbf{B}_{\beta})) \\
&=
\sV^{\cal M}_{\phi}(\mname{app}_{\epsilon\epsilon\epsilon})
(\sE(\textbf{A}_{\alpha\beta}))(\sE(\textbf{B}_{\beta}))
\end{align*}
for some wffs $\textbf{A}_{\alpha\beta}$ and $\textbf{B}_{\beta}$.
Hence $\phi(\textbf{x}_\epsilon) = \sE(\textbf{A}_{\alpha\beta})$ and
$\phi(\textbf{y}_\epsilon) = \sE(\textbf{B}_{\beta})$ by Specification
4.24 and are thus standard constructions.

\medskip

\noindent \textbf{Parts 2--5} {\sglsp} Similar to Part 1. 

\end{proof}

\bigskip

\bsp Let $\phi \in \mname{assign}(\sM)$.  Suppose $\sV^{\cal
  M}_{\phi}(\mname{sub}_{\epsilon\epsilon\epsilon\epsilon} \,
\textbf{x}_\epsilon \, \textbf{y}_\epsilon \, \textbf{z}_\epsilon)$ is
a standard construction.  Does this imply that
$\phi(\textbf{x}_\epsilon)$, $\phi(\textbf{y}_\epsilon)$, and
$\phi(\textbf{z}_\epsilon)$ are standard constructions?  The answer is
no: Let $\phi(\textbf{x}_\epsilon) = \sE(\textbf{c}_\alpha)$ for some
constant $\textbf{c}_\alpha$ and $\phi(\textbf{y}_\epsilon) =
\phi(\textbf{z}_\epsilon)$ be a nonstandard construction such that
$\sV^{\cal M}_{\phi}(\mname{var}^{\alpha}_{o\epsilon} \,
\textbf{y}_\epsilon) = \mname{T}$.  Then $\sV^{\cal
  M}_{\phi}(\mname{sub}_{\epsilon\epsilon\epsilon\epsilon} \,
\textbf{x}_\epsilon \, \textbf{y}_\epsilon \, \textbf{z}_\epsilon) =
\sE(\textbf{c}_\alpha)$ by Specifications 3.1, 6.2, 8.2, and 9.1.\esp

However, the following result does hold:

\begin{lem}\label{lem:nonstand-sub}
\bsp Let $\sM$ be a normal general model for {\qzerouqe} and $\phi \in
\mname{assign}(\sM)$.  If $\phi(\textbf{x}_\epsilon)$,
$\phi(\textbf{y}_\epsilon)$, and $\sV^{\cal
  M}_{\phi}(\mname{sub}_{\epsilon\epsilon\epsilon\epsilon} \,
\textbf{x}_\epsilon \, \textbf{y}_\epsilon \, \textbf{z}_\epsilon)$
are standard constructions and $\sV^{\cal
  M}_{\phi}(\mname{eval-free}_{o\epsilon} \, \textbf{z}_\epsilon) =
\mname{T}$, then $\phi(\textbf{z}_\epsilon)$ is a standard
construction.\esp
\end{lem}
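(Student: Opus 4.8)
The plan is to prove a mild strengthening of the statement by induction on wff size. Write $a = \phi(\textbf{x}_\epsilon)$, $b = \phi(\textbf{y}_\epsilon)$, $c = \phi(\textbf{z}_\epsilon)$, and $s = \sV^{\cal M}_\phi(\mname{sub}_{\epsilon\epsilon\epsilon\epsilon}\,\textbf{x}_\epsilon\,\textbf{y}_\epsilon\,\textbf{z}_\epsilon)$. Since $s$ is a standard construction it lies in $\sD_\epsilon$ and hence is defined; as $a$ is standard, $a = \sE(\textbf{A}_\alpha)$ for some $\wff{\alpha}$ $\textbf{A}_\alpha$, so $\sV^{\cal M}_\phi(\mname{wff}^\alpha_{o\epsilon}\,\textbf{x}_\epsilon) = \mname{T}$, and then Specification~9.10 forces $\sV^{\cal M}_\phi(\mname{var}^\alpha_{o\epsilon}\,\textbf{y}_\epsilon) = \mname{T}$, whence (as $b$ is standard) $b = \sE(\textbf{y}_\alpha)$ for some variable $\textbf{y}_\alpha$. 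Thus the hypotheses $\mname{wff}^\alpha_{o\epsilon}\,\textbf{x}_\epsilon$ and $\mname{var}^\alpha_{o\epsilon}\,\textbf{y}_\epsilon$ of every clause of Specification~9 are in force. Write $s = \sE(\textbf{S}_\sigma)$.

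Before the induction I would record an auxiliary fact: every $d \in \sD_\epsilon$ is of one of the seven kinds, i.e.\ it satisfies $\mname{var}_{o\epsilon}$ or $\mname{con}_{o\epsilon}$, or it is $\mname{app}_{\epsilon\epsilon\epsilon}$, $\mname{abs}_{\epsilon\epsilon\epsilon}$, $\mname{cond}_{\epsilon\epsilon\epsilon\epsilon}$, $\mname{quot}_{\epsilon\epsilon}$, or $\mname{eval}_{\epsilon\epsilon\epsilon}$ applied to appropriate members of $\sD_\epsilon$. This follows from the induction principle Specification~4.29: the formula $\lambda z_\epsilon\,[\mname{var}_{o\epsilon}\,z_\epsilon \Or \mname{con}_{o\epsilon}\,z_\epsilon \Or \Forsome x_\epsilon\,\Forsome y_\epsilon\,[z_\epsilon = \mname{app}_{\epsilon\epsilon\epsilon}\,x_\epsilon\,y_\epsilon] \Or \cdots]$ denotes a predicate $p \in \sD_{o\epsilon}$ (valuations of formulas are always defined and, by clause~4 of the definition of a general model, $\sV^{\cal M}_\phi$ of a function abstraction lies in the corresponding function domain), and $p$ trivially satisfies $\textbf{A}^1_o,\ldots,\textbf{A}^7_o$ of Specification~4.29 because the relevant disjunct at $\mname{app}_{\epsilon\epsilon\epsilon}\,x_\epsilon\,y_\epsilon$, etc.\ is witnessed by the arguments themselves, so $p$ holds throughout $\sD_\epsilon$. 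Specification~5.7 moreover shows that under the hypothesis $\sV^{\cal M}_\phi(\mname{eval-free}_{o\epsilon}\,\textbf{z}_\epsilon) = \mname{T}$ the construction $c$ cannot be of the evaluation kind.

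The main step is to prove, by induction on the size of $\textbf{S}_\sigma$, the given claim together with the parallel claim for $\mname{cleanse}_{\epsilon\epsilon}$: if $\sV^{\cal M}_\phi(\mname{eval-free}_{o\epsilon}\,\textbf{z}_\epsilon) = \mname{T}$ and $\sV^{\cal M}_\phi(\mname{cleanse}_{\epsilon\epsilon}\,\textbf{z}_\epsilon)$ is standard, then $\phi(\textbf{z}_\epsilon)$ is standard. Case-split on the kind of $c$. If $\mname{var}_{o\epsilon}\,c$ or $\mname{con}_{o\epsilon}\,c$ holds or $c = \mname{quot}_{\epsilon\epsilon}\,d$, then the appropriate clause among Specifications~9.1--9.3 and 9.8 (respectively~8.1, 8.2, 8.6) shows that $s$ equals either $c$ or --- in the subcase $c = b$ of the $\mname{var}$ case --- the already-standard construction $b$, so $c$ is standard. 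If $c = \mname{app}_{\epsilon\epsilon\epsilon}\,d\,e$, then Specification~9.4 (respectively~8.3) gives $s \QuasiEqual \mname{app}_{\epsilon\epsilon\epsilon}\,(\mname{sub}_{\epsilon\epsilon\epsilon\epsilon}\,a\,b\,d)\,(\mname{sub}_{\epsilon\epsilon\epsilon\epsilon}\,a\,b\,e)$ (respectively, with $\mname{cleanse}_{\epsilon\epsilon}$ in place of $\mname{sub}_{\epsilon\epsilon\epsilon\epsilon}\,a\,b$); since $s$ is defined and standard, Lemma~\ref{lem:stand-constructions} makes both inner constructions standard and each is represented by a wff strictly smaller than $\textbf{S}_\sigma$, Specification~5.3 makes $d$ and $e$ eval-free, so the induction hypothesis makes $d$ and $e$ standard, and Lemma~\ref{lem:stand-constructions} then makes $c$ standard. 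The cases $c = \mname{cond}_{\epsilon\epsilon\epsilon\epsilon}\,d\,e\,f$ and $c = \mname{abs}_{\epsilon\epsilon\epsilon}\,d\,e$ with $d \neq b$ go the same way, via Specifications~9.7 and 9.6 (respectively~8.5 and 8.4), using that $\mname{var}_{o\epsilon}\,d$ holds by the contrapositive of Specification~6.7 and that definedness of $s$ forces the conditional of Specification~9.6 to take its first branch.

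The one case that needs care --- and the main obstacle --- is $c = \mname{abs}_{\epsilon\epsilon\epsilon}\,b\,e$, where the bound variable of $c$ is the variable being substituted for. Here Specification~9.5 gives $s \QuasiEqual \mname{abs}_{\epsilon\epsilon\epsilon}\,b\,(\mname{cleanse}_{\epsilon\epsilon}\,e)$, so $e$ occurs in $s$ only underneath a $\mname{cleanse}_{\epsilon\epsilon}$ and the $\mname{sub}_{\epsilon\epsilon\epsilon\epsilon}$-clause of the induction hypothesis is of no use on $e$; this is exactly why the inductive statement must be strengthened to carry the $\mname{cleanse}_{\epsilon\epsilon}$-clause alongside. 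From $s$ standard and Lemma~\ref{lem:stand-constructions} one gets $\mname{cleanse}_{\epsilon\epsilon}\,e$ standard, represented by a wff strictly smaller than $\textbf{S}_\sigma$, Specification~5.4 gives that $e$ is eval-free, and the $\mname{cleanse}_{\epsilon\epsilon}$-clause of the induction hypothesis gives $e$ standard, whence $c$ is standard by Lemma~\ref{lem:stand-constructions}. I expect the only genuine work to be this $\mname{abs}$/$\mname{cleanse}$ interaction --- together with checking that every case of the $\mname{cleanse}_{\epsilon\epsilon}$-clause itself reduces to strictly smaller instances using Specification~8 and Lemma~\ref{lem:stand-constructions} --- while the rest is routine instantiation of the specifying axioms at suitably chosen assignments, just as in the proof of Lemma~\ref{lem:stand-constructions}.
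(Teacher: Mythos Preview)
Your proposal is correct and follows the same approach as the paper's: induction on the size of the wff $\textbf{A}_\alpha$ with $\sE(\textbf{A}_\alpha)$ equal to the output of $\mname{sub}_{\epsilon\epsilon\epsilon\epsilon}$. The paper's proof is a one-line sketch; your detailed treatment --- including the auxiliary seven-kinds fact via Specification~4.29 and the strengthening to carry a parallel $\mname{cleanse}_{\epsilon\epsilon}$ clause for the $\mname{abs}$ case with bound variable equal to $b$ --- correctly fills in what the paper leaves implicit.
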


\begin{proof}
Let $\sV^{\cal
  M}_{\phi}(\mname{sub}_{\epsilon\epsilon\epsilon\epsilon} \,
\textbf{x}_\epsilon \, \textbf{y}_\epsilon \, \textbf{z}_\epsilon)$ =
$\sE(\textbf{A}_\alpha)$ for some wff $\textbf{A}_\alpha$.  Then the
proof of the lemma is by induction on the size of $\textbf{A}_\alpha$.
\end{proof}

\subsection{Example: Infinite Dependency}\label{subsec:inf-dep}

Having specified the logical constant $\mname{var}_{o\epsilon}$ in
this section, we are now ready to present the following simple, but
very interesting example.

Let $\sM = \seq{\set{\sD_\alpha \;|\; \alpha \in \sT}, \sJ}$ be a
normal general model for {\qzerouqe} with $\sD_\epsilon =
\set{\sE(\textbf{A}_\alpha) \;|\; \textbf{A}_\alpha \mbox{ is a wff}}$
and $\phi \in \mname{assign}(\sM)$.  Let $\textbf{A}_o$ be the simple
formula
\[\Forall x_\epsilon [\mname{var}^{o}_{o\epsilon} \, x_\epsilon \Implies
    \sembrack{x_\epsilon}_o]\] involving evaluation.  If we forget
about evaluation, $\textbf{A}_o$ looks like a semantically close
formula --- which is not the case!  By the semantics of universal
quantification $\sV^{\cal M}_{\phi}(\textbf{A}_o) = \mname{T}$ iff
$\sV^{\cal M}_{\phi[x_\epsilon \mapsto {\cal E}({\bf
      B}_\alpha)]}(\mname{var}^{o}_{o\epsilon} \, x_\epsilon \Implies
\sembrack{x_\epsilon}_o) = \mname{T}$ for every wff
$\textbf{B}_\alpha$.  If $\textbf{B}_\alpha$ is not a variable of type
$o$, then $\sV^{\cal M}_{\phi[x_\epsilon \mapsto {\cal E}({\bf
      B}_\alpha)]}(\mname{var}^{o}_{o\epsilon} \, x_\epsilon) =
\mname{F}$, and so $\sV^{\cal M}_{\phi[x_\epsilon \mapsto {\cal
      E}({\bf B}_\alpha)]}(\mname{var}^{o}_{o\epsilon} \, x_\epsilon
\Implies \sembrack{x_\epsilon}_o) = \mname{T}$.  If
$\textbf{B}_\alpha$ is a variable $\textbf{y}_o$, then
\begin{align*}
& 
\sV^{\cal M}_{\phi[x_\epsilon \mapsto {\cal E}({\bf y}_o)]}
([\mname{var}^{o}_{o\epsilon} \, x_\epsilon \Implies \sembrack{x_\epsilon}_o]) \\
&= 
\sV^{\cal M}_{\phi[x_\epsilon \mapsto {\cal E}({\bf y}_o)]}
(\sembrack{x_\epsilon}_o) \\
&= 
\sV^{\cal M}_{\phi[x_\epsilon \mapsto {\cal E}({\bf y}_o)]}
(\sE^{-1}(\sV^{\cal M}_{\phi[x_\epsilon \mapsto {\cal E}({\bf y}_o)]}
(x_\epsilon))) \\
&= 
\sV^{\cal M}_{\phi[x_\epsilon \mapsto {\cal E}({\bf y}_o)]}
(\sE^{-1}(\sE({\bf y}_o))) \\
&= 
\sV^{\cal M}_{\phi[x_\epsilon \mapsto {\cal E}({\bf y}_o)]}(\textbf{y}_o) \\
&= 
\phi(\textbf{y}_o).
\end{align*}
Hence $\sV^{\cal M}_{\phi}(\textbf{A}_o) = \mname{T}$ iff
$\phi(\textbf{y}_o) = \mname{T}$ for all variables $\textbf{y}_o$ of
type $o$.  Therefore, not only is $\textbf{A}_o$ not semantically
closed, its value in $\sM$ depends on the values assigned to
infinitely many variables.  In contrast, the value of any
evaluation-free wff depends on at most finitely many variables.

\section{Substitution} \label{sec:substitution}

Our next task is to construct a proof system {\pfsysuqe} for
{\qzerouqe} based on the proof system of {\qzerou}.  We need a
mechanism for substituting a wff $\textbf{A}_\alpha$ for a free
variable $\textbf{x}_\alpha$ in another wff $\textbf{B}_\alpha$ so
that we can perform beta-reduction in {\pfsysuqe}.  Beta-reduction
is performed in the proof system of {\qzero} in a purely syntactic way
using the basic properties of lambda-notation stated as Axioms
$4_1$--$4_5$ in~\cite{Andrews02}.  Due to the Variable Problem
discussed in section~1, {\pfsysuqe} requires a semantics-dependent
form of substitution.  There is no easy way of extending or modifying
Axioms $4_1$--$4_5$ to cover all function abstractions that contain
evaluations.  Instead, we will utilize a form of explicit
substitution~\cite{AbadiEtAl91}.  We will also utilize as well the
basic properties of lambda-notation that remain valid in
{\qzerouqe}.

The law of beta-reduction for {\qzerou} is expressed as the
schema \[{\textbf{A}_\alpha \IsDefApp} \Implies [[\lambda
    \textbf{x}_\alpha \textbf{B}_\beta]\textbf{A}_\alpha \QuasiEqual
  \mname{S}^{{\bf x}_\alpha}_{{\bf A}_\alpha} \textbf{B}_\beta]\]
where $\textbf{A}_\alpha$ is free for $\textbf{x}_\alpha$ in
$\textbf{B}_\beta$ and $\mname{S}^{{\bf x}_\alpha}_{{\bf
    A}_\alpha}\textbf{B}_\beta$ is the result of substituting
$\textbf{A}_\alpha$ for each free occurrence of $\textbf{x}_\alpha$ in
$\textbf{B}_\beta$.\footnote{Andrews uses {\sub} (with a dot) instead
  of $\mname{S}$ for substitution in~\cite{Andrews02}.}  The law of
beta-reduction for {\qzerouqe} will be expressed by the schema
\[[{\textbf{A}_\alpha \IsDefApp} \Andd 
   {\mname{sub}_{\epsilon\epsilon\epsilon\epsilon} \,
     \synbrack{\textbf{A}_\alpha} \, \synbrack{\textbf{x}_\alpha} \,
     \synbrack{\textbf{B}_\beta} = \synbrack{\textbf{C}_\beta}}]
\Implies [\lambda \textbf{x}_\alpha \textbf{B}_\beta]\textbf{A}_\alpha
\QuasiEqual \textbf{C}_\beta\] without the syntactic side condition
that $\textbf{A}_\alpha$ is free for $\textbf{x}_\alpha$ in
$\textbf{B}_\beta$ and with the result of the substitution expressed
by the wff $\mname{sub}_{\epsilon\epsilon\epsilon\epsilon} \,
\synbrack{\textbf{A}_\alpha} \, \synbrack{\textbf{x}_\alpha} \,
\synbrack{\textbf{B}_\beta}$.  The logical constant
$\mname{sub}_{\epsilon\epsilon\epsilon\epsilon}$ was specified in the
previous section.  We will prove in this section that the law of
beta-reduction for {\qzerouqe} stated above --- in which substitution
is represented by $\mname{sub}_{\epsilon\epsilon\epsilon\epsilon}$ ---
is valid in {\qzerouqe}.

\subsection{Requirements for $\mname{sub}_{\epsilon\epsilon\epsilon\epsilon}$}\label{subsec:requirements}

The specification of $\mname{sub}_{\epsilon\epsilon\epsilon\epsilon}$
needs to satisfy the following requirements:

\bi

  \item[] \textbf{Requirement 1} {\sglsp} \emph{When
    $\mname{sub}_{\epsilon\epsilon\epsilon\epsilon} \,
    \synbrack{\textbf{A}_\alpha} \, \synbrack{\textbf{x}_\alpha} \,
    \synbrack{\textbf{B}_\beta}$ is defined, its value must represent
    the $\textit{wff}_{\beta}$ that results from substituting
    $\textbf{A}_\alpha$ for each free occurrence of
    $\textbf{x}_\alpha$ in $\textbf{B}_\beta$.}  More precisely, for
    any normal general model $\sM$ for {\qzerouqe}, if \[\sM \models
    [\mname{sub}_{\epsilon\epsilon\epsilon\epsilon} \,
      \synbrack{\textbf{A}_\alpha} \, \synbrack{\textbf{x}_\alpha} \,
      \synbrack{\textbf{B}_\beta}] \IsDefApp,\] then \[\sV^{\cal
      M}_{\phi}(\sembrack{\mname{sub}_{\epsilon\epsilon\epsilon\epsilon}
      \, \synbrack{\textbf{A}_\alpha} \, \synbrack{\textbf{x}_\alpha}
      \, \synbrack{\textbf{B}_\beta}}_\beta) \QuasiEqual \sV^{\cal
      M}_{\phi[{\bf x}_\alpha \mapsto {\cal V}^{\cal M}_{\phi}({\bf
          A}_\alpha)]}(\textbf{B}_\beta)\] must be true for all $\phi
    \in \mname{assign}(\sM)$ such that $\sV^{\cal
      M}_{\phi}(\textbf{A}_\alpha)$ is defined.  Satisfying this
    requirement is straightforward when $\textbf{A}_\alpha$ and
    $\textbf{B}_\beta$ are evaluation-free.  Since the semantics of
    evaluation involves a double application of $\sV^{\cal M}_{\phi}$,
    the specification of
    $\mname{sub}_{\epsilon\epsilon\epsilon\epsilon} \,
    \synbrack{\textbf{A}_\alpha} \, \synbrack{\textbf{x}_\alpha} \,
    \synbrack{\textbf{B}_\beta}$ must include a double substitution
    when $\textbf{B}_\beta$ is an evaluation.

  \item[] \textbf{Requirement 2} {\sglsp} 
    \emph{$\mname{sub}_{\epsilon\epsilon\epsilon\epsilon} \,
      \synbrack{\textbf{A}_\alpha} \, \synbrack{\textbf{x}_\alpha} \,
      \synbrack{\textbf{B}_\beta}$ must be undefined when substitution
      would result in a variable capture.}  To avoid variable capture
    we need to check whether a variable does not occur freely in a
    wff.  We have specified the logical constant
    $\mname{not-free-in}_{o\epsilon\epsilon}$ to do this.

  \item[]\bsp \textbf{Requirement 3} {\sglsp} \emph{When
    $\mname{sub}_{\epsilon\epsilon\epsilon\epsilon} \,
    \synbrack{\textbf{A}_\alpha} \, \synbrack{\textbf{x}_\alpha} \,
    \synbrack{\textbf{B}_\beta}$ is defined, its value must represent
    an evaluation-free $\textit{wff}_{\beta}$.}  Otherwise
    $\sembrack{\mname{sub}_{\epsilon\epsilon\epsilon\epsilon} \,
    \synbrack{\textbf{A}_\alpha} \, \synbrack{\textbf{x}_\alpha} \,
    \synbrack{\textbf{B}_\beta}}_\beta$ will be undefined.  We will
    ``cleanse'' any evaluations that remain after a substitution by
    effectively replacing each wff of the form
    $\synbrack{\sembrack{\textbf{A}_{\epsilon}}_\alpha}$ with
    \[[\If \, [\mname{eval-free}^{\alpha}_{o\epsilon} \, \textbf{A}_\epsilon] \,
      \textbf{A}_\epsilon \, \Undefined_\epsilon].\] We have specified
      the logical constant $\mname{cleanse}_{\epsilon\epsilon}$ to do
      this.\esp

  \item[] \textbf{Requirement 4} {\sglsp} \emph{When
    $\mname{sub}_{\epsilon\epsilon\epsilon\epsilon} \,
    \synbrack{\textbf{A}_\alpha} \, \synbrack{\textbf{x}_\alpha} \,
    \synbrack{\textbf{B}_\beta}$ is defined, its value must be
    semantically closed.}  That is, the variables occurring in
    $\textbf{A}_\alpha$ or $\textbf{B}_\beta$ must not be allowed to
    escape outside of a quotation. To avoid such variable escape when
    a wff of the form
    $\synbrack{\sembrack{\textbf{A}_{\epsilon}}_\alpha}$ is cleansed
    as noted above, we need to enforce that $\textbf{A}_{\epsilon}$ is
    semantically closed.  We have used the defined constant
    $\mname{syn-closed}_{o\epsilon}$ to do this.

  \item[] \bsp \textbf{Requirement 5} {\sglsp} \emph{
    $\mname{sub}_{\epsilon\epsilon\epsilon\epsilon} \,
    \synbrack{\textbf{A}_\alpha} \, \synbrack{\textbf{x}_\alpha} \,
    \synbrack{\textbf{B}_\beta}$ is defined in the cases corresponding
    to when substitution is defined in {\qzero}.}  More precisely,
    $\mname{sub}_{\epsilon\epsilon\epsilon\epsilon} \,
    \synbrack{\textbf{A}_\alpha} \, \synbrack{\textbf{x}_\alpha} \,
    \synbrack{\textbf{B}_\beta}$ is defined whenever
    $\textbf{A}_\alpha$ and $\textbf{B}_\beta$ are evaluation-free,
    $\textbf{A}_\alpha$ is defined, and substituting
    $\textbf{A}_\alpha$ for each free occurrence of
    $\textbf{x}_\alpha$ in $\textbf{B}_\beta$ does not result in a
    variable capture.\esp

\ei

\bsp We will prove a series of lemmas that show (1) the properties
that $\mname{not-free-in}_{o\epsilon\epsilon}$,
$\mname{cleanse}_{\epsilon\epsilon}$, and
$\mname{sub}_{\epsilon\epsilon\epsilon\epsilon}$ have and (2) that
$\mname{sub}_{\epsilon\epsilon\epsilon\epsilon}$ satisfies
Requirements 1--5.\esp

\subsection{Evaluation-Free Wffs}

\begin{prop} [Meaning of $\mname{eval-free}^{\alpha}_{o\epsilon}$] \label{prop:eval-free}
\bsp Let $\sM$ be a normal general model for {\qzerouqe}.  $\sM \models
\mname{eval-free}^{\alpha}_{o\epsilon} \,
\synbrack{\textbf{A}_\alpha}$ iff $\textbf{A}_\alpha$ is
evaluation-free.\esp
\end{prop}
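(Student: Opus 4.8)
The plan is to unfold the abbreviation $\mname{eval-free}^{\alpha}_{o\epsilon}$ from Table~\ref{tab:defs} and reduce the statement to two separate facts about the construction $\sE(\textbf{A}_\alpha)$ that $\synbrack{\textbf{A}_\alpha}$ denotes. Since $\mname{eval-free}^{\alpha}_{o\epsilon} \, \synbrack{\textbf{A}_\alpha}$ is $[[\lambda x_\epsilon [\mname{eval-free}_{o\epsilon} \, x_\epsilon \Andd \mname{wff}^{\alpha}_{o\epsilon} \, x_\epsilon]] \, \synbrack{\textbf{A}_\alpha}]$ and $\synbrack{\textbf{A}_\alpha}$ denotes $\sE(\textbf{A}_\alpha)$ by the semantics of quotation, the valuation clauses for function abstraction and function application show that, for every $\phi \in \mname{assign}(\sM)$, the wff $\mname{eval-free}^{\alpha}_{o\epsilon} \, \synbrack{\textbf{A}_\alpha}$ has value $\mname{T}$ under $\phi$ iff both $\sJ(\mname{eval-free}_{o\epsilon})(\sE(\textbf{A}_\alpha)) = \mname{T}$ and $\sJ(\mname{wff}^{\alpha}_{o\epsilon})(\sE(\textbf{A}_\alpha)) = \mname{T}$, using the standard fact that $\Andd$ behaves as conjunction on $\wffs{o}$. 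Hence it suffices to establish: (i) $\sJ(\mname{wff}^{\alpha}_{o\epsilon})(\sE(\textbf{A}_\alpha)) = \mname{T}$ for every $\wff{\alpha}$ $\textbf{A}_\alpha$; and (ii) $\sJ(\mname{eval-free}_{o\epsilon})(\sE(\textbf{A}_\alpha)) = \mname{T}$ iff $\textbf{A}_\alpha$ is evaluation-free.

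I would prove both (i) and (ii) by induction on the structure of $\textbf{A}_\alpha$, running through the seven kinds of wffs. For (i) the inductive step uses the ``positive'' clauses of Specification 6 --- 6.1 (variable), 6.2 (primitive constant), 6.3 (application), 6.6 (abstraction), 6.9 (conditional), 6.11 (quotation), 6.12 (evaluation) --- instantiated at the $\sE$-images of the immediate subwffs. The key bookkeeping point is the identity $\sE([\textbf{B}_{\gamma\delta}\textbf{C}_\delta]) = [\mname{app}_{\epsilon\epsilon\epsilon}\,\sE(\textbf{B}_{\gamma\delta})\,\sE(\textbf{C}_\delta)]$ and its analogues for $\mname{abs}$, $\mname{cond}$, $\mname{quot}$, $\mname{eval}$: this makes the $\wff{\epsilon}$ produced by a specification clause literally $\sE$ of the compound wff, and by Proposition~\ref{prop:sem-of-E} such a wff denotes itself, which also discharges the definedness antecedent $\textbf{A}_\epsilon\IsDefApp$ appearing in 6.11. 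The abstraction case 6.6 and the evaluation case 6.12 additionally require $\mname{var}^{\alpha}_{o\epsilon} \, \synbrack{\textbf{x}_\alpha}$, which unfolds to $\mname{var}_{o\epsilon} \, \synbrack{\textbf{x}_\alpha} \Andd \mname{wff}^{\alpha}_{o\epsilon} \, \synbrack{\textbf{x}_\alpha}$ and follows from Specification 2.1 together with the already-handled base case 6.1.

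For (ii) the same structural induction uses Specification 5. A variable or primitive constant is evaluation-free, and $\mname{eval-free}_{o\epsilon}$ holds of its quotation by 5.1 (resp.\ 5.2), using Specification 2.1 (resp.\ 3.1) to trigger the antecedent. For an application, abstraction, or conditional, clauses 5.3, 5.4, 5.5 supply biconditionals --- valid under their definedness antecedents, which again hold because the relevant $\sE$-image denotes itself by Proposition~\ref{prop:sem-of-E} --- that mirror exactly the recursive definition of ``evaluation-free'', so the induction hypothesis closes the case. A quotation $[\mname{q}\textbf{B}_\beta]$ is always evaluation-free and 5.6 yields $\mname{eval-free}_{o\epsilon}(\sE([\mname{q}\textbf{B}_\beta]))$, whereas an evaluation $[\mname{e}\textbf{B}_\epsilon\textbf{x}_\alpha]$ is never evaluation-free and 5.7 yields $\NegAlt[\mname{eval-free}_{o\epsilon}(\sE([\mname{e}\textbf{B}_\epsilon\textbf{x}_\alpha]))]$. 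Combining (i) and (ii) with the reduction of the first paragraph gives the proposition. I do not expect a genuine conceptual obstacle; the only real work is the bookkeeping --- keeping the syntactic term $\sE(\textbf{A}_\alpha)$ carefully distinct from the construction it denotes, and systematically discharging the definedness side conditions in Specifications 5 and 6 via Proposition~\ref{prop:sem-of-E}.
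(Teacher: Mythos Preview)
Your proposal is correct and follows the same route the paper intends: the paper's proof is the single line ``Immediate from the specification of $\mname{eval-free}^{\alpha}_{o\epsilon}$,'' and what you have written is precisely the unfolding of that immediacy --- reducing via the definition in Table~\ref{tab:defs} to Specifications~5 and~6 and then running a structural induction on $\textbf{A}_\alpha$. There is no substantive difference in approach, only in the level of detail you supply.
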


\begin{proof}
Immediate from the specification of $\mname{eval-free}^{\alpha}_{o\epsilon}$.
\end{proof}

\begin{lem} [Evaluation-Free] \label{lem:evaluation-free}
Let $\sM$ be a normal general model for {\qzerouqe} and
$\textbf{A}_\alpha$ and $\textbf{B}_\beta$ be evaluation-free.
\be

  \item \bsp $\mname{not-free-in}_{o\epsilon\epsilon} \,
    \synbrack{\textbf{x}_\alpha} \, \synbrack{\textbf{B}_\beta}$,
    $\mname{syn-closed}_{o\epsilon} \, \synbrack{\textbf{A}_\alpha}$,
    $\mname{cleanse}_{\epsilon\epsilon} \,
    \synbrack{\textbf{B}_\beta}$, and
    $\mname{sub}_{\epsilon\epsilon\epsilon\epsilon} \,
    \synbrack{\textbf{A}_\alpha} \, \synbrack{\textbf{x}_\alpha} \,
    \synbrack{\textbf{B}_\beta}$ are invariable. \esp

  \item If $\sM \models \mname{not-free-in}_{o\epsilon\epsilon} \,
    \synbrack{\textbf{x}_\alpha} \, \synbrack{\textbf{B}_\beta}$, then
    $\textbf{B}_\beta$ is independent of $\set{\textbf{x}_\alpha}$ in
    $\sM$.

  \item If $\sM \models \mname{not-free-in}_{o\epsilon\epsilon} \,
    \synbrack{\textbf{x}_\alpha} \, \synbrack{\textbf{B}_\beta}$, then
    $\sM \models \mname{sub}_{\epsilon\epsilon\epsilon\epsilon} \,
    \synbrack{\textbf{A}_\alpha} \, \synbrack{\textbf{x}_\alpha} \,
    \synbrack{\textbf{B}_\beta} = \synbrack{\textbf{B}_\beta}$.

  \item $\sM \models \mname{cleanse}_{\epsilon\epsilon} \,
    \synbrack{\textbf{B}_\beta} = \synbrack{\textbf{B}_\beta}$.

  \item Either $\sM \models
    \mname{sub}_{\epsilon\epsilon\epsilon\epsilon} \,
    \synbrack{\textbf{A}_\alpha} \, \synbrack{\textbf{x}_\alpha} \,
    \synbrack{\textbf{B}_\beta} = \synbrack{\textbf{C}_\beta}$ for
    some evaluation-free $\textbf{C}_\beta$ or $\sM \models
    [\mname{sub}_{\epsilon\epsilon\epsilon\epsilon} \,
      \synbrack{\textbf{A}_\alpha} \, \synbrack{\textbf{x}_\alpha} \,
      \synbrack{\textbf{B}_\beta}] \IsUndefApp$.

  \item $\sM \models \NegAlt[\mname{not-free-in}_{o\epsilon\epsilon} \,
    \synbrack{\textbf{x}_\alpha} \, \synbrack{\textbf{B}_\beta}]$ for
    at most finitely many variables $\textbf{x}_\alpha$.

\ee
\end{lem}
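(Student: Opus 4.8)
The plan is to prove the six parts more or less in order, since each relies on the machinery set up by the earlier parts. The unifying idea is that for evaluation-free wffs the constants $\mname{not-free-in}_{o\epsilon\epsilon}$, $\mname{syn-closed}_{o\epsilon}$, $\mname{cleanse}_{\epsilon\epsilon}$, and $\mname{sub}_{\epsilon\epsilon\epsilon\epsilon}$ behave exactly like their purely syntactic counterparts in {\qzero}, because the problematic clauses in Specifications 7--9 (the ones involving $\sembrack{\cdot}_\epsilon$, $\mname{syn-closed}_{o\epsilon}$, $\mname{eval-free}^{\alpha}_{o\epsilon}$) are triggered only at evaluation subwffs, which by hypothesis do not occur. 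First I would handle part~1: using Proposition~\ref{prop:sem-of-E} (every $\sE(\textbf{A}_\alpha)$ denotes itself) together with the injectivity of $\sE$ and the relevant specifying axioms, I would show by induction on the size of $\textbf{B}_\beta$ (simultaneously for all four listed wffs) that in any normal general model $\sM$ and any assignment $\phi$, the value of each of these wffs is a fixed construction determined purely by the syntax of $\textbf{A}_\alpha$, $\textbf{x}_\alpha$, $\textbf{B}_\beta$ --- hence they are invariable. The key point is that at each step the controlling side conditions ($\mname{var}_{o\epsilon}$, $\mname{con}_{o\epsilon}$, $\mname{wff}^{\alpha}_{o\epsilon}$, disequality of quoted variables, definedness of the $\mname{app}/\mname{abs}/\mname{cond}$ constructors on standard arguments) all have values fixed by Specifications 2--6 applied to quotations, so the recursion is deterministic.

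For part~2 I would again induct on $\textbf{B}_\beta$. Using part~1, $\mname{not-free-in}_{o\epsilon\epsilon} \, \synbrack{\textbf{x}_\alpha} \, \synbrack{\textbf{B}_\beta}$ has a definite truth value, and the Specification~7 clauses (items 1--9, with the evaluation clause 10 vacuous here) mirror exactly the standard inductive definition of ``$\textbf{x}_\alpha$ not free in $\textbf{B}_\beta$.'' So $\sM\models\mname{not-free-in}_{o\epsilon\epsilon}\,\synbrack{\textbf{x}_\alpha}\,\synbrack{\textbf{B}_\beta}$ iff $\textbf{x}_\alpha$ is genuinely not free in $\textbf{B}_\beta$ in the ordinary syntactic sense; a routine induction on $\textbf{B}_\beta$ (matching the cases in clause~3 of the general-model definition) then shows $\sV^{\cal M}_{\phi}(\textbf{B}_\beta)$ depends only on $\phi$ restricted to the free variables of $\textbf{B}_\beta$, giving independence of $\set{\textbf{x}_\alpha}$. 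Part~3 follows by the same kind of induction using Specification~9, reading off that when $\textbf{x}_\alpha$ is not free in $\textbf{B}_\beta$ the recursion for $\mname{sub}$ just reconstructs $\textbf{B}_\beta$ (with $\mname{cleanse}$ applied, which by part~4 is the identity on evaluation-free quotations). Part~4 is a direct induction on $\textbf{B}_\beta$ using the $\mname{cleanse}$ clauses 8.1--8.6, all of whose right-hand sides rebuild the same construction when no evaluation is present.

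Part~5 is the one requiring real care, and I expect it to be the main obstacle. One must show that $\mname{sub}_{\epsilon\epsilon\epsilon\epsilon}\,\synbrack{\textbf{A}_\alpha}\,\synbrack{\textbf{x}_\alpha}\,\synbrack{\textbf{B}_\beta}$ always either denotes $\synbrack{\textbf{C}_\beta}$ for some evaluation-free $\textbf{C}_\beta$ or is undefined --- i.e.\ it never produces a nonstandard construction or one containing a surviving evaluation. I would proceed by induction on the size of $\textbf{B}_\beta$, threading Lemma~\ref{lem:nonstand-sub} and part~4 through the $\mname{abs}$-case: in the capture-avoiding branch (Specification 9.7) the recursive call on $\textbf{E}_\epsilon$ yields, by the induction hypothesis, either $\synbrack{\textbf{C}'_\beta}$ with $\textbf{C}'_\beta$ evaluation-free or $\Undefined$; in the former case $\mname{abs}_{\epsilon\epsilon\epsilon}\,\synbrack{\textbf{D}_\epsilon}\,\synbrack{\textbf{C}'_\beta}$ is a standard construction of an evaluation-free abstraction, and in the latter $\mname{abs}$ of an undefined argument is undefined; the blocked branch gives $\Undefined_\epsilon$ outright. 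The $\mname{app}$, $\mname{cond}$, and base cases are easier. The subtlety is keeping track of which constructs are standard at each step so that Specifications~6 apply and so that ``evaluation-free'' is preserved --- this is where the lemma's evaluation-free hypothesis on both $\textbf{A}_\alpha$ and $\textbf{B}_\beta$ is essential. Finally, part~6 follows from part~2 plus the Example of Section~\ref{subsec:inf-dep} in reverse: the value of an evaluation-free wff $\textbf{B}_\beta$ depends on only finitely many variables (a trivial induction on $\textbf{B}_\beta$), so for all but those finitely many $\textbf{x}_\alpha$, $\textbf{B}_\beta$ is independent of $\set{\textbf{x}_\alpha}$; combining with the faithfulness established in part~2, $\mname{not-free-in}_{o\epsilon\epsilon}\,\synbrack{\textbf{x}_\alpha}\,\synbrack{\textbf{B}_\beta}$ holds for all but finitely many $\textbf{x}_\alpha$.
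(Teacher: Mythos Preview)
Your plan for parts 1--5 is essentially the paper's: the paper simply says ``Parts 1--5 follow straightforwardly by induction on the size of $\synbrack{\textbf{B}_\beta}$,'' and your expanded sketch fills in exactly that induction using Specifications 7--9 with the evaluation clauses vacuous. Your worry about part~5 being the ``main obstacle'' and about nonstandard constructions is overblown here: all arguments to $\mname{sub}$ are quotations, hence standard, and the recursion stays within standard constructions by Lemma~\ref{lem:stand-constructions}; you do not need Lemma~\ref{lem:nonstand-sub} at all.

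There is, however, a genuine gap in your argument for part~6. You try to go: (i) an evaluation-free $\textbf{B}_\beta$ is semantically independent of all but finitely many variables; (ii) combine with ``faithfulness from part~2'' to conclude $\mname{not-free-in}$ holds for all but finitely many $\textbf{x}_\alpha$. But part~2 is the implication $\mname{not-free-in}\Rightarrow\text{independence}$, and the converse is false (e.g.\ $x_o \Or \NegAlt x_o$ is independent of $x_o$ yet $x_o$ is free in it, so $\mname{not-free-in}$ fails). What you \emph{did} establish earlier---the biconditional between $\mname{not-free-in}_{o\epsilon\epsilon}\,\synbrack{\textbf{x}_\alpha}\,\synbrack{\textbf{B}_\beta}$ and the ordinary syntactic ``$\textbf{x}_\alpha$ not free in $\textbf{B}_\beta$''---is exactly what you need, but then the argument should be purely syntactic: only finitely many variables occur in $\textbf{B}_\beta$, hence only finitely many can be free. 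This is the paper's route: it observes directly that $\sM \models \NegAlt[\mname{not-free-in}_{o\epsilon\epsilon}\,\synbrack{\textbf{x}_\alpha}\,\synbrack{\textbf{B}_\beta}]$ forces $\synbrack{\textbf{x}_\alpha}$ to occur in $\synbrack{\textbf{B}_\beta}$, which can happen for only finitely many $\textbf{x}_\alpha$. Drop the detour through semantic independence and your proof is complete.
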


\begin{proof}
Parts 1--5 follow straightforwardly by induction on the size of
$\synbrack{\textbf{B}_\beta}$.  Part~6 follows from the fact that $\sM
\models \NegAlt[\mname{not-free-in}_{o\epsilon\epsilon} \,
  \synbrack{\textbf{x}_\alpha} \, \synbrack{\textbf{B}_\beta}]$
implies $\synbrack{\textbf{x}_\alpha}$ occurs in
$\synbrack{\textbf{B}_\beta}$.
\end{proof}

\bigskip

\bsp By virtue of Lemma~\ref{lem:evaluation-free} (particularly part
1), several standard definitions of predicate logic that are not
applicable to wffs in general are applicable to evaluation-free wffs.
Let $\textbf{A}_\alpha$, $\textbf{B}_\beta$, and $\textbf{C}_o$ be
evaluation-free wffs.  A variable $\textbf{x}_\alpha$ is \emph{bound
  in $\textbf{B}_\beta$} if $\mname{not-free-in}_{o\epsilon\epsilon}
\, \synbrack{\textbf{x}_\alpha} \, \synbrack{\textbf{B}_\beta}$
denotes $\mname{T}$ and is \emph{free in $\textbf{B}_\beta$} if
$\mname{not-free-in}_{o\epsilon\epsilon} \,
\synbrack{\textbf{x}_\alpha} \, \synbrack{\textbf{B}_\beta}$ denotes
$\mname{F}$.  $\textbf{A}_\alpha$ is \emph{syntactically closed} if
$\mname{syn-closed}_{o\epsilon} \, \synbrack{\textbf{A}_\alpha}$
denotes $\mname{T}$.  A \emph{universal closure} of $\textbf{C}_o$ is
a formula
\[\Forall \textbf{x}^{1}_{\alpha_1} \cdots \Forall \textbf{x}^{n}_{\alpha_n} 
\textbf{C}_o\] such that $\textbf{y}_{\beta}$ is free in
$\textbf{C}_o$ iff $\textbf{y}_{\beta} \in
\set{\textbf{x}^{1}_{\alpha_1}, \ldots,
  \textbf{x}^{n}_{\alpha_n}}$. \esp

\begin{lem} [Universal Closures]\label{lem:uni-close}
Let $\sM$ be a normal general model for {\qzerouqe}, $\textbf{A}_o$ be
an evaluation-free formula, and $\textbf{B}_o$ be a universal closure
of $\textbf{A}_o$.

\be

  \item $\textbf{B}_o$ is syntactically closed.

  \item $\sM \models \textbf{A}_o$ iff $\sM \models \textbf{B}_o$.

\ee
\end{lem}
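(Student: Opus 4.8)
The plan is to treat the two parts separately; Part~2 is elementary (it does not even use the free-variable side condition), while Part~1 carries the real work.

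For Part~2 I would unwind the abbreviation $\Forall\textbf{x}_\alpha\textbf{C}_o \equiv [[\lambda y_\alpha T_o] = [\lambda\textbf{x}_\alpha\textbf{C}_o]]$ and apply the semantic clauses for function abstraction and for $\mname{Q}$. Since a $\wff{o}$ is always defined in a general model, $\sV^{\cal M}_\phi([\lambda\textbf{x}_\alpha\textbf{C}_o])$ is the total function sending each $d \in \sD_\alpha$ to $\sV^{\cal M}_{\phi[\textbf{x}_\alpha \mapsto d]}(\textbf{C}_o)$, while $\sV^{\cal M}_\phi([\lambda y_\alpha T_o])$ is the constant function with value $\mname{T}$; hence $\sV^{\cal M}_\phi(\Forall\textbf{x}_\alpha\textbf{C}_o) = \mname{T}$ iff $\sV^{\cal M}_{\phi[\textbf{x}_\alpha \mapsto d]}(\textbf{C}_o) = \mname{T}$ for every $d \in \sD_\alpha$. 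Iterating this through the $n$ quantifiers prefixing $\textbf{A}_o$ in $\textbf{B}_o = \Forall\textbf{x}^1_{\alpha_1}\cdots\Forall\textbf{x}^n_{\alpha_n}\textbf{A}_o$ shows that $\sM \models \textbf{B}_o$ iff $\sV^{\cal M}_\psi(\textbf{A}_o) = \mname{T}$ for every assignment $\psi$ of the form $\phi[\textbf{x}^1_{\alpha_1}\mapsto d_1]\cdots[\textbf{x}^n_{\alpha_n}\mapsto d_n]$. As $\phi$ and $d_1,\dots,d_n$ range over all assignments and domain elements, these $\psi$ are exactly the members of $\mname{assign}(\sM)$, so the condition is equivalent to $\sM \models \textbf{A}_o$.

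For Part~1, by the definition of \emph{syntactically closed} and the abbreviation for $\mname{syn-closed}_{o\epsilon}$, it suffices (for an arbitrary normal general model $\sM$) to show that $\sV^{\cal M}_\phi(\mname{not-free-in}_{o\epsilon\epsilon}\,y_\epsilon\,\synbrack{\textbf{B}_o}) = \mname{T}$ for every $\phi \in \mname{assign}(\sM)$ with $\sV^{\cal M}_\phi(\mname{var}_{o\epsilon}\,y_\epsilon) = \mname{T}$. I split on whether $\phi(y_\epsilon)$ is a standard or a nonstandard construction. If $\phi(y_\epsilon) = d$ is nonstandard but recognized as a variable, I would prove by induction on the size of $\textbf{C}_\gamma$ that $\sV^{\cal M}_\phi(\mname{not-free-in}_{o\epsilon\epsilon}\,y_\epsilon\,\synbrack{\textbf{C}_\gamma}) = \mname{T}$ for every evaluation-free $\textbf{C}_\gamma$: this is immediate from Specification~7, since clauses 7.2, 7.3, 7.8 yield $\mname{T}$ outright, clauses 7.4, 7.6, 7.7 reduce to the (evaluation-free) subterms where the induction hypothesis applies, and clauses 7.1 and 7.5 never fire because $d$ is never equal to a standard variable construction $\synbrack{\textbf{z}_\delta}$. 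Taking $\textbf{C}_\gamma = \textbf{B}_o$ disposes of the nonstandard case.

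If instead $\phi(y_\epsilon) = \synbrack{\textbf{y}_\beta}$ for a genuine variable $\textbf{y}_\beta$, I would first record two facts, obtained by unfolding $\Forall\textbf{x}_\alpha\textbf{C}_o \equiv \mname{Q}_{o(\alpha\alpha)(\alpha\alpha)}\,[\lambda y_\alpha T_o]\,[\lambda\textbf{x}_\alpha\textbf{C}_o]$, taking its $\sE$-image, and applying clauses 7.3--7.6 of Specification~7 (every required definedness hypothesis holds because the relevant $\mname{app}$/$\mname{abs}$ terms are $\sE$-images of actual wffs, which denote themselves by Proposition~\ref{prop:sem-of-E}), using that $T_o$ is built only from primitive constants: \[ \textrm{(i)}\quad \sM \models \mname{not-free-in}_{o\epsilon\epsilon}\,\synbrack{\textbf{x}_\alpha}\,\synbrack{\Forall\textbf{x}_\alpha\textbf{C}_o}; \] \[ \textrm{(ii)}\quad \textbf{y}_\beta \neq \textbf{x}_\alpha \;\Longrightarrow\; \sM \models [\mname{not-free-in}_{o\epsilon\epsilon}\,\synbrack{\textbf{y}_\beta}\,\synbrack{\Forall\textbf{x}_\alpha\textbf{C}_o} \Iff \mname{not-free-in}_{o\epsilon\epsilon}\,\synbrack{\textbf{y}_\beta}\,\synbrack{\textbf{C}_o}]. \] Applying (i) and (ii) to the $n$ nested quantifiers of $\textbf{B}_o$ gives: if $\textbf{y}_\beta \in \set{\textbf{x}^1_{\alpha_1},\dots,\textbf{x}^n_{\alpha_n}}$ then $\mname{not-free-in}_{o\epsilon\epsilon}\,\synbrack{\textbf{y}_\beta}\,\synbrack{\textbf{B}_o}$ is valid in $\sM$; otherwise it is $\sM$-equivalent to $\mname{not-free-in}_{o\epsilon\epsilon}\,\synbrack{\textbf{y}_\beta}\,\synbrack{\textbf{A}_o}$, and since $\textbf{B}_o$ is a universal closure of $\textbf{A}_o$ the variable $\textbf{y}_\beta$ is not free in $\textbf{A}_o$, which — because this $\wff{o}$ is defined and two-valued (Lemma~\ref{lem:evaluation-free}(1) together with the recursive shape of Specification~7) — means it denotes $\mname{T}$. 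In both subcases it is valid in $\sM$, which completes Part~1 and hence Lemma~\ref{lem:uni-close}.

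I expect the main obstacle to be the bookkeeping in Part~1: $\synbrack{\textbf{B}_o}$ is not a plain tower of $\mname{abs}$-nodes but the $\sE$-image of the fully desugared quantifier prefix (each $\Forall$ contributing an application of $\mname{Q}$ and a dummy $[\lambda y_\alpha T_o]$), so facts (i)--(ii) must be verified against that desugaring; one also needs the auxiliary observation — a routine induction on size via Specification~7 — that $\mname{not-free-in}_{o\epsilon\epsilon}\,\synbrack{\textbf{y}_\beta}\,\synbrack{\textbf{C}_\gamma}$ is always defined and Boolean-valued for evaluation-free $\textbf{C}_\gamma$, plus the separate treatment of nonstandard constructions in $\sD_\epsilon$.
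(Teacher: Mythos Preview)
Your proof is correct and follows the same underlying approach as the paper --- namely, unfolding the definitions of \emph{universal closure}, \emph{syntactically closed}, and the semantics of $\Forall$ --- but the paper's own proof is a two-line remark (``Part~1 follows from the definitions of universal closure and syntactically closed; Part~2 follows from the semantics of universal quantification''), whereas you have filled in all the details the paper leaves implicit.

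One remark: your careful case split on whether $\phi(y_\epsilon)$ is a standard or nonstandard construction is correct but is more work than strictly needed. Since Lemma~\ref{lem:evaluation-free}(1) already tells you that $\mname{syn-closed}_{o\epsilon}\,\synbrack{\textbf{B}_o}$ is \emph{invariable}, it suffices to verify that it denotes $\mname{T}$ in a single normal general model --- in particular a standard one, where by Proposition~\ref{prop:standard-epsilon} there are no nonstandard constructions at all, and your nonstandard case becomes vacuous. This is presumably why the paper treats Part~1 as immediate from the definitions.
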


\begin{proof} Part 1 follows from the definitions of universal closure 
and syntactically closed.  Part 2 follows from the semantics of
universal quantification.
\end{proof}

\begin{note}[Syntactically Closed]\label{note:syn-closed}\em\bsp
It is clearly decidable whether an evaluation-free wff is
syntactically closed.  Is it also decidable whether a
non-evaluation-free wff $\textbf{A}_\alpha$ is syntactically closed
(i.e., ${} \models \mname{syn-closed}_{o\epsilon} \,
\synbrack{\textbf{A}_\alpha}$ holds)?  Since {\qzerouqe} is
undecidable, it follows that it is undecidable whether ${} \models
\mname{syn-closed}_{o\epsilon} \, \synbrack{\textbf{A}_\alpha}$ holds
when $\textbf{A}_\alpha$ has the form \[\sembrack{\If \, \textbf{B}_o
  \, \synbrack{\textbf{c}_\alpha} \,
  \synbrack{\textbf{x}_\alpha}}_\alpha,\] where $\textbf{c}_\alpha$ is
a primitive constant.  Therefore, it undecidable whether a
non-evaluation-free wff is syntactically closed.\esp
\end{note}

\begin{lem} [Semantically Closed]\label{lem:sem-closed}
Let $\sM$ be a normal general model for {\qzerouqe}.

\be

  \item If $\textbf{A}_\alpha$ is evaluation-free and syntactically
    closed, then $\textbf{A}_\alpha$ is semantically closed.

  \item If $\textbf{A}_\epsilon$ is semantically closed, then either
    $\sM \models \textbf{A}_\epsilon = \synbrack{\textbf{B}_\beta}$
    for some $\textbf{B}_\beta$ or $\sM \models
    \sembrack{\textbf{A}_\epsilon}_\gamma \QuasiEqual
    \Undefined_\gamma$ for all $\gamma \in \sT$.

  \item \bsp If $\textbf{A}_\epsilon$ is semantically closed, $\sM \models
    \mname{syn-closed}_{o\epsilon} \, \textbf{A}_\epsilon$, and $\sM
    \models \mname{eval-free}^{\alpha}_{o\epsilon} \,
    \textbf{A}_\epsilon$, then $\sembrack{\textbf{A}_\epsilon}_\alpha$
    is semantically closed.\esp

\ee

\end{lem}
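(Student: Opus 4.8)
The plan is to prove the three parts in order, deriving part~3 from parts~1 and~2. \emph{Part~1.} Suppose $\textbf{A}_\alpha$ is evaluation-free and syntactically closed, i.e.\ $\mname{syn-closed}_{o\epsilon} \, \synbrack{\textbf{A}_\alpha}$ denotes $\mname{T}$ --- a legitimate phrasing, since that wff is invariable by Lemma~\ref{lem:evaluation-free}(1). Fix any normal general model $\sM'$. Since $\synbrack{\textbf{A}_\alpha}$ denotes the defined value $\sE(\textbf{A}_\alpha)$, unfolding the abstraction $\mname{syn-closed}_{o\epsilon}$ through the valuation clauses shows that $\sM' \models \mname{syn-closed}_{o\epsilon} \, \synbrack{\textbf{A}_\alpha}$ says exactly that every $d \in \sD_\epsilon$ with $\mname{var}_{o\epsilon}(d) = \mname{T}$ satisfies $\mname{not-free-in}_{o\epsilon\epsilon}(d)(\sE(\textbf{A}_\alpha)) = \mname{T}$. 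For each variable $\textbf{x}_\beta$ we have $\mname{var}_{o\epsilon}(\synbrack{\textbf{x}_\beta}) = \mname{T}$ by Specification~2.1, so $\sM' \models \mname{not-free-in}_{o\epsilon\epsilon} \, \synbrack{\textbf{x}_\beta} \, \synbrack{\textbf{A}_\alpha}$, whence $\textbf{A}_\alpha$ is independent of $\set{\textbf{x}_\beta}$ in $\sM'$ by Lemma~\ref{lem:evaluation-free}(2). Since $\textbf{A}_\alpha$ is evaluation-free, its value in $\sM'$ depends on at most the finitely many variables occurring free in it (the contrast highlighted at the end of Section~\ref{subsec:inf-dep}, provable by a routine induction on size), so chaining the individual independences over those finitely many variables gives that $\textbf{A}_\alpha$ is independent of the set of \emph{all} variables in $\sM'$. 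As $\sM'$ was arbitrary, $\textbf{A}_\alpha$ is semantically closed.

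\emph{Part~2.} Since $\textbf{A}_\epsilon$ is semantically closed, $\sV^{\cal M}_\phi(\textbf{A}_\epsilon)$ is one and the same value $d$, or is undefined, for all $\phi \in \mname{assign}(\sM)$. If it is undefined, the evaluation clause of the definition of a general model makes $\sV^{\cal M}_\phi(\sembrack{\textbf{A}_\epsilon}_\gamma)$ equal $\mname{F}$ when $\gamma = o$ and undefined when $\gamma \not= o$; as $\Undefined_o$ denotes $\mname{F}$ and $\Undefined_\gamma$ is undefined for $\gamma \not= o$, this gives $\sM \models \sembrack{\textbf{A}_\epsilon}_\gamma \QuasiEqual \Undefined_\gamma$ for all $\gamma \in \sT$, the second alternative. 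If $d$ is a standard construction, write $d = \sE(\textbf{B}_\beta)$; then by the semantics of quotation $\sV^{\cal M}_\phi(\synbrack{\textbf{B}_\beta}) = \sE(\textbf{B}_\beta) = \sV^{\cal M}_\phi(\textbf{A}_\epsilon)$ for all $\phi$, so $\sM \models \textbf{A}_\epsilon = \synbrack{\textbf{B}_\beta}$, the first alternative. If $d$ is nonstandard, Lemma~\ref{lem:eval-nonstand} (applicable at every $\phi$, since $d$ is the common value) again gives $\sM \models \sembrack{\textbf{A}_\epsilon}_\gamma \QuasiEqual \Undefined_\gamma$ for all $\gamma$.

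\emph{Part~3.} Apply part~2 to $\textbf{A}_\epsilon$. In its second alternative $\sembrack{\textbf{A}_\epsilon}_\alpha \QuasiEqual \Undefined_\alpha$ holds throughout $\sM$, so $\sembrack{\textbf{A}_\epsilon}_\alpha$ has a constant value ($\mname{F}$ or undefined) and is trivially semantically closed. In its first alternative $\sM \models \textbf{A}_\epsilon = \synbrack{\textbf{B}_\beta}$, so the two model-local hypotheses transfer to $\synbrack{\textbf{B}_\beta}$: from $\sM \models \mname{eval-free}^{\alpha}_{o\epsilon} \, \synbrack{\textbf{B}_\beta}$ the $\mname{wff}^{\alpha}_{o\epsilon}$ conjunct in the definition of $\mname{eval-free}^{\alpha}_{o\epsilon}$ forces $\beta = \alpha$ (by the specifications of $\mname{wff}^{\alpha}_{o\epsilon}$, notably the incompatibility axiom Specification~6.13), and Proposition~\ref{prop:eval-free} then yields that $\textbf{B}_\alpha$ is evaluation-free; from $\sM \models \mname{syn-closed}_{o\epsilon} \, \synbrack{\textbf{B}_\alpha}$, together with the invariability of this wff for evaluation-free $\textbf{B}_\alpha$ (Lemma~\ref{lem:evaluation-free}(1)), we get that $\textbf{B}_\alpha$ is syntactically closed. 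Part~1 now makes $\textbf{B}_\alpha$ semantically closed. Finally, since $\textbf{B}_\alpha$ is an evaluation-free $\wff{\alpha}$, the evaluation clause gives the disquotation identity $\sV^{\cal M}_\phi(\sembrack{\synbrack{\textbf{B}_\alpha}}_\alpha) \QuasiEqual \sV^{\cal M}_\phi(\textbf{B}_\alpha)$ for all $\phi$; as $\sM \models \textbf{A}_\epsilon = \synbrack{\textbf{B}_\alpha}$, the same relation holds with $\textbf{A}_\epsilon$ in place of $\synbrack{\textbf{B}_\alpha}$, so $\sV^{\cal M}_\phi(\sembrack{\textbf{A}_\epsilon}_\alpha)$ agrees under every assignment with the assignment-independent value of $\textbf{B}_\alpha$, whence $\sembrack{\textbf{A}_\epsilon}_\alpha$ is semantically closed.

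The main obstacle I expect is the bookkeeping in part~3: transporting the two $\sM$-local hypotheses across the identity $\sM \models \textbf{A}_\epsilon = \synbrack{\textbf{B}_\beta}$, pinning down $\beta = \alpha$ from the type-superscripted logical constants, and invoking the disquotation behaviour of evaluation on an evaluation-free quotation in exactly the form the semantics permits (handling the $\alpha = o$ versus $\alpha \not= o$ split for definedness). A secondary point: part~1 leans on the finiteness of variable-dependence for evaluation-free wffs, stated only informally in Section~\ref{subsec:inf-dep}, so that step should be underpinned by an explicit induction on size; and part~2 genuinely uses Lemma~\ref{lem:eval-nonstand}, so the standard-versus-nonstandard case split for the value of $\textbf{A}_\epsilon$ is precisely where the possible presence of nonstandard constructions in a general model must be reckoned with.
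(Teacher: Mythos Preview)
Your proposal is correct and follows essentially the same route as the paper: Part~1 via Lemma~\ref{lem:evaluation-free}(2), Part~2 by case-splitting on whether $\sV^{\cal M}_\phi(\textbf{A}_\epsilon)$ is undefined, a standard construction, or nonstandard, and Part~3 by applying Part~2 and then Part~1 to the witness $\textbf{B}_\beta$. You are in fact slightly more careful than the paper in two places---you make explicit the finiteness-chaining step in Part~1 and the $\beta=\alpha$ identification via Specification~6.13 in Part~3---both of which the paper elides.
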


\begin{proof}

\medskip

\noindent \textbf{Part 1} {\sglsp} Follows immediately from part 2 of
Lemma~\ref{lem:evaluation-free}.

\medskip

\noindent \textbf{Part 2} {\sglsp} Assume $\textbf{A}_\epsilon$ is
semantically closed.  Let $\phi \in \mname{assign}(\sM)$.  If $V^{\cal
  M}_{\phi}(\textbf{A}_\epsilon)$ is undefined or $\sE^{-1}(V^{\cal
  M}_{\phi}(\textbf{A}_\epsilon))$ is undefined, then $\sM \models
\sembrack{\textbf{A}_\epsilon}_\gamma \QuasiEqual \Undefined_\gamma$
for all $\gamma \in \sT$.  So we may assume $\sE^{-1}(V^{\cal
  M}_{\phi}(\textbf{A}_\epsilon))$ is some wff $\textbf{B}_\beta$.
Then $V^{\cal M}_{\phi}(\synbrack{\textbf{B}_\beta}) =
\sE(\textbf{B}_\beta) = \sE(\sE^{-1}(V^{\cal
  M}_{\phi}(\textbf{A}_\epsilon))) = V^{\cal
  M}_{\phi}(\textbf{A}_\epsilon)$.  The hypothesis implies
$\sE^{-1}(V^{\cal M}_{\phi}(\textbf{A}_\epsilon))$ does not depend on
$\phi$.  Hence $\sM \models \textbf{A}_\epsilon =
\synbrack{\textbf{B}_\beta}$.

\medskip

\noindent \textbf{Part 3} {\sglsp} Assume (a) $\textbf{A}_\epsilon$ is
semantically closed, (b) $\sM \models \mname{syn-closed}_{o\epsilon}
\, \textbf{A}_\epsilon$, and (c)~$\sM \models
\mname{eval-free}^{\alpha}_{o\epsilon} \, \textbf{A}_\epsilon$.  (a)
and part 2 of this lemma imply either there is some
$\textbf{B}_\alpha$ such that (d) $\sM \models \textbf{A}_\epsilon =
\synbrack{\textbf{B}_\alpha}$ or $\sM \models
\sembrack{\textbf{A}_\epsilon}_\alpha \QuasiEqual \Undefined_\alpha$.
$\Undefined_\alpha$ is semantically closed, so we may assume (d).
(b), (c), and (d) imply (e) $\sM \models
\mname{syn-closed}_{o\epsilon} \, \synbrack{\textbf{B}_\alpha}$ and
(f) $\sM \models \mname{eval-free}_{o\epsilon} \,
\synbrack{\textbf{B}_\alpha}$.  (f) implies $\textbf{B}_\alpha$ is
evaluation-free by Proposition~\ref{prop:eval-free}, and this and (e)
imply $\textbf{B}_\alpha$ is syntactically closed by part 1 of
Lemma~\ref{lem:evaluation-free}.  Thus $\textbf{B}_\alpha$ is
semantically closed by part 1 of this lemma.  Therefore,
$\sembrack{\textbf{A}_\epsilon}_\alpha$ is semantically closed since
$\sM \models \textbf{B}_\alpha \QuasiEqual
\sembrack{\synbrack{\textbf{B}_\alpha}}_\alpha$ by~(f) and $\sM
\models \sembrack{\synbrack{\textbf{B}_\alpha}}_\alpha =
\sembrack{\textbf{A}_\epsilon}_\alpha$ by (d).
\end{proof}

\subsection{Properties of $\mname{not-free-in}_{o\epsilon\epsilon}$}

\begin{lem} [Not Free In] \label{lem:not-free-in}
Let $\sM$ be a normal general model for {\qzerouqe}.

\be

  \item If $X$ is a set of variables such that $\sM \models
    \mname{not-free-in}_{o\epsilon\epsilon} \,
    \synbrack{\textbf{x}_\alpha} \, \synbrack{\textbf{B}_\beta}$ for
    all $\textbf{x}_\alpha \in X$, then $\textbf{B}_\beta$ is
    independent of $X$ in $\sM$.

  \item If $\sM \models \mname{not-free-in}_{o\epsilon\epsilon} \,
    \synbrack{\textbf{x}_\alpha} \, \synbrack{\textbf{B}_\beta}$,
    then \[V^{\cal M}_{\phi}(\textbf{B}_\beta) \QuasiEqual V^{\cal
      M}_{\phi[{\bf x}_\alpha \mapsto d]}(\textbf{B}_\beta)\] for all
    $\phi \in \mname{assign}(\sM)$ and all $d \in \sD_\alpha$.

\ee
\end{lem}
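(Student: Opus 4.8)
Since $\textbf{B}_\beta$ being independent of $\set{\textbf{x}_\alpha}$ in $\sM$ is, by definition, exactly the assertion $\sV^{\cal M}_\phi(\textbf{B}_\beta) \QuasiEqual \sV^{\cal M}_{\phi[{\bf x}_\alpha \mapsto d]}(\textbf{B}_\beta)$ for all $\phi$ and $d$, Part~2 is the special case of Part~1 with $X = \set{\textbf{x}_\alpha}$, so it suffices to prove Part~1.  The plan is to prove Part~1 by strong induction on the complexity of $\textbf{B}_\beta$ (the lexicographically ordered pair counting the evaluations of $\textbf{B}_\beta$ not within a quotation and then the size of $\textbf{B}_\beta$), the induction hypothesis being that Part~1 holds for every wff of strictly smaller complexity, for every set of variables and every pair of assignments.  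So fix $X$ with $\sM \models \mname{not-free-in}_{o\epsilon\epsilon} \, \synbrack{\textbf{x}_\alpha} \, \synbrack{\textbf{B}_\beta}$ for all $\textbf{x}_\alpha \in X$, and fix $\phi,\phi' \in \mname{assign}(\sM)$ agreeing on every variable not in $X$; we may assume $X \neq \emptyset$.  Then split on the seven syntactic kinds of $\textbf{B}_\beta$.

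The cases other than evaluation are routine and essentially reproduce the corresponding fact for {\qzerou}.  A primitive constant and a quotation have assignment-independent values.  If $\textbf{B}_\beta$ is a variable $\textbf{y}_\gamma$, then either $\textbf{y}_\gamma \notin X$, where $\phi$ and $\phi'$ agree, or $\textbf{y}_\gamma \in X$, in which case $\sM \models \mname{not-free-in}_{o\epsilon\epsilon} \, \synbrack{\textbf{y}_\gamma} \, \synbrack{\textbf{y}_\gamma}$, contradicting Specifications~2.1 and~7.1.  If $\textbf{B}_\beta$ is a function application or a conditional, the corresponding instance of Specification~7.4 or~7.7 applies (its $\mname{var}$-hypothesis by Specification~2.1 and its definedness hypothesis because $\synbrack{\textbf{B}_\beta}$ denotes $\sE(\textbf{B}_\beta)$ by Proposition~\ref{prop:sem-of-E}), pushing the not-free-in assertion down to each immediate subwff; the induction hypothesis together with the semantics of function application / conditional then finishes the case.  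If $\textbf{B}_\beta$ is an abstraction $[\lambda\textbf{z}_\gamma\textbf{C}_\delta]$, then Specification~7.6 (with $\synbrack{\textbf{x}_\alpha} \neq \synbrack{\textbf{z}_\gamma}$ supplied by Specifications~4 and~6) gives $\sM \models \mname{not-free-in}_{o\epsilon\epsilon} \, \synbrack{\textbf{x}_\alpha} \, \synbrack{\textbf{C}_\delta}$ for every $\textbf{x}_\alpha \in X \setminus \set{\textbf{z}_\gamma}$; since $\phi[\textbf{z}_\gamma \mapsto e]$ and $\phi'[\textbf{z}_\gamma \mapsto e]$ agree on every variable not in $X \setminus \set{\textbf{z}_\gamma}$, the induction hypothesis applied to $\textbf{C}_\delta$ shows the two abstraction-functions agree at each $e \in \sD_\gamma$, hence coincide.

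The substantive case is the evaluation $\textbf{B}_\beta = [\mname{e}\textbf{C}_\epsilon\textbf{z}_\beta]$.  Here Specification~7.9 applies (its hypotheses holding by Specifications~2.1, 6.1 and Proposition~\ref{prop:sem-of-E}), so from the not-free-in hypothesis we obtain $\sM \models \mname{syn-closed}_{o\epsilon} \, \synbrack{\textbf{C}_\epsilon}$, $\sM \models \mname{eval-free}^{\epsilon}_{o\epsilon} \, \synbrack{\textbf{C}_\epsilon}$, $\sM \models \mname{eval-free}^{\beta}_{o\epsilon} \, \sembrack{\synbrack{\textbf{C}_\epsilon}}_\epsilon$, and $\sM \models \mname{not-free-in}_{o\epsilon\epsilon} \, \synbrack{\textbf{x}_\alpha} \, \sembrack{\synbrack{\textbf{C}_\epsilon}}_\epsilon$ for all $\textbf{x}_\alpha \in X$.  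The second clause makes $\textbf{C}_\epsilon$ evaluation-free (Proposition~\ref{prop:eval-free}); then $\mname{syn-closed}_{o\epsilon} \, \synbrack{\textbf{C}_\epsilon}$ is invariable by part~1 of Lemma~\ref{lem:evaluation-free}, so the first clause makes $\textbf{C}_\epsilon$ syntactically closed, hence semantically closed by part~1 of Lemma~\ref{lem:sem-closed}; thus $\sV^{\cal M}_\psi(\textbf{C}_\epsilon)$ is a single value (or uniformly undefined), independent of $\psi$.  If that value is undefined, or is defined but is not $\sE(\textbf{G}_\beta)$ for any evaluation-free $\wff{\beta}$ $\textbf{G}_\beta$, then the semantics of evaluation makes $\sV^{\cal M}_\phi(\textbf{B}_\beta)$ and $\sV^{\cal M}_{\phi'}(\textbf{B}_\beta)$ both $\mname{F}$ (if $\beta = o$) or both undefined, and we are done.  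Otherwise $\sV^{\cal M}_\psi(\textbf{C}_\epsilon) = \sE(\textbf{G}_\beta)$ with $\textbf{G}_\beta$ an evaluation-free $\wff{\beta}$; then $\sembrack{\synbrack{\textbf{C}_\epsilon}}_\epsilon$ and $\synbrack{\textbf{G}_\beta}$ have the common value $\sE(\textbf{G}_\beta)$ in $\sM$, so the fourth clause transfers to $\sM \models \mname{not-free-in}_{o\epsilon\epsilon} \, \synbrack{\textbf{x}_\alpha} \, \synbrack{\textbf{G}_\beta}$ for all $\textbf{x}_\alpha \in X$.  Since $\textbf{G}_\beta$ is evaluation-free its complexity is $(0,\cdot)$, which is strictly below the complexity $(m,n)$ of $\textbf{B}_\beta$ because the outermost $\mname{e}$ of $\textbf{B}_\beta$ is not within a quotation, so $m \geq 1$; hence the induction hypothesis applies to $\textbf{G}_\beta$ and gives $\sV^{\cal M}_\phi(\textbf{G}_\beta) \QuasiEqual \sV^{\cal M}_{\phi'}(\textbf{G}_\beta)$.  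Finally, since $\sV^{\cal M}_\phi(\textbf{C}_\epsilon) = \sV^{\cal M}_{\phi'}(\textbf{C}_\epsilon) = \sE(\textbf{G}_\beta)$, the semantics of evaluation gives $\sV^{\cal M}_\phi(\textbf{B}_\beta) \QuasiEqual \sV^{\cal M}_\phi(\textbf{G}_\beta)$ and $\sV^{\cal M}_{\phi'}(\textbf{B}_\beta) \QuasiEqual \sV^{\cal M}_{\phi'}(\textbf{G}_\beta)$, whence $\sV^{\cal M}_\phi(\textbf{B}_\beta) \QuasiEqual \sV^{\cal M}_{\phi'}(\textbf{B}_\beta)$, completing the induction.

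I expect the evaluation case to be the main obstacle, since it is the only one that actually exercises the semantics-dependence of ``not free in''.  Three ingredients must line up there: the value of $\textbf{C}_\epsilon$ must be assignment-independent, so that the wff $\textbf{G}_\beta$ it represents is well defined --- this is where syntactic closedness feeds through Lemmas~\ref{lem:evaluation-free} and~\ref{lem:sem-closed}; the complexity must strictly drop when passing from $\textbf{B}_\beta$ to $\textbf{G}_\beta$ --- which is precisely why the complexity measure counts non-quoted evaluations before size; and the not-free-in information must be transported across the semantic identity $\sembrack{\synbrack{\textbf{C}_\epsilon}}_\epsilon = \synbrack{\textbf{G}_\beta}$ in $\sM$.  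The remaining cases are bookkeeping, the only mild care being the verification of the $\mname{var}$- and definedness side conditions in the Specification~7 schemas, for which Specifications~2, 4, 6 and Proposition~\ref{prop:sem-of-E} suffice.
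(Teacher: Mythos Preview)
Your proposal is correct and follows essentially the same approach as the paper: induction on the complexity of $\textbf{B}_\beta$, case-splitting on the seven syntactic kinds, with the evaluation case handled via Specification~7.9, Proposition~\ref{prop:eval-free}, and Lemmas~\ref{lem:evaluation-free} and~\ref{lem:sem-closed} to obtain semantic closedness of the body and then apply the induction hypothesis to the evaluation-free wff it represents; Part~2 is the singleton case of Part~1. The only cosmetic difference is that the paper organizes the case split around the nine clauses of Specification~7 rather than the seven wff kinds, and names the represented wff $\sE^{-1}(\sV^{\cal M}_\phi(\textbf{A}_\epsilon))$ rather than your $\textbf{G}_\beta$.
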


\begin{proof} 

\medskip

\noindent \textbf{Part 1} {\sglsp} Let $X$ be a set of variables.
Without loss of generality, we may assume that $X$ is nonempty.  We
will show that, if \[\sM \models
\mname{not-free-in}_{o\epsilon\epsilon} \,
\synbrack{\textbf{x}_\alpha} \, \synbrack{\textbf{D}_\delta} \mbox{
  for all $\textbf{x}_\alpha \in X$} {\dblsp} \mbox{[designated
    $H(\synbrack{\textbf{D}_\delta},X)$]},\] then
\[\textbf{D}_\delta \mbox{ is independent of $X$ in $\sM$} {\dblsp} \mbox{[designated
        $C(\textbf{D}_\delta,X)$]}.\] Our proof is by induction on the
complexity of $\textbf{D}_\delta$.  There are 9 cases corresponding to
the 9 parts of Specification 7 used to specify
$\mname{not-free-in}_{o\epsilon\epsilon} \,
\synbrack{\textbf{x}_\alpha} \, \synbrack{\textbf{D}_\delta}$.

\be

  \item[] \textbf{Case 1}: $\textbf{D}_\delta$ is a variable
    $\textbf{x}_\alpha$.  Assume $H(\synbrack{\textbf{x}_\alpha},X)$
    is true.  Then $\textbf{x}_\alpha \not\in X$ by the specification of
    $\mname{not-free-in}_{o\epsilon\epsilon}$.  Hence
    $C(\textbf{x}_\alpha,X)$ is obviously true.

  \item[] \textbf{Case 2}: $\textbf{D}_\delta$ is a primitive constant
    $\textbf{c}_\alpha$.  Then $C(\textbf{c}_\alpha,X)$ is true since
    every primitive constant is semantically closed by
    Proposition~\ref{prop:sem-closed}.

  \item[] \bsp\textbf{Case 3}: $\textbf{D}_\delta$ is
    $\textbf{A}_{\alpha\beta}\textbf{B}_\beta$.  Assume
    $H(\synbrack{\textbf{A}_{\alpha\beta}\textbf{B}_\beta},X)$ is
    true.  Then $H(\synbrack{\textbf{A}_{\alpha\beta}},X)$ and
    $H(\synbrack{\textbf{B}_\beta},X)$ are true by the specification of
    $\mname{not-free-in}_{o\epsilon\epsilon}$.  Hence
    $C(\textbf{A}_{\alpha\beta},X)$ and $C(\textbf{B}_\beta,X)$ are
    true by the induction hypothesis.  These imply
    $C(\textbf{A}_{\alpha\beta}\textbf{B}_\beta,X)$ by the semantics
    of function application. \esp

  \item[] \textbf{Case 4}: $\textbf{D}_\delta$ is
    $\lambda\textbf{x}_\alpha \textbf{A}_\beta$.  Assume
    $H(\synbrack{\lambda\textbf{x}_\alpha \textbf{A}_\beta},X)$ is
    true.  $C(\lambda\textbf{x}_\alpha
    \textbf{A}_\beta,\set{\textbf{x}_\alpha})$ is true by the
    semantics of function abstraction.
    $H(\synbrack{\lambda\textbf{x}_\alpha \textbf{A}_\beta},X)$
    implies $H(\synbrack{\textbf{A}_\beta},X \setminus
    \set{\textbf{x}_\alpha})$ by the specification of
    $\mname{not-free-in}_{o\epsilon\epsilon}$.  Hence
    $C(\textbf{A}_\beta,X \setminus \set{\textbf{x}_\alpha})$ is true
    by the induction hypothesis.  This implies
    $C(\lambda\textbf{x}_\alpha \textbf{A}_\beta,X \setminus
    \set{\textbf{x}_\alpha})$ by the semantics of function
    abstraction.  Therefore, $C(\lambda\textbf{x}_\alpha
    \textbf{A}_\beta,X)$ holds.

  \item[] \textbf{Case 5}: $\textbf{D}_\delta$ is $\If \, \textbf{A}_o
    \, \textbf{B}_\alpha \, \textbf{C}_\alpha$.  Similar to Case 3.

  \item[] \textbf{Case 6}: $\textbf{D}_\delta$ is
    $\synbrack{\textbf{A}_\alpha}$.  Then
    $C(\synbrack{\textbf{A}_\alpha},X)$ is true since every quotation
    is semantically closed by Proposition~\ref{prop:sem-closed}.

  \item[] \bsp\textbf{Case 7}: $\textbf{D}_\delta$ is
    $\sembrack{\textbf{A}_{\epsilon}}_\alpha$.  Assume
    $H(\synbrack{\sembrack{\textbf{A}_{\epsilon}}_\alpha},X)$ is true.
    Then (a)~$\sM \models \mname{syn-closed}_{o\epsilon} \,
    \synbrack{\textbf{A}_{\epsilon}}$, (b)~$\sM \models
    \mname{eval-free}^{\epsilon}_{o\epsilon} \,
    \synbrack{\textbf{A}_{\epsilon}}$, (c)~$\sM \models
    \mname{eval-free}^{\alpha}_{o\epsilon} \, \textbf{A}_{\epsilon}$,
    and (d) $H(\textbf{A}_{\epsilon},X)$ by the specification of
    $\mname{not-free-in}_{o\epsilon\epsilon}$ and the fact $X$ is
    nonempty.  (a) and (b) imply (e) $\textbf{A}_{\epsilon}$ is
    semantically closed by Proposition~\ref{prop:eval-free} and part~1
    of Lemma~\ref{lem:sem-closed}.  (e) and part 2 of
    Lemma~\ref{lem:sem-closed} implies either
    $\sembrack{\textbf{A}_{\epsilon}}_\alpha$ is semantically closed
    or (f) $\sE^{-1}(\sV^{\cal M}_{\phi}(\textbf{A}_{\epsilon}))$ is
    defined for all $\phi \in \mname{assign}(\sM)$.  So we may assume
    (f).  (c) and (f) imply (g)~$\sE^{-1}(\sV^{\cal
      M}_{\phi}(\textbf{A}_{\epsilon}))$ is an evaluation-free
    $\wff{\alpha}$ for all $\phi \in \mname{assign}(\sM)$, and thus
    the complexity of $\sE^{-1}(\sV^{\cal
      M}_{\phi}(\textbf{A}_{\epsilon}))$ is less than the complexity
    of $\sembrack{\textbf{A}_{\epsilon}}_\alpha$ (for any $\phi \in
    \mname{assign}(\sM)$).  Hence (d) implies $C(\sE^{-1}(\sV^{\cal
      M}_{\phi}(\textbf{A}_{\epsilon})),X)$ by the induction
    hypothesis.  Let $\phi,\phi' \in \mname{assign}(\sM)$ such that
    $\phi(\textbf{x}_\alpha) = \phi'(\textbf{x}_\alpha)$ whenever
    $\textbf{x}_\alpha \not\in X$.  Then
    \begin{align} \setcounter{equation}{0}
    &
    \sV^{\cal M}_{\phi}(\sembrack{\textbf{A}_{\epsilon}}_\beta) \\
    &\QuasiEqual
    \sV^{\cal M}_{\phi}(\sE^{-1}(\sV^{\cal M}_{\phi}(\textbf{A}_{\epsilon})))\\
    &\QuasiEqual 
    \sV^{\cal M}_{\phi'}(\sE^{-1}(\sV^{\cal M}_{\phi}(\textbf{A}_{\epsilon})))\\
    &\QuasiEqual 
    \sV^{\cal M}_{\phi'}(\sE^{-1}(\sV^{\cal M}_{\phi'}(\textbf{A}_{\epsilon})))\\
    &\QuasiEqual 
    \sV^{\cal M}_{\phi'}(\sembrack{\textbf{A}_{\epsilon}}_\beta).
    \end{align}
    (2) is by (g) and the semantics of evaluation; (3) is by
    $C(\sE^{-1}(\sV^{\cal M}_{\phi}(\textbf{A}_{\epsilon}))),X)$; (4)
    is by (e); and (5) is again by (g) and the semantics of
    evaluation.  This implies
    $C(\sembrack{\textbf{A}_{\epsilon}}_\beta,X)$. \esp

\ee

\medskip

\noindent \textbf{Part 2} {\sglsp} This part of the lemma is the
special case of part~1 when $X$ is a singleton.
\end{proof}

\subsection{Properties of $\mname{cleanse}_{\epsilon\epsilon}$}

\begin{lem} [Cleanse] \label{lem:cleanse}
Let $\sM$ be a normal general model for {\qzerouqe}.
\be

  \item If $\sM \models [\mname{cleanse}_{\epsilon\epsilon} \,
    \synbrack{\textbf{D}_\delta}] \IsDefApp$, then
    $\mname{cleanse}_{\epsilon\epsilon} \,
    \synbrack{\textbf{D}_\delta}$ is semantically closed and \[\sM
    \models \mname{eval-free}^{\delta}_{o\epsilon} \,
            [\mname{cleanse}_{\epsilon\epsilon} \,
              \synbrack{\textbf{D}_\delta}].\]

  \item Either $\sM \models \mname{cleanse}_{\epsilon\epsilon} \,
    \synbrack{\textbf{A}_\alpha} = \synbrack{\textbf{B}_\alpha}$ for
    some evaluation-free $\textbf{B}_\alpha$ or $\sM \models
    \sembrack{\mname{cleanse}_{\epsilon\epsilon} \,
      \synbrack{\textbf{A}_\alpha}}_\gamma \QuasiEqual
    \Undefined_\gamma$ for all $\gamma \in \sT$.

    \item \bsp If $\textbf{C}_\gamma$ contains an evaluation
    $\sembrack{\textbf{A}_\epsilon}_\alpha$ not in a quotation such
    that, for some variable $\textbf{x}_\beta$, $\sM \models
    \NegAlt[\mname{not-free-in}_{o\epsilon\epsilon} \, \synbrack{\textbf{x}_\beta}
    \, \synbrack{\textbf{A}_\epsilon}]$, then \[\sM \models
       [\mname{cleanse}_{\epsilon\epsilon} \,
         \synbrack{\textbf{C}_\gamma}]\IsUndefApp.\]\esp

  \item If $\sM \models [\mname{cleanse}_{\epsilon\epsilon} \,
    \synbrack{\textbf{D}_\delta}]\IsDefApp$, then \[\sM \models
    \sembrack{\mname{cleanse}_{\epsilon\epsilon} \,
      \synbrack{\textbf{D}_\delta}}_\delta \QuasiEqual
    \textbf{D}_\delta\]

\ee
\end{lem}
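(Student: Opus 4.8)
The plan is to prove parts 1, 3, and 4 simultaneously by induction on the size of $\synbrack{\textbf{D}_\delta}$ (equivalently of $\textbf{D}_\delta$), and to deduce part 2 from part 1. The induction splits into the seven cases given by the seven clauses of Specification 8 --- $\textbf{D}_\delta$ a variable, a primitive constant, a function application, a function abstraction, a conditional, a quotation, or an evaluation. Three remarks apply throughout. First, every wff of the form $\mname{cleanse}_{\epsilon\epsilon}\,\synbrack{\textbf{D}_\delta}$, and likewise every wff $\sembrack{\mname{cleanse}_{\epsilon\epsilon}\,\sE(\textbf{A}_\epsilon)}_\gamma$, contains variables only inside a quotation or as the second argument of an evaluation, hence is semantically closed by Proposition~\ref{prop:sem-closed}; so the semantic-closedness clause of part 1 is automatic, and ``defined'' may be read as ``$\sM \models {}\cdot{}\IsDefApp$'' since these wffs are invariable (a constant applied to $\sE$-images, which are literals by Proposition~\ref{prop:sem-of-E}). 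Second, each $\sE$-image is defined in $\sM$ and satisfies $\mname{wff}^{\gamma}_{o\epsilon}$ for the appropriate $\gamma$, so the relevant clause of Specification 8 is always applicable. Third, part 3 is vacuous when $\textbf{C}_\gamma$ is a variable, a primitive constant, or a quotation.

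For the variable, constant, and quotation cases, $\mname{cleanse}_{\epsilon\epsilon}\,\synbrack{\textbf{D}_\delta} = \synbrack{\textbf{D}_\delta}$ by Specifications 8.1, 8.2, 8.6; then $\sM \models \mname{eval-free}^{\delta}_{o\epsilon}\,\synbrack{\textbf{D}_\delta}$ by Specification 6 and Proposition~\ref{prop:eval-free}, and part 4 holds since $\synbrack{\textbf{D}_\delta}$ denotes $\sE(\textbf{D}_\delta)$ and $\sembrack{\synbrack{\textbf{D}_\delta}}_\delta = \sV^{\cal M}_{\phi}(\sE^{-1}(\sE(\textbf{D}_\delta))) \QuasiEqual \textbf{D}_\delta$ by the semantics of evaluation. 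For the application, abstraction, and conditional cases, Specification 8.3/8.4/8.5 equates $\mname{cleanse}_{\epsilon\epsilon}\,\synbrack{\textbf{D}_\delta}$ with $\mname{app}_{\epsilon\epsilon\epsilon}$ (resp.\ $\mname{abs}_{\epsilon\epsilon\epsilon}$, $\mname{cond}_{\epsilon\epsilon\epsilon\epsilon}$) applied to the cleansings of the immediate subwffs; when this is defined all those cleansings are defined, and part 1 follows from the induction hypothesis using Specifications 5 and 6 to propagate evaluation-freeness and typing, while part 4 follows from the induction hypothesis for the subwffs together with the semantics of application/abstraction/conditional, splitting on whether the cleansed subwffs denote standard constructions (Lemmas~\ref{lem:stand-constructions} and~\ref{lem:eval-nonstand}) and using that a standard construction $\mname{app}_{\epsilon\epsilon\epsilon}(\sE(\textbf{B}))(\sE(\textbf{C}))$ equals $\sE([\textbf{B}\textbf{C}])$. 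Part 3 in these cases is immediate: the offending evaluation lies in an immediate subwff whose cleansing is undefined by the induction hypothesis, hence so is $\mname{cleanse}_{\epsilon\epsilon}\,\synbrack{\textbf{C}_\gamma}$.

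The evaluation case carries the real weight. Here $\textbf{D}_\delta$ is $\sembrack{\textbf{A}_\epsilon}_\delta$, and by Specification 8.7 the value of $\mname{cleanse}_{\epsilon\epsilon}\,\synbrack{\textbf{D}_\delta}$ is that of $[\If\,[\mname{syn-closed}_{o\epsilon}\,\textbf{E}_\epsilon \Andd \mname{eval-free}^{\delta}_{o\epsilon}\,\sembrack{\textbf{E}_\epsilon}_\epsilon]\,\sembrack{\textbf{E}_\epsilon}_\epsilon\,\Undefined_\epsilon]$, where $\textbf{E}_\epsilon$ has the value of $\mname{cleanse}_{\epsilon\epsilon}\,\synbrack{\textbf{A}_\epsilon}$. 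By the induction hypothesis on $\textbf{A}_\epsilon$, whenever $\textbf{E}_\epsilon$ is defined it is semantically closed with $\sM \models \mname{eval-free}^{\epsilon}_{o\epsilon}\,\textbf{E}_\epsilon$; if moreover the guard $\mname{syn-closed}_{o\epsilon}\,\textbf{E}_\epsilon$ holds, Lemma~\ref{lem:sem-closed}(3) makes $\sembrack{\textbf{E}_\epsilon}_\epsilon$ semantically closed, so the guard and $\sembrack{\textbf{E}_\epsilon}_\epsilon$ are assignment-independent and the whole conditional is invariable. When the guard holds the value is that of $\sembrack{\textbf{E}_\epsilon}_\epsilon$, so $\mname{eval-free}^{\delta}_{o\epsilon}$ of the cleansing is exactly the second conjunct of the guard, giving part 1; and part 4 reduces, via the semantics of evaluation and $\sE^{-1}$ together with the induction hypothesis $\sembrack{\textbf{E}_\epsilon}_\epsilon \QuasiEqual \textbf{A}_\epsilon$ (part 4 for $\textbf{A}_\epsilon$), to the observation that $\sembrack{\mname{cleanse}_{\epsilon\epsilon}\,\synbrack{\textbf{D}_\delta}}_\delta$ and $\textbf{D}_\delta = \sembrack{\textbf{A}_\epsilon}_\delta$ are governed by the same three side conditions on $\sV^{\cal M}_{\phi}(\textbf{A}_\epsilon)$ and so are quasi-equal. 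When the guard fails the conditional denotes $\Undefined_\epsilon$, the cleansing is undefined, and parts 1 and 4 are vacuous.

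The step I expect to be hardest is part 3 in the evaluation case, where $\textbf{C}_\gamma$ is itself $\sembrack{\textbf{A}_\epsilon}_\gamma$ with $\textbf{x}_\beta$ free in $\textbf{A}_\epsilon$ in $\sM$: one must show that the guard $\mname{syn-closed}_{o\epsilon}\,\textbf{E}_\epsilon$ of Specification 8.7 fails (or $\textbf{E}_\epsilon$ is undefined), so that $\mname{cleanse}_{\epsilon\epsilon}\,\synbrack{\textbf{C}_\gamma}$ is $\Undefined_\epsilon$. This is delicate because cleansing is not free-variable preserving in general --- it can discard a variable that is free only inside an evaluation and can introduce new variables when it unfolds a closed evaluation --- so the argument must proceed by an inner induction on the size of $\textbf{A}_\epsilon$ that tracks, through each clause of Specification 8, which variables are forced to remain free, invoking part 3 of this lemma itself to eliminate the case of a nested evaluation whose argument is not syntactically closed, Specification 7 and Lemma~\ref{lem:not-free-in} to read off when $\textbf{x}_\beta$ counts as free, and part 4 (inductively) to identify the wff produced by a successfully cleansed evaluation. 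Finally, part 2 is a corollary of part 1: since $\mname{cleanse}_{\epsilon\epsilon}\,\synbrack{\textbf{A}_\alpha}$ is invariable, it is either defined --- whence semantically closed by part 1, so Lemma~\ref{lem:sem-closed}(2) applies, and in its first disjunct the $\mname{eval-free}^{\alpha}_{o\epsilon}$ clause of part 1 forces $\textbf{B}_\alpha$ to be evaluation-free of type $\alpha$ --- or undefined, in which case $\sembrack{\mname{cleanse}_{\epsilon\epsilon}\,\synbrack{\textbf{A}_\alpha}}_\gamma$ is $\mname{F}$ (if $\gamma = o$) or undefined, i.e.\ quasi-equal to $\Undefined_\gamma$, for every $\gamma \in \sT$.
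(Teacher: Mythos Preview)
Your approach is essentially the same as the paper's --- induction over the seven cases of Specification~8 --- and is correct. Two differences are worth noting.

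First, your observation that $\mname{cleanse}_{\epsilon\epsilon}\,\synbrack{\textbf{D}_\delta}$ is semantically closed by Proposition~\ref{prop:sem-closed} (it is a constant applied to a quotation) is a genuine simplification: the paper establishes semantic closedness inductively, invoking Lemma~\ref{lem:sem-closed} part~3 in the evaluation case, whereas you get it for free. This shortcut is sound and streamlines Part~1.

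Second, on Part~3 you and the paper diverge sharply. The paper disposes of Part~3 in one line (``Follows immediately from the specification of $\mname{cleanse}_{\epsilon\epsilon}$''), whereas you flag it as the hardest step and sketch an inner induction tracking how free variables survive cleansing. You are right that the paper's one-liner conceals real work: propagation of undefinedness through Specifications~8.3--8.5 reduces to the case $\textbf{C}_\gamma = \sembrack{\textbf{A}_\epsilon}_\alpha$, and then one must argue that a free variable in $\textbf{A}_\epsilon$ forces either $\mname{cleanse}_{\epsilon\epsilon}\,\synbrack{\textbf{A}_\epsilon}$ to be undefined or its $\mname{syn-closed}$ guard to fail. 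This does require an induction on $\textbf{A}_\epsilon$ that chases the witness to $\NegAlt[\mname{not-free-in}]$ through the clauses of Specification~7, matching each against the corresponding clause of Specification~8; the evaluation sub-case splits according to which of the four conjuncts of Specification~7.9 fails, and sub-cases~(i) and~(ii) (failure of $\mname{syn-closed}$ or $\mname{eval-free}^{\epsilon}$ on the body) need the inductive hypothesis. Your plan for this is sound, though you may find the argument is more uniform than you fear once you align the two specifications clause by clause. A minor point: since the evaluation sub-case~(iv) recurses on the \emph{value} of the body (which need not be a subwff), the induction should be on complexity rather than size, as the paper does elsewhere; for Parts~1 and~4 size suffices, since those recurse only on syntactic subwffs.
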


\begin{proof} 
Let $A(\synbrack{\textbf{D}_\delta})$ mean
$\mname{cleanse}(\synbrack{\textbf{D}_\delta})$.

\medskip

\noindent \textbf{Part 1} {\sglsp} Our proof is by induction on the
complexity of $\textbf{D}_\delta$.  There are 7 cases corresponding to
the 7 parts of Specification 8 used to specify
$A(\synbrack{\textbf{D}_\delta})$.

\be

  \item[] \textbf{Cases 1, 2, and 6}: $\textbf{D}_\delta$ is a
    variable, primitive constant, or quotation.  Then $\sM \models
    A(\synbrack{\textbf{D}_\delta}) = \synbrack{\textbf{D}_\delta}$ by
    the specification of $\mname{cleanse}_{\epsilon\epsilon}$.  Hence
    $A(\synbrack{\textbf{D}_\delta})$ is semantically closed since a
    quotation is semantically closed by
    Proposition~\ref{prop:sem-closed} and $\sM \models
    \mname{eval-free}^{\delta}_{o\epsilon} \,
    A(\synbrack{\textbf{D}_\delta})$ since a variable, primitive
    constant, or quotation is evaluation-free.

  \item[] \bsp \textbf{Case 3}: $\textbf{D}_\delta$ is
    $\textbf{A}_{\alpha\beta}\textbf{B}_\beta$.  Assume $\sM \models
    A(\synbrack{\textbf{A}_{\alpha\beta}\textbf{B}_\beta})\IsDefApp$.
    Then $\sM \models A(\synbrack{\textbf{A}_{\alpha\beta}})\IsDefApp$
    and $\sM \models A(\synbrack{\textbf{B}_\beta})\IsDefApp$ by the
    specification of $\mname{cleanse}_{\epsilon\epsilon}$.  It follows
    that $A(\synbrack{\textbf{A}_{\alpha\beta}\textbf{B}_\beta})$ is
    semantically closed and $\sM \models
    \mname{eval-free}^{\alpha}_{o\epsilon} \,
    A(\synbrack{\textbf{A}_{\alpha\beta}\textbf{B}_\beta})$ by the
    induction hypothesis and the specification of
    $\mname{cleanse}_{\epsilon\epsilon}$. \esp

  \item[] \textbf{Case 4}: $\textbf{D}_\delta$ is
    $\lambda\textbf{x}_\beta\textbf{A}_\alpha$.  Similar to Case 3.

  \item[] \textbf{Case 5}: $\textbf{D}_\delta$ is $\If \,
    \textbf{A}_o \, \textbf{B}_\alpha \, \textbf{C}_\alpha$.  Similar
      to the proof of Case 3.

  \item[]\bsp \textbf{Case 7}: $\textbf{D}_\delta$ is
    $\sembrack{\textbf{A}_{\epsilon}}_\alpha$.  Assume (a) $\sM
    \models
    A(\synbrack{\sembrack{\textbf{A}_{\epsilon}}_\alpha})\IsDefApp$.
    (a) implies (b)~$\sM \models \mname{syn-closed}_{o\epsilon} \,
    A(\synbrack{\textbf{A}_{\epsilon}})$, (c)~$\sM \models
    \mname{eval-free}^{\alpha}_{o\epsilon} \,
    \sembrack{A(\synbrack{\textbf{A}_{\epsilon}})}_\epsilon$, and 
    \[(\mbox{d})~\sM \models A(\synbrack{\sembrack{\textbf{A}_{\epsilon}}_\alpha}) \QuasiEqual
    \sembrack{A(\synbrack{\textbf{A}_{\epsilon}})}_\epsilon\] by the
    specification of $\mname{cleanse}_{\epsilon\epsilon}$.
    (a)~implies (e) $\sM \models
    A(\synbrack{\textbf{A}_{\epsilon}})\IsDefApp$, and (e)~implies (f)
    $A(\synbrack{\textbf{A}_{\epsilon}})$ is semantically closed and
    (g) $\sM \models \mname{eval-free}^{\epsilon}_{o\epsilon} \,
    A(\synbrack{\textbf{A}_{\epsilon}})$ by the induction hypothesis.
    (b), (f), and (g) imply (h)
    $\sembrack{A(\synbrack{\textbf{A}_{\epsilon}})}_\epsilon$ is
    semantically closed by part 3 of Lemma~\ref{lem:sem-closed}.
    Therefore, $A(\synbrack{\sembrack{\textbf{A}_{\epsilon}}_\alpha})$
    is semantically closed by (d) and (h). \esp

\ee

\medskip

\noindent \textbf{Part 2} {\sglsp} Follows easily from part 1 of this
lemma and part 2 of Lemma~\ref{lem:sem-closed}.

\medskip

\noindent \textbf{Part 3} {\sglsp} Follows immediately from the
specification of $\mname{cleanse}_{\epsilon\epsilon}$.

\medskip

\noindent \textbf{Part 4} {\sglsp} Assume 
\[\sM \models A(\synbrack{\textbf{D}_\delta})\IsDefApp \dblsp \mbox{[designated
    $H(\synbrack{\textbf{D}_\delta})$]}.\] We must show that \[\sM
\models \sembrack{A(\synbrack{\textbf{D}_\delta})}_\delta \QuasiEqual
\textbf{D}_\delta \dblsp \mbox{[designated
    $C(\synbrack{\textbf{D}_\delta})$]}.\] Our proof is by induction
on the complexity of $\textbf{D}_\delta$.  There are 7 cases
corresponding to the 7 parts of Specification 8 used to specify
$A(\synbrack{\textbf{D}_\delta})$.  Let $\phi \in
\mname{assign}(\sM)$.

\be

  \item[] \textbf{Case 1}: $\textbf{D}_\delta$ is $\textbf{x}_\alpha$.
    Then 
    \begin{align} \setcounter{equation}{0}
    &
    \sV^{\cal M}_{\phi}(\sembrack{A(\synbrack{\textbf{x}_\alpha})}_\alpha)\\
    &\QuasiEqual 
    \sV^{\cal  M}_{\phi}(\sembrack{\synbrack{\textbf{x}_\alpha}}_\alpha)\\
    &\QuasiEqual
    \sV^{\cal M}_{\phi}(\textbf{x}_\alpha).
    \end{align}
    (2) is by the specification of
    $\mname{cleanse}_{\epsilon\epsilon}$, and (3) is by the fact that
    $\textbf{x}_\alpha$ is evaluation-free and the semantics of
    evaluation.  Therefore, $C(\synbrack{\textbf{x}_\alpha})$ holds.

  \item[] \textbf{Case 2}: $\textbf{D}_\delta$ is a primitive constant
    $\textbf{c}_\alpha$.  Similar to Case 1.

  \item[] \textbf{Case 3}: $\textbf{D}_\delta$ is
    $\textbf{A}_{\alpha\beta}\textbf{B}_\beta$.
    $H(\synbrack{\textbf{A}_{\alpha\beta}\textbf{B}_\beta})$ implies
    $H(\synbrack{\textbf{A}_{\alpha\beta}})$ and
    $H(\synbrack{\textbf{B}_\beta})$ by the specification of
    $\mname{cleanse}_{\epsilon\epsilon}$.  These imply
    $C(\synbrack{\textbf{A}_{\alpha\beta}})$ and
    $C(\synbrack{\textbf{B}_\beta})$ by the induction hypothesis.
    Then
    \begin{align} \setcounter{equation}{0}
    &
    \sV^{\cal M}_{\phi}(\sembrack{A(\synbrack{\textbf{A}_{\alpha\beta}
    \textbf{B}_\beta})}_\alpha)\\
    &\QuasiEqual
    \sV^{\cal  M}_{\phi}(\sembrack{\mname{app}_{\epsilon\epsilon\epsilon} \, 
    A(\synbrack{\textbf{A}_{\alpha\beta}}) \, A(\synbrack{\textbf{B}_\beta})}_\alpha)\\
    &\QuasiEqual
    \sV^{\cal  M}_{\phi}(\sembrack{A(\synbrack{\textbf{A}_{\alpha\beta}})}_{\alpha\beta}
    \sembrack{A(\synbrack{\textbf{B}_\beta})}_\beta)\\
    &\QuasiEqual
    \sV^{\cal M}_{\phi}(\textbf{A}_{\alpha\beta}\textbf{B}_\beta).
    \end{align}
    \bsp
    (2) is by the specification of
    $\mname{cleanse}_{\epsilon\epsilon}$; (3) is by the semantics of
    $\mname{app}_{\epsilon\epsilon\epsilon}$ and evaluation; and (4)
    is by $C(\synbrack{\textbf{A}_{\alpha\beta}})$ and
    $C(\synbrack{\textbf{B}_\beta})$.  Therefore,
    $C(\synbrack{\textbf{A}_{\alpha\beta} \textbf{B}_\beta})$ holds.
    \esp

  \item[] \textbf{Case 4}: $\textbf{D}_\delta$ is
    $\lambda\textbf{x}_\beta\textbf{A}_\alpha$.
    $H(\synbrack{\lambda\textbf{x}_\beta\textbf{A}_\alpha})$ implies
    $H(\synbrack{\textbf{A}_\alpha})$ by the specification of
    $\mname{cleanse}_{\epsilon\epsilon}$.  This implies
    $C(\synbrack{\textbf{A}_\alpha})$ by the induction hypothesis and
    $A(\synbrack{\textbf{A}_\alpha})$ is semantically closed by part 1
    of this lemma.  Then
    \begin{align} \setcounter{equation}{0}
    &
    \sV^{\cal M}_{\phi}(\sembrack{A(\synbrack{\lambda\textbf{x}_\beta\textbf{A}_\alpha})}_{\alpha\beta})\\
    &\QuasiEqual
    \sV^{\cal M}_{\phi}(\sembrack{\mname{abs}_{\epsilon\epsilon\epsilon} \, 
    \synbrack{\textbf{x}_\beta} \, A(\synbrack{\textbf{A}_\alpha})}_{\alpha\beta})\\
    &\QuasiEqual
    \sV^{\cal M}_{\phi}(\lambda\textbf{x}_\beta
    \sembrack{A(\synbrack{\textbf{A}_\alpha})}_\alpha)\\
    &\QuasiEqual
    \sV^{\cal M}_{\phi}(\lambda\textbf{x}_\beta\textbf{A}_\alpha).
    \end{align}
    (2) is by the specification of
    $\mname{cleanse}_{\epsilon\epsilon}$; (3) is by the semantics of
    $\mname{abs}_{\epsilon\epsilon\epsilon}$ and evaluation and the
    fact that $A(\synbrack{\textbf{A}_\alpha})$ is semantically
    closed; and (4) is by $C(\synbrack{\textbf{A}_\alpha})$.
    Therefore,
    $C(\synbrack{\lambda\textbf{x}_\beta\textbf{A}_\alpha})$ holds.

  \item[] \textbf{Case 5}: $\textbf{D}_\delta$ is $\If \,
    \textbf{A}_o \, \textbf{B}_\alpha \, \textbf{C}_\alpha$.  Similar
      to Case 3.

  \item[] \textbf{Case 6}: $\textbf{D}_\delta$ is
    $\synbrack{\textbf{A}_{\alpha}}$.  Similar to Case 1.

  \item[] \textbf{Case 7}: $\textbf{D}_\delta$ is
    $\sembrack{\textbf{A}_{\epsilon}}_\alpha$.
    $H(\synbrack{\textbf{A}_{\epsilon}})$ is true by the proof for
    Case 7 of Part 1, and hence $C(\synbrack{\textbf{A}_{\epsilon}})$
    is true by the induction hypothesis.  Then
    \begin{align} \setcounter{equation}{0}
    &
    \sV^{\cal M}_{\phi}(\sembrack{A(\synbrack{\sembrack{\textbf{A}_{\epsilon}}_\alpha})}_\alpha)\\
    &\QuasiEqual 
    \sV^{\cal M}_{\phi}(\sembrack{\sembrack{A(\synbrack{\textbf{A}_\epsilon})}_\epsilon}_\alpha) \\
    &\QuasiEqual 
    \sV^{\cal M}_{\phi}(\sembrack{\textbf{A}_\epsilon}_\alpha).
    \end{align}
    (2) is by \[\sM \models
    A(\synbrack{\sembrack{\textbf{A}_{\epsilon}}_\alpha}) \QuasiEqual
    \sembrack{A(\synbrack{\textbf{A}_{\epsilon}})}_\epsilon\] shown in
    the proof for Case 7 of Part 1, and (3) is by
    $C(\synbrack{\textbf{A}_{\epsilon}})$.  Therefore,
    $C(\synbrack{\sembrack{\textbf{A}_{\epsilon}}_\alpha})$ holds.

\ee
\end{proof}

\subsection{Properties of $\mname{sub}_{\epsilon\epsilon\epsilon\epsilon}$}

\begin{lem} [Substitution] \label{lem:sub}
Let $\sM$ be a normal general model for {\qzerouqe}.
\be

  \item \bsp If $\sM \models
    [\mname{sub}_{\epsilon\epsilon\epsilon\epsilon} \,
      \synbrack{\textbf{A}_\alpha} \, \synbrack{\textbf{x}_\alpha} \,
      \synbrack{\textbf{B}_\beta}]\IsDefApp$, then
    $\mname{sub}_{\epsilon\epsilon\epsilon\epsilon} \,
    \synbrack{\textbf{A}_\alpha} \, \synbrack{\textbf{x}_\alpha} \,
    \synbrack{\textbf{B}_\beta}$ is semantically closed and \[\sM
    \models \mname{eval-free}^{\beta}_{o\epsilon} \,
            [\mname{sub}_{\epsilon\epsilon\epsilon\epsilon} \,
              \synbrack{\textbf{A}_\alpha} \,
              \synbrack{\textbf{x}_\alpha} \,
              \synbrack{\textbf{B}_\beta}].\] \esp

  \item Either $\sM \models
    \mname{sub}_{\epsilon\epsilon\epsilon\epsilon} \,
    \synbrack{\textbf{A}_\alpha} \, \synbrack{\textbf{x}_\alpha} \,
    \synbrack{\textbf{B}_\beta} = \synbrack{\textbf{C}_\beta}$ for
    some evaluation-free $\textbf{C}_\beta$ or $\sM \models
    \sembrack{\mname{sub}_{\epsilon\epsilon\epsilon\epsilon} \,
      \synbrack{\textbf{A}_\alpha} \, \synbrack{\textbf{x}_\alpha} \,
      \synbrack{\textbf{B}_\beta}}_\gamma \QuasiEqual
    \Undefined_\gamma$ for all $\gamma \in \sT$.

  \item \bsp If $\sM \models \mname{sub}_{\epsilon\epsilon\epsilon\epsilon}
    \, \synbrack{\textbf{A}_\alpha} \, \synbrack{\textbf{x}_\alpha} \,
    \textbf{B}_\epsilon = \synbrack{\textbf{C}_\beta}$ for some
    $\textbf{C}_\beta$ and $\sM \models
    \mname{eval-free}^{\beta}_{o\epsilon} \, \textbf{B}_\epsilon$,
    then $\sM \models \textbf{B}_\epsilon =
    \synbrack{\textbf{D}_\beta}$ for some evaluation-free
    $\textbf{D}_\beta$.\esp

  \item If $\textbf{C}_\gamma$ contains an evaluation
    $\sembrack{\textbf{B}_\epsilon}_\beta$ not in a quotation such
    that, for some variable $\textbf{y}_\gamma$ with
    $\textbf{x}_\alpha \not= \textbf{y}_\gamma$, \[\sM
    \models \NegAlt[\mname{not-free-in}_{o\epsilon\epsilon} \,
      \synbrack{\textbf{y}_\gamma} \,
               [\mname{sub}_{\epsilon\epsilon\epsilon\epsilon} \,
                 \synbrack{\textbf{A}_\alpha} \,
                 \synbrack{\textbf{x}_\alpha} \,
                 \synbrack{\textbf{B}_\epsilon}]],\] then \[\sM
    \models [\mname{sub}_{\epsilon\epsilon\epsilon\epsilon} \,
      \synbrack{\textbf{A}_\alpha} \, \synbrack{\textbf{x}_\alpha} \,
      \synbrack{\textbf{C}_\gamma}]\IsUndefApp.\]

  \item If $\sM \models \mname{sub}_{\epsilon\epsilon\epsilon\epsilon}
    \, \synbrack{\textbf{A}_\alpha} \, \synbrack{\textbf{x}_\alpha} \,
    \synbrack{\textbf{D}_\delta} = \synbrack{\textbf{E}_\delta}$ for
    some $\textbf{E}_\delta$ and \[\sM \models
    \mname{not-free-in}_{o\epsilon\epsilon} \,
    \synbrack{\textbf{x}_\alpha} \, \synbrack{\textbf{D}_\delta},\]
    then \[\sM \models
    \sembrack{\mname{sub}_{\epsilon\epsilon\epsilon\epsilon} \,
      \synbrack{\textbf{A}_\alpha} \, \synbrack{\textbf{x}_\alpha} \,
      \synbrack{\textbf{D}_\delta}}_\delta \QuasiEqual
    \textbf{D}_\delta.\]

  \item If $\sM \models \mname{sub}_{\epsilon\epsilon\epsilon\epsilon}
    \, \synbrack{\textbf{A}_\alpha} \, \synbrack{\textbf{x}_\alpha} \,
    \synbrack{\textbf{D}_\delta} = \synbrack{\textbf{E}_\delta}$ for
    some $\textbf{E}_\delta$ , then \[\sV^{\cal
      M}_{\phi}(\sembrack{\mname{sub}_{\epsilon\epsilon\epsilon\epsilon}
      \, \synbrack{\textbf{A}_\alpha} \, \synbrack{\textbf{x}_\alpha}
      \, \synbrack{\textbf{D}_\delta}}_\delta) \QuasiEqual \sV^{\cal
      M}_{\phi[{\bf x}_\alpha \mapsto {\cal V}^{\cal M}_{\phi}({\bf
          A}_\alpha)]}(\textbf{D}_\delta)\] for all $\phi \in
    \mname{assign}(\sM)$ such that $\sV^{\cal
      M}_{\phi}(\textbf{A}_\alpha)$ is defined.

  \item $\sM \models [\mname{sub}_{\epsilon\epsilon\epsilon\epsilon}
    \, \synbrack{\textbf{A}_\alpha} \, \synbrack{\textbf{x}_\alpha} \,
    \synbrack{\textbf{B}_\beta}]\IsDefApp$ whenever
    $\textbf{A}_\alpha$ and $\textbf{B}_\beta$ are evaluation-free,
    $\sV^{\cal M}_{\phi}(\textbf{A}_\alpha)$ is defined, and
    substituting $\textbf{A}_\alpha$ for each free occurrence of
    $\textbf{x}_\alpha$ in $\textbf{B}_\beta$ does not result in a
    variable capture.

\ee
\end{lem}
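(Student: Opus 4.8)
The plan is to prove the analytic parts --- Parts~1, 5, and~6 --- simultaneously by induction on the complexity of the third wff argument (written $\textbf{B}_\beta$ in Part~1 and $\textbf{D}_\delta$ in Parts~5--7), with a case analysis following the ten clauses of Specification~9, and then to derive Parts~2, 3, and~4 from these together with the earlier lemmas and to handle Part~7 by a separate induction on size. This mirrors the structure of the proofs of Lemma~\ref{lem:not-free-in} and Lemma~\ref{lem:cleanse}, and I would reuse both of them, along with Proposition~\ref{prop:eval-free}, Lemma~\ref{lem:sem-closed}, and Lemma~\ref{lem:evaluation-free}, throughout. Part~5 is just the instance of Part~6 in which $\sM \models \mname{not-free-in}_{o\epsilon\epsilon}\,\synbrack{\textbf{x}_\alpha}\,\synbrack{\textbf{D}_\delta}$, since then $\sV^{\cal M}_{\phi[{\bf x}_\alpha \mapsto {\cal V}^{\cal M}_{\phi}({\bf A}_\alpha)]}(\textbf{D}_\delta) \QuasiEqual \sV^{\cal M}_{\phi}(\textbf{D}_\delta)$ by Lemma~\ref{lem:not-free-in}(2), so it needs no separate treatment.

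For Part~1 the base cases are immediate: if $\textbf{B}_\beta$ is the variable $\textbf{x}_\alpha$ (Specification~9.1), $\mname{sub}$ returns $\mname{cleanse}_{\epsilon\epsilon}\,\synbrack{\textbf{A}_\alpha}$ and Lemma~\ref{lem:cleanse}(1) applies; if $\textbf{B}_\beta$ is any other variable (9.2), a primitive constant (9.3), or a quotation (9.8), $\mname{sub}$ returns a quotation, which is semantically closed by Proposition~\ref{prop:sem-closed} and evaluation-free. The application and conditional clauses (9.4, 9.7) are routine, since $\mname{sub}$ distributes over $\mname{app}_{\epsilon\epsilon\epsilon}$ and $\mname{cond}_{\epsilon\epsilon\epsilon\epsilon}$ and the conclusion is inherited from the immediate subwffs via the induction hypothesis and the specifications of $\mname{eval-free}_{o\epsilon}$ and $\mname{not-free-in}_{o\epsilon\epsilon}$. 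The two abstraction clauses (9.5, 9.6) split on whether $\textbf{x}_\alpha$ is the bound variable of the abstraction; in both, the conclusion follows from Lemma~\ref{lem:cleanse}(1) together with the induction hypothesis applied to the body (in clause~9.6 after disposing of the capture-avoidance conditional). The matching clauses of Part~6 for all the structural cases are equally mechanical, each reducing --- via the specifications and the semantics of $\mname{app}$, $\mname{abs}$, and $\mname{cond}$ --- to the induction hypothesis applied to the subwffs.

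The heart of the argument, and the step I expect to be the main obstacle, is the evaluation clause (Specification~9.9) of Part~6. When $\textbf{D}_\delta$ is an evaluation, say $\sembrack{\textbf{B}_\epsilon}_\delta$, the value of $\mname{sub}_{\epsilon\epsilon\epsilon\epsilon}\,\synbrack{\textbf{A}_\alpha}\,\synbrack{\textbf{x}_\alpha}\,\synbrack{\textbf{D}_\delta}$ is, when defined, the doubly-substituted construction $\textbf{E}^2_\epsilon = \mname{sub}_{\epsilon\epsilon\epsilon\epsilon}\,\synbrack{\textbf{A}_\alpha}\,\synbrack{\textbf{x}_\alpha}\,\sembrack{\textbf{E}^1_\epsilon}_\epsilon$, where $\textbf{E}^1_\epsilon = \mname{sub}_{\epsilon\epsilon\epsilon\epsilon}\,\synbrack{\textbf{A}_\alpha}\,\synbrack{\textbf{x}_\alpha}\,\synbrack{\textbf{B}_\epsilon}$, all under the guard $\mname{syn-closed}_{o\epsilon}\,\textbf{E}^1_\epsilon \Andd \mname{eval-free}^{\delta}_{o\epsilon}\,\sembrack{\textbf{E}^1_\epsilon}_\epsilon$. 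Writing $\phi' = \phi[{\bf x}_\alpha \mapsto \sV^{\cal M}_{\phi}(\textbf{A}_\alpha)]$, I would first apply the induction hypotheses for Parts~1 and~6 to the immediate subwff $\textbf{B}_\epsilon$ --- of strictly smaller complexity --- to see that $\textbf{E}^1_\epsilon$ is semantically closed and that $\sV^{\cal M}_{\phi}(\sembrack{\textbf{E}^1_\epsilon}_\epsilon) \QuasiEqual \sV^{\cal M}_{\phi'}(\textbf{B}_\epsilon)$; combining this with the two guard conjuncts and Lemma~\ref{lem:sem-closed}, I would conclude that $\sE^{-1}(\sV^{\cal M}_{\phi'}(\textbf{B}_\epsilon))$ is a well-defined evaluation-free $\wff{\delta}$, say $\textbf{F}_\delta$, that is $\phi$-independent and of complexity strictly below that of $\sembrack{\textbf{B}_\epsilon}_\delta$ --- the crucial measure fact, exactly as in Case~7 of Lemma~\ref{lem:not-free-in}; and a second application of the induction hypothesis for Part~6, now to $\textbf{F}_\delta$, together with the semantics of evaluation, closes the case. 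The delicate point is that the $\mname{syn-closed}$/$\mname{eval-free}^{\delta}$ guard built into Specification~9.9 (which is there to secure Requirements~1, 3, and~4) is exactly what licenses this second use of the induction hypothesis and prevents variables from escaping through the inner evaluation; verifying that the guard is neither too strong nor too weak is the real content of this case.

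Parts~2, 3, and~4 then follow. Part~2 combines Part~1 with Lemma~\ref{lem:sem-closed}, just as in the proof of Lemma~\ref{lem:cleanse}(2). Part~3 is a consequence of Lemma~\ref{lem:nonstand-sub}: a construction that $\mname{sub}$ forces to be a standard quotation and that is known to be $\mname{eval-free}^{\beta}$ must itself be the quotation of an evaluation-free $\wff{\beta}$. Part~4 is the contrapositive of Part~1 --- the $\mname{syn-closed}$ guard of Specification~9.9 fails at the offending evaluation inside $\textbf{C}_\gamma$, forcing $\Undefined_\epsilon$ there, and the undefinedness propagates to $\mname{sub}_{\epsilon\epsilon\epsilon\epsilon}\,\synbrack{\textbf{A}_\alpha}\,\synbrack{\textbf{x}_\alpha}\,\synbrack{\textbf{C}_\gamma}$ through the strictness of the constants $\mname{app}_{\epsilon\epsilon\epsilon}$, $\mname{abs}_{\epsilon\epsilon\epsilon}$, $\mname{cond}_{\epsilon\epsilon\epsilon\epsilon}$, and $\mname{eval}_{\epsilon\epsilon\epsilon}$. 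Finally, Part~7 is a separate induction on the size of the evaluation-free wff $\textbf{B}_\beta$, using Lemma~\ref{lem:evaluation-free} at each step and the fact that for evaluation-free wffs a variable capture occurs precisely when the relevant instance of $\mname{not-free-in}_{o\epsilon\epsilon}$ fails. One organizational caveat: since the evaluation case of Part~6 invokes Part~1 at strictly smaller complexity (and conversely), the cleanest presentation carries Parts~1 and~6 together as a single inductive invariant, deriving Parts~2, 3, 4, and~7 afterward.
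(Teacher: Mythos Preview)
Your overall architecture---induction on the complexity of the third argument, case-split along Specification~9, evaluation case handled via a double application of the induction hypothesis---matches the paper's. But there is one real gap: your claim that Part~5 ``is just the instance of Part~6 in which $\sM \models \mname{not-free-in}_{o\epsilon\epsilon}\,\synbrack{\textbf{x}_\alpha}\,\synbrack{\textbf{D}_\delta}$'' and hence ``needs no separate treatment'' is not correct. The conclusion of Part~6 is stated only for assignments $\phi$ at which $\sV^{\cal M}_{\phi}(\textbf{A}_\alpha)$ is defined, whereas the conclusion of Part~5 is $\sM \models \sembrack{\mname{sub}\ldots}_\delta \QuasiEqual \textbf{D}_\delta$, which must hold for \emph{every} assignment. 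If, say, $\textbf{A}_\alpha$ is $\Undefined_\alpha$, Part~6 is vacuous and tells you nothing, yet Part~5 still has content. So Part~5 is not an instance of Part~6.

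This is not merely cosmetic: it is precisely the place where the paper's Part~6, Case~6 (the abstraction $\lambda\textbf{y}_\beta\textbf{B}_\gamma$ with $\textbf{x}_\alpha\neq\textbf{y}_\beta$) needs Part~5 rather than Part~6 as inner hypothesis. In sub-case~($\ast$), where one only knows $\mname{not-free-in}\,\synbrack{\textbf{x}_\alpha}\,\synbrack{\textbf{B}_\gamma}$, the argument must evaluate at $\phi[\textbf{y}_\beta\mapsto d]$ for arbitrary $d$; since nothing prevents $\textbf{A}_\alpha$ from depending on $\textbf{y}_\beta$, $\textbf{A}_\alpha$ may well be undefined at $\phi[\textbf{y}_\beta\mapsto d]$ even though it is defined at $\phi$. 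The induction hypothesis for Part~6 on $\textbf{B}_\gamma$ therefore does not apply at that assignment, and the paper invokes Part~5 on $\textbf{B}_\gamma$ instead. The fix is simply to carry Part~5 through the case analysis as a separate inductive invariant (exactly as the paper does); the cases are parallel to those of Part~6 but without the definedness restriction, and Case~1 (where $\textbf{D}_\delta$ is $\textbf{x}_\alpha$ itself) becomes vacuous under the $\mname{not-free-in}$ hypothesis.

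One smaller organizational point: you defer Part~3 until after Parts~1, 5, 6, but your own sketch of the evaluation case of Part~6 needs it (to conclude that $\sE^{-1}(\sV^{\cal M}_{\phi'}(\textbf{B}_\epsilon))$ is a genuine wff rather than a nonstandard construction). Since Part~3 follows directly from Lemma~\ref{lem:nonstand-sub} and does not depend on Parts~5 or~6, just prove it first, as the paper does.
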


\begin{proof} 
Let $S(\synbrack{\textbf{D}_\delta})$ mean
$\mname{sub}_{\epsilon\epsilon\epsilon\epsilon} \,
\synbrack{\textbf{A}_\alpha} \, \synbrack{\textbf{x}_\alpha} \,
\synbrack{\textbf{D}_\delta}$.

\medskip

\noindent \textbf{Part 1} {\sglsp} Similar to the proof of part 1 of
Lemma~\ref{lem:cleanse}.

\medskip

\noindent \textbf{Part 2} {\sglsp} Follows easily from part 1 of this
lemma and part 2 of Lemma~\ref{lem:sem-closed}.

\medskip

\noindent \textbf{Part 3} {\sglsp} Follows from
Lemma~\ref{lem:nonstand-sub}.

\medskip

\noindent \textbf{Part 4} {\sglsp} Follows immediately from the
specification of $\mname{sub}_{\epsilon\epsilon\epsilon\epsilon}$.

\medskip

\noindent \textbf{Part 5} {\sglsp} Assume \[\sM \models
S(\synbrack{\textbf{D}_\delta}) = \synbrack{\textbf{E}_\delta} \mbox{
  for some } \textbf{E}_\delta \dblsp \mbox{[designated
    $H_1(\synbrack{\textbf{D}_\delta})$]}\] and \[\sM \models
\mname{not-free-in}_{o\epsilon\epsilon} \,
\synbrack{\textbf{x}_\alpha} \, \synbrack{\textbf{D}_\delta} \dblsp
\mbox{[designated $H_2(\synbrack{\textbf{D}_\delta})$]}.\] We must
show that \[\sM \models
\sembrack{S(\synbrack{\textbf{D}_\delta})}_\delta \QuasiEqual
\textbf{D}_\delta \dblsp \mbox{[designated
    $C(\synbrack{\textbf{D}_\delta})$]}.\] Our proof is by induction
on the complexity of $\textbf{D}_\delta$.  There are 9 cases
corresponding to the 9 parts of Specification 9 used to specify
$S(\synbrack{\textbf{D}_\delta})$.  Let $\phi \in
\mname{assign}(\sM)$.

\be

  \item[] \textbf{Case 1}: $\textbf{D}_\delta$ is $\textbf{x}_\alpha$.
    By the specification of $\mname{not-free-in}_{o\epsilon\epsilon}$,
    $H_2(\synbrack{\textbf{x}_\alpha})$ does not hold in this case.

  \item[] \textbf{Case 2}: $\textbf{D}_\delta$ is
    $\textbf{y}_\beta$ where $\textbf{x}_\alpha \not=
    \textbf{y}_\beta$.
    Then 
    \begin{align} \setcounter{equation}{0}
    &
    \sV^{\cal M}_{\phi}(\sembrack{S(\synbrack{\textbf{y}_\beta})}_\beta)\\
    &\QuasiEqual
    \sV^{\cal  M}_{\phi}(\sembrack{\synbrack{\textbf{y}_\beta}}_\beta)\\
    &\QuasiEqual
    \sV^{\cal M}_{\phi}(\textbf{y}_\beta).
    \end{align}
    (2) is by the specification of
    $\mname{sub}_{\epsilon\epsilon\epsilon\epsilon}$, and (3) is by
    semantics of evaluation and the fact that $\textbf{y}_\beta$ is
    evaluation-free.  Therefore, $C(\synbrack{\textbf{y}_\beta})$
    holds.

  \item[] \textbf{Case 3}: $\textbf{D}_\delta$ is a primitive constant
    $\textbf{c}_\beta$. Similar to Case 2.

  \item[] \bsp \textbf{Case 4}: $\textbf{D}_\delta$ is
    $\textbf{B}_{\beta\gamma}\textbf{D}_\delta$.
    $H_1(\synbrack{\textbf{B}_{\beta\gamma}\textbf{D}_\delta})$
    implies $H_1(\synbrack{\textbf{B}_{\beta\gamma}})$ and
    $H_1(\synbrack{\textbf{D}_\delta})$ by the specification of
    $\mname{sub}_{\epsilon\epsilon\epsilon\epsilon}$.
    $H_2(\synbrack{\textbf{B}_{\beta\gamma}\textbf{D}_\delta})$
    implies $H_2(\synbrack{\textbf{B}_{\beta\gamma}})$ and
    $H_2(\synbrack{\textbf{D}_\delta})$ by the specification of
    $\mname{not-free-in}_{o\epsilon\epsilon}$.  These imply
    $C(\synbrack{\textbf{B}_{\beta\gamma}})$ and
    $C(\synbrack{\textbf{D}_\delta})$ by the induction hypothesis.
    Then
    \begin{align} \setcounter{equation}{0}
    &
    \sV^{\cal M}_{\phi}(\sembrack{S(\synbrack{\textbf{B}_{\beta\gamma}
    \textbf{D}_\delta})}_\beta)\\
    &\QuasiEqual
    \sV^{\cal  M}_{\phi}(\sembrack{\mname{app}_{\epsilon\epsilon\epsilon} \, 
    S(\synbrack{\textbf{B}_{\beta\gamma}}) \, S(\synbrack{\textbf{D}_\delta})}_\alpha)\\
    &\QuasiEqual
    \sV^{\cal  M}_{\phi}(\sembrack{S(\synbrack{\textbf{B}_{\beta\gamma}})}_{\beta\gamma}
    \sembrack{S(\synbrack{\textbf{D}_\delta})}_\gamma)\\
    &\QuasiEqual
    \sV^{\cal M}_{\phi}(\textbf{B}_{\beta\gamma}\textbf{D}_\delta).
    \end{align}
    (2) is by the specification of
    $\mname{sub}_{\epsilon\epsilon\epsilon\epsilon}$; (3) is by the
    semantics of $\mname{app}_{\epsilon\epsilon\epsilon}$ and
    evaluation; and (4) is by $C(\synbrack{\textbf{B}_{\beta\gamma}})$
    and $C(\synbrack{\textbf{D}_\delta})$.  Therefore,
    $C(\synbrack{\textbf{B}_{\beta\gamma} \textbf{D}_\delta})$
    holds. \esp

  \item[] \textbf{Case 5}: $\textbf{D}_\delta$ is $\lambda
    \textbf{x}_\alpha \textbf{B}_\beta$.
    $H_1(\synbrack{\lambda\textbf{x}_\alpha\textbf{B}_\beta})$ implies
    $\sM \models [\mname{cleanse}_{\epsilon\epsilon} \,
      \synbrack{\textbf{B}_\beta}] \IsDefApp$ by the specification of
    $\mname{sub}_{\epsilon\epsilon\epsilon\epsilon}$.  This implies
    that $\mname{cleanse}_{\epsilon\epsilon} \,
    \synbrack{\textbf{B}_\beta}$ is semantically closed by part 1 of
    Lemma~\ref{lem:cleanse}.  Then
    \begin{align} \setcounter{equation}{0}
    &
    \sV^{\cal M}_{\phi}(\sembrack{S(\lambda\textbf{x}_\alpha\textbf{B}_\beta)}_{\beta\alpha})\\
    &\QuasiEqual
    \sV^{\cal M}_{\phi}(\sembrack{\mname{abs}_{\epsilon\epsilon\epsilon} \, 
    \synbrack{\textbf{x}_\alpha} \, \mname{cleanse}_{\epsilon\epsilon} \, 
    \synbrack{\textbf{B}_\beta}}_{\beta\alpha})\\
    &\QuasiEqual
    \sV^{\cal M}_{\phi}(\lambda\textbf{x}_\alpha
    \sembrack{\mname{cleanse}_{\epsilon\epsilon} \, 
    \synbrack{\textbf{B}_\beta}}_\beta)\\
    &\QuasiEqual
    \sV^{\cal M}_{\phi}(\lambda\textbf{x}_\alpha\textbf{B}_\beta).
    \end{align}
    (2) is by the specification of
    $\mname{sub}_{\epsilon\epsilon\epsilon\epsilon}$; (3) is by the
    semantics of $\mname{abs}_{\epsilon\epsilon\epsilon}$ and
    evaluation and the fact that $\mname{cleanse}_{\epsilon\epsilon}
    \, \synbrack{\textbf{B}_\beta}$ is semantically closed; and (4) is
    by part 4 of Lemma~\ref{lem:cleanse}.  Therefore,
    $C(\synbrack{\lambda\textbf{x}_\alpha\textbf{B}_\beta})$ holds.

  \item[] \bsp \textbf{Case 6}: $\textbf{D}_\delta$ is $\lambda
    \textbf{y}_\beta \textbf{B}_\gamma$ where $\textbf{x}_\alpha \not=
    \textbf{y}_\beta$.
    $H_1(\synbrack{\lambda\textbf{y}_\beta\textbf{B}_\gamma})$ implies
    \[\sV^{\cal
      M}_{\phi}(S(\synbrack{\lambda\textbf{y}_\beta\textbf{B}_\gamma}))
    \QuasiEqual \sV^{\cal
      M}_{\phi}(\mname{abs}_{\epsilon\epsilon\epsilon} \,
    \synbrack{\textbf{y}_\beta} \, S(\synbrack{\textbf{B}_\gamma}))\]
    and $H_1(\synbrack{\textbf{B}_\gamma})$ by the specification of
    $\mname{sub}_{\epsilon\epsilon\epsilon\epsilon}$.
    $H_2(\synbrack{\lambda\textbf{y}_\beta\textbf{B}_\gamma})$ implies
    $H_2(\synbrack{\textbf{B}_\gamma})$ by the specification of
    $\mname{not-free-in}_{o\epsilon\epsilon}$.  These imply
    $C(\synbrack{\textbf{B}_\gamma})$ by the induction hypothesis and
    $S(\synbrack{\textbf{B}_\gamma})$ is semantically closed by part 1
    of this lemma.  Then
    \begin{align} \setcounter{equation}{0}
    &
    \sV^{\cal M}_{\phi}(\sembrack{S(\lambda\textbf{y}_\beta\textbf{B}_\gamma)}_{\gamma\beta})\\
    &\QuasiEqual
    \sV^{\cal M}_{\phi}(\sembrack{\mname{abs}_{\epsilon\epsilon\epsilon} \, 
    \synbrack{\textbf{y}_\beta} \, S(\synbrack{\textbf{B}_\gamma})}_{\gamma\beta})\\
    &\QuasiEqual
    \sV^{\cal M}_{\phi}(\lambda\textbf{y}_\beta
    \sembrack{S(\synbrack{\textbf{B}_\gamma})}_\gamma)\\
    &\QuasiEqual
    \sV^{\cal M}_{\phi}(\lambda\textbf{y}_\beta\textbf{B}_\gamma).
    \end{align}
    (2) is by the equation shown above; (3) is by the semantics of
    $\mname{abs}_{\epsilon\epsilon\epsilon}$ and evaluation and the
    fact that $S(\synbrack{\textbf{B}_\gamma})$ is semantically
    closed; and (4) is by $C(\synbrack{\textbf{B}_\gamma})$.
    Therefore,
    $C(\synbrack{\lambda\textbf{y}_\beta\textbf{B}_\gamma})$
    holds.\esp

  \item[] \textbf{Case 7}: $\textbf{D}_\delta$ is $\If \, \textbf{A}_o
    \, \textbf{B}_\alpha \, \textbf{C}_\alpha$.  Similar to Case 4.

  \item[] \textbf{Case 8}: $\textbf{D}_\delta$ is
    $\synbrack{\textbf{B}_\beta}$.  Similar to Case 2.

  \item[] \bsp \textbf{Case 9}: $\textbf{D}_\delta$ is
    $\sembrack{\textbf{B}_\epsilon}_\beta$.
    $H_1(\synbrack{\sembrack{\textbf{B}_\epsilon}_\beta})$
    implies \[(\mbox{a})~\sM \models
    \mname{eval-free}^{\beta}_{o\epsilon} \,
    \sembrack{S(\synbrack{\textbf{B}_\epsilon})}_\epsilon\]
    and \[(\mbox{b})~\sM \models
    S(\synbrack{\sembrack{\textbf{B}_\epsilon}_\beta}) =
    S(\sembrack{S(\synbrack{\textbf{B}_\epsilon})}_\epsilon)\] by the
    specification of $\mname{sub}_{\epsilon\epsilon\epsilon\epsilon}$.
    (a), (b), and
    $H_1(\synbrack{\sembrack{\textbf{B}_\epsilon}_\beta})$ imply
    $H_1(\sembrack{S(\synbrack{\textbf{B}_\epsilon})}_\epsilon)$ by
    part 3 of this lemma, and so (c)~$\sM \models
    \sembrack{S(\synbrack{\textbf{B}_\epsilon})}_\epsilon =
    \synbrack{\textbf{C}_\beta}$ for some evaluation-free
    $\textbf{C}_\beta$.
    $H_1(\sembrack{S(\synbrack{\textbf{B}_\epsilon})}_\epsilon)$
    implies $H_1(\synbrack{\textbf{B}_\epsilon})$ by the specification
    of $\mname{sub}_{\epsilon\epsilon\epsilon\epsilon}$.  By the
    specification of $\mname{not-free-in}_{o\epsilon\epsilon}$,
    $H_2(\synbrack{\sembrack{\textbf{B}_\epsilon}_\beta})$ implies
    $\sM \models \mname{syn-closed}_{o\epsilon} \,
    \synbrack{\textbf{B}_\epsilon}$ (hence
    $H_2(\synbrack{\textbf{B}_\epsilon})$ by the definition of
    $\mname{syn-closed}_{o\epsilon}$), $\sM \models
    \mname{eval-free}^{\epsilon}_{o\epsilon} \,
    \synbrack{\textbf{B}_\epsilon}$, and
    $H_2(\sembrack{\synbrack{\textbf{B}_\epsilon}}_\epsilon)$ (hence
    $H_2(\textbf{B}_\epsilon)$ by the semantics of evaluation).
    $H_1(\synbrack{\textbf{B}_\epsilon})$ and
    $H_2(\synbrack{\textbf{B}_\epsilon})$ imply
    $C(\synbrack{\textbf{B}_\epsilon})$ by the induction hypothesis,
    and so (d) $\sM \models
    \sembrack{S(\synbrack{\textbf{B}_\epsilon})}_\epsilon \QuasiEqual
    \textbf{B}_\epsilon$.  (c) and (d) imply (e) $\sM \models
    \textbf{B}_\epsilon = \synbrack{\textbf{C}_\beta}$.
    $H_1(\sembrack{S(\synbrack{\textbf{B}_\epsilon})}_\epsilon)$ and
    (c) imply $H_1(\synbrack{\textbf{C}_\beta})$.
    $H_2(\textbf{B}_\epsilon)$ and (e) implies
    $H_2(\synbrack{\textbf{C}_\beta})$.
    $H_1(\synbrack{\textbf{C}_\beta})$ and
    $H_2(\synbrack{\textbf{C}_\beta})$ imply
    $C(\synbrack{\textbf{C}_\beta})$ by the inductive hypothesis.
    Then
    \begin{align} \setcounter{equation}{0}
    &
    \sV^{\cal M}_{\phi}(\sembrack{S(\synbrack{\sembrack{\textbf{B}_\epsilon}_\beta})}_\beta)\\
    &\QuasiEqual
    \sV^{\cal M}_{\phi}(\sembrack{S(\sembrack{S(\synbrack{\textbf{B}_\epsilon})}_\epsilon)}_\beta)\\
    &\QuasiEqual
    \sV^{\cal M}_{\phi}(\sembrack{S(\synbrack{\textbf{C}_\beta})}_\beta)\\
    &\QuasiEqual
    \sV^{\cal M}_{\phi}(\textbf{C}_\beta)\\
    &\QuasiEqual
    \sV^{\cal M}_{\phi}(\sembrack{\synbrack{\textbf{C}_\beta}}_\beta)\\
    &\QuasiEqual
    \sV^{\cal M}_{\phi}(\sembrack{\textbf{B}_\epsilon}_\beta).
    \end{align}
    (2) is by (b); (3) is by (d) and (e); (4) is by
    $C(\synbrack{\textbf{C}_\beta})$; (5) is by the semantics of
    evaluation and the fact $\textbf{C}_\beta$ is evaluation-free; and
    (6) is by (e).  Therefore,
    $C(\synbrack{\sembrack{\textbf{B}_\epsilon}_\beta})$ holds.\esp

\ee

\medskip

\noindent \textbf{Part 6} {\sglsp} Assume \[\sM \models
S(\synbrack{\textbf{D}_\delta}) = \synbrack{\textbf{E}_\delta} \mbox{
  for some } \textbf{E}_\delta \dblsp \mbox{[designated
    $H(\synbrack{\textbf{D}_\delta})$]}\] We must show that

\bi

  \item[] $\sM \models \sV^{\cal
    M}_{\phi}(\sembrack{S(\synbrack{\textbf{D}_\delta})}_\delta)
    \QuasiEqual \sV^{\cal M}_{\phi[{\bf x}_\alpha \mapsto {\cal
          V}^{\cal M}_{\phi}({\bf A}_\alpha)]}(\textbf{D}_\delta)$ for all
    $\phi \in \mname{assign}(\sM)$ such that $\sV^{\cal
      M}_{\phi}(\textbf{A}_\alpha)$ is defined \sglsp
    \mbox{[designated $C(\synbrack{\textbf{D}_\delta})$]}.

\ei
Our proof is by induction on the complexity of $\textbf{D}_\delta$.
There are 9 cases corresponding to the 9 parts of Specification 9 used
to specify $S(\synbrack{\textbf{D}_\delta})$.  Let $\phi \in
\mname{assign}(\sM)$ such that $\sV^{\cal
  M}_{\phi}(\textbf{A}_\alpha)$ is defined.

\be

  \item[] \textbf{Case 1}: $\textbf{D}_\delta$ is $\textbf{x}_\alpha$.  Then
    \begin{align} \setcounter{equation}{0}
    &
    \sV^{\cal M}_{\phi}(\sembrack{S(\synbrack{\textbf{x}_\alpha})}_\alpha)\\
    &\QuasiEqual
    \sV^{\cal  M}_{\phi}(\sembrack{\mname{cleanse}_{\epsilon\epsilon}\, 
    \synbrack{\textbf{A}_\alpha}}_\alpha)\\
    &\QuasiEqual
    \sV^{\cal  M}_{\phi}(\textbf{A}_\alpha)\\
    &\QuasiEqual
    \sV^{\cal M}_{\phi[{\bf x}_\alpha \mapsto {\cal V}^{\cal M}_{\phi}({\bf A}_\alpha)]}(\textbf{x}_\alpha).
    \end{align}
    (2) is by the specification of
    $\mname{sub}_{\epsilon\epsilon\epsilon\epsilon}$; (3) is by
    $H(\synbrack{\textbf{x}_\alpha})$ and part 4 of
    Lemma~\ref{lem:cleanse}; and (4) is by the semantics of variables.
    Therefore, $C(\synbrack{\textbf{x}_\alpha})$ holds.

  \item[] \textbf{Case 2}: $\textbf{D}_\delta$ is
    $\textbf{y}_\beta$ where $\textbf{x}_\alpha \not=
    \textbf{y}_\beta$. Then
    \begin{align} \setcounter{equation}{0}
    &
    \sV^{\cal M}_{\phi}(\sembrack{S(\synbrack{\textbf{y}_\beta})}_\beta)\\
    &\QuasiEqual
    \sV^{\cal  M}_{\phi}(\sembrack{\synbrack{\textbf{y}_\beta}}_\beta)\\
    &\QuasiEqual
    \sV^{\cal  M}_{\phi}(\textbf{y}_\beta)\\
    &\QuasiEqual
    \sV^{\cal M}_{\phi[{\bf x}_\alpha \mapsto {\cal V}^{\cal M}_{\phi}({\bf A}_\alpha)]}(\textbf{y}_\beta).
    \end{align}
    (2) is by the specification of
    $\mname{sub}_{\epsilon\epsilon\epsilon\epsilon}$; (3) is by the
    semantics of evaluation and that fact that $\textbf{y}_\beta$ is
    evaluation-free; and (4) follows from $\textbf{x}_\alpha \not=
    \textbf{y}_\beta$.  Therefore, $C(\synbrack{\textbf{y}_\beta})$
    holds.

  \item[] \textbf{Case 3}: $\textbf{D}_\delta$ is a primitive constant
    $\textbf{c}_\beta$.  Similar to Case 2.

  \item[] \textbf{Case 4}: $\textbf{D}_\delta$ is
    $\textbf{B}_{\beta\gamma}\textbf{D}_\delta$.
    $H(\synbrack{\textbf{B}_{\beta\gamma}\textbf{D}_\delta})$ implies
    $H(\synbrack{\textbf{B}_{\beta\gamma}})$ and
    $H(\synbrack{\textbf{D}_\delta})$ by the specification of
    $\mname{sub}_{\epsilon\epsilon\epsilon\epsilon}$.  These imply
    $C(\synbrack{\textbf{B}_{\beta\gamma}})$ and
    $C(\synbrack{\textbf{D}_\delta})$ by the induction hypothesis.
    Then
    \begin{align} \setcounter{equation}{0}
    &
    \sV^{\cal M}_{\phi}(\sembrack{S(\synbrack{\textbf{B}_{\beta\gamma} \textbf{D}_\delta})}_\beta)\\
    & \QuasiEqual
    \sV^{\cal  M}_{\phi}(\sembrack{\mname{app}_{\epsilon\epsilon\epsilon} \, 
    S(\synbrack{\textbf{B}_{\beta\gamma}}) \, S(\synbrack{\textbf{D}_\delta})}_\alpha)\\
    &\QuasiEqual
    \sV^{\cal  M}_{\phi}(\sembrack{S(\synbrack{\textbf{B}_{\beta\gamma}})}_{\beta\gamma}
    \sembrack{S(\synbrack{\textbf{D}_\delta})}_\gamma)\\
    &\QuasiEqual
    \sV^{\cal  M}_{\phi}(\sembrack{S(\synbrack{\textbf{B}_{\beta\gamma}})}_{\beta\gamma})
    (\sV^{\cal  M}_{\phi}(\sembrack{S(\synbrack{\textbf{D}_\delta})}_\gamma))\\
    &\QuasiEqual
    \sV^{\cal  M}_{\phi[{\bf x}_\alpha \mapsto {\cal V}^{\cal M}_{\phi}({\bf A}_\alpha)]}(\textbf{B}_{\beta\gamma})
    (\sV^{\cal  M}_{\phi[{\bf x}_\alpha \mapsto {\cal V}^{\cal M}_{\phi}({\bf A}_\alpha)]}(\textbf{D}_\delta))\\
    &\QuasiEqual
    \sV^{\cal M}_{\phi[{\bf x}_\alpha \mapsto {\cal V}^{\cal M}_{\phi}({\bf A}_\alpha)]}
    (\textbf{B}_{\beta\gamma}\textbf{D}_\delta).
    \end{align}
    (2) is by the specification of
    $\mname{sub}_{\epsilon\epsilon\epsilon\epsilon}$; (3) is by the
    semantics of $\mname{app}_{\epsilon\epsilon\epsilon}$ and
    evaluation; (4) and (6) are by the semantics of application; and
    (5) is by $C(\synbrack{\textbf{B}_{\beta\gamma}})$ and
    $C(\synbrack{\textbf{D}_\delta})$.  Therefore,
    $C(\synbrack{\textbf{B}_{\beta\gamma} \textbf{D}_\delta})$ holds.

  \item[] \textbf{Case 5}: $\textbf{D}_\delta$ is $\lambda
    \textbf{x}_\alpha \textbf{B}_\beta$.
    $H(\synbrack{\lambda\textbf{x}_\alpha\textbf{B}_\beta})$ implies
    $\sM \models [\mname{cleanse}_{\epsilon\epsilon} \,
      \synbrack{\textbf{B}_\beta}] \IsDefApp$ by the specification of
    $\mname{sub}_{\epsilon\epsilon\epsilon\epsilon}$.  This implies
    that $\mname{cleanse}_{\epsilon\epsilon} \,
    \synbrack{\textbf{B}_\beta}$ is semantically closed by part 1 of
    Lemma~\ref{lem:cleanse}.  Then
    \begin{align} \setcounter{equation}{0}
    &
    \sV^{\cal M}_{\phi}(\sembrack{S(\lambda\textbf{x}_\alpha\textbf{B}_\beta)}_{\beta\alpha})\\
    &\QuasiEqual
    \sV^{\cal M}_{\phi}(\sembrack{\mname{abs}_{\epsilon\epsilon\epsilon} \, 
    \synbrack{\textbf{x}_\alpha} \, \mname{cleanse}_{\epsilon\epsilon} \, 
    \synbrack{\textbf{B}_\beta}}_{\beta\alpha})\\
    &\QuasiEqual
    \sV^{\cal M}_{\phi}(\lambda\textbf{x}_\alpha
    \sembrack{\mname{cleanse}_{\epsilon\epsilon} \, 
    \synbrack{\textbf{B}_\beta}}_\beta)\\
    &\QuasiEqual
    \sV^{\cal M}_{\phi}(\lambda\textbf{x}_\alpha\textbf{B}_\beta)\\
    &\QuasiEqual
    \sV^{\cal M}_{\phi[{\bf x}_\alpha \mapsto {\cal V}^{\cal M}_{\phi}
    ({\bf A}_\alpha)]}(\lambda\textbf{x}_\alpha\textbf{B}_\beta).
    \end{align}
    (2) is by the specification of
    $\mname{sub}_{\epsilon\epsilon\epsilon\epsilon}$; (3) is by the
    semantics of $\mname{abs}_{\epsilon\epsilon\epsilon}$ and
    evaluation and the fact that $\mname{cleanse}_{\epsilon\epsilon}
    \, \synbrack{\textbf{B}_\beta}$ is semantically closed; (4) is by
    part 4 of Lemma~\ref{lem:cleanse}; and (5) is by the fact
    that \[\sV^{\cal M}_{\phi[{\bf x}_\alpha \mapsto
        d]}(\textbf{B}_\beta) \QuasiEqual \sV^{\cal M}_{\phi[{\bf
          x}_\alpha \mapsto {\cal V}^{\cal M}_{\phi}({\bf
          A}_\alpha)][{\bf x}_\alpha \mapsto d]}(\textbf{B}_\beta)\]
    for all $d \in \sD_\alpha$.  Therefore,
    $C(\synbrack{\lambda\textbf{x}_\alpha\textbf{B}_\beta})$ holds.

  \item[] \textbf{Case 6}: $\textbf{D}_\delta$ is $\lambda
    \textbf{y}_\beta \textbf{B}_\gamma$ where $\textbf{x}_\alpha
    \not= \textbf{y}_\beta$.  
    $H(\synbrack{\lambda\textbf{y}_\beta\textbf{B}_\gamma})$ implies
    \[(\mbox{a})~\sV^{\cal
      M}_{\phi}(S(\synbrack{\lambda\textbf{y}_\beta\textbf{B}_\gamma}))
    \QuasiEqual \sV^{\cal
      M}_{\phi}(\mname{abs}_{\epsilon\epsilon\epsilon} \,
    \synbrack{\textbf{y}_\beta} \, S(\synbrack{\textbf{B}_\gamma})),\]
    $H(\synbrack{\textbf{B}_\gamma})$, and either $(\ast) \sglsp \sM
    \models \mname{not-free-in}_{o\epsilon\epsilon} \,
    \synbrack{\textbf{x}_\alpha} \, \synbrack{\textbf{B}_\gamma}$ or
    $(\ast\ast) \sglsp \sM \models
    \mname{not-free-in}_{o\epsilon\epsilon} \,
    \synbrack{\textbf{y}_\beta} \, \synbrack{\textbf{A}_\alpha}$ by
    the specification of
    $\mname{sub}_{\epsilon\epsilon\epsilon\epsilon}$.
    $H(\synbrack{\textbf{B}_\gamma})$ implies
    $C(\synbrack{\textbf{B}_\gamma})$ by the induction hypothesis and
    (b)~$S(\synbrack{\textbf{B}_\gamma})$ is semantically closed by
    part 1 of this lemma.  Then
    \begin{align} \setcounter{equation}{0}
    &
    \sV^{\cal M}_{\phi}(\sembrack{S(\lambda\textbf{y}_\beta\textbf{B}_\gamma)}_{\gamma\beta})\\
    &\QuasiEqual
    \sV^{\cal M}_{\phi}(\sembrack{\mname{abs}_{\epsilon\epsilon\epsilon} \, 
    \synbrack{\textbf{y}_\beta} \, S(\synbrack{\textbf{B}_\gamma})}_{\gamma\beta})\\
    &\QuasiEqual
    \sV^{\cal M}_{\phi}(\lambda\textbf{y}_\beta \sembrack{S(\synbrack{\textbf{B}_\gamma})}_\gamma)\\
    &\QuasiEqual
    \sV^{\cal M}_{\phi[{\bf x}_\alpha \mapsto {\cal
          V}^{\cal M}_{\phi}({\bf A}_\alpha)]}(\lambda\textbf{y}_\beta\textbf{B}_\gamma).
    \end{align}
    (2) is by (a); (3) is by (b) and the semantics of
    $\mname{abs}_{\epsilon\epsilon\epsilon}$ and evaluation; and (4)
    is by separate arguments for the two cases ($\ast$) and
    ($\ast\ast$).  In case ($\ast$),
    \begin{align} \setcounter{equation}{0}
    &
    \sV^{\cal M}_{\phi[{\bf y}_\alpha \mapsto d]}(\sembrack{S(\synbrack{\textbf{B}_\gamma})}_\gamma)\\
    &\QuasiEqual
    \sV^{\cal M}_{\phi[{\bf y}_\alpha \mapsto d]}(\textbf{B}_\gamma)\\
    &\QuasiEqual
    \sV^{\cal M}_{\phi[{\bf y}_\alpha \mapsto d][{\bf x}_\alpha \mapsto {\cal V}^{\cal M}_{\phi}({\bf A}_\alpha)]}
    (\textbf{B}_\gamma)\\
    &\QuasiEqual
    \sV^{\cal M}_{\phi[{\bf x}_\alpha \mapsto {\cal V}^{\cal M}_{\phi}({\bf A}_\alpha)][{\bf y}_\alpha \mapsto d]}
    (\textbf{B}_\gamma)
    \end{align}
    for all $d \in \sD_\alpha$.  (2) is by ($\ast$),
    $H(\synbrack{\textbf{B}_\gamma})$, and part 5 of this lemma; (3)
    is by ($\ast$) and part 2 of Lemma~\ref{lem:not-free-in}; and (4)
    follows from $\textbf{x}_\alpha \not= \textbf{y}_\beta$.  In case
    ($\ast\ast$),
    \begin{align} \setcounter{equation}{0}
    &
    \sV^{\cal M}_{\phi[{\bf y}_\alpha \mapsto d]}(\sembrack{S(\synbrack{\textbf{B}_\gamma})}_\gamma)\\
    &\QuasiEqual
    \sV^{\cal M}_{\phi[{\bf y}_\alpha \mapsto d][{\bf x}_\alpha \mapsto {\cal V}^{\cal M}_{\phi[{\bf y}_\alpha \mapsto d]}
    ({\bf A}_\alpha)]}(\textbf{B}_\gamma)\\
    &\QuasiEqual
    \sV^{\cal M}_{\phi[{\bf y}_\alpha \mapsto d][{\bf x}_\alpha \mapsto {\cal V}^{\cal M}_{\phi}({\bf A}_\alpha)]}
    (\textbf{B}_\gamma)\\
    &\QuasiEqual
    \sV^{\cal M}_{\phi[{\bf x}_\alpha \mapsto {\cal V}^{\cal M}_{\phi}({\bf A}_\alpha)][{\bf y}_\alpha \mapsto d]}
    (\textbf{B}_\gamma)
    \end{align}
    for all $d \in \sD_\alpha$.  (2) is by
    $C(\synbrack{\textbf{B}_\gamma})$; (3) is by ($\ast\ast$) and part
    2 of Lemma~\ref{lem:not-free-in}; and (4) follows from
    $\textbf{x}_\alpha \not= \textbf{y}_\beta$.  Therefore,
    $C(\synbrack{\lambda\textbf{y}_\beta\textbf{B}_\gamma})$ holds.

  \item[] \textbf{Case 7}: $\textbf{D}_\delta$ is $\If \, \textbf{A}_o
    \, \textbf{B}_\alpha \, \textbf{C}_\alpha$.  Similar to Case 4.

  \item[] \textbf{Case 8}: $\textbf{D}_\delta$ is
    $\synbrack{\textbf{B}_\beta}$.  Similar to Case 2.

  \item[] \bsp \textbf{Case 9}: $\textbf{D}_\delta$ is
    $\sembrack{\textbf{B}_\epsilon}_\beta$.
    $H(\synbrack{\sembrack{\textbf{B}_\epsilon}_\beta})$
    implies \[(\mbox{a})~\sM \models \mname{eval-free}^{\beta}_{o\epsilon} \,
    \sembrack{S(\synbrack{\textbf{B}_\epsilon})}_\epsilon\]
    and \[(\mbox{b})~\sM \models
    S(\synbrack{\sembrack{\textbf{B}_\epsilon}_\beta}) =
    S(\sembrack{S(\synbrack{\textbf{B}_\epsilon})}_\epsilon)\] by the
    specification of $\mname{sub}_{\epsilon\epsilon\epsilon\epsilon}$.
    (a), (b), and $H(\synbrack{\sembrack{\textbf{B}_\epsilon}_\beta})$ imply
    $H(\sembrack{S(\synbrack{\textbf{B}_\epsilon})}_\epsilon)$ by part
    3 of this lemma, and so (c)~$\sM \models
    \sembrack{S(\synbrack{\textbf{B}_\epsilon})}_\epsilon =
    \synbrack{\textbf{C}_\beta}$ for some evaluation-free
    $\textbf{C}_\beta$.
    $H(\sembrack{S(\synbrack{\textbf{B}_\epsilon})}_\epsilon)$ implies
    $H(\synbrack{\textbf{B}_\epsilon})$ by the specification of
    $\mname{sub}_{\epsilon\epsilon\epsilon\epsilon}$.
    $H(\synbrack{\textbf{B}_\epsilon})$ implies
    $C(\synbrack{\textbf{B}_\epsilon})$ by the induction hypothesis,
    and so \[(\mbox{d})~\sV^{\cal
      M}_{\phi}(\sembrack{S(\synbrack{\textbf{B}_\epsilon})}_\epsilon)
    \QuasiEqual \sV^{\cal M}_{\phi[{\bf x}_\alpha \mapsto {\cal
          V}^{\cal M}_{\phi}({\bf A}_\alpha)]}(\textbf{B}_\epsilon).\]
    (c) and (d) imply \[(\mbox{e})~\sV^{\cal M}_{\phi[{\bf x}_\alpha \mapsto
        {\cal V}^{\cal M}_{\phi}({\bf
          A}_\alpha)]}(\textbf{B}_\epsilon) = \sV^{\cal
      M}_{\phi}(\synbrack{\textbf{C}_\beta}) = \sV^{\cal M}_{\phi[{\bf
          x}_\alpha \mapsto {\cal V}^{\cal M}_{\phi}({\bf
          A}_\alpha)]}(\synbrack{\textbf{C}_\beta})\] since
    $\synbrack{\textbf{C}_\beta}$ is semantically closed.
    $H(\sembrack{S(\synbrack{\textbf{B}_\epsilon})}_\epsilon)$ and (c)
    imply $H(\synbrack{\textbf{C}_\beta})$, and
    $H(\synbrack{\textbf{C}_\beta})$ implies
    $C(\synbrack{\textbf{C}_\beta})$ by the inductive hypothesis.
    \begin{align} \setcounter{equation}{0}
    &
    \sV^{\cal M}_{\phi}(\sembrack{S(\synbrack{\sembrack{\textbf{B}_\epsilon}_\beta})}_\beta)\\
    &\QuasiEqual
    \sV^{\cal M}_{\phi}(\sembrack{S(\sembrack{S(\synbrack{\textbf{B}_\epsilon})}_\epsilon)}_\beta)\\
    &\QuasiEqual
    \sV^{\cal M}_{\phi}(\sembrack{S(\synbrack{\textbf{C}_\beta})}_\beta)\\ 
    &\QuasiEqual
    \sV^{\cal M}_{\phi[{\bf x}_\alpha\mapsto {\cal V}^{\cal M}_{\phi} ({\bf A}_\alpha)]}(\textbf{C}_\beta)\\
    &\QuasiEqual
    \sV^{\cal M}_{\phi[{\bf x}_\alpha\mapsto {\cal V}^{\cal M}_{\phi} ({\bf A}_\alpha)]}
    (\sembrack{\synbrack{\textbf{C}_\beta}}_\beta)\\
    &\QuasiEqual
    \sV^{\cal M}_{\phi[{\bf x}_\alpha\mapsto {\cal V}^{\cal M}_{\phi} ({\bf A}_\alpha)]}
    (\sembrack{\textbf{B}_\epsilon}_\beta).
    \end{align}
    (2) is by (b); (3) is by (d) and (e); (4) is by
    $C(\synbrack{\textbf{C}_\beta})$; (5) is by the semantics of
    evaluation and the fact $\textbf{C}_\beta$ is evaluation-free; and
    (6) is by (e).  Therefore,
    $C(\synbrack{\sembrack{\textbf{B}_\epsilon}_\beta})$ holds.\esp

\ee

\medskip

\noindent \textbf{Part 7} {\sglsp} Follows from the specifications of
$\mname{not-free-in}_{o\epsilon\epsilon}$,
$\mname{cleanse}_{\epsilon\epsilon}$, and
$\mname{sub}_{\epsilon\epsilon\epsilon\epsilon}$.

\end{proof}

\bigskip

The five requirements for
$\mname{sub}_{\epsilon\epsilon\epsilon\epsilon}$ are satisfied as
follows:

\be 

  \item \bsp Requirement 1 is satisfied by Specification 9 for
    $\mname{sub}_{\epsilon\epsilon\epsilon\epsilon}$.  Part~6 of
    Lemma~\ref{lem:sub} verifies that
    $\mname{sub}_{\epsilon\epsilon\epsilon\epsilon}$ performs
    substitution correctly.\esp

  \item \bsp Requirement 2 is satisfied by Specification 7 for
    $\mname{not-free-in}_{o\epsilon\epsilon}$ and Specification 9.6
    for $\mname{sub}_{\epsilon\epsilon\epsilon\epsilon}$.  Part~6 of
    Lemma~\ref{lem:sub} verifies that, when
    $\mname{sub}_{\epsilon\epsilon\epsilon\epsilon}
    \synbrack{\textbf{A}_\alpha} \, \synbrack{\textbf{x}_\alpha} \,
    \synbrack{\textbf{B}_\beta}$ is defined, variables are not
    captured.\esp

  \item Requirement 3 is satisfied by Specification 8 for
    $\mname{cleanse}_{\epsilon\epsilon}$ and Specifications 9.1, 9.5,
    and 9.9 for $\mname{sub}_{\epsilon\epsilon\epsilon\epsilon}$.
    Part 1 of Lemma~\ref{lem:sub} verifies that, when
    $\mname{sub}_{\epsilon\epsilon\epsilon\epsilon}
    \synbrack{\textbf{A}_\alpha} \, \synbrack{\textbf{x}_\alpha} \,
    \synbrack{\textbf{B}_\beta}$ is defined, it represents an
    evaluation-free $\wff{\beta}$.

  \item Requirement 4 is satisfied by Specification 7.9 for
    $\mname{not-free-in}_{o\epsilon\epsilon}$, Specification 8.7 for
    $\mname{cleanse}_{\epsilon\epsilon}$, and Specification 9.9 for
    $\mname{sub}_{\epsilon\epsilon\epsilon\epsilon}$.  Part 1 of
    Lemma~\ref{lem:sub} verifies that, when
    $\mname{sub}_{\epsilon\epsilon\epsilon\epsilon}
    \synbrack{\textbf{A}_\alpha} \, \synbrack{\textbf{x}_\alpha} \,
    \synbrack{\textbf{B}_\beta}$ is defined, it is semantically
    closed.

   \item Requirement 5 is satisfied by Specifications 7--9.  Part~7 of
     Lemma~\ref{lem:sub} verifies that
     $\mname{sub}_{\epsilon\epsilon\epsilon\epsilon}
     \synbrack{\textbf{A}_\alpha} \, \synbrack{\textbf{x}_\alpha} \,
     \synbrack{\textbf{B}_\beta}$ is defined in the cases
     corresponding to when substitution is defined in {\qzero}.

\ee

As a consequence of $\mname{sub}_{\epsilon\epsilon\epsilon\epsilon}$
satisfying Requirements 1--5, we can now prove that the law of
beta-reduction for {\qzerouqe} is valid in {\qzerouqe}:

\begin{thm}[Law of Beta-Reduction]\label{thm:beta-red-sub}\bsp
Let $\sM$ be a normal general model for {\qzerouqe}.  Then
\[\sM \models {[{\textbf{A}_\alpha \IsDefApp} \Andd
\mname{sub}_{\epsilon\epsilon\epsilon\epsilon} \,
\synbrack{\textbf{A}_\alpha} \, \synbrack{\textbf{x}_\alpha} \, \synbrack{\textbf{B}_\beta} = 
\synbrack{\textbf{C}_\beta}]} \Implies 
[\lambda \textbf{x}_\alpha \textbf{B}_\beta]\textbf{A}_\alpha \QuasiEqual \textbf{C}_\beta.\]
\esp
\end{thm}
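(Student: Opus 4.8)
The plan is to reduce the statement to the properties of $\mname{sub}_{\epsilon\epsilon\epsilon\epsilon}$ already established in Lemma~\ref{lem:sub}, so that the theorem becomes a short chain of quasi-equalities. Fix a normal general model $\sM$ and an assignment $\phi \in \mname{assign}(\sM)$, and suppose the antecedent holds at $\phi$, i.e.\ $\sV^{\cal M}_{\phi}(\textbf{A}_\alpha)$ is defined and $\sV^{\cal M}_{\phi}(\mname{sub}_{\epsilon\epsilon\epsilon\epsilon} \, \synbrack{\textbf{A}_\alpha} \, \synbrack{\textbf{x}_\alpha} \, \synbrack{\textbf{B}_\beta}) = \sV^{\cal M}_{\phi}(\synbrack{\textbf{C}_\beta})$; the goal is to show $\sV^{\cal M}_{\phi}([\lambda \textbf{x}_\alpha \textbf{B}_\beta]\textbf{A}_\alpha) \QuasiEqual \sV^{\cal M}_{\phi}(\textbf{C}_\beta)$. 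First I would observe that $\mname{sub}_{\epsilon\epsilon\epsilon\epsilon} \, \synbrack{\textbf{A}_\alpha} \, \synbrack{\textbf{x}_\alpha} \, \synbrack{\textbf{B}_\beta}$ contains variables only within quotations, hence is semantically closed by Proposition~\ref{prop:sem-closed}, while $\synbrack{\textbf{C}_\beta}$ always denotes $\sE(\textbf{C}_\beta)$ by Proposition~\ref{prop:sem-of-E}. Therefore the equation in the antecedent, which holds at $\phi$, in fact holds at every assignment, so $\sM \models \mname{sub}_{\epsilon\epsilon\epsilon\epsilon} \, \synbrack{\textbf{A}_\alpha} \, \synbrack{\textbf{x}_\alpha} \, \synbrack{\textbf{B}_\beta} = \synbrack{\textbf{C}_\beta}$, which is the form needed to invoke Lemma~\ref{lem:sub}.

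Next comes the main computation. By the semantics of function abstraction and application (one of the basic laws of lambda-notation that survives in {\qzerouqe}, using that every wff of type $o$ has a defined value), $\sV^{\cal M}_{\phi}([\lambda \textbf{x}_\alpha \textbf{B}_\beta]\textbf{A}_\alpha) \QuasiEqual \sV^{\cal M}_{\phi[{\bf x}_\alpha \mapsto {\cal V}^{\cal M}_{\phi}({\bf A}_\alpha)]}(\textbf{B}_\beta)$. Since $\sV^{\cal M}_{\phi}(\textbf{A}_\alpha)$ is defined and $\sM \models \mname{sub}_{\epsilon\epsilon\epsilon\epsilon} \, \synbrack{\textbf{A}_\alpha} \, \synbrack{\textbf{x}_\alpha} \, \synbrack{\textbf{B}_\beta} = \synbrack{\textbf{C}_\beta}$, part~6 of Lemma~\ref{lem:sub} gives $\sV^{\cal M}_{\phi[{\bf x}_\alpha \mapsto {\cal V}^{\cal M}_{\phi}({\bf A}_\alpha)]}(\textbf{B}_\beta) \QuasiEqual \sV^{\cal M}_{\phi}(\sembrack{\mname{sub}_{\epsilon\epsilon\epsilon\epsilon} \, \synbrack{\textbf{A}_\alpha} \, \synbrack{\textbf{x}_\alpha} \, \synbrack{\textbf{B}_\beta}}_\beta)$. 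Because the value of an evaluation $\sembrack{\textbf{D}_\epsilon}_\beta$ under $\phi$ depends on $\textbf{D}_\epsilon$ only through $\sV^{\cal M}_{\phi}(\textbf{D}_\epsilon)$, and $\sV^{\cal M}_{\phi}(\mname{sub}_{\epsilon\epsilon\epsilon\epsilon} \, \synbrack{\textbf{A}_\alpha} \, \synbrack{\textbf{x}_\alpha} \, \synbrack{\textbf{B}_\beta}) = \sV^{\cal M}_{\phi}(\synbrack{\textbf{C}_\beta})$, this last term quasi-equals $\sV^{\cal M}_{\phi}(\sembrack{\synbrack{\textbf{C}_\beta}}_\beta)$. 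Finally, the antecedent forces $\mname{sub}_{\epsilon\epsilon\epsilon\epsilon} \, \synbrack{\textbf{A}_\alpha} \, \synbrack{\textbf{x}_\alpha} \, \synbrack{\textbf{B}_\beta}$ to be defined, so part~1 of Lemma~\ref{lem:sub} yields $\sM \models \mname{eval-free}^{\beta}_{o\epsilon} \, \synbrack{\textbf{C}_\beta}$, whence $\textbf{C}_\beta$ is evaluation-free by Proposition~\ref{prop:eval-free}; then the semantics of quotation and evaluation give $\sV^{\cal M}_{\phi}(\sembrack{\synbrack{\textbf{C}_\beta}}_\beta) \QuasiEqual \sV^{\cal M}_{\phi}(\textbf{C}_\beta)$. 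Composing these quasi-equalities (and recalling that a formula $[\textbf{P} \QuasiEqual \textbf{Q}]$ is true at $\phi$ exactly when $\sV^{\cal M}_{\phi}(\textbf{P}) \QuasiEqual \sV^{\cal M}_{\phi}(\textbf{Q})$) proves the theorem.

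Since the heavy lifting has already been done in Lemma~\ref{lem:sub}, no genuine obstacle remains; the only points requiring care are (i) promoting the antecedent's equation from the fixed $\phi$ to all assignments so that parts~1 and~6 of Lemma~\ref{lem:sub} apply, (ii) verifying that $\textbf{C}_\beta$ is evaluation-free --- without this the final disquotation step $\sembrack{\synbrack{\textbf{C}_\beta}}_\beta \QuasiEqual \textbf{C}_\beta$ can fail --- and (iii) being careful that every link in the chain is a quasi-equality rather than an equality, so that undefinedness is tracked correctly on both sides (in particular at type $o$, where $\sV^{\cal M}_{\phi}$ is always defined).
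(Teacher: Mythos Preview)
Your proposal is correct and follows essentially the same route as the paper's proof: reduce $[\lambda \textbf{x}_\alpha \textbf{B}_\beta]\textbf{A}_\alpha$ to $\sV^{\cal M}_{\phi[{\bf x}_\alpha \mapsto \sV^{\cal M}_{\phi}({\bf A}_\alpha)]}(\textbf{B}_\beta)$ via the semantics, then to $\sembrack{\mname{sub}_{\epsilon\epsilon\epsilon\epsilon}\,\synbrack{\textbf{A}_\alpha}\,\synbrack{\textbf{x}_\alpha}\,\synbrack{\textbf{B}_\beta}}_\beta$ via part~6 of Lemma~\ref{lem:sub}, replace by $\sembrack{\synbrack{\textbf{C}_\beta}}_\beta$ using the antecedent, and disquote using that $\textbf{C}_\beta$ is evaluation-free (part~1 of Lemma~\ref{lem:sub} plus Proposition~\ref{prop:eval-free}). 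Your use of Proposition~\ref{prop:sem-closed} to promote the antecedent's equation from the fixed $\phi$ to all assignments is slightly more explicit than the paper, which simply cites part~1 of Lemma~\ref{lem:sub} for that step; the aside about ``every wff of type $o$ has a defined value'' is unnecessary here, since the only definedness needed in the first link is that of $\sV^{\cal M}_{\phi}(\textbf{A}_\alpha)$.
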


\begin{proof} 
Let $\phi \in \mname{assign}(\sM)$.  Assume (a) $\sV^{\cal
  M}_{\phi}(\textbf{A}_\alpha)$ is defined and \[(\mbox{b})~\sV^{\cal
  M}_{\phi}(\mname{sub}_{\epsilon\epsilon\epsilon\epsilon} \,
\synbrack{\textbf{A}_\alpha} \, \synbrack{\textbf{x}_\alpha} \,
\synbrack{\textbf{B}_\beta} = \synbrack{\textbf{C}_\beta}) =
\mname{T}.\] We must show
\[\sV^{\cal M}_{\phi}([\lambda \textbf{x}_\alpha
    \textbf{B}_\beta]\textbf{A}_\alpha) \QuasiEqual \sV^{\cal
  M}_{\phi}(\textbf{C}_\beta).\] (b) implies \[(\mbox{c})~\sM \models
\mname{sub}_{\epsilon\epsilon\epsilon\epsilon} \,
\synbrack{\textbf{A}_\alpha} \, \synbrack{\textbf{x}_\alpha} \,
\synbrack{\textbf{B}_\beta} = \synbrack{\textbf{C}_\beta}\] and (d)
$\textbf{C}_\beta$ is evaluation-free by part 1 of
Lemma~\ref{lem:sub}.  Then
\begin{align} \setcounter{equation}{0}
& 
\sV^{\cal  M}_{\phi}([\lambda \textbf{x}_\alpha \textbf{B}_\beta]\textbf{A}_\alpha) \\
&\QuasiEqual
\sV^{\cal M}_{\phi[{\bf x}_\alpha \mapsto {\cal V}^{\cal M}_{\phi}({\bf A}_\alpha)]}(\textbf{B}_\beta) \\
&\QuasiEqual
\sV^{\cal M}_{\phi}(\sembrack{\mname{sub}_{\epsilon\epsilon\epsilon\epsilon} \,
\synbrack{\textbf{A}_\alpha} \, \synbrack{\textbf{x}_\alpha} \, \synbrack{\textbf{B}_\beta}}_\beta) \\
&\QuasiEqual
\sV^{\cal M}_{\phi}(\sembrack{\synbrack{\textbf{C}_\beta}}_\beta) \\
&\QuasiEqual
\sV^{\cal M}_{\phi}(\textbf{C}_\beta).
\end{align}
(2) is by (a) and the semantics of function application and function
abstraction; (3) is by (c) and part 6 of Lemma~\ref{lem:sub}; (4) is
by (b); and (5) is by (d) and the semantics of evaluation.  Therefore,
$\sV^{\cal M}_{\phi}([\lambda \textbf{x}_\alpha
  \textbf{B}_\beta]\textbf{A}_\alpha) \QuasiEqual \sV^{\cal
  M}_{\phi}(\textbf{C}_\beta)$.
\end{proof}

\subsection{Example: Double Substitution}

We mentioned above that
$\mname{sub}_{\epsilon\epsilon\epsilon\epsilon} \,
\synbrack{\textbf{A}_\alpha} \, \synbrack{\textbf{x}_\alpha} \,
\synbrack{\textbf{B}_\beta}$ may involve a ``double substitution''
when $\textbf{B}_\beta$ is an evaluation.  The following example
explores this possibility when $\textbf{B}_\beta$ is the simple
evaluation $\sembrack{\textbf{x}_{\epsilon}}_o$.

Let $\sM$ be any normal general model for {\qzerouqe}, $\phi \in
\mname{assign}(\sM)$, and $\textbf{A}_{o}$ be an evaluation-free wff
in which $\textbf{x}_{\epsilon}$ is not free.  Then
\begin{align} \setcounter{equation}{0}
& 
\sV^{\cal M}_{\phi}(\mname{sub}_{\epsilon\epsilon\epsilon\epsilon} \,
\synbrack{\synbrack{\textbf{A}_{o}}} \, 
\synbrack{\textbf{x}_{\epsilon}} \,
\synbrack{\sembrack{\textbf{x}_{\epsilon}}_o}) \\
&=
\sV^{\cal M}_{\phi}(\mname{sub}_{\epsilon\epsilon\epsilon\epsilon} \,
\synbrack{\synbrack{\textbf{A}_{o}}} \, 
\synbrack{\textbf{x}_{\epsilon}} \,
\sembrack{\mname{sub}_{\epsilon\epsilon\epsilon\epsilon} \,
\synbrack{\synbrack{\textbf{A}_{o}}} \, 
\synbrack{\textbf{x}_{\epsilon}} \,
\synbrack{\textbf{x}_{\epsilon}}}_\epsilon) \\
&=
\sV^{\cal M}_{\phi}(\mname{sub}_{\epsilon\epsilon\epsilon\epsilon} \,
\synbrack{\synbrack{\textbf{A}_{o}}} \, 
\synbrack{\textbf{x}_{\epsilon}} \,
\sembrack{\synbrack{\synbrack{\textbf{A}_{o}}}}_\epsilon) \\
&=
\sV^{\cal M}_{\phi}(\mname{sub}_{\epsilon\epsilon\epsilon\epsilon} \,
\synbrack{\synbrack{\textbf{A}_{o}}} \, 
\synbrack{\textbf{x}_{\epsilon}} \,
\synbrack{\textbf{A}_{o}}) \\
&=
\sV^{\cal M}_{\phi}(\synbrack{\textbf{A}_{o}}).
\end{align}
(2) is by the specification of
$\mname{sub}_{\epsilon\epsilon\epsilon\epsilon}$, the fact that
$\synbrack{\textbf{A}_{o}}$ is syntactically closed, and the fact that
$\textbf{A}_{o}$ is evaluation-free; (3) is by the specification of
$\mname{sub}_{\epsilon\epsilon\epsilon\epsilon}$; (4) is by the
semantics of evaluation and the fact that $\synbrack{\textbf{A}_{o}}$
is evaluation-free; and (5) is by the specification of
$\mname{sub}_{\epsilon\epsilon\epsilon\epsilon}$ and the fact that
$\textbf{x}_{\epsilon}$ is not free in $\textbf{A}_{o}$.  Therefore,
\[\sM \models \mname{sub}_{\epsilon\epsilon\epsilon\epsilon} \,
\synbrack{\synbrack{\textbf{A}_{o}}} \,
\synbrack{\textbf{x}_{\epsilon}} \,
\synbrack{\sembrack{\textbf{x}_{\epsilon}}_o}) =
\synbrack{\textbf{A}_{o}}\] and only the first substitution has an
effect.

Now consider the evaluation-free wff $\textbf{x}_{\epsilon} =
\textbf{x}_{\epsilon}$ (in which the variable $\textbf{x}_{\epsilon}$
is free).  Then
\begin{align} \setcounter{equation}{0}
& 
\sV^{\cal M}_{\phi}(\mname{sub}_{\epsilon\epsilon\epsilon\epsilon} \,
\synbrack{\synbrack{\textbf{x}_{\epsilon} = \textbf{x}_{\epsilon}}} \, 
\synbrack{\textbf{x}_{\epsilon}} \,
\synbrack{\sembrack{\textbf{x}_{\epsilon}}_o}) \\
&=
\sV^{\cal M}_{\phi}(\mname{sub}_{\epsilon\epsilon\epsilon\epsilon} \,
\synbrack{\synbrack{\textbf{x}_{\epsilon} = \textbf{x}_{\epsilon}}} \, 
\synbrack{\textbf{x}_{\epsilon}} \,
\sembrack{\mname{sub}_{\epsilon\epsilon\epsilon\epsilon} \,
\synbrack{\synbrack{\textbf{x}_{\epsilon} = \textbf{x}_{\epsilon}}} \, 
\synbrack{\textbf{x}_{\epsilon}} \,
\synbrack{\textbf{x}_{\epsilon}}}_\epsilon) \\
&=
\sV^{\cal M}_{\phi}(\mname{sub}_{\epsilon\epsilon\epsilon\epsilon} \,
\synbrack{\synbrack{\textbf{x}_{\epsilon} = \textbf{x}_{\epsilon}}} \, 
\synbrack{\textbf{x}_{\epsilon}} \,
\sembrack{\synbrack{\synbrack{\textbf{x}_{\epsilon} = \textbf{x}_{\epsilon}}}}_\epsilon) \\
&=
\sV^{\cal M}_{\phi}(\mname{sub}_{\epsilon\epsilon\epsilon\epsilon} \,
\synbrack{\synbrack{\textbf{x}_{\epsilon} = \textbf{x}_{\epsilon}}} \, 
\synbrack{\textbf{x}_{\epsilon}} \,
\synbrack{\textbf{x}_{\epsilon} = \textbf{x}_{\epsilon}}) \\
&=
\sV^{\cal M}_{\phi}(\synbrack{\synbrack{\textbf{x}_{\epsilon} = \textbf{x}_{\epsilon}}
= 
\synbrack{\textbf{x}_{\epsilon} = \textbf{x}_{\epsilon}}}).
\end{align}
(1)--(4) are by the same reasoning as above, and (5) is by the
specification of $\mname{sub}_{\epsilon\epsilon\epsilon\epsilon}$.
Therefore,
\[\sM \models \mname{sub}_{\epsilon\epsilon\epsilon\epsilon} \,
\synbrack{\synbrack{\textbf{x}_{\epsilon} = \textbf{x}_{\epsilon}}} \,
\synbrack{\textbf{x}_{\epsilon}} \,
\synbrack{\sembrack{\textbf{x}_{\epsilon}}_o} =
\synbrack{\synbrack{\textbf{x}_{\epsilon} = \textbf{x}_{\epsilon}} =
  \synbrack{\textbf{x}_{\epsilon} = \textbf{x}_{\epsilon}}}\] and both
substitutions have an effect. 

\subsection{Example: Variable Renaming}

In predicate logics like {\qzero}, bound variables can be renamed in a
wff (in certain ways) without changing the meaning the wff.  For
example, when the variable $x_\alpha$ is renamed to the variable
$y_\alpha$ (or any other variable of type $\alpha$) in the
evaluation-free wff $\lambda x_\alpha x_\alpha$, the result is the wff
$\lambda y_\alpha y_\alpha$.  $\lambda x_\alpha x_\alpha$ and $\lambda
y_\alpha y_\alpha$ are logically equivalent to each other, i.e., \[\sM
\models \lambda x_\alpha x_\alpha = \lambda y_\alpha y_\alpha.\] In
fact, a variable renaming that permutes the names of the variables
occurring in an evaluation-free wff of {\qzerouqe} without changing
the names of the wff's free variables preserves the meaning of the
wff.

\bsp Unfortunately, meaning-preserving variable renamings do not exist for
all the non-evaluation-free wffs of {\qzerouqe}.  As an example,
consider the two non-evaluation-free wffs $\lambda x_\epsilon
\sembrack{x_\epsilon}_{\seq{\epsilon\epsilon}}$ and $\lambda
y_\epsilon \sembrack{y_\epsilon}_{\seq{\epsilon\epsilon}}$ where
$x_\epsilon$ and $y_\epsilon$ are distinct variables.  Obviously,
$\lambda y_\epsilon \sembrack{y_\epsilon}_{\seq{\epsilon\epsilon}}$ is obtained from
$\lambda x_\epsilon \sembrack{x_\epsilon}_{\seq{\epsilon\epsilon}}$ by renaming
$x_\epsilon$ to be $y_\epsilon$.  If we forget about evaluation, we
would expect that $\lambda x_\epsilon
\sembrack{x_\epsilon}_{\seq{\epsilon\epsilon}}$ and $\lambda
y_\epsilon \sembrack{y_\epsilon}_{\seq{\epsilon\epsilon}}$ are
logically equivalent --- but they are not!  Let $\textbf{A}_\epsilon$
be $\synbrack{\mname{pair}_{\seq{\epsilon\epsilon}\epsilon\epsilon} \,
  x_\epsilon \, y_\epsilon}$, and suppose $\phi(x_\epsilon) =
\sE(x_\epsilon)$ and $\phi(y_\epsilon) = \sE(y_\epsilon)$.  Then
\begin{align*}
&
\sV^{\cal M}_{\phi}([\lambda
x_\epsilon \sembrack{x_\epsilon}_{\seq{\epsilon\epsilon}}]\textbf{A}_\epsilon) \\
&\QuasiEqual
\sV^{\cal M}_{\phi[x_\epsilon \mapsto {\cal V}_{\phi[{\bf A}_\epsilon]}]}
(\sembrack{x_\epsilon}_{\seq{\epsilon\epsilon}}) \\
&\QuasiEqual
\sV^{\cal M}_{\phi[x_\epsilon \mapsto {\cal V}_{\phi[{\bf A}_\epsilon]}]}
(\sE^{-1}(\sV^{\cal M}_{\phi[x_\epsilon \mapsto {\cal V}_{\phi[{\bf A}_\epsilon]}]}(x_\epsilon))) \\
&\QuasiEqual
\sV^{\cal M}_{\phi[x_\epsilon \mapsto {\cal V}_{\phi[{\bf A}_\epsilon]}]}(\sE^{-1}
(\sE(\mname{pair}_{\seq{\epsilon\epsilon}\epsilon\epsilon} \, x_\epsilon \, y_\epsilon))) \\
&\QuasiEqual
\sV^{\cal M}_{\phi[x_\epsilon \mapsto {\cal V}_{\phi[{\bf A}_\epsilon]}]}
(\mname{pair}_{\seq{\epsilon\epsilon}\epsilon\epsilon} \, x_\epsilon \, y_\epsilon) \\
&=
\seq{\sE(\mname{pair}_{\seq{\epsilon\epsilon}\epsilon\epsilon} \, x_\epsilon \, y_\epsilon),\sE(y_\epsilon)}.
\end{align*}
Similarly, 
\[\sV^{\cal M}_{\phi}([\lambda y_\epsilon \sembrack{y_\epsilon}_{\seq{\epsilon\epsilon}}]\textbf{A}_\epsilon)
\QuasiEqual
\seq{\sE(x_\epsilon),\sE(\mname{pair}_{\seq{\epsilon\epsilon}\epsilon\epsilon}
  \, x_\epsilon \, y_\epsilon)}.\] Therefore, $\lambda x_\epsilon
\sembrack{x_\epsilon}_{\seq{\epsilon\epsilon}}$ and $\lambda
y_\epsilon \sembrack{y_\epsilon}_{\seq{\epsilon\epsilon}}$ are not
logically equivalent, but the functions $\sV^{\cal M}_{\phi}(\lambda
x_\epsilon \sembrack{x_\epsilon}_{\seq{\epsilon\epsilon}})$ and
$\sV^{\cal M}_{\phi}(\lambda y_\epsilon
\sembrack{y_\epsilon}_{\seq{\epsilon\epsilon}})$ are equal on
constructions of the form $\sE(B_{\seq{\epsilon\epsilon}})$ where
$B_{\seq{\epsilon\epsilon}}$ is semantically closed.\esp

This example proves the following proposition:

\begin{prop}
\bsp Alpha-conversion is not valid in {\qzerouqe} for some
non-evaluation-free wffs.\esp
\end{prop}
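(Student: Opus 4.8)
The plan is to establish the proposition by exhibiting exactly the separating example carried out in the text just above it. It suffices to produce two non-evaluation-free wffs related by a bound-variable renaming that is meaning-preserving in ordinary predicate logic, together with a single normal general model $\sM$ and assignment $\phi$ in which the two wffs denote different values. The candidates are
\[[\lambda x_\epsilon \sembrack{x_\epsilon}_{\seq{\epsilon\epsilon}}] \mbox{ and } [\lambda y_\epsilon \sembrack{y_\epsilon}_{\seq{\epsilon\epsilon}}],\]
with $x_\epsilon \not= y_\epsilon$, the second obtained from the first by renaming the bound $x_\epsilon$ to $y_\epsilon$. Since validity of alpha-conversion for these wffs would force $\sM \models [\lambda x_\epsilon \sembrack{x_\epsilon}_{\seq{\epsilon\epsilon}}] = [\lambda y_\epsilon \sembrack{y_\epsilon}_{\seq{\epsilon\epsilon}}]$ in every normal general model for {\qzerouqe}, a single counterexample model settles the claim.

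To separate the two denoted functions I would apply each abstraction to the construction $\textbf{A}_\epsilon$ equal to $\synbrack{\mname{pair}_{\seq{\epsilon\epsilon}\epsilon\epsilon}\, x_\epsilon\, y_\epsilon}$, evaluated under an assignment $\phi$ with $\phi(x_\epsilon) = \sE(x_\epsilon)$ and $\phi(y_\epsilon) = \sE(y_\epsilon)$. Unwinding the semantics of function application, function abstraction, and the double application of $\sV^{\cal M}$ built into the semantics of evaluation, $[\lambda x_\epsilon \sembrack{x_\epsilon}_{\seq{\epsilon\epsilon}}]\textbf{A}_\epsilon$ computes to the ordered pair $\seq{\sE(\mname{pair}_{\seq{\epsilon\epsilon}\epsilon\epsilon}\, x_\epsilon\, y_\epsilon), \sE(y_\epsilon)}$, while $[\lambda y_\epsilon \sembrack{y_\epsilon}_{\seq{\epsilon\epsilon}}]\textbf{A}_\epsilon$ computes to $\seq{\sE(x_\epsilon), \sE(\mname{pair}_{\seq{\epsilon\epsilon}\epsilon\epsilon}\, x_\epsilon\, y_\epsilon)}$. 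These are distinct (by injectivity of $\sE$ and the Specification~4 axioms that separate quotations of constants from quotations of applications), so the two abstractions denote different functions in $\sM$ and hence are not logically equivalent, which is what the proposition asserts.

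The crux — and the reason the renaming is not harmless here — is that $\textbf{A}_\epsilon$ is deliberately chosen to be \emph{not} semantically closed: it is the quotation of a wff in which both $x_\epsilon$ and $y_\epsilon$ occur free. Thus when the $\lambda$-bound variable is substituted into the inner evaluation and $\sV^{\cal M}$ is run a second time on the resulting construction, whichever of $x_\epsilon, y_\epsilon$ is the bound variable has its value overwritten by the argument while the other keeps its $\phi$-value; renaming the bound variable swaps these roles and therefore swaps the coordinates of the resulting pair. The one delicate point is checking that the side conditions of clause~7 of the general-model definition are met at every step — that the intermediate constructions are genuinely evaluation-free wffs of the expected type — so that the $\QuasiEqual$-chains are honest equalities rather than instances of the ``otherwise undefined (false at type $o$)'' branch. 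Once that bookkeeping is in place, the conclusion is immediate.
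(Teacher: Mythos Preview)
Your proposal is correct and follows exactly the paper's approach: the paper's proof is simply the sentence ``This example proves the following proposition,'' referring to the very computation you outline with $[\lambda x_\epsilon \sembrack{x_\epsilon}_{\seq{\epsilon\epsilon}}]$, $[\lambda y_\epsilon \sembrack{y_\epsilon}_{\seq{\epsilon\epsilon}}]$, the argument $\synbrack{\mname{pair}_{\seq{\epsilon\epsilon}\epsilon\epsilon}\, x_\epsilon\, y_\epsilon}$, and the assignment $\phi(x_\epsilon)=\sE(x_\epsilon)$, $\phi(y_\epsilon)=\sE(y_\epsilon)$. Your diagnosis of the mechanism (the argument is not semantically closed, so the bound-variable renaming swaps which coordinate gets overwritten) matches the paper's discussion as well.
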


\begin{note}[Nominal Data Types]\em
Since alpha-conversion is not universally valid in {\qzerouqe}, it is
not clear whether techniques for managing variable naming and binding
--- such as \emph{higher-order abstract
  syntax}~\cite{Miller00,PfenningElliot88} and \emph{nominal
  techniques}~\cite{GabbayPitts02,Pitts03} --- are applicable to
{\qzerouqe}.  However, the paper~\cite{NanevskiPfenning05} does
combine quotation/evaluation techniques with nominal techniques.
\end{note}

\subsection{Limitations of $\mname{sub}_{\epsilon\epsilon\epsilon\epsilon}$}\label{subsec:undef-sub}

Theorem~\ref{thm:beta-red-sub} shows beta-reduction can be computed
using $\mname{sub}_{\epsilon\epsilon\epsilon\epsilon}$.  However, it
is obviously not possible to use
$\mname{sub}_{\epsilon\epsilon\epsilon\epsilon}$ to compute a
beta-reduction when the corresponding application of
$\mname{sub}_{\epsilon\epsilon\epsilon\epsilon}$ is undefined.  There
are thus two questions that concern us:

\be

  \item When is an application of
    $\mname{sub}_{\epsilon\epsilon\epsilon\epsilon}$ undefined?

  \item When an application of
    $\mname{sub}_{\epsilon\epsilon\epsilon\epsilon}$ is undefined, is
    the the corresponding beta-reduction ever valid in {\qzerouqe}.

\ee

Let $\sM$ be a normal general model for {\qzerouqe}.  There are two
cases in which $\sM \models
[\mname{sub}_{\epsilon\epsilon\epsilon\epsilon}
  \synbrack{\textbf{A}_\alpha} \, \synbrack{\textbf{x}_\alpha} \,
  \synbrack{\textbf{B}_\beta}] \IsUndefApp$ will be true.  The first
case occurs when the naive substitution of $\textbf{A}_\alpha$ for the
free occurrences of $\textbf{x}_\alpha$ in $\textbf{B}_\beta$ causes a
variable capture.  In this case the corresponding beta-reduction is
not valid unless the bound variables in $\textbf{B}_\beta$ are renamed
so that the variable capture is avoided.  This can always be done if
$\textbf{B}_\beta$ is evaluation-free, but as we showed in the
previous subsection it is not always possible to rename variables in a
non-evaluation-free wff.

The second case in which $\sM \models
[\mname{sub}_{\epsilon\epsilon\epsilon\epsilon}
  \synbrack{\textbf{A}_\alpha} \, \synbrack{\textbf{x}_\alpha} \,
  \synbrack{\textbf{B}_\beta}] \IsUndefApp$ will be true occurs when
the naive cleansing of evaluations in the result of the substitution
causes a variable to escape outside of a quotation.  This happens when
the body of an evaluation is not semantically closed after the first
substitution.  In this case, the corresponding beta-reduction may be
valid.  We will illustrate this possibility with three examples.

\subsubsection*{Example 1}

Let $\textbf{A}_{\alpha\epsilon\epsilon}$ be the wff
\[\lambda x_\epsilon \lambda y_\epsilon 
\sembrack{\mname{app}_{\epsilon\epsilon\epsilon} \, x_\epsilon \,
  y_\epsilon}_\alpha\] and $\textbf{B}_{\alpha\beta}$ and
$\textbf{C}_\beta$ be syntactically closed evaluation-free wffs.  Then
\[\sM \models \textbf{A}_{\alpha\epsilon\epsilon}
\synbrack{\textbf{B}_{\alpha\beta}}\synbrack{\textbf{C}_\beta}
\QuasiEqual \textbf{B}_{\alpha\beta} \textbf{C}_\beta.\] However, we
also have
\[\sM \models [\mname{sub}_{\epsilon\epsilon\epsilon\epsilon} \,
\synbrack{\textbf{B}_{\alpha\beta}} \, \synbrack{x_\epsilon} \,
\synbrack{\textbf{A}_{\alpha\epsilon\epsilon}}] \IsUndefApp\] 
since the body of the evaluation contains $y_\epsilon$ 
after the first substitution.  Hence the beta-reduction of
$\textbf{A}_{\alpha\epsilon\epsilon}
\synbrack{\textbf{B}_{\alpha\beta}}\synbrack{\textbf{C}_\beta}$ is
valid in {\qzerouqe}, but the corresponding application of
$\mname{sub}_{\epsilon\epsilon\epsilon\epsilon}$ is undefined.

This is a significant limitation.  It means, for instance, that using
$\mname{sub}_{\epsilon\epsilon\epsilon\epsilon}$ we cannot instantiate
a formula with more than one variable within an evaluation (not in a
quotation).  An instance of specification 9.9 where the syntactic
variables are replaced with variables is an example of a formula with
this property.

In some cases this limitation can be overcome by instantiating all the
variables of type $\epsilon$ within an evaluation together as a group.
For example, let $\textbf{A}'_{\alpha\epsilon\epsilon}$ be the wff 
$\lambda x_{\seq{\epsilon\epsilon}}\textbf{D}_\alpha$ where $\textbf{D}_\alpha$ is
\[\sembrack{\mname{app}_{\epsilon\epsilon\epsilon} \, 
[\mname{fst}_{\epsilon\seq{\epsilon\epsilon}} \, x_{\seq{\epsilon\epsilon}}]
[\mname{snd}_{\epsilon\seq{\epsilon\epsilon}} \, x_{\seq{\epsilon\epsilon}}]}_\alpha.\]
Then
\begin{align*}
& 
\sV^{\cal M}_{\phi}(\mname{sub}_{\epsilon\epsilon\epsilon\epsilon} \,
\synbrack{\mname{pair}_{\seq{\epsilon\epsilon}\epsilon\epsilon} \, 
\synbrack{\textbf{B}_{\alpha\beta}} \, \synbrack{\textbf{C}_\beta}} \,
\synbrack{x_{\seq{\epsilon\epsilon}}} \, \synbrack{\textbf{D}_\alpha}) \\
&\QuasiEqual
\sV^{\cal M}_{\phi}(\mname{app}_{\epsilon\epsilon\epsilon} \,
[\mname{fst}_{\epsilon\seq{\epsilon\epsilon}} \, 
[\mname{pair}_{\seq{\epsilon\epsilon}\epsilon\epsilon} \, 
\synbrack{\textbf{B}_{\alpha\beta}} \, \synbrack{\textbf{C}_\beta}]] \, \\
&\hspace{14.3ex}
[\mname{snd}_{\epsilon\seq{\epsilon\epsilon}} \, 
[\mname{pair}_{\seq{\epsilon\epsilon}\epsilon\epsilon} \, 
\synbrack{\textbf{B}_{\alpha\beta}} \, \synbrack{\textbf{C}_\beta}]]) \\
&\QuasiEqual
\sV^{\cal M}_{\phi}(\mname{app}_{\epsilon\epsilon\epsilon} \,
\synbrack{\textbf{B}_{\alpha\beta}} \, \synbrack{\textbf{C}_\beta}) \\
&\QuasiEqual
\sV^{\cal M}_{\phi}(\synbrack{\textbf{B}_{\alpha\beta} \textbf{C}_\beta})
\end{align*}
for all $\phi \in \mname{assign}(\sM)$.  Hence
\begin{align*}
&
\sM \models \mname{sub}_{\epsilon\epsilon\epsilon\epsilon} \,
\synbrack{\mname{pair}_{\seq{\epsilon\epsilon}\epsilon\epsilon} \,
\synbrack{\textbf{B}_{\alpha\beta}} \, \synbrack{\textbf{C}_\beta}} \, \synbrack{x_\epsilon} \,
\synbrack{\textbf{D}_\alpha} = \synbrack{\textbf{B}_{\alpha\beta} \textbf{C}_\beta},
\end{align*}
and so by Theorem~\ref{thm:beta-red-sub}
\[\sM \models \textbf{A}'_{\alpha\epsilon\epsilon} \,
[\mname{pair}_{\seq{\epsilon\epsilon}\epsilon\epsilon} \,
\synbrack{\textbf{B}_{\alpha\beta}} \, \synbrack{\textbf{C}_\beta}] \QuasiEqual
\textbf{B}_{\alpha\beta} \textbf{C}_\beta.\]
The main reason we have introduced pairs in {\qzerouqe} is to allow us
to express function abstractions like
$\textbf{A}_{\alpha\epsilon\epsilon}$ in a form like
$\textbf{A}'_{\alpha\epsilon\epsilon}$ that can be beta-reduced using
$\mname{sub}_{\epsilon\epsilon\epsilon\epsilon}$.

\subsubsection*{Example 2}

Let $\textbf{C}_\alpha$ be the wff $[\lambda x_\epsilon
  x_\epsilon]\sembrack{x_\epsilon}_\alpha$.  Then
\[\sM \models [\lambda x_\epsilon x_\epsilon]\sembrack{x_\epsilon}_\alpha \QuasiEqual
\sembrack{x_\epsilon}_\alpha\]
but 
\[\sM \models [\mname{sub}_{\epsilon\epsilon\epsilon\epsilon} \,
\synbrack{\sembrack{x_\epsilon}_\alpha} \, \synbrack{x_\epsilon} \,
\synbrack{x_\epsilon}] \IsUndefApp\] 
since 
$\sM \models [\mname{cleanse}_{\epsilon\epsilon} \, \sembrack{x_\epsilon}_\alpha] \IsUndefApp$.  
We will overcome this limitation of
$\mname{sub}_{\epsilon\epsilon\epsilon\epsilon}$ by including 
\[[\lambda x_\epsilon x_\epsilon]\textbf{A}_\alpha \QuasiEqual \textbf{A}_\alpha\]
and the other basic properties of lambda-notation in the axioms of
{\pfsysuqe}.  These properties will be presented as schemas
similar to Axioms $4_1$--$4_5$ in~\cite{Andrews02}.

\subsubsection*{Example 3}

Let $\textbf{C}_\alpha$ be the wff $[\lambda x_\epsilon
\sembrack{x_\epsilon}_\alpha] x_\epsilon$.  Then
\[\sM \models [\lambda x_\epsilon \sembrack{x_\epsilon}_\alpha] x_\epsilon \QuasiEqual
\sembrack{x_\epsilon}_\alpha\]
but 
\[\sM \models [\mname{sub}_{\epsilon\epsilon\epsilon\epsilon} \,
\synbrack{x_\epsilon} \, \synbrack{x_\epsilon} \,
\synbrack{\sembrack{x_\epsilon}_\alpha}] \IsUndefApp\] 
since the body of the evaluation contains $x_\epsilon$ 
after the first substitution.  We will overcome this limitation of
$\mname{sub}_{\epsilon\epsilon\epsilon\epsilon}$ by including 
\[[\lambda x_\alpha \textbf{B}_\beta] x_\alpha \QuasiEqual \textbf{B}_\alpha\]
 in the axioms of {\pfsysuqe}.

\section{Proof System} \label{sec:proofsystem}

Now that we have defined a mechanism for substitution, we are ready to
present the proof system of {\qzerouqe} called {\pfsysuqe}.  It is
derived from {\pfsysu}, the proof system of {\qzerou}.  The presence
of undefinedness makes {\pfsysu} moderately more complicated than
$\sP$, the proof system of {\qzero}, but the presence of the type
$\epsilon$ and quotation and evaluation makes {\pfsysuqe}
significantly more complicated than {\pfsysu}.  A large part of the
complexity of {\qzerouqe} is due to the difficulty of beta-reducing
wffs that involve evaluations.

\subsection{Axioms}

{\pfsysuqe} consists of a set of axioms and a set of rules of
inference.  The axioms are given in this section, while the rules of
inference are given in the next section.  The axioms are organized
into groups.  The members of each group are presented using one or
more formula schemas.  A group is called an ``Axiom'' even though it
consists of infinitely many formulas.  

\bi

  \item[] \textbf{Axiom 1 (Truth Values)}

  \be

    \item[] $[\textbf{G}_{oo}T_o \Andd \textbf{G}_{oo}F_o] \Iff 
    \Forall \textbf{x}_o [\textbf{G}_{oo} \textbf{x}_o].$

  \ee

  \item[] \textbf{Axiom 2 (Leibniz' Law)} 

  \be

    \item[] $\textbf{A}_\alpha = \textbf{B}_\alpha \Implies [\textbf{H}_{o\alpha}\textbf{A}_\alpha \Iff
      \textbf{H}_{o\alpha}\textbf{B}_\alpha].$

  \ee

  \item[] \textbf{Axiom 3 (Extensionality)}

  \be

    \item[] $[{\textbf{F}_{\alpha\beta} \IsDefApp} \Andd {\textbf{G}_{\alpha\beta} \IsDefApp}] \Implies
      \textbf{F}_{\alpha\beta} = \textbf{G}_{\alpha\beta} \Iff 
      \Forall \textbf{x}_\beta 
      [\textbf{F}_{\alpha\beta}\textbf{x}_\beta \QuasiEqual \textbf{G}_{\alpha\beta}\textbf{x}_\beta].$

  \ee

  \item[] \textbf{Axiom 4 (Beta-Reduction)}

  \be

    \item ${[{\textbf{A}_\alpha \IsDefApp} \Andd
      \mname{sub}_{\epsilon\epsilon\epsilon\epsilon} \,
      \synbrack{\textbf{A}_\alpha} \, \synbrack{\textbf{x}_\alpha} \, \synbrack{\textbf{B}_\beta} = 
      \synbrack{\textbf{C}_\beta}]} \Implies 
      [\lambda \textbf{x}_\alpha \textbf{B}_\beta]\textbf{A}_\alpha \QuasiEqual \textbf{C}_\beta.$

    \vspace{1ex}

    \item $[\lambda \textbf{x}_\alpha \textbf{x}_\alpha]\textbf{A}_\alpha \QuasiEqual \textbf{A}_\alpha$.

    \vspace{1ex}

    \item ${\textbf{A}_\alpha \IsDefApp} \Implies 
      [\lambda \textbf{x}_\alpha \textbf{y}_\beta]\textbf{A}_\alpha \QuasiEqual \textbf{y}_\beta$ 
      {\sglsp} where $\textbf{x}_\alpha \not= \textbf{y}_\beta$.

    \vspace{1ex}

    \item ${\textbf{A}_\alpha \IsDefApp} \Implies 
      [\lambda \textbf{x}_\alpha \textbf{c}_\beta]\textbf{A}_\alpha \QuasiEqual \textbf{c}_\alpha$ 
      {\sglsp} where $\textbf{c}_\beta$ is a primitive constant.

    \vspace{1ex}

    \item $[\lambda \textbf{x}_\alpha [\textbf{B}_{\alpha\beta}\textbf{C}_\beta]]\textbf{A}_\alpha \QuasiEqual
          [[\lambda \textbf{x}_\alpha \textbf{B}_{\alpha\beta}]\textbf{A}_\alpha]
          [[\lambda \textbf{x}_\alpha \textbf{C}_\beta]\textbf{A}_\alpha]$.

    \vspace{1ex}

    \item ${\textbf{A}_\alpha \IsDefApp} \Implies
          [\lambda \textbf{x}_\alpha [\lambda \textbf{x}_\alpha \textbf{B}_\beta]]\textbf{A}_\alpha =
          \lambda \textbf{x}_\alpha \textbf{B}_\beta.$

    \vspace{1ex}

    \item ${\textbf{A}_\alpha \IsDefApp} \Andd 
      [\mname{not-free-in}_{o\epsilon\epsilon} \, \synbrack{\textbf{x}_\alpha} \,
      \synbrack{\textbf{B}_\gamma} \Or 
      \mname{not-free-in}_{o\epsilon\epsilon} \, \synbrack{\textbf{y}_\beta} \,
      \synbrack{\textbf{A}_\alpha}] \Implies {}$ \\[.5ex]
      \hspace*{4ex}
      $[\lambda \textbf{x}_\alpha [\lambda \textbf{y}_\beta \textbf{B}_\gamma]]\textbf{A}_\alpha =
      \lambda \textbf{y}_\beta [[\lambda \textbf{x}_\alpha\textbf{B}_\gamma]\textbf{A}_\alpha]$
      {\sglsp} where $\textbf{x}_\alpha \not= \textbf{y}_\alpha$.

    \vspace{1ex}

    \item $[\lambda \textbf{x}_\alpha 
      [\If \, \textbf{B}_o \, \textbf{C}_\beta \, \textbf{D}_\beta]]\textbf{A}_\alpha \QuasiEqual
      \If \, 
      [\lambda \textbf{x}_\alpha \textbf{B}_o]\textbf{A}_\alpha] \,
      [\lambda \textbf{x}_\alpha \textbf{C}_\beta]\textbf{A}_\alpha] \,
      [\lambda \textbf{x}_\alpha \textbf{D}_\beta]\textbf{A}_\alpha]$.

    \vspace{1ex}

    \item ${\textbf{A}_\alpha \IsDefApp} \Implies 
      [\lambda \textbf{x}_\alpha \synbrack{\textbf{B}_\beta}]\textbf{A}_\alpha \QuasiEqual 
      \synbrack{\textbf{B}_\beta}$.

    \vspace{1ex}

    \item $[\lambda \textbf{x}_\alpha \textbf{B}_\beta] \textbf{x}_\alpha \QuasiEqual \textbf{B}_\beta$.

  \ee

  \item[] \textbf{Axiom 5 (Tautologous Formulas)}

  \be

    \item[] $\textbf{A}_o$ {\sglsp} where $\textbf{A}_o$ is tautologous.

  \ee

  \item[] \textbf{Axiom 6 (Definedness)}

  \be

    \item ${\textbf{x}_\alpha\IsDefApp}.$

    \vspace{1ex}

    \item ${\textbf{c}_\alpha\IsDefApp}$ {\sglsp} where
      $\textbf{c}_\alpha$ is a primitive constant.\footnote{Notice
      that, for $\alpha \not= o$, $\textbf{c}_\alpha\IsDefApp$ is
      false if $\textbf{c}_\alpha$ is the defined constant
      $\Undefined_\alpha$.}

    \vspace{1ex}

    \item ${\textbf{A}_{o\beta}\textbf{B}_\beta \IsDefApp}.$

    \vspace{1ex}

    \item $[{\textbf{A}_{\alpha\beta}\IsUndefApp} \Or
      {\textbf{B}_\beta\IsUndefApp}] \Implies
      \textbf{A}_{\alpha\beta}\textbf{B}_\beta \QuasiEqual
      \Undefined_\alpha.$

    \vspace{1ex}

    \item ${[\lambda \textbf{x}_\alpha \textbf{B}_\beta]\IsDefApp}.$

    \vspace{1ex}

    \item $[\If \, \textbf{A}_o \, \textbf{B}_o \,
      \textbf{C}_o]\IsDefApp.$

    \vspace{1ex}

    \item $\synbrack{\textbf{A}_{\alpha}}\IsDefApp.$

    \vspace{1ex}

    \item $\sembrack{\textbf{A}_\epsilon}_o \IsDefApp.$

    \vspace{1ex}

    \item $\sembrack{\synbrack{\synbrack{\textbf{A}_\alpha}}}_\epsilon \IsDefApp.$

    \vspace{1ex}

    \item ${\NegAlt[\mname{eval-free}^{\alpha}_{o\epsilon} \,
        \textbf{A}_\epsilon]} \Implies
      \sembrack{\textbf{A}_\epsilon}_\alpha \QuasiEqual
      \Undefined_\alpha.$

    \vspace{1ex}

    \item $\Undefined_\alpha \IsUndefApp$ {\sglsp} where $\alpha \not= o$.

  \ee

  \item[] \textbf{Axiom 7 (Quasi-Equality)}

  \be

    \item $\textbf{A}_\alpha \QuasiEqual \textbf{A}_\alpha$.

  \ee

  \item[] \textbf{Axiom 8 (Definite Description)}

  \be

    \item ${\Forsome_1 \textbf{x}_\alpha \textbf{A}_o} \Iff {[\Iota
        \textbf{x}_\alpha \textbf{A}_o]\IsDefApp}$ {\sglsp} where
      $\alpha \not= o$.

    \vspace{1ex}

    \item ${[{\Forsome_1 \textbf{x}_\alpha \textbf{A}_o} \Andd 
   \mname{sub}_{\epsilon\epsilon\epsilon\epsilon} \,
   \synbrack{\Iota \textbf{x}_\alpha \textbf{A}_o} \,
   \synbrack{\textbf{x}_\alpha} \, \synbrack{\textbf{A}_o} = \synbrack{\textbf{B}_o}]}
   \Implies \textbf{B}_o$ {\sglsp} where $\alpha \not= o$.

  \ee

  \item[] \textbf{Axiom 9 (Ordered Pairs)}

  \be

    \item $[\mname{pair}_{\seq{\alpha\beta}\beta\alpha} \, \textbf{A}_\alpha
      \, \textbf{B}_\beta = \mname{pair}_{\seq{\alpha\beta}\beta\alpha} \,
      \textbf{C}_\alpha \, \textbf{D}_\beta] \Iff 
      [\textbf{A}_\alpha = \textbf{C}_\alpha \Andd \textbf{B}_\beta = \textbf{D}_\beta].$

    \vspace{1ex}

    \item ${\textbf{A}_{\seq{\alpha\beta}} \IsDefApp} \Implies
      \Forsome \textbf{x}_\alpha \Forsome \textbf{y}_\alpha 
      [\textbf{A}_{\seq{\alpha\beta}}
      = \mname{pair}_{\seq{\alpha\beta}\beta\alpha} \, \textbf{x}_\alpha \, \textbf{y}_\beta].$

  \ee

  \item[] \textbf{Axiom 10 (Conditionals)}

  \be

    \item $[\If \, T_o \, \textbf{B}_\alpha \, \textbf{C}_\alpha] \QuasiEqual \textbf{B}_\alpha.$

    \vspace{1ex}

    \item $[\If \, F_o \, \textbf{B}_\alpha \, \textbf{C}_\alpha] \QuasiEqual \textbf{C}_\alpha.$

    \vspace{1ex}

    \item $\sembrack{\If \, \textbf{A}_o \, \textbf{B}_\epsilon\, \textbf{C}_\epsilon}_\alpha \QuasiEqual 
          \If \, \textbf{A}_o \, \sembrack{\textbf{B}_\epsilon}_\alpha \, \sembrack{\textbf{C}_\epsilon}_\alpha.$

  \ee

  \item[] \textbf{Axiom 11 (Evaluation)}

  \be

    \item $\sembrack{\synbrack{\textbf{x}_\alpha}}_\alpha = \textbf{x}_\alpha.$

    \vspace{1ex}

    \item $\sembrack{\synbrack{\textbf{c}_\alpha}}_\alpha =
      \textbf{c}_\alpha$ {\sglsp} where $\textbf{c}_\alpha$ is primitive
      constant.

    \vspace{1ex}

    \item ${\mname{wff}^{\alpha\beta}_{o\epsilon} \, \textbf{A}_\epsilon}
      \Implies \sembrack{\mname{app}_{\epsilon\epsilon\epsilon} \,
        \textbf{A}_\epsilon \, \textbf{B}_\epsilon}_\alpha \QuasiEqual
      \sembrack{\textbf{A}_\epsilon}_{\alpha\beta}
      \sembrack{\textbf{B}_\epsilon}_\beta.$

    \vspace{1ex}

    \item $\mname{not-free-in}_{o\epsilon\epsilon} \, \synbrack{\textbf{x}_\alpha} \, 
      \synbrack{\textbf{B}_\epsilon} \Implies
      \sembrack{\mname{abs}_{\epsilon\epsilon\epsilon} \,
      \synbrack{\textbf{x}_\alpha} \, \textbf{B}_\epsilon}_{\beta\alpha}
      \QuasiEqual \lambda \textbf{x}_\alpha
      \sembrack{\textbf{B}_\epsilon}_{\beta}.$

    \vspace{1ex}

    \item $\sembrack{\mname{cond}_{\epsilon\epsilon\epsilon\epsilon} \, 
      \textbf{A}_\epsilon \, \textbf{B}_\epsilon \,\textbf{C}_\epsilon}_\alpha \QuasiEqual 
      \If \, \sembrack{\textbf{A}_\epsilon}_o \, 
      \sembrack{\textbf{B}_\epsilon}_\alpha \, \sembrack{\textbf{C}_\epsilon}_\alpha$.

    \vspace{1ex}

    \item ${\sembrack{\mname{quot}_{\epsilon\epsilon} \,
      \textbf{A}_\epsilon}_\epsilon \IsDefApp} \Implies
      {\sembrack{\mname{quot}_{\epsilon\epsilon} \,
        \textbf{A}_\epsilon}_\epsilon = \textbf{A}_\epsilon}.$

  \ee

  \item[] \textbf{Axiom 12 (Specifying Axioms)}

  \be

    \item[] $\textbf{A}_o$ {\sglsp} where $\textbf{A}_o$ is a
      specifying axiom in Specifications 1--9.

  \ee

\ei

\begin{note}[Overview of Axioms]\em
Axioms 1--4 of {\qzerouqe} correspond to the first four axioms of
{\qzero}.  Axioms 1 and 2 say essentially the same thing as the first
and second axioms of {\qzero} (see the next note).  A modification of
the third axiom of {\qzero}, Axiom~3 is the axiom of extensionality
for partial and total functions. Axiom~4 is the law of beta-reduction
for functions that may be partial and arguments that may be undefined.
Axiom 4.1 expresses the law of beta-reduction with substitution
represented by the logical constant
$\mname{sub}_{\epsilon\epsilon\epsilon\epsilon}$.  Axioms 4.2--9
express the law of beta-reduction using the basic properties of
lambda-notation.  Axiom 4.10 is an additional property of
lambda-notation.

Axiom 5 provides the tautologous formulas that are needed to discharge
the definedness conditions and substitution conditions on instances of
Axiom~4.  Axiom 6 deals with the definedness properties of wffs; the
first five parts of Axiom 6 address the three principles of the
traditional approach.  Axiom 7 states the reflexivity law for
quasi-equality.  Axioms 8 and 9 state the properties of the logical
constants $\iota_{\alpha(o\alpha)}$ and
$\mname{pair}_{\seq{\alpha\beta}\beta\alpha)}$, respectively.  Axiom
10 states the properties of conditionals.  Axioms 11 states the
properties of evaluation.  Axiom 12 gives the specifying axioms of the
12 logical constants involving the type $\epsilon$.
\end{note}

\begin{note}[Schemas vs.~Universal Formulas]\em
The proof systems {\pfsys} and {\pfsysu} are intended to be mimimalist
axiomatizations of {\qzero} and {\qzerou}.  For instance, in both
systems the first three axiom groups are single universal formulas
that express three different fundamental ideas.  In contrast, the
first three axiom groups of {\pfsysuqe} are formula schemas that
present all the instances of the three universal formulas.  The
instances of the these universal formulas are obtained in {\pfsys} and
{\pfsysu} by substitution.  Formulas schemas are employed in
{\pfsysuqe} instead of universal formulas for the sake of convenience
and uniformity.  In fact, the only axiom presented as a single formula
in Axioms 1--12 is Specification 4.29, the principle of induction for
type~$\epsilon$.
\end{note}

\begin{note}[Syntactic Side Conditions]\em
The syntactic conditions placed on the syntactic variables in the
schemas in Axioms 1--12 come in a few simple forms: 
\be

  \item A syntactic variable $\textbf{A}_\alpha$ can be any wff of
    type $\alpha$.

  \item A syntactic variable $\textbf{x}_\alpha$ can be any variable
    of type $\alpha$.

  \item A syntactic variable
    $\textbf{c}_\alpha$ with the condition ``$\textbf{c}_\alpha$ is
    a primitive constant'' can be any primitive constant of type
    $\alpha$, 

  \item A syntactic variable $\textbf{A}_\alpha$ with the condition
    ``$\textbf{A}_\alpha$ is a not a variable'' can be any wff of
    type $\alpha$ that is not a variable.

  \item A syntactic variable $\textbf{A}_\alpha$ with the condition
    ``$\textbf{A}_\alpha$ is a not a primitive constant'' can be any
    wff of type $\alpha$ that is not a primitive constant.

  \item Two variables must be distinct.

  \item Two primitive constants must be distinct.

  \item Two types must be distinct.

\ee 
Notice that none of these syntactic side conditions refer to notions
concerning free variables and substitution.
\end{note}

\subsection{Rules of Inference}

{\qzerouqe} has just two rules of inference:

\bi

  \item[] \textbf{Rule 1 (Quasi-Equality Substitution)}{\sglsp} From
    $\textbf{A}_\alpha \QuasiEqual \textbf{B}_\alpha$ and
    $\textbf{C}_o$ infer the result of replacing one occurrence of
    $\textbf{A}_\alpha$ in $\textbf{C}_o$ by an occurrence of
    $\textbf{B}_\alpha$, provided that the occurrence of
    $\textbf{A}_\alpha$ in $\textbf{C}_o$ is not within a quotation,
    not the first argument of a function abstraction, and not the
    second argument of an evaluation.

  \medskip

  \item[] \textbf{Rule 2 (Modus Ponens)}{\sglsp} From $\textbf{A}_o$ and
    $\textbf{A}_o \Implies \textbf{B}_o$ infer $\textbf{B}_o$.

\ei

\begin{note}[Overview of Rules of Inference]\em
{\qzerouqe} has the same two rules of inference as {\qzerou}.  Rule 1
(Quasi-Equality Substitution) corresponds to {\qzero}'s single rule of
inference, which is equality substitution.  These rules are exactly
the same except that the {\qzerouqe} rule requires only
\emph{quasi-equality} ($\QuasiEqual$) between the target wff and the
substitution wff, while the {\qzero} rule requires \emph{equality}
(=).  Rule 2 (Modus Ponens) is a primitive rule of inference in
{\qzerouqe}, but modus ponens is a derived rule of inference in
{\qzero}.  Modus ponens must be primitive in {\qzerouqe} since it is
needed to discharge the definedness conditions and substitution
conditions on instances of Axiom 4, the law of beta-reduction.
\end{note}

\subsection{Proofs}

Let $\textbf{A}_o$ be a formula and $\sH$ be a set of sentences (i.e.,
semantically closed formulas) of {\qzerouqe}.  A \emph{proof of
  $\textbf{A}_o$ from $\sH$ in {\pfsysuqe}} is a finite sequence of
$\wffs{o}$, ending with $\textbf{A}_o$, such that each member in the
sequence is an axiom of {\pfsysuqe}, a member of $\sH$, or is inferred
from preceding members in the sequence by a rule of inference of
{\pfsysuqe}.  We write $\proves{\sH}{\textbf{A}_o}$ to mean there is a
proof of $\textbf{A}_o$ from $\sH$ in {\pfsysuqe}.
$\proves{}{\textbf{A}_o}$ is written instead of
$\proves{\emptyset}{\textbf{A}_o}$.  $\textbf{A}_o$ is a
\emph{theorem} of {\pfsysuqe} if $\proves{}{\textbf{A}_o}$.

Now let $\sH$ be a set of syntactically closed evaluation-free
formulas of {\qzerouqe}.  (Recall that a syntactically closed
evaluation-free formula is also semantically closed by
Lemma~\ref{lem:sem-closed}.)  An \emph{evaluation-free proof of
  $\textbf{A}_o$ from $\sH$ in {\pfsysuqe}} is a proof of
$\textbf{A}_o$ from $\sH$ that is a sequence of evaluation-free
$\wffs{o}$. We write $\provesa{\sH}{\textbf{A}_o}$ to mean there is an
evaluation-free proof of $\textbf{A}_o$ from $\sH$ in {\pfsysuqe}.
Obviously, $\provesa{\sH}{\textbf{A}_o}$ implies
$\proves{\sH}{\textbf{A}_o}$.  $\provesa{}{\textbf{A}_o}$ is written
instead of $\provesa{\emptyset}{\textbf{A}_o}$.

$\sH$ is \emph{consistent} in {\pfsysuqe} if there is no proof of
$F_o$ from $\sH$ in {\pfsysuqe}.

\begin{note}[Proof from Hypotheses]\em
Andrews employs in~\cite{Andrews02} a more complicated notion of a
``proof from hypotheses'' in which a hypothesis is not required to be
semantically or syntactically closed.  We have chosen to use the
simpler notion since it is difficult to define Andrews' notion in the
presence of evaluations and we can manage well enough in this paper
with having only semantically or syntactically closed hypotheses.
\end{note}

\section{Soundness} \label{sec:soundness}

{\pfsysuqe} is \emph{sound for {\qzerouqe}} if
$\proves{\sH}{\textbf{A}_o}$ implies $\sH \modelsn \textbf{A}_o$
whenever $\textbf{A}_o$ is a formula and $\sH$ is a set of sentences
of {\qzerouqe}.  We will prove that the proof system {\pfsysuqe} is
sound for {\qzerouqe} by showing that its axioms are valid in every
normal general model for {\qzerouqe} and its rules of inference
preserve validity in every normal general model for {\qzerouqe}.

\subsection{Axioms and Rules of Inference}

\begin{lem} \label{lem:axioms}
Each axiom of {\pfsysuqe} is valid in every normal general model for
{\qzerouqe}.
\end{lem}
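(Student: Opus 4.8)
The plan is to verify each of the twelve axiom groups of {\pfsysuqe} in turn, showing that every instance is valid in an arbitrary normal general model $\sM = \seq{\set{\sD_\alpha \;|\; \alpha \in \sT}, \sJ}$. Most of the work has, in effect, already been done: Axioms 1--3 are the truth-value, Leibniz, and extensionality principles, which follow directly from the semantics of $\sD_o$, equality, function application, and function abstraction, mirroring the corresponding soundness arguments in~\cite{Andrews02} for {\qzero} and {\qzerou}, with only minor adjustments for partiality (Axiom 3 must be checked with $\QuasiEqual$ in place of $=$ inside the scope of the quantifier). Axiom 4.1, the law of beta-reduction with substitution expressed by $\mname{sub}_{\epsilon\epsilon\epsilon\epsilon}$, is exactly Theorem~\ref{thm:beta-red-sub}, so nothing new is needed there. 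Axioms 4.2--4.10 are the basic lambda-notation properties that remain valid in {\qzerouqe}; each is checked by unwinding the semantics of function application and function abstraction and, in the cases involving $\mname{not-free-in}_{o\epsilon\epsilon}$ (Axiom 4.7), appealing to Lemma~\ref{lem:not-free-in}.

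Next I would handle Axioms 5--7. Axiom 5 (tautologous formulas) is immediate from the semantics of the propositional connectives and the fact that $\sD_o = \set{\mname{T},\mname{F}}$. Axiom 6 (definedness) is verified part by part from the definition of the valuation function: parts 6.1--6.9 follow from the clauses for variables, primitive constants, function applications of type $o$, the behavior of $\sV^{\cal M}_\phi$ on undefined subterms, function abstractions, conditionals of type $o$, quotations, and evaluations of type $o$; part 6.9 additionally uses that $\sembrack{\synbrack{\synbrack{\textbf{A}_\alpha}}}_\epsilon$ names the quotation $\sE(\textbf{A}_\alpha)$, which is an evaluation-free $\wff{\epsilon}$ denoting itself by Proposition~\ref{prop:sem-of-E}; part 6.10 is the semantics of evaluation restricted to arguments that fail $\mname{eval-free}^{\alpha}_{o\epsilon}$, using Proposition~\ref{prop:eval-free} and Lemma~\ref{lem:eval-nonstand}; and part 6.11 follows from the definition of $\Undefined_\alpha$ as $\Iota x_\alpha [x_\alpha \not= x_\alpha]$ together with the semantics of definite description. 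Axiom 7 (reflexivity of $\QuasiEqual$) is immediate. Axiom 8 (definite description) follows from the semantics of $\iota_{\alpha(o\alpha)}$ as the unique-member selector, with 8.2 also invoking part 6 of Lemma~\ref{lem:sub} to evaluate the substitution. Axiom 9 (ordered pairs) is immediate from the definition of $\sD_{\seq{\alpha\beta}}$ and the interpretation of $\mname{pair}$; the projection-style existential in 9.2 uses the surjectivity of the pairing function onto $\sD_{\seq{\alpha\beta}}$. Axiom 10 (conditionals) is the clause for conditionals in the valuation function, with 10.3 combining that clause with the semantics of evaluation.

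Axiom 11 (evaluation) is the genuinely delicate group. Each part asserts a quasi-equality between $\sembrack{\cdot}_\alpha$ applied to a construction built by one of the syntax-constructor constants and the ``decoded'' wff. The argument in each case splits on whether the relevant $\epsilon$-typed subterms denote standard or nonstandard constructions. In the standard case I would push the representation through $\sE^{-1}$ using Lemma~\ref{lem:stand-constructions} and the specifying axioms (Specification 6 for the type guards $\mname{wff}^{\alpha}_{o\epsilon}$, Specification 5 for $\mname{eval-free}_{o\epsilon}$), reducing to the semantics of evaluation on the decoded wff; in the nonstandard case both sides are undefined (or $\mname{F}$ when $\alpha=o$) by Lemma~\ref{lem:eval-nonstand}, so the quasi-equality holds trivially. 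Axiom 11.4, which carries the $\mname{not-free-in}_{o\epsilon\epsilon}$ hypothesis on $\synbrack{\textbf{B}_\epsilon}$, is the fiddliest: it requires Lemma~\ref{lem:not-free-in} to see that the bound variable $\textbf{x}_\alpha$ may be abstracted without disturbing the value, exactly as in the function-abstraction clause. Finally, Axiom 12 (specifying axioms) holds by the very definition of a normal general model, since $\sM \models \textbf{A}_o$ for every $\textbf{A}_o \in \sS$.

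The main obstacle is Axiom 11, specifically the bookkeeping needed to align the double-valuation semantics of evaluation with the first-order ``decoding'' performed by $\sE^{-1}$ in the presence of possibly nonstandard constructions; once the standard/nonstandard case split is in place and Lemmas~\ref{lem:eval-nonstand}, \ref{lem:stand-constructions}, and~\ref{lem:not-free-in} are deployed, each subcase is a short computation. Everything else is a direct appeal to the valuation-function clauses, the interpretation conditions on the three semantically-fixed logical constants, or results already established (Theorem~\ref{thm:beta-red-sub}, Propositions~\ref{prop:sem-of-E} and~\ref{prop:eval-free}). I would organize the write-up as a list of twelve short cases, spending essentially all the detailed prose on Axioms 3, 6, and 11 and dispatching the rest by citation.
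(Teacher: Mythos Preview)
Your proposal is correct and follows essentially the same approach as the paper's proof: a case-by-case walk through the twelve axiom groups, citing Theorem~\ref{thm:beta-red-sub} for Axiom 4.1, Lemma~\ref{lem:not-free-in} for Axioms 4.7 and 11.4, the standard/nonstandard split via Lemmas~\ref{lem:eval-nonstand} and~\ref{lem:stand-constructions} for the Axiom 11 family, and the definition of normality for Axiom 12. Your handling of Axiom 6.10 is in fact slightly more careful than the paper's, since you explicitly invoke Lemma~\ref{lem:eval-nonstand} to cover the nonstandard-construction case that Proposition~\ref{prop:eval-free} alone (stated only for quotations) does not directly address.
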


\begin{proof}
Let $\sM = \seq{\set{\sD_\alpha \;|\; \alpha \in \sT}, \sJ}$ be a
normal general model for {\qzerouqe} and $\phi \in
\mname{assign}(\sM)$.  There are 16 cases, one for each group of
axioms.

\medskip

\noindent \textbf{Axiom 1} {\sglsp} The proof is similar to the proof
of 5402 for Axiom 1 in~\cite[p.~241]{Andrews02} when $\sV^{\cal
  M}_{\phi}(\textbf{G}_{oo})$ is defined.  The proof is
straightforward when $\sV^{\cal M}_{\phi}(\textbf{G}_{oo})$ is
undefined.

\medskip

\noindent \textbf{Axiom 2} {\sglsp} The proof is similar to the proof
of 5402 for Axiom 2 in~\cite[p.~242]{Andrews02} when $\sV^{\cal
  M}_{\phi}(\textbf{H}_{o\alpha})$ is defined.  The proof is
straightforward when $\sV^{\cal M}_{\phi}(\textbf{H}_{o\alpha})$ is
undefined.

\medskip

\noindent \textbf{Axiom 3} {\sglsp} The proof is similar to the proof
of 5402 for Axiom 3 in~\cite[p.~242]{Andrews02}.

\medskip

\noindent \textbf{Axiom 4} {\sglsp} 

\bi

  \item[] \textbf{Axiom 4.1} {\sglsp} Each instance of Axiom 4.1 is
    valid in $\sM$ by Theorem~\ref{thm:beta-red-sub}.

  \item[] \textbf{Axiom 4.2} {\sglsp}
    We must show
    \[(\mbox{a})~\sV^{\cal M}_{\phi}([\lambda \textbf{x}_\alpha
    \textbf{x}_\alpha]\textbf{A}_\alpha) \QuasiEqual \sV^{\cal
      M}_{\phi}(\textbf{A}_\alpha)\] to prove Axiom 4.2 is valid in
    $\sM$.  If $\sV^{\cal M}_{\phi}(\textbf{A}_\alpha)$ is undefined,
    then clearly (a) is true.  So assume (b) $\sV^{\cal
      M}_{\phi}(\textbf{A}_\alpha)$ is defined.  Then
    \begin{align} \setcounter{equation}{0}
    &
    \sV^{\cal M}_{\phi}([\lambda \textbf{x}_\alpha \textbf{x}_\alpha]\textbf{A}_\alpha) \\
    & \QuasiEqual
    \sV^{\cal M}_{\phi[{\bf x}_\alpha \mapsto {\cal V}^{\cal M}_\phi({\bf A}_\alpha)]}
    (\textbf{x}_\alpha) \\
    & \QuasiEqual
    \sV^{\cal M}_{\phi}(\textbf{A}_\alpha) 
    \end{align}
    (2) is by (b) and the semantics of function application and function
    abstraction, and (3) is by the semantics of variables.

  \item[] \textbf{Axiom 4.3} {\sglsp} 
    Assume (a) $\sV^{\cal M}_{\phi}(\textbf{A}_\alpha)$ is defined and
    (b) $\textbf{x}_\alpha \not= \textbf{y}_\beta$.  We must show
    \[\sV^{\cal M}_{\phi}([\lambda \textbf{x}_\alpha
    \textbf{y}_\beta]\textbf{A}_\alpha) \QuasiEqual \sV^{\cal
      M}_{\phi}(\textbf{y}_\beta)\] to prove Axiom 4.3 is valid in
    $\sM$.  Then
    \begin{align} \setcounter{equation}{0}
    &
    \sV^{\cal M}_{\phi}([\lambda \textbf{x}_\alpha \textbf{y}_\beta]\textbf{A}_\alpha) \\
    & \QuasiEqual
    \sV^{\cal M}_{\phi[{\bf x}_\alpha \mapsto {\cal V}^{\cal M}_\phi({\bf A}_\alpha)]}
    (\textbf{y}_\beta) \\
    & \QuasiEqual
    \sV^{\cal M}_{\phi}(\textbf{y}_\beta). 
    \end{align}
    (2) is by (a) and the semantics of function application and
    function abstraction, and (3) is by (b) and the semantics of
    variables.

  \item[] \textbf{Axiom 4.4} {\sglsp} Similar to Axiom 4.3.

  \item[] \textbf{Axiom 4.5} {\sglsp} We must show
    \[(\mbox{a})~\sV^{\cal M}_{\phi}([\lambda \textbf{x}_\alpha 
    [\textbf{B}_{\alpha\beta}\textbf{C}_\beta]]\textbf{A}_\alpha)
    \QuasiEqual \sV^{\cal M}_{\phi}([[\lambda \textbf{x}_\alpha
        \textbf{B}_{\alpha\beta}]\textbf{A}_\alpha] [[\lambda
        \textbf{x}_\alpha \textbf{C}_\beta]\textbf{A}_\alpha])\] to
    prove Axiom 4.5 is valid in $\sM$.  If $\sV^{\cal
      M}_{\phi}(\textbf{A}_\alpha)$ is undefined, then clearly (a) is
    true.  So assume (b) $\sV^{\cal M}_{\phi}(\textbf{A}_\alpha)$ is
    defined.  Then
    \begin{align} \setcounter{equation}{0}
    &
    \sV^{\cal M}_{\phi}([\lambda \textbf{x}_\alpha [\textbf{B}_{\alpha\beta}\textbf{C}_\beta]]\textbf{A}_\alpha) \\
    & \QuasiEqual
    \sV^{\cal M}_{\phi[{\bf x}_\alpha \mapsto {\cal V}^{\cal M}_\phi({\bf A}_\alpha)]}
    (\textbf{B}_{\alpha\beta}\textbf{C}_\beta) \\
    & \QuasiEqual
    \sV^{\cal M}_{\phi[{\bf x}_\alpha \mapsto {\cal V}^{\cal M}_\phi({\bf A}_\alpha)]}(\textbf{B}_{\alpha\beta})
    (\sV^{\cal M}_{\phi[{\bf x}_\alpha \mapsto {\cal V}^{\cal M}_\phi({\bf A}_\alpha)]}(\textbf{C}_\beta)) \\
    & \QuasiEqual
    \sV^{\cal M}_\phi([\lambda \textbf{x}_\alpha \textbf{B}_{\alpha\beta}]\textbf{A}_\alpha)
    (\sV^{\cal M}_\phi([\lambda \textbf{x}_\alpha \textbf{C}_\beta]\textbf{A}_\alpha)) \\
    & \QuasiEqual
    \sV^{\cal M}_\phi([[\lambda \textbf{x}_\alpha \textbf{B}_{\alpha\beta}]\textbf{A}_\alpha] 
    [[\lambda \textbf{x}_\alpha \textbf{C}_\beta]\textbf{A}_\alpha]).
    \end{align}
    (2) and (4) are by (b) and the semantics of function application
    and function abstraction, and (3) and (5) are by the semantics of
    function application.

  \item[] \textbf{Axiom 4.6} {\sglsp} Assume (a) $\sV^{\cal
    M}_{\phi}(\textbf{A}_\alpha)$ is defined.  We must show
    \[\sV^{\cal M}_{\phi}([\lambda \textbf{x}_\alpha 
    [\lambda \textbf{x}_\alpha \textbf{B}_\beta]]\textbf{A}_\alpha)(d)
    \QuasiEqual \sV^{\cal M}_{\phi}(\lambda \textbf{x}_\alpha
    \textbf{B}_\beta)(d),\] where $d \in \sD_\alpha$, to prove Axiom
    4.6 is valid in $\sM$.
    \begin{align} \setcounter{equation}{0}
    &
    \sV^{\cal M}_{\phi}([\lambda \textbf{x}_\alpha 
    [\lambda \textbf{x}_\alpha \textbf{B}_\beta]]\textbf{A}_\alpha) (d)\\
    & \QuasiEqual
    \sV^{\cal M}_{\phi[{\bf x}_\alpha \mapsto {\cal V}^{\cal M}_\phi({\bf A}_\alpha)]}
    (\lambda \textbf{x}_\alpha \textbf{B}_\beta)(d)\\
    & \QuasiEqual
    \sV^{\cal M}_{\phi[{\bf x}_\alpha \mapsto {\cal V}^{\cal M}_\phi({\bf A}_\alpha)][{\bf x}_\alpha \mapsto d]}
    (\textbf{B}_\beta)\\
    & \QuasiEqual
    \sV^{\cal M}_{\phi[{\bf x}_\alpha \mapsto d]}
    (\textbf{B}_\beta)\\
    & \QuasiEqual
    \sV^{\cal M}_\phi(\lambda \textbf{x}_\alpha \textbf{B}_\beta])(d).
    \end{align}
    (2) is by (a) and the semantics of function application and
    function abstraction; (3) and (5) are by the semantics of function
    abstraction; and (4) is by
    \[\phi[{\bf x}_\alpha \mapsto {\cal V}^{\cal M}_\phi({\bf
        A}_\alpha)][{\bf x}_\alpha \mapsto d] = \phi[{\bf x}_\alpha
      \mapsto d].\]

  \item[] \textbf{Axiom 4.7} {\sglsp} Assume (a) $\sV^{\cal
    M}_{\phi}(\textbf{A}_\alpha)$ is defined, (b) $\textbf{x}_\alpha
    \not= \textbf{y}_\beta$, and 
    \begin{align*}
    &
    (\mbox{c})~\sM \models \mname{not-free-in}_{o\epsilon\epsilon} \,
    \synbrack{\textbf{x}_\alpha} \, \synbrack{\textbf{B}_\gamma} {\dblsp} \mbox{or} \\
    &\hspace*{3.75ex}
    \sM \models \mname{not-free-in}_{o\epsilon\epsilon}
    \, \synbrack{\textbf{y}_\beta} \, \synbrack{\textbf{A}_\alpha}.
    \end{align*}    
    We must show
    \[\sV^{\cal M}_{\phi}([\lambda \textbf{x}_\alpha 
    [\lambda \textbf{y}_\beta \textbf{B}_\gamma]]\textbf{A}_\alpha)(d)
    \QuasiEqual \sV^{\cal M}_{\phi}(\lambda \textbf{y}_\beta
      [[\lambda \textbf{x}_\alpha
        \textbf{B}_\gamma]\textbf{A}_\alpha])(d),\] where $d \in
    \sD_\beta$, to prove Axiom 4.7 is valid in $\sM$.
    \begin{align} \setcounter{equation}{0}
    &
    \sV^{\cal M}_{\phi}([\lambda \textbf{x}_\alpha 
    [\lambda \textbf{y}_\beta \textbf{B}_\gamma]]\textbf{A}_\alpha) (d)\\
    & \QuasiEqual
    \sV^{\cal M}_{\phi[{\bf x}_\alpha \mapsto {\cal V}^{\cal M}_\phi({\bf A}_\alpha)]}
    (\lambda \textbf{y}_\beta \textbf{B}_\gamma)(d)\\
    & \QuasiEqual
    \sV^{\cal M}_{\phi[{\bf x}_\alpha \mapsto {\cal V}^{\cal M}_\phi({\bf A}_\alpha)][{\bf y}_\beta \mapsto d]}
    (\textbf{B}_\gamma)\\
    & \QuasiEqual
    \sV^{\cal M}_{\phi[{\bf y}_\beta \mapsto d][{\bf x}_\alpha \mapsto {\cal V}^{\cal M}_\phi({\bf A}_\alpha)]}
    (\textbf{B}_\gamma)\\
    & \QuasiEqual
    \sV^{\cal M}_{\phi[{\bf y}_\beta \mapsto d]
    [{\bf x}_\alpha \mapsto {\cal V}^{\cal M}_{\phi[{\bf y}_\beta \mapsto d]}({\bf A}_\alpha)]}
    (\textbf{B}_\gamma)\\
    & \QuasiEqual
    \sV^{\cal M}_{\phi[{\bf y}_\beta \mapsto d]}
    ([\lambda \textbf{x}_\alpha \textbf{B}_\gamma]\textbf{A}_\alpha)\\
    & \QuasiEqual
    \sV^{\cal M}_\phi(\lambda \textbf{y}_\beta [[\lambda \textbf{x}_\alpha \textbf{B}_\gamma]\textbf{A}_\alpha])(d)
    \end{align}
    (2) and (6) are by (a) and the semantics of function application
    and function abstraction; (3) and (7) are by the semantics of
    function abstraction; (4) is by (b); and (5) is by (c) and part 2
    of Lemma~\ref{lem:not-free-in}.

  \item[] \textbf{Axiom 4.8} {\sglsp}  Similar to Axiom 4.5.

  \item[] \textbf{Axiom 4.9} {\sglsp}  Similar to Axiom 4.3.

  \item[] \textbf{Axiom 4.10} {\sglsp}
    We must show
    \[\sV^{\cal M}_{\phi}([\lambda \textbf{x}_\alpha
    \textbf{B}_\beta]\textbf{x}_\alpha) \QuasiEqual \sV^{\cal
      M}_{\phi}(\textbf{B}_\beta)\] to prove Axiom 4.10 is valid in
    $\sM$.
    \begin{align} \setcounter{equation}{0}
    &
    \sV^{\cal M}_{\phi}([\lambda \textbf{x}_\alpha \textbf{B}_\beta]\textbf{x}_\alpha) \\
    & \QuasiEqual
    \sV^{\cal M}_{\phi[{\bf x}_\alpha \mapsto \phi({\bf x}_\alpha)]}
    (\textbf{B}_\beta) \\
    & \QuasiEqual
    \sV^{\cal M}_{\phi}(\textbf{B}_\beta) 
    \end{align}
    (2) is by the semantics of function application, function
    abstraction, variables; and (3) is by $\phi = \phi[{\bf x}_\alpha
      \mapsto \phi({\bf x}_\alpha)]$.

\ei

\medskip

\noindent \textbf{Axiom 5} {\sglsp} The propositional constants $T_o$
and $F_o$ and the propositional connectives $\wedge_{ooo}$,
$\vee_{ooo}$, and $\Implies_{ooo}$ have their usual meanings in a
general model.  Hence any tautologous formula is valid in $\sM$.

\medskip

\noindent \textbf{Axiom 6} {\sglsp} $\sM \models
\textbf{A}_\alpha\IsDefApp$ iff $\sV^{\cal
  M}_{\phi}(\textbf{A}_\alpha)$ is defined for all $\phi \in
\mname{assign}(\sM)$.  Hence Axioms 6.1, 6.2, 6.3, 6.4, 6.5, 6.6, 6,7,
and 6.8 are valid in $\sM$ by conditions 1, 2, 3, 3, 4, 5, 6, and 7 in
the definition of a general model.  Axiom 6.9 is valid in $\sM$ by the
fact that quotations are evaluation-free and conditions 6 and 7 in
definition of a general model.  Axiom 6.10 is valid in $\sM$ by
Proposition~\ref{prop:eval-free} and condition 7 in the definition
of a general model.  Axiom 6.11 is valid in $\sM$ since
$\sJ(\iota_{\alpha(o\alpha)})$ is a unique member selector on
$\sD_\alpha$ and $\lambda x_\alpha [x_\alpha \not= x_\alpha]$
represents the empty set.

\medskip

\noindent \textbf{Axiom 7} {\sglsp} Clearly, $\sV^{\cal
  M}_{\phi}(\textbf{A}_\alpha \QuasiEqual \textbf{A}_\alpha) =
\mname{T}$ iff $\sV^{\cal M}_{\phi}(\textbf{A}_\alpha) \QuasiEqual
\sV^{\cal M}_{\phi}(\textbf{A}_\alpha)$, which is always true.  Hence
$\sM \models \textbf{A}_\alpha \QuasiEqual \textbf{A}_\alpha$.

\medskip

\noindent \textbf{Axiom 8} {\sglsp} 

\bi

  \item[] \textbf{Axiom 8.1} {\sglsp} Axiom 8.1 is valid in $\sM$ since
    $\sJ(\iota_{\alpha(o\alpha)})$ is a unique member selector on
    $\sD_\alpha$.

  \item[]\bsp \textbf{Axiom 8.2} {\sglsp} Assume (a) $\sV^{\cal
    M}_{\phi}(\Forsome_1 \textbf{x}_\alpha \textbf{A}_o) = \mname{T}$
    and \[\sV^{\cal
      M}_{\phi}(\mname{sub}_{\epsilon\epsilon\epsilon\epsilon} \,
    \synbrack{\Iota \textbf{x}_\alpha \textbf{A}_o} \,
    \synbrack{\textbf{x}_\alpha} \, \synbrack{\textbf{A}_o} =
    \synbrack{\textbf{B}_o}) = \mname{T}.\] We must show $\sV^{\cal
      M}_{\phi}(\textbf{B}_o) = \mname{T}$ to prove that Axiom 8.2 is
    valid in $\sM$.  Axiom~8.1 and (a) implies $\sV^{\cal
      M}_{\phi}(\Iota \textbf{x}_\alpha \textbf{A}_o)$ is defined.
    (a) and the fact that $\sJ(\iota_{\alpha(o\alpha)})$ is a unique
    member selector on $\sD_\alpha$ implies \[\sV^{\cal
      M}_{\phi}([\lambda \textbf{x}_\alpha \textbf{A}_o][\Iota
      \textbf{x}_\alpha \textbf{A}_o]) = \mname{T}.\]Then $\sV^{\cal
      M}_{\phi}([\lambda \textbf{x}_\alpha \textbf{A}_o][\Iota
      \textbf{x}_\alpha \textbf{A}_o] = \textbf{B}_o)$ by the proof
    for Axiom 4.  Thus $\sV^{\cal M}_{\phi}(\textbf{B}_o) =
    \mname{T}$.\esp

\ei

\medskip

\noindent \textbf{Axiom 9} {\sglsp} Axiom~9.1 is valid in $\sM$ since
$\sJ(\mname{pair}_{\seq{\alpha\beta}\beta\alpha})$ is a pairing
function on $\sD_\alpha$ and $\sD_\beta$.  Axiom 9.2 is valid in $\sM$
since every $p \in \sD_{\seq{\alpha\beta}}$ is a pair $\seq{a,b}$ where $a
\in \sD_\alpha$ and $b \in \sD_\beta$ and
$\sJ(\mname{pair}_{\seq{\alpha\beta}\beta\alpha})$ is a pairing
function on $\sD_\alpha$ and $\sD_\beta$.

\medskip

\noindent \textbf{Axiom 10} {\sglsp} Axioms 10.1 and 10.2 are valid in
$\sM$ by condition 5 in the definition of a general model.  $\sV^{\cal
  M}_{\phi}(\textbf{A}_o) = \mname{T}$ implies $\sV^{\cal
  M}_{\phi}(\sembrack{\If \, \textbf{A}_o \, \textbf{B}_\epsilon\,
  \textbf{C}_\epsilon}_\alpha) \QuasiEqual \sV^{\cal
  M}_{\phi}(\sembrack{\textbf{B}_\epsilon}_\alpha)$ and $\sV^{\cal
  M}_{\phi}(\textbf{A}_o) = \mname{F}$ implies $\sV^{\cal
  M}_{\phi}(\sembrack{\If \, \textbf{A}_o \, \textbf{B}_\epsilon\,
  \textbf{C}_\epsilon}_\alpha) \QuasiEqual \sV^{\cal
  M}_{\phi}(\sembrack{\textbf{C}_\epsilon}_\alpha)$ by conditions 5
and 7 in the definition of a general model.  Hence Axiom 10.3 is valid
in $\sM$.

\medskip

\noindent \textbf{Axiom 11} {\sglsp} 

\bi

  \item[] \textbf{Axioms 11.1 and 11.2} {\sglsp} Immediate by
    condition 7 in the definition of a general model since variables
    and primitive constants are evaluation-free.

  \item[] \textbf{Axiom 11.3} {\sglsp} Assume $\sV^{\cal
    M}_{\phi}(\mname{wff}^{\alpha\beta}_{o\epsilon} \,
    \textbf{A}_\epsilon) = \mname{T}$.  This implies (a)~$\sV^{\cal
      M}_{\phi}(\textbf{A}_\epsilon) = \sE(\textbf{C}_{\alpha\beta})$
    for some $\textbf{C}_{\alpha\beta}$.  We must show $X \QuasiEqual
    Y$ where $X$ is
    \[\sV^{\cal M}_{\phi}(\sembrack{\mname{app}_{\epsilon\epsilon\epsilon} \,
    \textbf{A}_\epsilon \, \textbf{B}_\epsilon}_\alpha)\] and $Y$
    is 
    \[\sV^{\cal M}_{\phi}(\sembrack{\textbf{A}_\epsilon }_{\alpha\beta} \sembrack{\textbf{B}_\epsilon}_\beta).\] 

    First, assume (b)~$\sV^{\cal M}_{\phi}(\textbf{B}_\epsilon)
    =\sE(\textbf{D}_{\beta})$ for some $\textbf{D}_{\beta}$.  If
    (c)~$\textbf{C}_{\alpha\beta}\textbf{D}_{\beta}$ is
    evaluation-free, then
    \begin{align} \setcounter{equation}{0}
    &
    \sV^{\cal M}_{\phi}(\sembrack{\mname{app}_{\epsilon\epsilon\epsilon} \,
    \textbf{A}_\epsilon \, \textbf{B}_\epsilon}_\alpha) \\
    &\QuasiEqual 
    \sV^{\cal  M}_{\phi}(\sembrack{\mname{app}_{\epsilon\epsilon\epsilon} \,
    \sE(\textbf{C}_{\alpha\beta}) \, \sE(\textbf{D}_{\beta})}_\alpha) \\
    &\QuasiEqual
    \sV^{\cal  M}_{\phi}(\sembrack{\sE(\textbf{C}_{\alpha\beta}\textbf{D}_{\beta})}_\alpha) \\
    &\QuasiEqual
    \sV^{\cal  M}_{\phi}(\textbf{C}_{\alpha\beta}\textbf{D}_{\beta})   \\
    &\QuasiEqual
    \sV^{\cal  M}_{\phi}(\textbf{C}_{\alpha\beta})(\sV^{\cal  M}_{\phi}(\textbf{D}_{\beta})) \\
    &\QuasiEqual
    \sV^{\cal  M}_{\phi}(\sembrack{\sE(\textbf{C}_{\alpha\beta})}_{\alpha\beta})
    (\sV^{\cal  M}_{\phi}(\sembrack{\sE(\textbf{D}_{\beta})}_\beta)) \\
    &\QuasiEqual
    \sV^{\cal  M}_{\phi}(\sembrack{\textbf{A}_\epsilon}_{\alpha\beta})
    (\sV^{\cal  M}_{\phi}(\sembrack{\textbf{B}_\epsilon}_\beta)) \\
    &\QuasiEqual
    \sV^{\cal  M}_{\phi}(\sembrack{\textbf{A}_\epsilon}_{\alpha\beta}
    \sembrack{\textbf{B}_\epsilon}_\beta).
    \end{align}
    (2) and (7) are by (a), (b), and Proposition~\ref{prop:sem-of-E}; (3) is
    by the definition of $\sE$; (4) and (6) are by (c) and the
    semantics of evaluation; and (5) and (8) are by the semantics of
    function application.  Hence $X \QuasiEqual Y$.  If
    $\textbf{C}_{\alpha\beta}\textbf{D}_{\beta}$ is not
    evaluation-free, then $\textbf{C}_{\alpha\beta}$ or
    $\textbf{D}_{\beta}$ is not evaluation-free.  Then $X$ and $Y$ are
    both undefined by the semantics of evaluation and the beginning
    and end of the derivation above.  Hence $X \QuasiEqual Y$.

    Second, assume $\sV^{\cal M}_{\phi}(\textbf{B}_\epsilon)
    =\sE(\textbf{D}_{\gamma})$ with $\gamma \not= \beta$.  Then $X$ is
    undefined by Specifications 6.4 and 6.5, the semantics of
    evaluation, and the beginning of the derivation above, and $Y$ is
    undefined by the semantics of evaluation and function application
    and the end of the derivation above.  Hence $X \QuasiEqual Y$.

    \bsp Third, assume $\sV^{\cal M}_{\phi}(\textbf{B}_\epsilon) = d$ where
    $d$ is a nonstandard construction.  Then $X$ is undefined by
    Lemmas~\ref{lem:eval-nonstand} and~\ref{lem:stand-constructions},
    and $Y$ is undefined by Lemma~\ref{lem:eval-nonstand} and the
    semantics of function application.  Hence $X \QuasiEqual Y$ in
    this case, and therefore, in every case.\esp

  \item[] \bsp \textbf{Axiom 11.4} {\sglsp} Let \[(\mbox{a})~\sV^{\cal
    M}_{\phi}(\mname{not-free-in}_{o\epsilon\epsilon} \,
    \synbrack{\textbf{x}_\alpha} \, \synbrack{\textbf{B}_\epsilon}) =
    \mname{T}\] and $d \in \sD_\alpha$.  It suffices to show $X(d)
    \QuasiEqual Y(d)$ where $X$ is \[\sV^{\cal
      M}_{\phi}(\sembrack{\mname{abs}_{\epsilon\epsilon\epsilon} \,
      \synbrack{\textbf{x}_\alpha} \,
      \textbf{B}_\epsilon}_{\beta\alpha})\] and $Y$ is \[\sV^{\cal
      M}_{\phi}(\lambda \textbf{x}_\alpha
    \sembrack{\textbf{B}_\epsilon}_{\beta}).\] First, assume (b)
    $\sV^{\cal M}_{\phi}(\textbf{B}_\epsilon) =
    \sE(\textbf{C}_{\beta})$ for some $\textbf{C}_{\beta}$.  This
    implies (c)~$\sV^{\cal M}_{\phi[{\bf x}_\alpha \mapsto
        d]}(\textbf{B}_\epsilon) = \sE(\textbf{C}_{\beta})$ by (a) and
    part 2 of Lemma~\ref{lem:not-free-in}.  If (d)
    $\textbf{C}_{\beta}$ is evaluation-free, then
    \begin{align} \setcounter{equation}{0}
    &
    \sV^{\cal M}_{\phi}(\sembrack{\mname{abs}_{\epsilon\epsilon\epsilon} \,
    \synbrack{\textbf{x}_\alpha} \, \textbf{B}_\epsilon}_{\beta\alpha})(d) \\
    &\QuasiEqual
    \sV^{\cal M}_{\phi}(\sembrack{\mname{abs}_{\epsilon\epsilon\epsilon} \,
    \synbrack{\textbf{x}_\alpha} \, \sE(\textbf{C}_{\beta})}_{\beta\alpha})(d) \\
    &\QuasiEqual
    \sV^{\cal M}_{\phi}(\sembrack{\sE(\lambda \textbf{x}_\alpha \textbf{C}_{\beta})}_{\beta\alpha})(d) \\
    &\QuasiEqual
    \sV^{\cal M}_{\phi}(\lambda \textbf{x}_\alpha \textbf{C}_{\beta})(d) \\
    &\QuasiEqual
    \sV^{\cal M}_{\phi[{\bf x}_\alpha \mapsto d]}(\textbf{C}_{\beta}) \\
    &\QuasiEqual
    \sV^{\cal M}_{\phi[{\bf x}_\alpha \mapsto d]}(\sembrack{\sE(\textbf{C}_{\beta})}_\beta) \\
    &\QuasiEqual
    \sV^{\cal M}_{\phi[{\bf x}_\alpha \mapsto d]}(\sembrack{\textbf{B}_\epsilon}_\beta) \\
    &\QuasiEqual
    \sV^{\cal M}_{\phi}(\lambda \textbf{x}_\alpha\sembrack{\textbf{B}_\epsilon}_\beta)(d). 
    \end{align}
    (2) is by (b) and Proposition~\ref{prop:sem-of-E}; (3) is by the
    definition of $\sE$; (4) and (6) are by the semantics of
    evaluation and (d); (5) and (8) are by the semantics of function
    abstraction; and (7) is by (c) and and
    Proposition~\ref{prop:sem-of-E}.  Hence $X(d) \QuasiEqual Y(d)$.
    If $\textbf{C}_{\beta}$ is not evaluation-free, then $X(d)$ and
    $Y(d)$ are both undefined by the semantics of evaluation and the
    beginning and end of the derivation above.  Hence $X(d)
    \QuasiEqual Y(d)$. \esp

    Second, assume $\sV^{\cal M}_{\phi}(\textbf{B}_\epsilon) =
    \sE(\textbf{C}_{\gamma})$ for some $\textbf{C}_{\gamma}$ where
    $\gamma \not= \beta$.  Then $X(d)$ is undefined by the semantics
    of evaluation and the beginning of the derivation above, and
    $Y(d)$ is undefined by the semantics of evaluation and the end of
    the derivation above.  Hence $X(d) \QuasiEqual Y(d)$.

    Third, assume $\sV^{\cal M}_{\phi}(\textbf{B}_\epsilon)$ is a
    nonstandard construction.  Then $X(d)$ is undefined by
    Lemmas~\ref{lem:eval-nonstand} and~\ref{lem:stand-constructions},
    and $Y(d)$ is undefined by Lemma~\ref{lem:eval-nonstand}.  Hence
    $X(d) \QuasiEqual Y(d)$ in this case, and therefore, in every
    case.

  \item[] \textbf{Axiom 11.5} {\sglsp} Similar to Axiom 11.3.

  \item[] \textbf{Axiom 11.6} {\sglsp} First, assume $\sV^{\cal
    M}_{\phi}(\textbf{A}_\epsilon) = \sE(\textbf{B}_\alpha)$ for some
    $\textbf{B}_\alpha$.  Then
  \begin{align} \setcounter{equation}{0}
  & 
  \sV^{\cal M}_{\phi}(\sembrack{\mname{quot}_{\epsilon\epsilon} \, \textbf{A}_\epsilon}_\epsilon). \\
  &\QuasiEqual
  \sV^{\cal M}_{\phi}(\sembrack{\mname{quot}_{\epsilon\epsilon} \, \sE(\textbf{B}_\alpha)}_\epsilon). \\
  &\QuasiEqual
  \sV^{\cal M}_{\phi}(\sembrack{\sE(\synbrack{\textbf{B}_\alpha})}_\epsilon). \\
  &\QuasiEqual
  \sV^{\cal M}_{\phi}(\synbrack{\textbf{B}_\alpha}). \\
  &\QuasiEqual
  \sV^{\cal M}_{\phi}(\textbf{A}_\epsilon).
  \end{align}
  (2) is by Proposition~\ref{prop:sem-of-E}; (3) is by the definition
  of $\sE$; (4) by the semantics of evaluation and the fact that
  quotations are evaluation-free; and (5) is by Specification 1.
  Hence $\sV^{\cal M}_{\phi}(\sembrack{\mname{quot}_{\epsilon\epsilon}
    \, \textbf{A}_\epsilon}_\epsilon) = \sV^{\cal
    M}_{\phi}(\textbf{A}_\epsilon)$ since $\sV^{\cal
    M}_{\phi}(\textbf{A}_\epsilon)$ is defined.

  Second, assume $\sV^{\cal M}_{\phi}(\textbf{A}_\epsilon) \not=
  \sE(\textbf{B}_\alpha)$ for all $\textbf{B}_\alpha$.  Then
  $\sV^{\cal M}_{\phi}(\mname{quot}_{\epsilon\epsilon} \,
  \textbf{A}_\epsilon) \not= \sE(\textbf{A}_\alpha)$ for all
  $\textbf{B}_\alpha$ by Lemma~\ref{lem:stand-constructions}.  Hence
  $\sV^{\cal M}_{\phi}(\sembrack{\mname{quot}_{\epsilon\epsilon} \,
    \textbf{A}_\epsilon}_\epsilon)$ is undefined by
  Lemma~\ref{lem:eval-nonstand}.  Therefore, Axiom 11.6 is valid in
  $\sM$ in both cases.

\ei

\medskip

\noindent \textbf{Axiom 12} {\sglsp} Each axiom of this group is a
specifying axiom and thus is valid in $\sM$ since $\sM$ is normal.
\end{proof}

\begin{lem} \label{lem:rules}
Each rule of inference of {\pfsysuqe} preserves validity in every
normal general model for {\qzerouqe}.
\end{lem}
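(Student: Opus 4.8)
The plan is to handle the two rules separately, the bulk of the work being Rule 1. For Rule 2 (Modus Ponens) the argument is immediate. Fix a normal general model $\sM$ and suppose $\sM \models \textbf{A}_o$ and $\sM \models \textbf{A}_o \Implies \textbf{B}_o$. For every $\phi \in \mname{assign}(\sM)$ the connective $\Implies_{ooo}$ has its usual truth-functional meaning in $\sM$ and $\sV^{\cal M}_{\phi}(\textbf{B}_o)$ is defined (every $\wff{o}$ is), so $\sV^{\cal M}_{\phi}(\textbf{A}_o) = \mname{T}$ together with $\sV^{\cal M}_{\phi}(\textbf{A}_o \Implies \textbf{B}_o) = \mname{T}$ forces $\sV^{\cal M}_{\phi}(\textbf{B}_o) = \mname{T}$. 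Hence $\sM \models \textbf{B}_o$, so Rule 2 preserves validity in every normal general model.

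For Rule 1 (Quasi-Equality Substitution) I would first isolate and prove the following claim: \emph{Let $\sM$ be a normal general model with $\sM \models \textbf{A}_\alpha \QuasiEqual \textbf{B}_\alpha$, let $\textbf{D}_\delta$ be a wff, and let $\textbf{D}'_\delta$ be the result of replacing one occurrence of $\textbf{A}_\alpha$ in $\textbf{D}_\delta$ by $\textbf{B}_\alpha$, where that occurrence is not within a quotation, not the first argument of a function abstraction, and not the second argument of an evaluation. Then $\sV^{\cal M}_{\phi}(\textbf{D}_\delta) \QuasiEqual \sV^{\cal M}_{\phi}(\textbf{D}'_\delta)$ for every $\phi \in \mname{assign}(\sM)$.} (Note the types of the replaced and replacing wffs agree, so $\textbf{D}'_\delta$ is indeed a $\wff{\delta}$.) The proof is by induction on the wff $\textbf{D}_\delta$. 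If the designated occurrence is all of $\textbf{D}_\delta$, then $\textbf{D}_\delta$ is $\textbf{A}_\alpha$, $\textbf{D}'_\delta$ is $\textbf{B}_\alpha$, and the conclusion is the hypothesis applied at $\phi$. Otherwise the occurrence lies properly inside $\textbf{D}_\delta$; the cases where $\textbf{D}_\delta$ is a quotation, or where the occurrence would be the first variable of an abstraction or the second variable of an evaluation, are excluded by the proviso, leaving: the occurrence lies in the operator or operand of a function application, in the body of a function abstraction, in one of the three arguments of a conditional, or in the first argument of an evaluation. In each surviving case the occurrence lies in one immediate subwff, the induction hypothesis (stated for \emph{every} assignment, which is what lets it be reapplied under a binder, and which is available precisely because the hypothesis of Rule 1 is validity in $\sM$, i.e., holds at all assignments) gives a quasi-equal value for that subwff with its siblings unchanged, and the matching condition in the definition of a general model then shows the value of the whole wff is left quasi-equal — using for the conditional that its test is a $\wff{o}$ and hence always defined, and for an application head of type $o$ likewise.

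The case I expect to be the crux is the evaluation case: $\textbf{D}_\delta$ is $[\mname{e}\textbf{E}_\epsilon\textbf{x}_\delta]$ with the occurrence inside $\textbf{E}_\epsilon$. The point is that, by condition 7 in the definition of a general model, $\sV^{\cal M}_{\phi}([\mname{e}\textbf{E}_\epsilon\textbf{x}_\delta])$ is determined entirely by the \emph{value} $\sV^{\cal M}_{\phi}(\textbf{E}_\epsilon)$ together with $\phi$: if that value is undefined, or is not a standard construction $\sE(\textbf{C}_\delta)$ with $\textbf{C}_\delta$ evaluation-free, or is such a construction but $\sV^{\cal M}_{\phi}(\textbf{C}_\delta)$ is undefined, the evaluation is $\mname{F}$ when $\delta = o$ and undefined when $\delta \not= o$; otherwise it equals $\sV^{\cal M}_{\phi}(\textbf{C}_\delta)$. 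By the induction hypothesis $\sV^{\cal M}_{\phi}(\textbf{E}_\epsilon) \QuasiEqual \sV^{\cal M}_{\phi}(\textbf{E}'_\epsilon)$, so $[\mname{e}\textbf{E}_\epsilon\textbf{x}_\delta]$ and $[\mname{e}\textbf{E}'_\epsilon\textbf{x}_\delta]$ fall into the same sub-case and receive the same value, hence are quasi-equal. This completes the claim. To finish: the conclusion $\textbf{C}'_o$ of Rule 1 is obtained from $\textbf{C}_o$ by exactly such a replacement, so the claim gives $\sV^{\cal M}_{\phi}(\textbf{C}_o) \QuasiEqual \sV^{\cal M}_{\phi}(\textbf{C}'_o)$ for all $\phi$; since both are $\wffs{o}$ and therefore always defined, this quasi-equality is genuine equality, so $\sM \models \textbf{C}_o$ yields $\sM \models \textbf{C}'_o$. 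As $\sM$ was an arbitrary normal general model, Rule 1 preserves validity in every normal general model, which together with the argument for Rule 2 establishes the lemma.
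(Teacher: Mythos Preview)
Your proposal is correct and follows essentially the same approach as the paper. The paper's proof of Rule~1 simply asserts that the implication ``$\sV^{\cal M}_{\phi}(\textbf{A}_\alpha) \QuasiEqual \sV^{\cal M}_{\phi}(\textbf{B}_\alpha)$ for all $\phi$'' $\Rightarrow$ ``$\sV^{\cal M}_{\phi}(\textbf{C}_o) = \sV^{\cal M}_{\phi}(\textbf{C}'_o)$ for all $\phi$'' \emph{easily follows by induction on the size of} $\textbf{C}_o$, and handles Rule~2 in one sentence; you have unpacked precisely this induction, correctly flagging the two points the paper leaves implicit (that the induction hypothesis must be available at every assignment for the abstraction case, and that the evaluation case goes through because condition~7 makes the value depend only on $\sV^{\cal M}_{\phi}(\textbf{E}_\epsilon)$).
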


\begin{proof} Let $\sM$ be a normal general model for {\qzerouqe}.  
We must show that Rules 1 and 2 preserve validity in $\sM$.

\medskip

\noindent \textbf{Rule 1} {\sglsp} Suppose $\textbf{C}_o$ and
$\textbf{C}'_o$ are wffs such that $\textbf{C}'_o$ is the result of
replacing one occurrence of $\textbf{A}_\alpha$ in $\textbf{C}_o$ by
an occurrence of $\textbf{B}_\alpha$, provided that the occurrence of
$\textbf{A}_\alpha$ in $\textbf{C}_o$ is not within a quotation, not
the first argument of a function abstraction, and not the second
argument of an evaluation.  Then it easily follows that $\sV^{\cal
  M}_{\phi}(\textbf{A}_\alpha) \QuasiEqual \sV^{\cal
  M}_{\phi}(\textbf{B}_\alpha)$ for all $\phi \in \mname{assign}(\sM)$
implies $\sV^{\cal M}_{\phi}(\textbf{C}_o) = \sV^{\cal
  M}_{\phi}(\textbf{C}'_o)$ for all $\phi \in \mname{assign}(\sM)$ by
induction on the size of $\textbf{C}_o$.  $\sM \models
\textbf{A}_\alpha \QuasiEqual \textbf{B}_\alpha$ implies $\sV^{\cal
  M}_{\phi}(\textbf{A}_\alpha) \QuasiEqual \sV^{\cal
  M}_{\phi}(\textbf{B}_\alpha)$ for all $\phi \in
\mname{assign}(\sM)$, and hence $\sM \models \textbf{C}_o$ implies
$\sV^{\cal M}_{\phi}(\textbf{C}_o) = \sV^{\cal
  M}_{\phi}(\textbf{C}'_o) = \mname{T}$ for all $\phi \in
\mname{assign}(\sM)$.  Therefore, $\sM \models \textbf{A}_\alpha
\QuasiEqual \textbf{B}_\alpha$ and $\sM \models \textbf{C}_o$ implies
$\sM \models \textbf{C}'_o$, and so Rule 1 preserves validity in
$\sM$.

\medskip

\noindent \textbf{Rule 2} {\sglsp} Since $\Implies_{ooo}$ has its usual
meaning in a general model, Rule 2 obviously preserves validity in $\sM$.
\end{proof}

\subsection{Soundness and Consistency Theorems}

\begin{thm} [Soundness Theorem] \label{thm:soundness}
{\pfsysuqe} is sound for {\qzerouqe}.
\end{thm}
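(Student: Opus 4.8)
The plan is to derive the Soundness Theorem directly from Lemma~\ref{lem:axioms} and Lemma~\ref{lem:rules} by a routine induction on the length of a proof. First I would unpack the statement: given a formula $\textbf{A}_o$ and a set $\sH$ of sentences of {\qzerouqe} with $\proves{\sH}{\textbf{A}_o}$, I must show $\sH \modelsn \textbf{A}_o$, i.e.\ that $\sM \models \textbf{A}_o$ for every normal general model $\sM$ for $\sH$. So fix such an $\sM$; by definition $\sM \models \textbf{B}_o$ for all $\textbf{B}_o \in \sH$, and $\sM$ is normal, so every axiom of {\pfsysuqe} is valid in $\sM$ by Lemma~\ref{lem:axioms}.

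Next I would fix a proof $\textbf{D}^1_o, \ldots, \textbf{D}^n_o$ of $\textbf{A}_o$ from $\sH$ in {\pfsysuqe} (so $\textbf{D}^n_o = \textbf{A}_o$) and argue by induction on $i$ that $\sM \models \textbf{D}^i_o$ for each $i \le n$. In the base/non-inference cases, $\textbf{D}^i_o$ is either an axiom of {\pfsysuqe}, hence valid in $\sM$ by Lemma~\ref{lem:axioms}, or a member of $\sH$, hence valid in $\sM$ because $\sM$ is a general model for $\sH$. In the inference cases, $\textbf{D}^i_o$ is obtained from earlier members of the sequence by Rule~1 (Quasi-Equality Substitution) or Rule~2 (Modus Ponens); by the induction hypothesis those earlier members are valid in $\sM$, and by Lemma~\ref{lem:rules} each rule of inference preserves validity in every normal general model, so $\textbf{D}^i_o$ is valid in $\sM$ as well. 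Taking $i = n$ gives $\sM \models \textbf{A}_o$, and since $\sM$ was an arbitrary normal general model for $\sH$, we conclude $\sH \modelsn \textbf{A}_o$, which is exactly the soundness of {\pfsysuqe} for {\qzerouqe}.

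I do not expect a genuine obstacle here: all the real work has been done in Lemmas~\ref{lem:axioms} and~\ref{lem:rules}, and the only thing to be slightly careful about is bookkeeping — namely that ``valid in $\sM$'' ($\sM \models \textbf{B}_o$) is the correct notion to propagate along the proof, that $\sH$ consisting of sentences is harmless (validity is defined for arbitrary formulas, so we never actually need semantic closedness of hypotheses in this argument), and that Rule~1's side conditions on the replaced occurrence are exactly the hypotheses under which Lemma~\ref{lem:rules} establishes preservation of validity. Everything else is a standard induction on proof length.
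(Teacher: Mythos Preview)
Your proposal is correct and follows essentially the same approach as the paper's proof: both fix a normal general model $\sM$ for $\sH$, invoke Lemma~\ref{lem:axioms} for the axioms and Lemma~\ref{lem:rules} for the rules, and conclude by (what the paper leaves implicit as) induction on the length of the proof. The paper's version is simply more compressed, omitting the explicit case split that you spell out.
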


\begin{proof}
Assume $\proves{\sH}{\textbf{A}_o}$ and $\sM \models \sH$ where
$\textbf{A}_o$ is a formula of {\qzerouqe}, $\sH$ is a set of
sentences of {\qzerouqe}, and $\sM$ is a normal general model for
{\qzerouqe}.  We must show that $\sM \models \textbf{A}_o$.  By
Lemma~\ref{lem:axioms}, each axiom of {\pfsysuqe} is valid in $\sM$,
and by Lemma~\ref{lem:rules}, each rule of inference of {\pfsysuqe}
preserve validity in $\sM$.  Therefore, $\proves{\sH}{\textbf{A}_o}$
implies $\sM \models \textbf{A}_o$.
\end{proof}

\begin{thm}[Consistency Theorem] \label{thm:consistency}\bsp
Let $\sH$ be a set of sentences of {\qzerouqe}.  If $\sH$ has a normal
general model, then $\sH$ is consistent in {\pfsysuqe}.\esp
\end{thm}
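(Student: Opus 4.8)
The plan is to prove the contrapositive of the statement, which is the standard way to derive consistency from the existence of a model. Suppose, for contradiction, that $\sH$ is \emph{not} consistent in {\pfsysuqe}. By the definition of consistency, this means there is a proof of $F_o$ from $\sH$ in {\pfsysuqe}, i.e., $\proves{\sH}{F_o}$. We are also assuming that $\sH$ has a normal general model; call it $\sM$, so that $\sM \models \sH$.

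Now I would invoke the Soundness Theorem (Theorem~\ref{thm:soundness}): since {\pfsysuqe} is sound for {\qzerouqe}, $\proves{\sH}{F_o}$ together with $\sH$ being a set of sentences implies $\sH \modelsn F_o$, and in particular $\sM \models F_o$. But $\sM \models F_o$ means $\sV^{\cal M}_{\phi}(F_o) = \mname{T}$ for all $\phi \in \mname{assign}(\sM)$. Since $F_o$ is an abbreviation for $[\lambda x_o T_o] = [\lambda x_o x_o]$ and in any general model $T_o$ and $F_o$ have their usual meanings with $\sV^{\cal M}_{\phi}(F_o) = \mname{F} \not= \mname{T}$ (and $\mname{assign}(\sM)$ is nonempty since all domains are nonempty, so such a $\phi$ exists), this is a contradiction. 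Hence $\sH$ must be consistent in {\pfsysuqe}.

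There is essentially no obstacle here: the argument is a routine application of soundness. The only points requiring the mildest care are (1) confirming that $\sH$ consists of sentences so that the hypothesis of Theorem~\ref{thm:soundness} applies verbatim — but this is given in the statement — and (2) observing that $\mname{assign}(\sM)$ is nonempty, which follows immediately from the requirement in the definition of a frame that every domain $\sD_\alpha$ be nonempty. The fact that $F_o$ denotes $\mname{F}$ in every general model is implicit in the remarks following the definition of general models and in the treatment of Axiom 5 in the proof of Lemma~\ref{lem:axioms} (where it is noted that $T_o$, $F_o$, and the propositional connectives have their usual meanings). I would simply cite Theorem~\ref{thm:soundness} and these standard facts rather than re-derive them.
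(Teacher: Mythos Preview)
Your proposal is correct and follows essentially the same approach as the paper's own proof: assume inconsistency, apply the Soundness Theorem to obtain $\sM \models F_o$, and derive a contradiction from the fact that $F_o$ must evaluate to $\mname{F}$ in any general model. The additional remarks you make about the nonemptiness of $\mname{assign}(\sM)$ and about $F_o$ having its usual meaning are reasonable points of care, but the paper's proof simply takes these as evident from the definition of a general model.
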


\begin{proof}
Let $\sM$ be a normal general model for $\sH$.  Assume that $\sH$ is
inconsistent in {\pfsysuqe}, i.e., that $\proves{\sH}{F_o}$.  Then, by
the Soundness Theorem, $\sH \modelsn F_o$ and hence $\sM \models F_o$.
This means that $\sV^{\cal M}_{\phi}(F_o) = \mname{T}$ and thus
$\sV^{\cal M}_{\phi}(F_o) \not= \mname{F}$ (for any assignment
$\phi$), which contradicts the definition of a general model.
\end{proof}

\section{Some Metatheorems} \label{sec:metatheorems}

We will prove several metatheorems of {\qzerouqe}.  Most of them will
be metatheorems that we need in order to prove the evaluation-free
completeness of {\qzerouqe} in section~\ref{sec:completeness} and the
results in section~\ref{sec:applications}.

\subsection{Analogs to Metatheorems of {\qzero}}

Most of the metatheorems we prove in this subsection are analogs of
the metatheorems of {\qzero} proven in section~52 of~\cite{Andrews02}.
There will be two versions for many of them, the first restricted to
evaluation-free proofs and the second unrestricted.  In this
subsection, let $\sH^{\rm ef}$ be a set of syntactically closed
evaluation-free formulas of {\qzerouqe} and $\sH$ be a set of
sentences of {\qzerouqe}.

\begin{prop}[Analog of 5200 in~\cite{Andrews02}]\label{prop:5200}
\be

  \item[]

  \item $\provesa{}{\textbf{A}_\alpha \QuasiEqual \textbf{A}_\alpha}$
    where $\textbf{A}_\alpha$ is evaluation-free.

  \item $\proves{}{\textbf{A}_\alpha \QuasiEqual \textbf{A}_\alpha}$.

\ee
\end{prop}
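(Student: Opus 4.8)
The plan is to prove the two parts of Proposition~\ref{prop:5200} by first establishing a very short derivation of $\proves{}{\textbf{A}_\alpha \QuasiEqual \textbf{A}_\alpha}$ from the axioms, and then observing that this derivation is evaluation-free whenever $\textbf{A}_\alpha$ is evaluation-free, which immediately yields part~1. The key fact is that reflexivity of quasi-equality is directly available as Axiom~7.1, namely $\textbf{A}_\alpha \QuasiEqual \textbf{A}_\alpha$. So part~2 is essentially just a citation of Axiom~7.1: the one-element sequence consisting of $\textbf{A}_\alpha \QuasiEqual \textbf{A}_\alpha$ is already a proof in {\pfsysuqe}.

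For part~1, I would note that the formula $\textbf{A}_\alpha \QuasiEqual \textbf{A}_\alpha$ is an instance of Axiom~7.1 no matter what $\textbf{A}_\alpha$ is, and when $\textbf{A}_\alpha$ is evaluation-free the abbreviation $[\textbf{A}_\alpha \QuasiEqual \textbf{A}_\alpha]$ (which unfolds to $[{\textbf{A}_\alpha\IsDefApp} \Or {\textbf{A}_\alpha\IsDefApp}] \Implies [\textbf{A}_\alpha = \textbf{A}_\alpha]$, i.e.\ into a wff built only from $\textbf{A}_\alpha$, $\mname{Q}$, $\lambda$, and propositional connectives) is itself evaluation-free. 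Hence the one-term sequence $\langle \textbf{A}_\alpha \QuasiEqual \textbf{A}_\alpha\rangle$ is an evaluation-free proof of $\textbf{A}_\alpha \QuasiEqual \textbf{A}_\alpha$ from $\emptyset$, so $\provesa{}{\textbf{A}_\alpha \QuasiEqual \textbf{A}_\alpha}$.

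The only genuine point to check — and the closest thing to an obstacle, though it is minor — is the claim that every subformula introduced by unfolding the $\QuasiEqual$, $\IsDefApp$, $\Or$, and $\Implies$ abbreviations is evaluation-free when $\textbf{A}_\alpha$ is. This follows by inspecting Table~\ref{tab:defs}: $[\textbf{A}_\alpha \IsDefApp]$ abbreviates $[\textbf{A}_\alpha = \textbf{A}_\alpha]$, i.e.\ $[\mname{Q}_{o\alpha\alpha}\textbf{A}_\alpha\textbf{A}_\alpha]$; $\Or$ and $\Implies$ expand into applications of $\lambda$-terms over $\mname{Q}_{ooo}$, $F_o$, and the subformulas, introducing no quotation or evaluation constructors; and a wff is evaluation-free precisely when every evaluation in it lies inside a quotation, so a wff with no evaluation constructors at all is evaluation-free. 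Since $\textbf{A}_\alpha$ is evaluation-free by hypothesis, the whole assembled wff $\textbf{A}_\alpha \QuasiEqual \textbf{A}_\alpha$ is evaluation-free, completing part~1. Part~2 then follows from part~1, or directly from Axiom~7.1, since an evaluation-free proof is in particular a proof.
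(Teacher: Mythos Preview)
Your proposal is correct and takes essentially the same approach as the paper, which simply writes ``By Axiom 7 for both parts.'' Your additional unpacking of the abbreviations to verify that $\textbf{A}_\alpha \QuasiEqual \textbf{A}_\alpha$ is evaluation-free when $\textbf{A}_\alpha$ is evaluation-free is a reasonable bit of extra care, but the paper evidently treats this as obvious and does not spell it out.
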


\begin{proof}
By Axiom 7 for both parts.
\end{proof}

\begin{thm}[Tautology Theorem: Analog of 5234]\label{thm:tautology}
\be

  \item[]

  \item Let $\textbf{A}^{1}_{o}, \ldots, \textbf{A}^{n}_{o},
    \textbf{B}_o$ be evaluation-free.  If $\provesa{\sH^{\rm
        ef}}{\textbf{A}^{1}_{o}}$, \ldots, $\provesa{\sH^{\rm
        ef}}{\textbf{A}^{n}_{o}}$ and $[\textbf{A}^{1}_{o} \Andd
      \cdots \Andd \textbf{A}^{n}_{o}] \Implies \textbf{B}_o$ is
    tautologous for $n \ge 1$, then $\provesa{\sH^{\rm
        ef}}{\textbf{B}_o}$.  Also, if $\textbf{B}_o$ is tautologous,
    then $\provesa{\sH^{\rm ef}}{\textbf{B}_o}$.

  \item If $\proves{\sH}{\textbf{A}^{1}_{o}}$, \ldots,
    $\proves{\sH}{\textbf{A}^{n}_{o}}$ and $[\textbf{A}^{1}_{o} \Andd
    \cdots \Andd \textbf{A}^{n}_{o}] \Implies \textbf{B}_o$ is
    tautologous for $n \ge 1$, then $\proves{\sH}{\textbf{B}_o}$.
    Also, if $\textbf{B}_o$ is tautologous, then
    $\proves{\sH}{\textbf{B}_o}$.

\ee
\end{thm}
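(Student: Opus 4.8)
The plan is to follow the structure of Andrews' proof of 5234 in~\cite{Andrews02}, which establishes the Tautology Theorem for {\qzero}, and adapt it to {\pfsysuqe}. The essential observation is that the propositional fragment of {\pfsysuqe} is strong enough: Axiom~5 (in part~1) or Axiom~12 in conjunction with Axiom~5 supplies every tautologous formula as an axiom, and Rule~2 (Modus Ponens) is a primitive rule of inference, whereas in {\qzero} modus ponens had to be derived. So the present theorem should actually be \emph{easier} to prove than Andrews' 5234.

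For part~2 (the unrestricted version), I would argue as follows. Suppose $\proves{\sH}{\textbf{A}^{1}_{o}}, \ldots, \proves{\sH}{\textbf{A}^{n}_{o}}$ with $n \ge 1$ and $[\textbf{A}^{1}_{o} \Andd \cdots \Andd \textbf{A}^{n}_{o}] \Implies \textbf{B}_o$ tautologous. First I would note that since the indicated implication is tautologous, so is the ``curried'' form $\textbf{A}^{1}_{o} \Implies [\textbf{A}^{2}_{o} \Implies \cdots \Implies [\textbf{A}^{n}_{o} \Implies \textbf{B}_o]\cdots]$; this is a routine fact about the propositional connectives as abbreviated in Table~\ref{tab:defs}, using that $\wedge_{ooo}$, $\Implies_{ooo}$ have their standard truth-functional meaning. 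By Axiom~5 this curried formula is a theorem, hence provable from $\sH$. Then $n$ applications of Rule~2, discharging $\textbf{A}^{1}_{o}, \ldots, \textbf{A}^{n}_{o}$ in turn (each available by hypothesis), yield $\proves{\sH}{\textbf{B}_o}$. The final sentence of the statement, ``if $\textbf{B}_o$ is tautologous then $\proves{\sH}{\textbf{B}_o}$,'' is immediate from Axiom~5 alone (the $n=0$ case, so to speak).

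For part~1 (the evaluation-free version), the same argument goes through provided every wff appearing in the proof is evaluation-free. Here I would check that the curried tautology built from evaluation-free $\textbf{A}^{1}_{o}, \ldots, \textbf{A}^{n}_{o}, \textbf{B}_o$ is itself evaluation-free --- which it is, since it is built up from those wffs using only the abbreviations $\Implies$, $\Andd$, $\NegAlt$, which introduce no evaluations --- so the instance of Axiom~5 used is evaluation-free, and each of the hypotheses is given to have an evaluation-free proof from $\sH^{\rm ef}$. Concatenating these evaluation-free proofs with the Axiom~5 instance and then applying Rule~2 repeatedly keeps the entire sequence evaluation-free (Rule~2 produces $\textbf{B}_o$ from two evaluation-free wffs, and the intermediate curried implications $\textbf{A}^{i+1}_o \Implies \cdots \Implies \textbf{B}_o$ are all evaluation-free). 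Hence $\provesa{\sH^{\rm ef}}{\textbf{B}_o}$, and again the $n=0$ clause is direct from Axiom~5.

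The only mild subtlety --- and the place where I would be most careful --- is the purely propositional lemma that ``$[\textbf{A}^{1}_{o} \Andd \cdots \Andd \textbf{A}^{n}_{o}] \Implies \textbf{B}_o$ tautologous'' implies ``$\textbf{A}^{1}_{o} \Implies [\cdots [\textbf{A}^{n}_{o} \Implies \textbf{B}_o]\cdots]$ tautologous,'' together with the fact that this transformation preserves evaluation-freeness. This is genuinely routine (it is the standard deduction-theorem-style manipulation of truth tables, noting that ``tautologous'' in~\cite{Andrews02} is defined by treating the maximal propositional subformulas as atomic, so the $\textbf{A}^{i}_o$ and $\textbf{B}_o$ may be regarded as propositional variables), but it is worth stating cleanly since it is the one step not handled by simply invoking an earlier result of the excerpt. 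I expect no real obstacle beyond this bookkeeping.
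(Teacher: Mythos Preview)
Your proposal is correct and matches the paper's approach exactly: the paper's proof consists of the single sentence ``Follows from Axiom 5 (Tautologous Formulas) and Rule 2 (Modus Ponens) for both parts,'' and your argument is simply a careful unpacking of that sentence, including the evaluation-freeness bookkeeping for part~1. (The passing mention of Axiom~12 in your opening paragraph is a slip---it plays no role here---but your actual argument does not use it.)
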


\begin{proof}
Follows from Axiom 5 (Tautologous Formulas) and Rule 2 (Modus Ponens)
for both parts.
\end{proof}

\begin{lem}\label{lem:quasi-equality}
\be

  \item[]

  \item $\provesa{}{[\textbf{A}_\alpha = \textbf{B}_\alpha] \Implies
    [\textbf{A}_\alpha \QuasiEqual \textbf{B}_\alpha]}$ where
    $\textbf{A}_\alpha$ and $\textbf{B}_\alpha$ are evaluation-free.

  \item $\proves{}{[\textbf{A}_\alpha = \textbf{B}_\alpha] \Implies
    [\textbf{A}_\alpha \QuasiEqual \textbf{B}_\alpha]}$.

\ee
\end{lem}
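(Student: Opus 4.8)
The final statement to prove is Lemma~\ref{lem:quasi-equality}, which has two parts: an evaluation-free version asserting $\provesa{}{[\textbf{A}_\alpha = \textbf{B}_\alpha] \Implies [\textbf{A}_\alpha \QuasiEqual \textbf{B}_\alpha]}$ when $\textbf{A}_\alpha$ and $\textbf{B}_\alpha$ are evaluation-free, and the unrestricted version $\proves{}{[\textbf{A}_\alpha = \textbf{B}_\alpha] \Implies [\textbf{A}_\alpha \QuasiEqual \textbf{B}_\alpha]}$.

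\medskip

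The plan is to unfold the definition of quasi-equality and reduce everything to propositional reasoning plus one application of Leibniz' Law (Axiom 2). Recall that $[\textbf{A}_\alpha \QuasiEqual \textbf{B}_\alpha]$ stands for $[{\textbf{A}_\alpha\IsDefApp} \Or {\textbf{B}_\alpha\IsDefApp}] \Implies [\textbf{A}_\alpha = \textbf{B}_\alpha]$, and $[{\textbf{A}_\alpha\IsDefApp}]$ stands for $[\textbf{A}_\alpha = \textbf{A}_\alpha]$. So after expanding abbreviations, the target formula $[\textbf{A}_\alpha = \textbf{B}_\alpha] \Implies [\textbf{A}_\alpha \QuasiEqual \textbf{B}_\alpha]$ becomes $[\textbf{A}_\alpha = \textbf{B}_\alpha] \Implies [[[\textbf{A}_\alpha = \textbf{A}_\alpha] \Or [\textbf{B}_\alpha = \textbf{B}_\alpha]] \Implies [\textbf{A}_\alpha = \textbf{B}_\alpha]]$. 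This is propositionally valid — it has the form $p \Implies [q \Implies p]$ — so it is tautologous, and hence provable (with an evaluation-free proof when $\textbf{A}_\alpha$ and $\textbf{B}_\alpha$ are evaluation-free) by Theorem~\ref{thm:tautology} (the Tautology Theorem). Concretely, for Part 1 I would note that since $\textbf{A}_\alpha$ and $\textbf{B}_\alpha$ are evaluation-free, so are $\textbf{A}_\alpha = \textbf{B}_\alpha$, $\textbf{A}_\alpha\IsDefApp$, $\textbf{B}_\alpha\IsDefApp$, and the whole implication; then apply part~1 of the Tautology Theorem. For Part 2, apply part~2 of the Tautology Theorem with no evaluation-freeness restriction.

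\medskip

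I anticipate the only subtlety — and the main thing to be careful about — is checking that the expanded formula really is a tautology in the propositional sense used by Andrews, i.e., that treating $[\textbf{A}_\alpha = \textbf{A}_\alpha]$, $[\textbf{B}_\alpha = \textbf{B}_\alpha]$, and $[\textbf{A}_\alpha = \textbf{B}_\alpha]$ as three (possibly distinct) propositional atoms $r$, $s$, $p$ respectively, the formula $p \Implies [[r \Or s] \Implies p]$ is a tautology. It visibly is: any truth assignment making $p$ true makes the outer consequent's consequent true, and any assignment making $p$ false makes the antecedent false. So no appeal to Leibniz' Law or to the semantics of $=$ is actually needed — the result is purely propositional once abbreviations are unfolded. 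The proof is therefore short: unfold the definitions of $\QuasiEqual$ and $\IsDefApp$, observe the resulting formula is tautologous, and invoke Theorem~\ref{thm:tautology} (part 1 for the evaluation-free claim, using that all constituent wffs are evaluation-free; part 2 for the general claim).
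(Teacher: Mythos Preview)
Your proposal is correct and matches the paper's own proof, which simply says the result ``follows from the definition of $\QuasiEqual$ and the Tautology Theorem for both parts.'' Your observation that the expanded formula has the propositional form $p \Implies [q \Implies p]$ (so that Leibniz' Law is not needed after all) is exactly right.
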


\begin{proof}
Follows from the definition of $\QuasiEqual$ and the Tautology
Theorem for both parts.
\end{proof}

\begin{lem}\label{lem:equality}
\be

  \item[]

  \item If $\provesa{\sH^{\rm ef}}{{\textbf{A}_\alpha\IsDefApp}}$ or
    $\provesa{\sH^{\rm ef}}{{\textbf{B}_\alpha\IsDefApp}}$, then
    $\provesa{\sH^{\rm ef}}{\textbf{A}_\alpha \QuasiEqual
    \textbf{B}_\alpha}$ implies $\provesa{\sH^{\rm
      ef}}{\textbf{A}_\alpha = \textbf{B}_\alpha}$ where
    $\textbf{A}_\alpha$ and $\textbf{B}_\alpha$ are evaluation-free.

  \item If $\proves{\sH}{{\textbf{A}_\alpha\IsDefApp}}$ or
    $\proves{\sH}{{\textbf{B}_\alpha\IsDefApp}}$, then
    $\proves{\sH}{\textbf{A}_\alpha \QuasiEqual \textbf{B}_\alpha}$
    implies $\proves{\sH}{\textbf{A}_\alpha = \textbf{B}_\alpha}$.

\ee
\end{lem}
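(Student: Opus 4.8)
The plan is to unfold the abbreviation $\QuasiEqual$ and then invoke the Tautology Theorem (Theorem~\ref{thm:tautology}). Recall from Table~\ref{tab:defs} that $[\textbf{A}_\alpha \QuasiEqual \textbf{B}_\alpha]$ is \emph{by definition} the formula $[{\textbf{A}_\alpha\IsDefApp} \Or {\textbf{B}_\alpha\IsDefApp}] \Implies [\textbf{A}_\alpha = \textbf{B}_\alpha]$, and that ${\textbf{A}_\alpha\IsDefApp}$, ${\textbf{B}_\alpha\IsDefApp}$, and $[\textbf{A}_\alpha = \textbf{B}_\alpha]$ are themselves abbreviations built only from $\mname{Q}_{o\alpha\alpha}$ applied to $\textbf{A}_\alpha$ and $\textbf{B}_\alpha$ together with the propositional connectives. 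Since $\textbf{A}_\alpha$ and $\textbf{B}_\alpha$ are evaluation-free, so are all of these formulas, which keeps us within the scope of part~1 of the Tautology Theorem.

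First I would treat the case $\provesa{\sH^{\rm ef}}{{\textbf{A}_\alpha\IsDefApp}}$. We also have $\provesa{\sH^{\rm ef}}{\textbf{A}_\alpha \QuasiEqual \textbf{B}_\alpha}$ by hypothesis, i.e.\ $\provesa{\sH^{\rm ef}}{[{\textbf{A}_\alpha\IsDefApp} \Or {\textbf{B}_\alpha\IsDefApp}] \Implies [\textbf{A}_\alpha = \textbf{B}_\alpha]}$. The formula obtained by conjoining these two hypotheses and implying $[\textbf{A}_\alpha = \textbf{B}_\alpha]$, namely
\[
[{\textbf{A}_\alpha\IsDefApp} \Andd [[{\textbf{A}_\alpha\IsDefApp} \Or {\textbf{B}_\alpha\IsDefApp}] \Implies [\textbf{A}_\alpha = \textbf{B}_\alpha]]] \Implies [\textbf{A}_\alpha = \textbf{B}_\alpha],
\]
is tautologous, since as a propositional schema it has the shape $[P \Andd [[P \Or Q] \Implies R]] \Implies R$. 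Hence part~1 of the Tautology Theorem yields $\provesa{\sH^{\rm ef}}{\textbf{A}_\alpha = \textbf{B}_\alpha}$. The case $\provesa{\sH^{\rm ef}}{{\textbf{B}_\alpha\IsDefApp}}$ is identical, using instead the tautologous schema $[Q \Andd [[P \Or Q] \Implies R]] \Implies R$. For part~2, the argument is word-for-word the same, replacing $\provesa{\cdot}{\cdot}$ by $\proves{\cdot}{\cdot}$, $\sH^{\rm ef}$ by $\sH$, and part~1 of the Tautology Theorem by part~2; here $\textbf{A}_\alpha$ and $\textbf{B}_\alpha$ need not be evaluation-free, which is harmless since part~2 of the Tautology Theorem carries no such restriction.

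There is no substantive obstacle in this lemma; the only point requiring (minor) care is to confirm that unfolding the $\QuasiEqual$, $\IsDefApp$, and $=$ abbreviations produces a formula whose propositional skeleton makes the displayed implication a genuine tautology, and --- for part~1 --- that this unfolding introduces no evaluations, so that the evaluation-free version of the Tautology Theorem indeed applies.
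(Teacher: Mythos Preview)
Your proposal is correct and matches the paper's proof exactly: the paper's argument is simply ``Follows from the definition of $\QuasiEqual$ and the Tautology Theorem for both parts,'' and you have spelled out precisely how that goes.
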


\begin{proof}
Follows from the definition of $\QuasiEqual$ and the Tautology Theorem
for both parts.
\end{proof}

\begin{cor}\label{cor:T}
$\provesa{}{T_o}$.
\end{cor}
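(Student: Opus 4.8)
The plan is to unwind the abbreviation $T_o$ and recognize it as an instance of Axiom~6.2 (Definedness). Recall from Table~\ref{tab:defs} that $T_o$ stands for $[\mname{Q}_{ooo} = \mname{Q}_{ooo}]$ and that, for any wff $\textbf{A}_\alpha$, the wff $[{\textbf{A}_\alpha\IsDefApp}]$ stands for $[\textbf{A}_\alpha = \textbf{A}_\alpha]$. Taking $\textbf{A}_\alpha$ to be the primitive logical constant $\mname{Q}_{ooo}$ --- which is the instance of $\mname{Q}_{((o\alpha)\alpha)}$ in Table~\ref{tab:log-con} for $\alpha = o$, and hence is of type $(ooo)$, not of type $o$ --- one sees that $[{\mname{Q}_{ooo}\IsDefApp}]$ and $T_o$ are literally the same wff. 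Since $\mname{Q}_{ooo}$ is a primitive constant, Axiom~6.2 furnishes $[{\mname{Q}_{ooo}\IsDefApp}]$ as an axiom of {\pfsysuqe}; this wff contains no evaluations, so the one-element sequence consisting of it alone is an evaluation-free proof of $T_o$. Hence $\provesa{}{T_o}$.

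For readers who would rather not lean on the coincidence of the two abbreviations, the same conclusion follows from the machinery already in place: by part~1 of Proposition~\ref{prop:5200}, $\provesa{}{\mname{Q}_{ooo} \QuasiEqual \mname{Q}_{ooo}}$ (the wff $\mname{Q}_{ooo}$ being evaluation-free); by Axiom~6.2, $\provesa{}{{\mname{Q}_{ooo}\IsDefApp}}$; and then part~1 of Lemma~\ref{lem:equality} (with empty hypothesis set) upgrades the quasi-equality to an equality, yielding $\provesa{}{\mname{Q}_{ooo} = \mname{Q}_{ooo}}$, that is, $\provesa{}{T_o}$.

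There is essentially no obstacle here; the statement is a trivial bookkeeping consequence of the axioms. The only point needing a moment's care is verifying that Axiom~6.2 genuinely applies to $\mname{Q}_{ooo}$ and that its definedness expansion reproduces exactly the wff abbreviated by $T_o$. In particular, the footnote attached to Axiom~6.2 concerning $\Undefined_\alpha$ is irrelevant, since $\mname{Q}_{ooo}$ is a genuine primitive constant and its type is a function type rather than $o$.
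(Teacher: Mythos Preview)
Your proposal is correct. Your second argument---Proposition~\ref{prop:5200} for $\mname{Q}_{ooo} \QuasiEqual \mname{Q}_{ooo}$, Axiom~6.2 for definedness, then Lemma~\ref{lem:equality} to upgrade to equality---is exactly the paper's proof. Your first argument is a genuine shortcut the paper does not take: observing that $T_o$ and $[{\mname{Q}_{ooo}\IsDefApp}]$ unwind to the identical wff $[\mname{Q}_{ooo} = \mname{Q}_{ooo}]$, so that Axiom~6.2 alone already yields $T_o$ as a one-line evaluation-free proof. This is cleaner and avoids invoking Proposition~\ref{prop:5200} and Lemma~\ref{lem:equality} altogether; the paper's route, by contrast, illustrates the general pattern (quasi-equality plus definedness yields equality) that recurs throughout the development.
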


\begin{proof}
By the definition of $T_o$, Axiom 6.2, Lemma~\ref{lem:equality}, and
Proposition~\ref{prop:5200}. \phantom{XX}
\end{proof}

\bigskip

Both versions of the Quasi-Equality Rules (analog of the Equality
Rules (5201)) follow from Lemma~\ref{prop:5200} and Rule 1.  By virtue
of Lemmas~\ref{lem:quasi-equality} and the Quasi-Equality Rules,
Rule~1 is valid if the hypothesis $\textbf{A}_\alpha \QuasiEqual
\textbf{B}_\alpha$ is replaced by $\textbf{B}_\alpha \QuasiEqual
\textbf{A}_\alpha$, $\textbf{A}_\alpha = \textbf{B}_\alpha$, or
$\textbf{B}_\alpha = \textbf{A}_\alpha$,

\begin{prop}\label{prop:o-defined}
\be

  \item[]

  \item $\provesa{}{{\textbf{A}_o\IsDefApp}}$ where $\textbf{A}_o$ is
    evaluation-free.

  \item $\proves{}{{\textbf{A}_o\IsDefApp}}$.

\ee
\end{prop}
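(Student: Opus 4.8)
The plan is to prove both parts by a single case analysis on the outermost syntactic form of $\textbf{A}_o$, producing in each case the formula $[\textbf{A}_o\IsDefApp]$ --- which, by the definitions in Table~\ref{tab:defs}, abbreviates $[\textbf{A}_o = \textbf{A}_o]$ --- as a one-step proof consisting of a single instance of Axiom~6 (Definedness).

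First I would observe that, since $o$ is an atomic type symbol, a $\wff{o}$ can be neither a function abstraction (whose type has the form $(\alpha\beta)$) nor a quotation (whose type is $\epsilon$); hence, by the inductive definition of wffs in Section~\ref{sec:syntax}, $\textbf{A}_o$ is one of: a variable $\textbf{x}_o$, a primitive constant $\textbf{c}_o$, a function application $[\textbf{A}_{o\beta}\textbf{B}_\beta]$, a conditional $[\mname{c}\textbf{A}_o\textbf{B}_o\textbf{C}_o]$, or an evaluation $[\mname{e}\textbf{A}_\epsilon\textbf{x}_o]$. In these five cases $[\textbf{A}_o\IsDefApp]$ is an instance of Axiom~6.1, Axiom~6.2, Axiom~6.3, Axiom~6.6 (recalling that $[\If\,\textbf{A}_o\,\textbf{B}_o\,\textbf{C}_o]$ abbreviates $[\mname{c}\textbf{A}_o\textbf{B}_o\textbf{C}_o]$), and Axiom~6.8 (recalling that $\sembrack{\textbf{A}_\epsilon}_o$ abbreviates $[\mname{e}\textbf{A}_\epsilon x_o]$), respectively. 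Since an axiom standing alone is a proof, this gives $\proves{}{{\textbf{A}_o\IsDefApp}}$, which is part~2.

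For part~1, I would note that an evaluation-free $\wff{o}$ cannot itself be an evaluation, because then its leading symbol $\mname{e}$ would be an occurrence of an evaluation not lying within a quotation, contradicting evaluation-freeness. Thus only the first four cases can occur, and in each of them the axiom instance used is literally the formula $[\textbf{A}_o = \textbf{A}_o]$, which is evaluation-free whenever $\textbf{A}_o$ is. Hence the same one-step proof is an evaluation-free proof, giving $\provesa{}{{\textbf{A}_o\IsDefApp}}$, which is part~1.

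There is essentially no real obstacle here; the only point calling for a moment's attention is the evaluation case of part~2. Axiom~6.8 is stated for the canonical form $\sembrack{\textbf{A}_\epsilon}_o = [\mname{e}\textbf{A}_\epsilon x_o]$, in which the designating variable is $x_o$, whereas a general $\wff{o}$ of evaluation shape may carry a different variable of type $o$ in that position. Since the designating variable of an evaluation serves only to record its type and plays no other semantic role (Note~\ref{note:eval-syn}), the definedness claim is unaffected by the choice of that variable; if one wishes to be fully formal this can be recorded by a brief preliminary remark, or absorbed into the standing convention that evaluations are written in the form $\sembrack{\cdot}_\alpha$.
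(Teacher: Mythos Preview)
Your argument is correct and follows essentially the same approach as the paper, which simply cites Axioms~6.1--3 and~6.5--8; your case analysis is just a more explicit unpacking of that citation. In fact your observation that a $\wff{o}$ cannot be a function abstraction or a quotation makes your list of needed axioms (6.1, 6.2, 6.3, 6.6, 6.8) tighter than the paper's, which harmlessly includes 6.5 and 6.7 as well. The technical wrinkle you flag about the designating variable in Axiom~6.8 is a fair reading of the abbreviation in Table~\ref{tab:defs}, but the paper treats $\sembrack{\cdot}_\alpha$ uniformly as covering all evaluations of the given type, so in the paper's own conventions no extra step is needed.
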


\begin{proof}
By Axioms 6.1--3 and 6.5--8 for both parts.
\end{proof}

\begin{lem}\label{lem:eval-free-sub}
Let $\textbf{A}_\alpha$ and $\textbf{B}_\beta$ be evaluation-free.
Either \[\provesa{}{[\mname{sub}_{\epsilon\epsilon\epsilon\epsilon} \,
    \synbrack{\textbf{A}_\alpha} \, \synbrack{\textbf{x}_\alpha} \,
    \synbrack{\textbf{B}_\beta}]\IsUndefApp}\] or
\[\provesa{}{\mname{sub}_{\epsilon\epsilon\epsilon\epsilon} \,
  \synbrack{\textbf{A}_\alpha} \, \synbrack{\textbf{x}_\alpha} \,
  \synbrack{\textbf{B}_\beta} = \synbrack{\textbf{C}_\beta}}\] for
some (evaluation-free) wff $\textbf{C}_\beta$.
\end{lem}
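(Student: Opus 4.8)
The plan is to prove the statement by induction on the size of $\synbrack{\textbf{B}_\beta}$ (equivalently, the size of $\textbf{B}_\beta$), following the case analysis dictated by the nine clauses of Specification 9 that define $\mname{sub}_{\epsilon\epsilon\epsilon\epsilon}$ when the third argument is a quotation of an evaluation-free wff. Since $\textbf{A}_\alpha$ is evaluation-free, we know by Lemma~\ref{lem:evaluation-free}(1) that $\mname{sub}_{\epsilon\epsilon\epsilon\epsilon} \, \synbrack{\textbf{A}_\alpha} \, \synbrack{\textbf{x}_\alpha} \, \synbrack{\textbf{B}_\beta}$, $\mname{cleanse}_{\epsilon\epsilon} \, \synbrack{\textbf{B}_\beta}$, and $\mname{not-free-in}_{o\epsilon\epsilon} \, \synbrack{\textbf{x}_\alpha} \, \synbrack{\textbf{B}_\beta}$ are all invariable, so the dichotomy (defined-and-equal-to-a-quotation versus undefined) is a genuine one: it suffices to establish one of the two disjuncts provably. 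Each of these facts is also a consequence of the specifying axioms in Specification 9, which are themselves axioms of {\pfsysuqe} by Axiom 12, so semantic facts about invariability translate into the provability claims we need.

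First I would handle the base cases. When $\textbf{B}_\beta$ is a variable: if it is $\textbf{x}_\alpha$ itself (with $\mname{wff}^{\alpha}$ and $\mname{var}^{\alpha}$ discharged using Specifications 6 and the $\mname{var}^{\alpha}$ abbreviation together with Axiom 5 and Lemma~\ref{lem:equality}), Specification 9.1 gives $\provesa{}{\mname{sub}_{\epsilon\epsilon\epsilon\epsilon} \, \synbrack{\textbf{A}_\alpha} \, \synbrack{\textbf{x}_\alpha} \, \synbrack{\textbf{x}_\alpha} = \mname{cleanse}_{\epsilon\epsilon} \, \synbrack{\textbf{A}_\alpha}}$, and then Lemma~\ref{lem:evaluation-free}(4) gives $\mname{cleanse}_{\epsilon\epsilon} \, \synbrack{\textbf{A}_\alpha} = \synbrack{\textbf{A}_\alpha}$, so $\textbf{C}_\beta = \textbf{A}_\alpha$ (which is evaluation-free by hypothesis). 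If it is a different variable, Specification 9.2 gives $\textbf{C}_\beta = \textbf{B}_\beta$. If $\textbf{B}_\beta$ is not a variable of type $\alpha$ at all, then $\mname{var}^{\alpha}_{o\epsilon} \, \synbrack{\textbf{x}_\alpha}$ may still hold but the relevant wff-type check fails — more carefully, one must check whether $\synbrack{\textbf{x}_\alpha}$ satisfies $\mname{var}^{\alpha}_{o\epsilon}$, which it does since $\textbf{x}_\alpha$ has type $\alpha$; the point is that the side-condition $\mname{wff}^{\alpha}_{o\epsilon} \, \synbrack{\textbf{A}_\alpha}$ is what pins down $\alpha$, and if the types don't match Specification 9.11 makes the substitution undefined. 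The primitive-constant case uses Specification 9.3, again giving $\textbf{C}_\beta = \textbf{B}_\beta$. The quotation case $\textbf{B}_\beta = \synbrack{\textbf{D}_\gamma}$ uses Specification 9.9 directly, with $\textbf{C}_\beta = \synbrack{\textbf{D}_\gamma}$.

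Next I would handle the recursive cases. For a function application $\textbf{B}_{\beta\gamma}\textbf{D}_\gamma$, Specification 9.4 reduces the substitution to $\mname{app}_{\epsilon\epsilon\epsilon}$ applied to the two recursive substitutions; by the induction hypothesis each is either provably undefined (whence the whole thing is undefined, using Axiom 6.4 and propositional reasoning through the Tautology Theorem) or provably equal to $\synbrack{\textbf{C}^{1}_{\beta\gamma}}$ and $\synbrack{\textbf{C}^{2}_\gamma}$ for evaluation-free $\textbf{C}^{1}, \textbf{C}^{2}$; in the latter case $\sE$'s clause for application (together with Specification for $\mname{app}$, i.e.\ the fact encoded in the definition of $\sE$ and provable via the specifying axioms) lets us conclude $\textbf{C}_\beta = \textbf{C}^{1}_{\beta\gamma}\textbf{C}^{2}_\gamma$, which is evaluation-free; if $[\textbf{C}^{1}_{\beta\gamma}\textbf{C}^{2}_\gamma]$ is not well-formed, the app-construction is undefined. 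The conditional case ($\textbf{B}_\beta = \If \, \textbf{D}_o \, \textbf{E}_\alpha \, \textbf{F}_\alpha$) is entirely parallel using Specification 9.8. The two abstraction cases (9.5 and 9.6) are the most delicate: in 9.5, where the bound variable equals $\textbf{x}_\alpha$, the result is $\mname{abs}_{\epsilon\epsilon\epsilon} \, \synbrack{\textbf{x}_\alpha} \, [\mname{cleanse}_{\epsilon\epsilon} \, \synbrack{\textbf{E}_\gamma}]$, and Lemma~\ref{lem:evaluation-free}(4) cleanses to $\synbrack{\textbf{E}_\gamma}$ since $\textbf{E}_\gamma$ is evaluation-free, giving $\textbf{C}_\beta = \lambda\textbf{x}_\alpha\textbf{E}_\gamma$; in 9.6, where the bound variable $\textbf{y}_\delta$ differs, the result is an $\If$ guarded by two $\mname{not-free-in}$ conditions — but since $\textbf{A}_\alpha$ and $\textbf{B}_\beta$ are evaluation-free, these $\mname{not-free-in}$ assertions are themselves decidable and invariable by Lemma~\ref{lem:evaluation-free}(1,6), so each of the two guards provably evaluates to $\mname{T}$ or to $\mname{F}$; if at least one is $\mname{T}$ we get $\mname{abs}_{\epsilon\epsilon\epsilon} \, \synbrack{\textbf{y}_\delta} \, [\mname{sub}\ldots\synbrack{\textbf{E}_\gamma}]$ and invoke the induction hypothesis on $\textbf{E}_\gamma$, and if both are $\mname{F}$ the result is $\Undefined_\epsilon$, which is provably undefined by Axiom 6.11 (noting $\epsilon \neq o$). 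The remaining case, where $\textbf{B}_\beta$ is an evaluation, cannot arise because $\textbf{B}_\beta$ is evaluation-free. I expect the main obstacle to be bookkeeping in the abstraction cases — specifically, carefully discharging the $\mname{wff}^{\alpha}_{o\epsilon}$, $\mname{var}^{\alpha}_{o\epsilon}$, $\mname{var}_{o\epsilon}$, and inequality side-conditions in the antecedents of Specifications 9.1--9.11 using Axiom 5, Lemma~\ref{lem:equality}, and the definitions of the $\mname{var}^{\alpha}$ and $\mname{con}^{\alpha}$ abbreviations, and propagating the "well-formedness of the reconstructed wff" condition correctly through the $\mname{app}$ and $\mname{abs}$ constructors so that undefinedness on the syntax side matches ill-formedness on the wff side — but none of this requires new ideas beyond the already-established invariability results.
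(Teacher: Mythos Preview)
Your approach is correct and matches the paper's, which gives only a one-line citation of Axiom 6.11, Axiom 10, Lemma~\ref{lem:quasi-equality}, Specifications 7--9, the Tautology Theorem, and Rule 1; your induction on the structure of $\textbf{B}_\beta$ through the clauses of Specification 9 is exactly the intended elaboration. Two small corrections: your Specification numbers are off by one in places (conditional is 9.7, quotation is 9.8, and the catch-all undefined case is 9.10, not 9.11), and where you invoke parts (4) and (6) of Lemma~\ref{lem:evaluation-free} you really need their \emph{provable} analogs---the statements $\provesa{}{\mname{cleanse}_{\epsilon\epsilon}\,\synbrack{\textbf{A}_\alpha} = \synbrack{\textbf{A}_\alpha}}$ and the $\vdash^{\rm ef}$-decidability of $\mname{not\mbox{-}free\mbox{-}in}$ on quotations of evaluation-free wffs---which are obtained by the same structural induction from Specifications 8 and 7 respectively, since the conclusion concerns $\vdash^{\rm ef}$ rather than $\models$.
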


\begin{proof}
Follows from Axiom 6.11, Axiom 10, Lemma~\ref{lem:quasi-equality},
Specifications 7--9, the Tautology Theorem, and Rule 1.
\end{proof}

\bigskip

\bsp When $\textbf{A}_\alpha$ and $\textbf{B}_\beta$ are
evaluation-free, let $\mname{S}^{{\bf x}_\alpha}_{{\bf A}_\alpha}
\textbf{B}_\beta$ be the wff
$\mname{sub}_{\epsilon\epsilon\epsilon\epsilon} \,
\synbrack{\textbf{A}_\alpha} \, \synbrack{\textbf{x}_\alpha} \,
\synbrack{\textbf{B}_\beta}$ denotes if
$\mname{sub}_{\epsilon\epsilon\epsilon\epsilon} \,
\synbrack{\textbf{A}_\alpha} \, \synbrack{\textbf{x}_\alpha} \,
\synbrack{\textbf{B}_\beta}$ is defined and be undefined
otherwise.\esp

\newpage

\begin{thm}[Beta-Reduction Theorem: Analog of 5207]
\be

  \item[]

  \item $\provesa{}{{\textbf{A}_\alpha \IsDefApp} \Implies [\lambda
      \textbf{x}_\alpha \textbf{B}_\beta]\textbf{A}_\alpha \QuasiEqual
    \mname{S}^{{\bf x}_\alpha}_{{\bf A}_\alpha} \textbf{B}_\beta}$,
    provided $\mname{S}^{{\bf x}_\alpha}_{{\bf A}_\alpha}
    \textbf{B}_\beta$ is defined, where $\textbf{A}_\alpha$ and
    $\textbf{B}_\beta$ are evaluation-free.

   \item $\proves{}{[{\textbf{A}_\alpha \IsDefApp} \Andd
       \mname{sub}_{\epsilon\epsilon\epsilon\epsilon} \,
       \synbrack{\textbf{A}_\alpha} \, \synbrack{\textbf{x}_\alpha} \,
       \synbrack{\textbf{B}_\beta} = \synbrack{\textbf{C}_\beta}]}
     \Implies [\lambda \textbf{x}_\alpha
       \textbf{B}_\beta]\textbf{A}_\alpha \QuasiEqual
     \textbf{C}_\beta.$

\ee
\end{thm}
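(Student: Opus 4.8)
The plan is to prove both versions of the Beta-Reduction Theorem by reducing them to results already established in the excerpt, principally Theorem~\ref{thm:beta-red-sub} (the semantic Law of Beta-Reduction) together with Axiom~4.1 for the syntactic side. Part~2 is essentially immediate: Axiom~4.1 is exactly the schema
\[{[{\textbf{A}_\alpha \IsDefApp} \Andd
\mname{sub}_{\epsilon\epsilon\epsilon\epsilon} \,
\synbrack{\textbf{A}_\alpha} \, \synbrack{\textbf{x}_\alpha} \, \synbrack{\textbf{B}_\beta} =
\synbrack{\textbf{C}_\beta}]} \Implies
[\lambda \textbf{x}_\alpha \textbf{B}_\beta]\textbf{A}_\alpha \QuasiEqual \textbf{C}_\beta,\]
so Part~2 holds by a one-line appeal to Axiom~4.1. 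The only care needed is that the proof witnessing $\proves{}{\cdots}$ is an honest {\pfsysuqe} proof, which it is, consisting of the single axiom instance.

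For Part~1, the plan is to specialize Part~2 to the evaluation-free situation and then discharge the substitution hypothesis. First I would invoke Lemma~\ref{lem:eval-free-sub}: since $\textbf{A}_\alpha$ and $\textbf{B}_\beta$ are evaluation-free, either $\provesa{}{[\mname{sub}_{\epsilon\epsilon\epsilon\epsilon} \, \synbrack{\textbf{A}_\alpha} \, \synbrack{\textbf{x}_\alpha} \, \synbrack{\textbf{B}_\beta}]\IsUndefApp}$ or $\provesa{}{\mname{sub}_{\epsilon\epsilon\epsilon\epsilon} \, \synbrack{\textbf{A}_\alpha} \, \synbrack{\textbf{x}_\alpha} \, \synbrack{\textbf{B}_\beta} = \synbrack{\textbf{C}_\beta}}$ for some evaluation-free $\textbf{C}_\beta$. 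The hypothesis of Part~1 --- namely that $\mname{S}^{{\bf x}_\alpha}_{{\bf A}_\alpha} \textbf{B}_\beta$ is defined --- rules out the first disjunct (using the definition of $\mname{S}^{{\bf x}_\alpha}_{{\bf A}_\alpha}\textbf{B}_\beta$ given just before the theorem, together with soundness or Axiom~6.11 to see that the \IsUndefApp\ case is incompatible with definedness), so we are in the second case and $\textbf{C}_\beta$ is precisely $\mname{S}^{{\bf x}_\alpha}_{{\bf A}_\alpha} \textbf{B}_\beta$ by definition.

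Having an evaluation-free proof of the substitution equation, I would then combine it with the evaluation-free instance of Axiom~4.1 (which is evaluation-free because $\textbf{A}_\alpha$, $\textbf{B}_\beta$, and $\textbf{C}_\beta$ are all evaluation-free, so $\synbrack{\textbf{A}_\alpha}$, $\synbrack{\textbf{x}_\alpha}$, $\synbrack{\textbf{B}_\beta}$, $\synbrack{\textbf{C}_\beta}$ are quotations and hence the whole formula contains no evaluation outside a quotation). From $\provesa{}{\mname{sub}_{\epsilon\epsilon\epsilon\epsilon} \, \synbrack{\textbf{A}_\alpha} \, \synbrack{\textbf{x}_\alpha} \, \synbrack{\textbf{B}_\beta} = \synbrack{\textbf{C}_\beta}}$, Axiom~4.1, and Proposition~\ref{prop:o-defined} (to manipulate the definedness conjunct $\textbf{A}_\alpha \IsDefApp$), the Tautology Theorem (Theorem~\ref{thm:tautology}, evaluation-free version) yields $\provesa{}{{\textbf{A}_\alpha \IsDefApp} \Implies [\lambda \textbf{x}_\alpha \textbf{B}_\beta]\textbf{A}_\alpha \QuasiEqual \textbf{C}_\beta}$, which is the claim. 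The main obstacle --- really the only subtle point --- is bookkeeping around evaluation-freeness: one must check that every formula appearing in the constructed proof is evaluation-free so that it qualifies as an evaluation-free proof, and that Lemma~\ref{lem:eval-free-sub} applies with the right $\textbf{C}_\beta$; both are routine given that quotations are evaluation-free by definition and that the proof uses only Axioms~4.1, 5, 6, 10, Specifications~7--9, Lemma~\ref{lem:quasi-equality}, and the two rules of inference, all of which preserve the evaluation-free character when fed evaluation-free inputs.
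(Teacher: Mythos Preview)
Your proposal is correct and follows essentially the same approach as the paper: Part~2 is an immediate appeal to Axiom~4.1, and Part~1 combines Lemma~\ref{lem:eval-free-sub} (to obtain the evaluation-free substitution equation, ruling out the undefined case via the hypothesis that $\mname{S}^{{\bf x}_\alpha}_{{\bf A}_\alpha}\textbf{B}_\beta$ is defined) with Axiom~4.1 and the Tautology Theorem. The extra bookkeeping you note about evaluation-freeness of all intermediate formulas is exactly the routine check the paper leaves implicit.
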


\begin{proof}
Part 1 is by Axiom 4, Lemma~\ref{lem:eval-free-sub}, and the Tautology
Theorem.  Part~2 is immediately by Axiom~4.1.
\end{proof}

\begin{thm}[Universal Instantiation: Analog of 5215]\label{thm:5215}
\be

  \item[]

  \item If $\provesa{\sH^{\rm ef}}{{\textbf{A}_\alpha \IsDefApp}}$ and
    $\provesa{\sH^{\rm ef}}{\forall \textbf{x}_\alpha \textbf{B}_o}$, then
    $\provesa{\sH^{\rm ef}}{\mname{S}^{{\bf x}_\alpha}_{{\bf A}_\alpha}
    \textbf{B}_o}$, provided $\mname{S}^{{\bf x}_\alpha}_{{\bf
      A}_\alpha} \textbf{B}_\beta$ is defined, where
    $\textbf{A}_\alpha$ and $\textbf{B}_\beta$ are evaluation-free.

  \item If $\proves{\sH}{{\textbf{A}_\alpha \IsDefApp}}$,
    $\proves{\sH}{\mname{sub}_{\epsilon\epsilon\epsilon\epsilon} \,
    \synbrack{\textbf{A}_\alpha} \, \synbrack{\textbf{x}_\alpha} \,
    \synbrack{\textbf{B}_o} = \synbrack{\textbf{C}_o}}$, and
    $\proves{\sH}{\forall \textbf{x}_\alpha \textbf{B}_o}$, then
    $\proves{\sH}{ \textbf{C}_o}$.

  \item If $\proves{\sH}{[\lambda \textbf{x}_\alpha
      \textbf{B}_o]\textbf{A}_\alpha = \textbf{C}_o}$ and
    $\proves{\sH}{\forall \textbf{x}_\alpha \textbf{B}_o}$, then
    $\proves{\sH}{ \textbf{C}_o}$.

  \item If $\proves{\sH}{\forall \textbf{x}_\alpha \textbf{B}_o}$,
    then $\proves{\sH}{ \textbf{B}_o}$.

\ee
\end{thm}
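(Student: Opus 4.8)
The plan is to follow the structure of Andrews' proof of 5215, carried out with $\QuasiEqual$ in place of $=$ and with beta-reduction effected through the explicit-substitution machinery. I would prove Part~2 first and then derive Parts~1, 3, and~4 from it or from the same argument. For Part~2, unfold $\forall \textbf{x}_\alpha \textbf{B}_o$ to its definition $[\lambda y_\alpha T_o] = [\lambda \textbf{x}_\alpha \textbf{B}_o]$. From the reflexivity instance $[\lambda \textbf{x}_\alpha \textbf{B}_o]\textbf{A}_\alpha \QuasiEqual [\lambda \textbf{x}_\alpha \textbf{B}_o]\textbf{A}_\alpha$ (Proposition~\ref{prop:5200}), an application of Rule~1 in its equality form (legitimate by Lemma~\ref{lem:quasi-equality} and the Quasi-Equality Rules; the rewritten occurrence of $[\lambda \textbf{x}_\alpha \textbf{B}_o]$ is the operator of an application and so satisfies Rule~1's side conditions) gives $\proves{\sH}{[\lambda y_\alpha T_o]\textbf{A}_\alpha \QuasiEqual [\lambda \textbf{x}_\alpha \textbf{B}_o]\textbf{A}_\alpha}$. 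Both sides have type $o$, hence are defined by Axiom~6.3 (equivalently Proposition~\ref{prop:o-defined}), so Lemma~\ref{lem:equality} promotes this quasi-equality to $\proves{\sH}{[\lambda y_\alpha T_o]\textbf{A}_\alpha = [\lambda \textbf{x}_\alpha \textbf{B}_o]\textbf{A}_\alpha}$.

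Next I beta-reduce each side. For the right side, the Tautology Theorem assembles the conjunctive antecedent of Axiom~4.1 from the two hypotheses $\proves{\sH}{{\textbf{A}_\alpha \IsDefApp}}$ and $\proves{\sH}{\mname{sub}_{\epsilon\epsilon\epsilon\epsilon} \, \synbrack{\textbf{A}_\alpha} \, \synbrack{\textbf{x}_\alpha} \, \synbrack{\textbf{B}_o} = \synbrack{\textbf{C}_o}}$, and Modus Ponens yields $\proves{\sH}{[\lambda \textbf{x}_\alpha \textbf{B}_o]\textbf{A}_\alpha \QuasiEqual \textbf{C}_o}$, again promoted to $=$ by Lemma~\ref{lem:equality}. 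For the left side, $T_o$ is a closed wff built from the primitive constants $\mname{Q}$ by function application alone, so repeated use of Axioms~4.5 and~4.4 --- the latter discharging its definedness hypothesis from $\proves{\sH}{{\textbf{A}_\alpha \IsDefApp}}$ --- together with Rule~1 gives $\proves{\sH}{[\lambda y_\alpha T_o]\textbf{A}_\alpha \QuasiEqual T_o}$, and then $\proves{\sH}{[\lambda y_\alpha T_o]\textbf{A}_\alpha = T_o}$ by Lemma~\ref{lem:equality}. Chaining these three equalities through the Quasi-Equality Rules gives $\proves{\sH}{T_o = \textbf{C}_o}$; since this wff is literally $[T_o \Iff \textbf{C}_o]$ and $\proves{}{T_o}$ by Corollary~\ref{cor:T}, the Tautology Theorem applied to the tautologous $[T_o \Andd [T_o \Iff \textbf{C}_o]] \Implies \textbf{C}_o$ concludes $\proves{\sH}{\textbf{C}_o}$.

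Part~1 is the evaluation-free specialization: here $\textbf{C}_o$ is $\mname{S}^{\textbf{x}_\alpha}_{\textbf{A}_\alpha}\textbf{B}_o$, and when it is defined Lemma~\ref{lem:eval-free-sub} supplies an \emph{evaluation-free} proof of $\mname{sub}_{\epsilon\epsilon\epsilon\epsilon} \, \synbrack{\textbf{A}_\alpha} \, \synbrack{\textbf{x}_\alpha} \, \synbrack{\textbf{B}_o} = \synbrack{\mname{S}^{\textbf{x}_\alpha}_{\textbf{A}_\alpha}\textbf{B}_o}$ (the undefinedness alternative of that lemma being ruled out by soundness) with $\mname{S}^{\textbf{x}_\alpha}_{\textbf{A}_\alpha}\textbf{B}_o$ evaluation-free; since $T_o$, every intermediate wff, and every axiom and metatheorem invoked above (Axioms~4.1, 4.4, 4.5, 6.3, 7; the Quasi-Equality Rules; the Tautology Theorem; Corollary~\ref{cor:T}; and, as an alternative route for the two beta-reductions, part~1 of the Beta-Reduction Theorem, the analog of 5207) are evaluation-free, the whole derivation is an evaluation-free proof, giving $\provesa{\sH^{\rm ef}}{\mname{S}^{\textbf{x}_\alpha}_{\textbf{A}_\alpha}\textbf{B}_o}$. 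For Part~3, the same chain is run with the given equality $\proves{\sH}{[\lambda \textbf{x}_\alpha \textbf{B}_o]\textbf{A}_\alpha = \textbf{C}_o}$ in place of the Axiom-4.1 step, chained with $\proves{\sH}{[\lambda y_\alpha T_o]\textbf{A}_\alpha = [\lambda \textbf{x}_\alpha \textbf{B}_o]\textbf{A}_\alpha}$ and $\proves{\sH}{[\lambda y_\alpha T_o]\textbf{A}_\alpha = T_o}$; the last equality uses ${\textbf{A}_\alpha \IsDefApp}$, which is available in the intended applications, in particular when $\textbf{A}_\alpha$ is a variable. Part~4 is the instance $\textbf{A}_\alpha := \textbf{x}_\alpha$, $\textbf{C}_o := \textbf{B}_o$ of Part~3, with the premise $\proves{\sH}{[\lambda \textbf{x}_\alpha \textbf{B}_o]\textbf{x}_\alpha = \textbf{B}_o}$ obtained from Axiom~4.10 and Lemma~\ref{lem:equality} ($\textbf{B}_o$ having type $o$) and with ${\textbf{x}_\alpha \IsDefApp}$ by Axiom~6.1.

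The work is bookkeeping rather than conceptual depth: getting the side conditions of Rule~1 right at every rewrite, tracking which quasi-equalities may be promoted to equalities (only when one side is already known to be defined, via Axiom~6.3 and Lemma~\ref{lem:equality}), and checking the elementary but tedious reduction of $[\lambda y_\alpha T_o]\textbf{A}_\alpha$ to $T_o$ by Axioms~4.4--4.5 under ${\textbf{A}_\alpha \IsDefApp}$. The one genuinely delicate point is keeping careful track of the definedness of $\textbf{A}_\alpha$ --- it is exactly what is needed to collapse the $\forall$-instance $[\lambda y_\alpha T_o]\textbf{A}_\alpha$ to the theorem $T_o$, and it is the hinge on which the theorem turns.
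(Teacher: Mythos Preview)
Your proposal is correct and follows essentially the same approach as the paper: unfold $\Forall \textbf{x}_\alpha \textbf{B}_o$ to the equality of lambda-abstractions, apply both sides to $\textbf{A}_\alpha$ via the Quasi-Equality Rules, beta-reduce each application, and conclude from $\vdash T_o$. The only differences are cosmetic: the paper proves Part~1 first and says Parts~2 and~3 are ``Similar,'' then gets Part~4 from Axiom~4.10, Lemma~\ref{lem:equality}, and Part~3 (exactly as you do); it invokes the Beta-Reduction Theorem (part~1) rather than Axioms~4.4--4.5 to collapse the left side, and it finishes by a single application of Rule~1 (from $T_o \QuasiEqual \textbf{C}_o$ and $T_o$) instead of first promoting to equality and using the Tautology Theorem. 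Your observation that Part~3 seems to require ${\textbf{A}_\alpha\IsDefApp}$ to reduce $[\lambda y_\alpha T_o]\textbf{A}_\alpha$ to $T_o$ is well-spotted; the paper's ``Similar to Part~1'' does not address it either.
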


\begin{proof}

\medskip

\noindent \textbf{Part 1} {\sglsp} 
\begin{align} \setcounter{equation}{0}
\sH^{\rm ef}
&\vdash^{\rm ef}
\lambda \textbf{x}_\alpha T_o = \lambda \textbf{x}_\alpha \textbf{B}_o. \\
\sH^{\rm ef}
&\vdash^{\rm ef}
[\lambda \textbf{x}_\alpha T_o]\textbf{A}_\alpha \QuasiEqual 
[\lambda \textbf{x}_\alpha \textbf{B}_o]\textbf{A}_\alpha. \\
\sH^{\rm ef}
&\vdash^{\rm ef}
T_o \QuasiEqual
\mname{S}^{{\bf x}_\alpha}_{{\bf A}_\alpha} \textbf{B}_\beta. \\
\sH^{\rm ef}
&\vdash^{\rm ef}
\mname{S}^{{\bf x}_\alpha}_{{\bf A}_\alpha} \textbf{B}_\beta.
\end{align}
(1) is by the definition of $\Forall$; (2) follows from (1) by the
Quasi-Equality Rules; (3) follows from (2) by the first hypothesis,
the Beta-Reduction Theorem (part~1), and Rule 1; and (4) follows from
(3) and Corollary~\ref{cor:T} by Rule 1.

\medskip

\noindent \textbf{Part 2} {\sglsp} Similar to Part 1.

\medskip

\noindent \textbf{Part 3} {\sglsp} Similar to Part 1.

\medskip

\noindent \textbf{Part 4} {\sglsp} Follows from Axiom 4.10,
Lemma~\ref{lem:equality}, and part 3 of this theorem. \phantom{XX}
\end{proof}

\begin{thm}[Universal Generalization: Analog of 5220]\label{thm:gen}
\be

  \item[]

  \item If $\provesa{\sH^{\rm ef}}{\textbf{A}_o}$, then
    $\provesa{\sH^{\rm ef}}{\forall \textbf{x}_\alpha \textbf{A}_o}$
    where $\textbf{A}_o$ is evaluation-free.

  \item If $\proves{\sH}{\textbf{A}_o}$, then $\proves{\sH}{\forall
    \textbf{x}_\alpha \textbf{A}_o}$.

\ee
\end{thm}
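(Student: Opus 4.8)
The plan is to prove both parts by essentially the same argument, reducing generalization to a ``$T_o$-substitution'' trick and exploiting the fact that Rule~1, as formulated here, may be applied inside the body of a function abstraction with no side condition on free variables (this is sound precisely because all hypotheses are closed, as the paper's note on proofs from hypotheses observes). Throughout, recall that $\Forall \textbf{x}_\alpha \textbf{A}_o$ abbreviates $[\lambda y_\alpha T_o] = [\lambda \textbf{x}_\alpha \textbf{A}_o]$, where $y_\alpha$ is a \emph{fixed} variable.

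First I would establish a lemma: $\proves{}{[\lambda y_\alpha T_o] = [\lambda \textbf{x}_\alpha T_o]}$, i.e. $\proves{}{\Forall \textbf{x}_\alpha T_o}$, together with its evaluation-free version. If $\textbf{x}_\alpha$ is literally $y_\alpha$ this is reflexivity (Axiom~7, Axiom~6.5, Lemma~\ref{lem:equality}). If $\textbf{x}_\alpha \not= y_\alpha$, I would instantiate Axiom~3 (Extensionality) with the two abstractions and with $y_\alpha$ as the quantified variable; discharging the definedness hypotheses by Axiom~6.5 and applying the Tautology Theorem reduces the goal to $\proves{}{\Forall y_\alpha [[\lambda y_\alpha T_o]y_\alpha \QuasiEqual [\lambda \textbf{x}_\alpha T_o]y_\alpha]}$. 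The body here is provable, since both applications beta-reduce to $T_o$ (Axiom~4.10 on the left, Axiom~4 together with Lemma~\ref{lem:eval-free-sub} on the right), both sides are defined by Axiom~6.3, hence they are provably equal by Lemma~\ref{lem:equality} and provably quasi-equal by Lemma~\ref{lem:quasi-equality}. Now generalizing \emph{this} provable formula on the fixed variable $y_\alpha$ is immediate: writing $\textbf{C}_o$ for the body, $\proves{}{\textbf{C}_o}$ gives $\proves{}{T_o = \textbf{C}_o}$ by the Tautology Theorem, and Rule~1 applied to the reflexivity instance $[\lambda y_\alpha T_o] = [\lambda y_\alpha T_o]$ — rewriting the occurrence of $T_o$ in the body of the second abstraction to $\textbf{C}_o$ — yields $[\lambda y_\alpha T_o] = [\lambda y_\alpha \textbf{C}_o]$, which is $\Forall y_\alpha \textbf{C}_o$. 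This is exactly where the lack of a free-variable side condition on Rule~1 is used.

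With the lemma in hand the theorem is short. Given $\proves{\sH}{\textbf{A}_o}$ (resp.\ $\provesa{\sH^{\rm ef}}{\textbf{A}_o}$), Corollary~\ref{cor:T} gives $\proves{\sH}{T_o}$, and the Tautology Theorem applied to $[T_o \Andd \textbf{A}_o] \Implies [T_o \Iff \textbf{A}_o]$ (tautologous, with consequent literally $[T_o = \textbf{A}_o]$) yields $\proves{\sH}{T_o = \textbf{A}_o}$. Taking the lemma's $\proves{}{[\lambda y_\alpha T_o] = [\lambda \textbf{x}_\alpha T_o]}$ and applying Rule~1 with the equation $T_o = \textbf{A}_o$ to rewrite the occurrence of $T_o$ in the body of $[\lambda \textbf{x}_\alpha T_o]$ to $\textbf{A}_o$ (that occurrence is the body of an abstraction, not within a quotation, and not the second argument of an evaluation, so Rule~1 permits it) gives $\proves{\sH}{[\lambda y_\alpha T_o] = [\lambda \textbf{x}_\alpha \textbf{A}_o]}$, i.e.\ $\proves{\sH}{\Forall \textbf{x}_\alpha \textbf{A}_o}$. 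For Part~1 one checks that every formula produced above is evaluation-free whenever $\textbf{A}_o$ is, so $\provesa{}{}$ is maintained; the argument uses only Corollary~\ref{cor:T}, the evaluation-free Tautology Theorem, Lemmas~\ref{lem:equality}, \ref{lem:quasi-equality} and \ref{lem:eval-free-sub}, evaluation-free instances of Axioms~3, 4, 6, 7, and Rule~1.

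The main obstacle is the lemma, and inside it the mismatch between an arbitrary variable $\textbf{x}_\alpha$ and the fixed variable $y_\alpha$ baked into the definition of $\Forall$: generalization on $y_\alpha$ itself is trivial via Rule~1, but generalizing on any other variable genuinely needs extensionality (plus beta-reduction) to relay the claim back to $y_\alpha$. A secondary point to get right is the bookkeeping of \emph{which} occurrence of $T_o$ Rule~1 rewrites — always the one in the body of the \emph{second} abstraction — since $[\lambda y_\alpha T_o] = [\lambda \textbf{x}_\alpha T_o]$ contains two occurrences of $T_o$ and only that one must change.
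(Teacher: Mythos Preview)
Your proof is correct and shares the paper's core manoeuvre: from $\textbf{A}_o$ obtain $T_o = \textbf{A}_o$ via the Tautology Theorem, then use Rule~1 (which carries no free-variable side condition) to rewrite the $T_o$ in the body of a reflexivity instance $[\lambda\,\cdot\;T_o] = [\lambda\,\cdot\;T_o]$ and read off the result as $\Forall\textbf{x}_\alpha\textbf{A}_o$. The paper's argument is exactly this four-line derivation.

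Where you diverge is in taking seriously the fixed variable $y_\alpha$ baked into the definition of $\Forall$. The paper simply writes $\lambda x_\alpha T_o = \lambda x_\alpha T_o$ at step~(3) and passes to~(4) ``by Rule~1 and the definition of~$\Forall$,'' tacitly identifying $[\lambda y_\alpha T_o]$ with $[\lambda \textbf{x}_\alpha T_o]$; you instead supply an explicit lemma $[\lambda y_\alpha T_o] = [\lambda \textbf{x}_\alpha T_o]$ via Axiom~3 (extensionality) and beta-reduction, reducing the needed generalization to the easy special case of the fixed variable $y_\alpha$. This is a more careful elaboration of the same idea rather than a genuinely different route; what it buys you is that the argument goes through verbatim for an arbitrary bound variable $\textbf{x}_\alpha$, which the paper's line~(3) as written does not quite deliver.
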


\begin{proof}
\medskip

\noindent \textbf{Part 1} {\sglsp} 
\begin{align} \setcounter{equation}{0}
\sH^{\rm ef}
&\vdash^{\rm ef}
\textbf{A}_o \\
\sH^{\rm ef}
&\vdash^{\rm ef}
T_o = \textbf{A}_o \\
\sH^{\rm ef}
&\vdash^{\rm ef}
\lambda x_\alpha T_o = \lambda x_\alpha T_o \\
\sH^{\rm ef}
&\vdash^{\rm ef}
\forall \textbf{x}_\alpha \textbf{A}_o.
\end{align}
(1) is by hypothesis; (2) follows from (1) by the Tautology Theorem;
(3) is by Axiom 6.5, Lemma~\ref{lem:equality}, and
Proposition~\ref{prop:5200}; and (4) follows from (2) and (3) by Rule
1 and the definition of $\Forall$.

\medskip

\noindent \textbf{Part 2} {\sglsp} Similar to Part 1.
\end{proof}

\begin{lem}[Analog of 5209]\label{lem:5209}
If $\provesa{}{{\textbf{A}_\alpha\IsDefApp}}$ and
$\provesa{}{\textbf{B}_\beta \QuasiEqual \textbf{C}_\beta}$, then
$\provesa{}\mname{S}^{{\bf x}_\alpha}_{{\bf
    A}_\alpha}[\textbf{B}_\beta \QuasiEqual \textbf{C}_\beta]$,
provided $\mname{S}^{{\bf x}_\alpha}_{{\bf A}_\alpha}[\textbf{B}_\beta
  \QuasiEqual \textbf{C}_\beta]$ is defined.
\end{lem}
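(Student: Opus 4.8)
The plan is to prove Lemma~\ref{lem:5209} as a routine consequence of the Beta-Reduction Theorem together with the Quasi-Equality Rules (the analog of 5201) already established in the excerpt, mirroring the proof of 5209 in Andrews' treatment of {\qzero}. Throughout, write $\textbf{D}_o$ for the evaluation-free formula $[\textbf{B}_\beta \QuasiEqual \textbf{C}_\beta]$; note $\textbf{D}_o$ is evaluation-free since $\textbf{B}_\beta$ and $\textbf{C}_\beta$ are, and that $\QuasiEqual$ on evaluation-free wffs expands to a combination of the propositional connectives and $=$, so $\mname{S}^{{\bf x}_\alpha}_{{\bf A}_\alpha}\textbf{D}_o$ makes sense. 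I assume as given the hypotheses $\provesa{}{{\textbf{A}_\alpha\IsDefApp}}$ and $\provesa{}{\textbf{D}_o}$, and that $\mname{S}^{{\bf x}_\alpha}_{{\bf A}_\alpha}\textbf{D}_o$ is defined.

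First I would apply Universal Generalization (Theorem~\ref{thm:gen}, part~1) to the hypothesis $\provesa{}{\textbf{D}_o}$ to obtain $\provesa{}{\Forall \textbf{x}_\alpha \textbf{D}_o}$. Next, since $\mname{S}^{{\bf x}_\alpha}_{{\bf A}_\alpha}\textbf{D}_o$ is defined, Lemma~\ref{lem:eval-free-sub} gives $\provesa{}{\mname{sub}_{\epsilon\epsilon\epsilon\epsilon} \, \synbrack{\textbf{A}_\alpha} \, \synbrack{\textbf{x}_\alpha} \, \synbrack{\textbf{D}_o} = \synbrack{\mname{S}^{{\bf x}_\alpha}_{{\bf A}_\alpha}\textbf{D}_o}}$ (this is exactly what the notation $\mname{S}^{{\bf x}_\alpha}_{{\bf A}_\alpha}\textbf{D}_o$ abbreviates, so the equation is available). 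Then I would invoke Universal Instantiation in the evaluation-free form — the natural evaluation-free analog combining parts~1 and~2 of Theorem~\ref{thm:5215} — applied to $\provesa{}{{\textbf{A}_\alpha\IsDefApp}}$, the substitution equation just obtained, and $\provesa{}{\Forall \textbf{x}_\alpha \textbf{D}_o}$, to conclude $\provesa{}{\mname{S}^{{\bf x}_\alpha}_{{\bf A}_\alpha}\textbf{D}_o}$, which is the desired $\provesa{}{\mname{S}^{{\bf x}_\alpha}_{{\bf A}_\alpha}[\textbf{B}_\beta \QuasiEqual \textbf{C}_\beta]}$.

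Alternatively, and perhaps more directly in the style of Andrews' 5209, I would argue via the Beta-Reduction Theorem (part~1): from $\provesa{}{{\textbf{A}_\alpha\IsDefApp}}$ we get $\provesa{}{[\lambda \textbf{x}_\alpha \textbf{D}_o]\textbf{A}_\alpha \QuasiEqual \mname{S}^{{\bf x}_\alpha}_{{\bf A}_\alpha}\textbf{D}_o}$; meanwhile from $\provesa{}{\textbf{D}_o}$ and Universal Generalization we get $\provesa{}{\Forall \textbf{x}_\alpha \textbf{D}_o}$, i.e., $\provesa{}{\lambda \textbf{x}_\alpha T_o = \lambda \textbf{x}_\alpha \textbf{D}_o}$ by the definition of $\Forall$; applying the Quasi-Equality Rules and Beta-Reduction (via Axiom~4.3 or Corollary~\ref{cor:T}, since $[\lambda \textbf{x}_\alpha T_o]\textbf{A}_\alpha \QuasiEqual T_o$) yields $\provesa{}{T_o \QuasiEqual \mname{S}^{{\bf x}_\alpha}_{{\bf A}_\alpha}\textbf{D}_o}$, and then Rule~1 with Corollary~\ref{cor:T} discharges the $T_o$ to give $\provesa{}{\mname{S}^{{\bf x}_\alpha}_{{\bf A}_\alpha}\textbf{D}_o}$.

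I do not expect any genuine obstacle here: the statement is a straightforward combination of results already in hand, and the only point requiring a little care is the bookkeeping around the notation $\mname{S}^{{\bf x}_\alpha}_{{\bf A}_\alpha}$ — specifically confirming via Lemma~\ref{lem:eval-free-sub} that, under the hypothesis that $\mname{S}^{{\bf x}_\alpha}_{{\bf A}_\alpha}[\textbf{B}_\beta \QuasiEqual \textbf{C}_\beta]$ is defined, the relevant $\mname{sub}_{\epsilon\epsilon\epsilon\epsilon}$ term provably equals the quotation of that wff, so that the Beta-Reduction Theorem and Universal Instantiation can be applied in their evaluation-free forms. Everything stays within evaluation-free proofs because $\textbf{A}_\alpha$, $\textbf{B}_\beta$, $\textbf{C}_\beta$, hence $\textbf{D}_o$ and $\mname{S}^{{\bf x}_\alpha}_{{\bf A}_\alpha}\textbf{D}_o$, are evaluation-free.
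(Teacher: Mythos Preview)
Your proposal is correct; both routes you sketch lead to the result with no real obstacles. The paper's proof, however, takes the more elementary Andrews-style path indicated by its citation of only Proposition~\ref{prop:5200}, the Beta-Reduction Theorem (part~1), and Rule~1: start from $[\lambda\textbf{x}_\alpha\textbf{B}_\beta]\textbf{A}_\alpha \QuasiEqual [\lambda\textbf{x}_\alpha\textbf{B}_\beta]\textbf{A}_\alpha$ by Proposition~\ref{prop:5200}, use Rule~1 with the hypothesis $\textbf{B}_\beta \QuasiEqual \textbf{C}_\beta$ to replace one inner occurrence and obtain $[\lambda\textbf{x}_\alpha\textbf{B}_\beta]\textbf{A}_\alpha \QuasiEqual [\lambda\textbf{x}_\alpha\textbf{C}_\beta]\textbf{A}_\alpha$, and then beta-reduce both sides via the Beta-Reduction Theorem using $\textbf{A}_\alpha\IsDefApp$. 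This avoids passing through Universal Generalization/Instantiation altogether. Your first approach is the quickest given that Theorem~\ref{thm:5215} is already available in this paper (unlike in Andrews, where 5209 precedes 5215), while the paper's approach is self-contained and mirrors Andrews literally. One small nit in your second approach: Axiom~4.3 applies only when the body is a variable, so reducing $[\lambda\textbf{x}_\alpha T_o]\textbf{A}_\alpha$ to $T_o$ should appeal to the Beta-Reduction Theorem (part~1) rather than Axiom~4.3 directly.
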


\begin{proof}
Similar to the proof of 5209 in~\cite{Andrews02}.  It uses
Proposition~\ref{prop:5200}, the Beta-Reduction Theorem (part 1), and
Rule 1.
\end{proof}

\begin{cor}\label{cor:5209}
If $\provesa{}{{\textbf{A}_\alpha\IsDefApp}}$ and
$\provesa{}{\textbf{B}_o = \textbf{C}_o}$, then
$\provesa{}\mname{S}^{{\bf x}_\alpha}_{{\bf A}_\alpha}[\textbf{B}_o =
  \textbf{C}_o]$, provided $\mname{S}^{{\bf x}_\alpha}_{{\bf
    A}_\alpha}[\textbf{B}_o = \textbf{C}_o]$ is defined.
\end{cor}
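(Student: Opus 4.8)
The plan is to derive Corollary~\ref{cor:5209} directly from Lemma~\ref{lem:5209} (the analog of 5209) together with Lemma~\ref{lem:quasi-equality} and Lemma~\ref{lem:equality}, all of which are already established above. The key observation is that Corollary~\ref{cor:5209} is just the special case of Lemma~\ref{lem:5209} in which the type $\beta$ is $o$ and the quasi-equality has been strengthened to an equality, using the fact that all formulas of type $o$ are defined (Proposition~\ref{prop:o-defined}).

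First I would assume the two hypotheses $\provesa{}{{\textbf{A}_\alpha\IsDefApp}}$ and $\provesa{}{\textbf{B}_o = \textbf{C}_o}$, and also assume that $\mname{S}^{{\bf x}_\alpha}_{{\bf A}_\alpha}[\textbf{B}_o = \textbf{C}_o]$ is defined. From $\provesa{}{\textbf{B}_o = \textbf{C}_o}$ I would obtain $\provesa{}{\textbf{B}_o \QuasiEqual \textbf{C}_o}$ by part~1 of Lemma~\ref{lem:quasi-equality} and the Tautology Theorem (part~1). Applying Lemma~\ref{lem:5209} with $\beta = o$ to the hypotheses $\provesa{}{{\textbf{A}_\alpha\IsDefApp}}$ and $\provesa{}{\textbf{B}_o \QuasiEqual \textbf{C}_o}$ then yields $\provesa{}{\mname{S}^{{\bf x}_\alpha}_{{\bf A}_\alpha}[\textbf{B}_o \QuasiEqual \textbf{C}_o]}$, provided that $\mname{S}^{{\bf x}_\alpha}_{{\bf A}_\alpha}[\textbf{B}_o \QuasiEqual \textbf{C}_o]$ is defined — which follows from the assumption that $\mname{S}^{{\bf x}_\alpha}_{{\bf A}_\alpha}[\textbf{B}_o = \textbf{C}_o]$ is defined, since $[\textbf{B}_o \QuasiEqual \textbf{C}_o]$ and $[\textbf{B}_o = \textbf{C}_o]$ have the same free-variable and evaluation structure.

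The remaining step is to pass from $\mname{S}^{{\bf x}_\alpha}_{{\bf A}_\alpha}[\textbf{B}_o \QuasiEqual \textbf{C}_o]$ back to $\mname{S}^{{\bf x}_\alpha}_{{\bf A}_\alpha}[\textbf{B}_o = \textbf{C}_o]$. Since substitution distributes over the wff constructors, $\mname{S}^{{\bf x}_\alpha}_{{\bf A}_\alpha}[\textbf{B}_o \QuasiEqual \textbf{C}_o]$ is $\mname{S}^{{\bf x}_\alpha}_{{\bf A}_\alpha}\textbf{B}_o \QuasiEqual \mname{S}^{{\bf x}_\alpha}_{{\bf A}_\alpha}\textbf{C}_o$ (and likewise for $=$), where the substituted results are evaluation-free formulas of type $o$. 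By Proposition~\ref{prop:o-defined} (part~1), $\provesa{}{\mname{S}^{{\bf x}_\alpha}_{{\bf A}_\alpha}\textbf{B}_o \IsDefApp}$, so part~1 of Lemma~\ref{lem:equality} converts the proved quasi-equality into the proved equality $\mname{S}^{{\bf x}_\alpha}_{{\bf A}_\alpha}\textbf{B}_o = \mname{S}^{{\bf x}_\alpha}_{{\bf A}_\alpha}\textbf{C}_o$, i.e.\ $\provesa{}{\mname{S}^{{\bf x}_\alpha}_{{\bf A}_\alpha}[\textbf{B}_o = \textbf{C}_o]}$, as required.

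I do not expect any genuine obstacle here; the only point requiring a little care is the bookkeeping that $\mname{S}^{{\bf x}_\alpha}_{{\bf A}_\alpha}$ commutes with the connective $=$ (unfolded via $\mname{Q}_{ooo}$) and with $\QuasiEqual$ (unfolded via its definition in Table~\ref{tab:defs}), so that the definedness hypothesis transfers correctly and the final application of Lemma~\ref{lem:equality} is legitimate. This is routine given the distributivity of $\mname{sub}_{\epsilon\epsilon\epsilon\epsilon}$ over function application (Specification 9.4) and quotation (Specification 9.9), combined with Lemma~\ref{lem:evaluation-free}.
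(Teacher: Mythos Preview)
Your proposal is correct and follows essentially the same route as the paper: the paper's proof cites exactly Lemma~\ref{lem:quasi-equality}, Lemma~\ref{lem:5209}, Proposition~\ref{prop:o-defined}, and the Tautology Theorem. Your only difference is that you invoke Lemma~\ref{lem:equality} explicitly for the final step, whereas the paper absorbs that step into the Tautology Theorem (which is precisely how Lemma~\ref{lem:equality} itself is proved).
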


\begin{proof}
By Lemma~\ref{lem:quasi-equality}, Lemma~\ref{lem:5209},
Proposition~\ref{prop:o-defined}, and the Tautology Theorem.
\end{proof}

\begin{lem}[Analog of 5205]\label{lem:5205}
$\provesa{}{f_{\alpha\beta} = \lambda \textbf{y}_\beta[f_{\alpha\beta}
      \textbf{y}_\beta]}.$
\end{lem}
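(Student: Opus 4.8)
The goal is the analog of Andrews' 5205, namely $\provesa{}{f_{\alpha\beta} = \lambda \textbf{y}_\beta[f_{\alpha\beta}\textbf{y}_\beta]}$, where both sides are evaluation-free (no evaluations occur) so the evaluation-free proof relation applies. The plan is to mimic Andrews' derivation of 5205 in~\cite{Andrews02}, which proceeds by extensionality: one shows the two functions are defined and agree on every argument. First I would invoke Axiom 6.1 to get $\provesa{}{f_{\alpha\beta}\IsDefApp}$ and Axiom 6.5 to get $\provesa{}{[\lambda \textbf{y}_\beta[f_{\alpha\beta}\textbf{y}_\beta]]\IsDefApp}$, so the hypothesis of Axiom 3 (Extensionality) is discharged by the Tautology Theorem. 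Next I would establish the pointwise equation $\provesa{}{[\lambda \textbf{y}_\beta[f_{\alpha\beta}\textbf{y}_\beta]]\textbf{y}_\beta \QuasiEqual f_{\alpha\beta}\textbf{y}_\beta}$: this is exactly an instance of Axiom 4.10, $[\lambda \textbf{x}_\alpha \textbf{B}_\beta]\textbf{x}_\alpha \QuasiEqual \textbf{B}_\beta$, with $\textbf{B}_\beta$ taken to be $f_{\alpha\beta}\textbf{y}_\beta$ and the bound variable $\textbf{y}_\beta$.

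From that pointwise quasi-equality I would derive $\provesa{}{\Forall \textbf{y}_\beta[[\lambda \textbf{y}_\beta[f_{\alpha\beta}\textbf{y}_\beta]]\textbf{y}_\beta \QuasiEqual f_{\alpha\beta}\textbf{y}_\beta]}$ by Universal Generalization (Theorem~\ref{thm:gen}, part~1, applicable since everything in sight is evaluation-free). Then, feeding the two definedness facts and this universally quantified quasi-equality into Extensionality (Axiom 3), the Tautology Theorem and Modus Ponens give $\provesa{}{\lambda \textbf{y}_\beta[f_{\alpha\beta}\textbf{y}_\beta] = f_{\alpha\beta}}$, and the Quasi-Equality Rules (or simply symmetry of equality via the Tautology Theorem) yield the stated orientation $f_{\alpha\beta} = \lambda \textbf{y}_\beta[f_{\alpha\beta}\textbf{y}_\beta]$. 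Note one must also instantiate the universally quantified formula back to the specific $\textbf{y}_\beta$ via Theorem~\ref{thm:5215}, part~4, before applying Axiom~3 — or, more directly, keep the pointwise equation and apply Axiom~3 after quantifying; either route is routine bookkeeping with the metatheorems already proved.

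The only place that needs a little care is the use of the quasi-equality form of extensionality together with the fact that $f_{\alpha\beta}\textbf{y}_\beta$ may be undefined for some values of $\textbf{y}_\beta$ when $\alpha \neq o$. This is precisely why Axiom~3 is stated with $\QuasiEqual$ on the body rather than $=$, and why Axiom 4.10 is stated with $\QuasiEqual$; so no extra argument is required — the partial-function case is already absorbed into the axioms. Thus I do not anticipate a genuine obstacle; the main (mild) subtlety is just making sure the evaluation-free proof relation $\vdash^{\rm ef}$ is legitimately used throughout, which holds because $f_{\alpha\beta}$, $\lambda \textbf{y}_\beta[f_{\alpha\beta}\textbf{y}_\beta]$, and all intermediate wffs are evaluation-free, and every metatheorem invoked (Tautology Theorem, Universal Generalization, Universal Instantiation, Quasi-Equality Rules) has an evaluation-free version. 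The proof will therefore be short: "Similar to the proof of 5205 in~\cite{Andrews02}, using Axiom 3, Axiom 4.10, Proposition~\ref{prop:5200}, Theorem~\ref{thm:gen}, and the Tautology Theorem."
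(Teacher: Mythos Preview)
Your proposal is correct and follows essentially the same route as the paper: discharge the definedness hypotheses of Axiom~3 via Axioms~6.1 and~6.5, establish the pointwise quasi-equality, and conclude by extensionality. The only noteworthy difference is that you obtain the pointwise equation $[\lambda \textbf{y}_\beta[f_{\alpha\beta}\textbf{y}_\beta]]\textbf{y}_\beta \QuasiEqual f_{\alpha\beta}\textbf{y}_\beta$ directly from Axiom~4.10, whereas the paper's proof (tracking Andrews more closely) cites the Beta-Reduction Theorem (part~1) together with Corollary~\ref{cor:5209}; your shortcut via Axiom~4.10 is perfectly valid here and arguably cleaner, since Axiom~4.10 is tailor-made for exactly this redex.
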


\begin{proof}
Similar to the proof of 5205 in~\cite{Andrews02}.  It uses Axiom 3,
Axioms 6.1 and 6.5, Corollary~\ref{cor:5209},
Lemmas~\ref{lem:quasi-equality} and~\ref{lem:equality}, the
Quasi-Equality Rules, the Beta-Reduction Theorem (part 1), and Rule 1.
\end{proof}

\begin{lem}[Analog of 5206]\label{lem:5206}
$\provesa{}{\lambda \textbf{x}_\beta \textbf{A}_\alpha = \lambda
    \textbf{z}_\beta \mname{S}^{{\bf x}_\beta}_{{\bf
        z}_\beta}\textbf{A}_\alpha}$, provided $\textbf{z}_\beta$ is
  not free in $\textbf{A}_\alpha$ and $\mname{S}^{{\bf x}_\beta}_{{\bf
      z}_\beta}\textbf{A}_\alpha$ is defined.
\end{lem}

\begin{proof}
Similar to the proof of 5206 in~\cite{Andrews02}.  It employs Axioms
6.1 and 6.5, Corollary~\ref{cor:5209}, Lemma~\ref{lem:5205}, the
Beta-Reduction Theorem (part 1), and Rule~1.
\end{proof}

\bigskip

Analogs of $\alpha$-conversion, $\beta$-conversion, and
$\eta$-conversion in~\cite{Andrews02} for evaluation-free proof are
obtained directly from Lemma~\ref{lem:5206}, the Beta-Reduction
Theorem (part 1), Lemma~\ref{lem:5205} using Lemma~\ref{lem:5209} and
Rule 1.

\begin{thm}[Deduction Theorem: Analog of 5240]\label{thm:deduction}\bsp
Let $\textbf{A}_o$ and $\textbf{H}_o$ be syntactically closed
evaluation-free formulas.  If $\provesa{\sH^{\rm ef} \cup
  \set{\textbf{H}_o}}{\textbf{A}_o}$, then $\provesa{\sH^{\rm
    ef}}{\textbf{H}_o \Implies \textbf{A}_o}$.\esp
\end{thm}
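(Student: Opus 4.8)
The plan is to mimic the classical proof of the Deduction Theorem for Church's type theory (5240 in Andrews), adapting it to the evaluation-free fragment of {\pfsysuqe}. Assume $\provesa{\sH^{\rm ef} \cup \set{\textbf{H}_o}}{\textbf{A}_o}$ and proceed by induction on the length of an evaluation-free proof $\textbf{B}^1_o, \ldots, \textbf{B}^n_o$ of $\textbf{A}_o$ from $\sH^{\rm ef} \cup \set{\textbf{H}_o}$. The goal is to show $\provesa{\sH^{\rm ef}}{\textbf{H}_o \Implies \textbf{B}^i_o}$ for each $i$, so that the case $i = n$ gives the conclusion. First I would check that every $\textbf{B}^i_o$ is evaluation-free (it is, since the proof is an evaluation-free proof) and that $\textbf{H}_o \Implies \textbf{B}^i_o$ is then evaluation-free, so all the metatheorems restricted to evaluation-free proofs are available.

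The induction splits into the usual cases. If $\textbf{B}^i_o$ is an axiom of {\pfsysuqe} or a member of $\sH^{\rm ef}$, then $\provesa{\sH^{\rm ef}}{\textbf{B}^i_o}$, and $\textbf{B}^i_o \Implies [\textbf{H}_o \Implies \textbf{B}^i_o]$ is tautologous, so the evaluation-free Tautology Theorem (Theorem~\ref{thm:tautology}, part~1) yields $\provesa{\sH^{\rm ef}}{\textbf{H}_o \Implies \textbf{B}^i_o}$. If $\textbf{B}^i_o$ is $\textbf{H}_o$ itself, then $\textbf{H}_o \Implies \textbf{H}_o$ is tautologous, so again the Tautology Theorem applies. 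If $\textbf{B}^i_o$ is inferred from earlier members by Rule~2 (Modus Ponens), say from $\textbf{B}^j_o$ and $\textbf{B}^j_o \Implies \textbf{B}^i_o$ with $j, k < i$, then by the induction hypothesis $\provesa{\sH^{\rm ef}}{\textbf{H}_o \Implies \textbf{B}^j_o}$ and $\provesa{\sH^{\rm ef}}{\textbf{H}_o \Implies [\textbf{B}^j_o \Implies \textbf{B}^i_o]}$; since $[[\textbf{H}_o \Implies \textbf{B}^j_o] \Andd [\textbf{H}_o \Implies [\textbf{B}^j_o \Implies \textbf{B}^i_o]]] \Implies [\textbf{H}_o \Implies \textbf{B}^i_o]$ is tautologous, the Tautology Theorem closes this case.

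The delicate case, and the one I expect to be the main obstacle, is Rule~1 (Quasi-Equality Substitution): $\textbf{B}^i_o$ is obtained from $\textbf{C}_\alpha \QuasiEqual \textbf{D}_\alpha$ (some $\textbf{B}^j_o$) and $\textbf{B}^k_o$ by replacing a permitted occurrence of $\textbf{C}_\alpha$ in $\textbf{B}^k_o$ by $\textbf{D}_\alpha$. In the {\qzero} proof one uses Universal Generalization over the variables of the hypothesis and then instantiates; here the cleaner route is to observe that Rule~1 applied to $\textbf{B}^j_o$ and $\textbf{B}^k_o$ can be simulated \emph{inside} an implication. Concretely, from the induction hypotheses $\provesa{\sH^{\rm ef}}{\textbf{H}_o \Implies [\textbf{C}_\alpha \QuasiEqual \textbf{D}_\alpha]}$ and $\provesa{\sH^{\rm ef}}{\textbf{H}_o \Implies \textbf{B}^k_o}$, I want to derive $\provesa{\sH^{\rm ef}}{\textbf{H}_o \Implies \textbf{B}^i_o}$. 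Since $\textbf{H}_o$ is syntactically closed, no variable of $\textbf{H}_o$ is free, so Universal Generalization (Theorem~\ref{thm:gen}, part~1) lets me generalize the derivation $\sH^{\rm ef} \cup \set{\textbf{H}_o} \vdash^{\rm ef} \textbf{C}_\alpha \QuasiEqual \textbf{D}_\alpha$ and $\sH^{\rm ef} \cup \set{\textbf{H}_o} \vdash^{\rm ef} \textbf{B}^k_o$ over any needed variables; then reconstruct the Rule~1 step and discharge $\textbf{H}_o$ via the Deduction-Theorem induction. If this ``generalize, substitute, re-specialize'' dance proves awkward, the fallback is Andrews' original device: prove by a subsidiary induction that $\provesa{\sH^{\rm ef}}{\textbf{H}_o \Implies \textbf{B}^i_o}$ using that $\textbf{H}_o$ is closed to move $\Forall$ past $\Implies$ (via Axiom~1 and the Tautology Theorem), exactly as in 5240, since every wff involved here is evaluation-free and hence behaves like an ordinary {\qzero} wff under all the evaluation-free metatheorems already established. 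I would finish by noting $i = n$ gives $\provesa{\sH^{\rm ef}}{\textbf{H}_o \Implies \textbf{A}_o}$, as required.
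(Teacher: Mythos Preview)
Your proposal is correct and follows essentially the same approach as the paper, which simply says the proof is ``similar to the proof of 5240 in~\cite{Andrews02}'' and lists the tools used (Axioms 1--3 and 6, the Tautology Theorem, the Beta-Reduction Theorem (part~1), Universal Instantiation (part~1), Universal Generalization, $\alpha$-conversion, and Rule~1). Your induction on proof length with the standard case split is exactly Andrews' argument, and your fallback for the Rule~1 case---the subsidiary induction exploiting that $\textbf{H}_o$ is syntactically closed so that $\Forall$ commutes with $\textbf{H}_o \Implies {-}$---is the right route and is what the listed tools (in particular Universal Generalization, Universal Instantiation, $\alpha$-conversion, and Axioms~1--3) are for; your first suggested handling of Rule~1 is a bit muddled (you drift back to the hypothesis set $\sH^{\rm ef} \cup \set{\textbf{H}_o}$), so commit to the fallback.
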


\begin{proof}
Similar to the proof of 5240 in~\cite{Andrews02}.  It uses Axioms 1--3
and 6, the Tautology Theorem, the Beta-Reduction Theorem (part~1),
Universal Instantiation (part~1), Universal Generalization,
$\alpha$-conversion, and Rule 1.
\end{proof}

\subsection{Other Metatheorems}

The metatheorems we prove in this subsection are not analogs of
metatheorems of {\qzero}; they involve ordered pairs, quotation, and evaluation.

\begin{lem}[Ordered Pairs]\label{lem:pairs}
\be

  \item[]

  \item $\provesa{}{\Forall x_\alpha \Forall y_\beta
    [\mname{pair}_{\seq{\alpha\beta}\beta\alpha} \, x_\alpha \, y_\beta]  \IsDefApp}$.

  \item $\provesa{}{\Forall x_\alpha \Forall y_\beta
    [\mname{fst}_{\alpha\seq{\alpha\beta}} \, 
    [\mname{pair}_{\seq{\alpha\beta}\beta\alpha} \, x_\alpha \, y_\beta] = x_\alpha]}$.

  \item $\provesa{}{\Forall x_\alpha \Forall y_\beta
    [\mname{snd}_{\beta\seq{\alpha\beta}} \, 
    [\mname{pair}_{\seq{\alpha\beta}\beta\alpha} \, x_\alpha \, y_\beta] = v_\beta]}$.

  \item $\provesa{}{\Forall z_{\seq{\alpha\beta}} 
    [\mname{pair}_{\seq{\alpha\beta}\beta\alpha} 
    [\mname{fst}_{\alpha\seq{\alpha\beta}} \, z_{\seq{\alpha\beta}}]
    [\mname{snd}_{\beta\seq{\alpha\beta}} \, z_{\seq{\alpha\beta}}] 
    = z_{\seq{\alpha\beta}}]}$.

\ee
\end{lem}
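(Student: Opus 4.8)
The plan is to prove all four parts of Lemma~\ref{lem:pairs} by exploiting the two axioms on ordered pairs (Axiom~9) together with the definitions of $\mname{fst}_{\alpha\seq{\alpha\beta}}$ and $\mname{snd}_{\beta\seq{\alpha\beta}}$ from Table~\ref{tab:defs}, working throughout within evaluation-free proofs so that the full battery of $\mname{q}_0$-style metatheorems (Tautology Theorem, Universal Instantiation and Generalization, the Beta-Reduction Theorem, the Deduction Theorem, and $\alpha$-, $\beta$-, $\eta$-conversion) is available. Part~1 should come first and most directly: Axiom~9.1 is an equivalence whose right-hand conjunct $\textbf{B}_\beta = \textbf{D}_\beta$ in particular forces $\mname{pair}_{\seq{\alpha\beta}\beta\alpha}\,x_\alpha\,y_\beta = \mname{pair}_{\seq{\alpha\beta}\beta\alpha}\,x_\alpha\,y_\beta$ (instantiating all four syntactic variables appropriately), and reflexivity (Axiom~7 via Proposition~\ref{prop:5200}) discharges the hypothesis; from $\textbf{A} = \textbf{A}$ one gets $\textbf{A}\IsDefApp$ by the definition of $\IsDefApp$, and then Universal Generalization (Theorem~\ref{thm:gen}, part~1) over $x_\alpha$ and $y_\beta$ finishes. (Alternatively one can appeal to Axiom~6.3 plus 6.1, since a pair is built by a double application of the primitive constant $\mname{pair}$; either route is routine.)

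For parts~2 and~3 the core is a beta-reduction computation. Unfolding $\mname{fst}_{\alpha\seq{\alpha\beta}}$ via its definition, $\mname{fst}_{\alpha\seq{\alpha\beta}}\,[\mname{pair}_{\seq{\alpha\beta}\beta\alpha}\,x_\alpha\,y_\beta]$ beta-reduces (by the Beta-Reduction Theorem, part~1, using part~1 of this lemma to discharge the definedness side condition) to $\Iota x'_\alpha \Forsome y'_\beta [\mname{pair}_{\seq{\alpha\beta}\beta\alpha}\,x_\alpha\,y_\beta = \mname{pair}_{\seq{\alpha\beta}\beta\alpha}\,x'_\alpha\,y'_\beta]$ after suitable renaming of bound variables; then Axiom~9.1 rewrites the body of the definite description to $x_\alpha = x'_\alpha \Andd y_\beta = y'_\beta$, whose unique-in-$x'_\alpha$ solution (there is indeed a witness, namely $y'_\beta \mapsto y_\beta$, and it is unique because the first conjunct pins $x'_\alpha$) is $x_\alpha$; a standard argument about definite descriptions (Axioms~8.1--8.2 together with $\Forsome_1$) then yields $\mname{fst}_{\alpha\seq{\alpha\beta}}\,[\mname{pair}\ldots] = x_\alpha$, and Universal Generalization closes it. Part~3 is symmetric, swapping the roles of the two coordinates and using $\mname{snd}_{\beta\seq{\alpha\beta}}$; I note that the statement as typeset has ``$= v_\beta$'' on the right, which is surely a typo for ``$= y_\beta$'', and I would prove the corrected statement. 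Part~4 combines these: using Axiom~9.2 one obtains, for any $z_{\seq{\alpha\beta}}$, a pair representation $z_{\seq{\alpha\beta}} = \mname{pair}_{\seq{\alpha\beta}\beta\alpha}\,x_\alpha\,y_\beta$ for some $x_\alpha, y_\beta$; applying parts~2 and~3 rewrites $\mname{fst}_{\alpha\seq{\alpha\beta}}\,z_{\seq{\alpha\beta}}$ to $x_\alpha$ and $\mname{snd}_{\beta\seq{\alpha\beta}}\,z_{\seq{\alpha\beta}}$ to $y_\beta$, so the left side becomes $\mname{pair}_{\seq{\alpha\beta}\beta\alpha}\,x_\alpha\,y_\beta$, which is $z_{\seq{\alpha\beta}}$; Rule~1 substitutions and the Tautology Theorem assemble this, and Universal Generalization produces the quantified form. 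The existential witnesses from Axiom~9.2 would be handled in the usual way via the Deduction Theorem and the rule for eliminating $\Forsome$ (derivable from Universal Generalization and the Tautology Theorem, as in~\cite{Andrews02}).

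The main obstacle I expect is the definite-description bookkeeping in parts~2 and~3: one must carefully verify the uniqueness premise $\Forsome_1 x'_\alpha[\,x_\alpha = x'_\alpha \Andd y_\beta = y'_\beta\,]$ (after the inner $\Forsome y'_\beta$ is resolved) so that Axiom~8.2's substitution hypothesis can be instantiated, and one must track the explicit $\mname{sub}_{\epsilon\epsilon\epsilon\epsilon}$ conditions that now decorate the beta-reduction and description axioms rather than the old syntactic ``free for'' side conditions. Since everything here is evaluation-free and syntactically closed, Lemma~\ref{lem:eval-free-sub} guarantees these $\mname{sub}$ applications are either provably undefined or provably equal to a concrete quotation, and a short computation identifies which; this is tedious but not deep. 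All the real content is already packaged in Axiom~9 and the standard $\mname{q}_0$ metatheorems, so the proof is essentially a careful assembly rather than a new idea.
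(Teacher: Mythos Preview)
Your proposal is correct and takes essentially the same approach as the paper, which gives only a one-line proof: ``These four metatheorems of {\qzerouqe} can be straightforwardly proved using the definitions of $\mname{fst}_{\alpha\seq{\alpha\beta}}$ and $\mname{snd}_{\beta\seq{\alpha\beta}}$ and Axioms 8 and 9.'' Your detailed expansion (Axiom~9.1 for part~1, unfolding the projection definitions and using Axioms~8 and~9.1 for parts~2--3, and Axiom~9.2 plus parts~2--3 for part~4) is exactly the intended route, and your observation that ``$= v_\beta$'' in part~3 is a typo for ``$= y_\beta$'' is correct.
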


\begin{proof}
These four metatheorems of {\qzerouqe} can be straightforwardly proved
using the definitions of $\mname{fst}_{\alpha\seq{\alpha\beta}}$ and
$\mname{snd}_{\beta\seq{\alpha\beta}}$ and Axioms 8 and 9.
\end{proof}

\begin{thm}[Injectiveness of Quotation]\label{lem:quo-inj}
If $\provesa{}{\synbrack{\textbf{A}_\alpha}} =
\synbrack{\textbf{B}_\alpha}$, then $\textbf{A}_\alpha =
\textbf{B}_\alpha$.
\end{thm}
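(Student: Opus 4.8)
The plan is to prove this semantically, via soundness of {\pfsysuqe}, rather than by a proof-theoretic analysis. First I would note that $\provesa{}{\synbrack{\textbf{A}_\alpha} = \synbrack{\textbf{B}_\alpha}}$ implies $\proves{}{\synbrack{\textbf{A}_\alpha} = \synbrack{\textbf{B}_\alpha}}$, so by the Soundness Theorem (Theorem~\ref{thm:soundness}) with $\sH = \emptyset$ we get ${} \modelsn \synbrack{\textbf{A}_\alpha} = \synbrack{\textbf{B}_\alpha}$; that is, the formula is valid in every normal general model for {\qzerouqe}.

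Next I would pass to a concrete model. By Proposition~\ref{prop:gen-models-exist} a standard model exists, and by Corollary~\ref{cor:nor-stand-models} there is then a normal standard model $\sM = \seq{\set{\sD_\alpha \;|\; \alpha \in \sT}, \sJ}$, which is in particular a normal general model. Fix any $\phi \in \mname{assign}(\sM)$. Since $\sM \models \synbrack{\textbf{A}_\alpha} = \synbrack{\textbf{B}_\alpha}$, since $\sJ(\mname{Q}_{o\epsilon\epsilon})$ is the identity relation on $\sD_\epsilon$, and since both quotations are defined (condition 6 in the definition of a general model, together with frame condition 2), we obtain $\sV^{\cal M}_{\phi}(\synbrack{\textbf{A}_\alpha}) = \sV^{\cal M}_{\phi}(\synbrack{\textbf{B}_\alpha})$. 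By the semantics of quotation (condition 6 again), $\sV^{\cal M}_{\phi}(\synbrack{\textbf{A}_\alpha}) = \sE(\textbf{A}_\alpha)$ and $\sV^{\cal M}_{\phi}(\synbrack{\textbf{B}_\alpha}) = \sE(\textbf{B}_\alpha)$, so $\sE(\textbf{A}_\alpha) = \sE(\textbf{B}_\alpha)$.

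Finally, since $\sE$ is an injective function (as observed when $\sE$ was introduced in Section~\ref{sec:semantics}), $\sE(\textbf{A}_\alpha) = \sE(\textbf{B}_\alpha)$ forces $\textbf{A}_\alpha$ and $\textbf{B}_\alpha$ to be the same wff, which is exactly what is to be shown. There is no real technical obstacle in this argument; the only point worth flagging is that it is cleaner to route through a normal standard model and the injectivity of $\sE$ than to attempt a direct syntactic derivation, since {\pfsysuqe} provides no obvious syntactic means of extracting identity of wffs from a provable equality of their quotations.
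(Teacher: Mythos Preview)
Your argument is correct, but it follows a different route from the paper's. The paper's proof is entirely proof-theoretic: from $\provesa{}{\synbrack{\textbf{A}_\alpha} = \synbrack{\textbf{B}_\alpha}}$ it applies Specification~1 and Rule~1 to obtain $\provesa{}{\sE(\textbf{A}_\alpha) = \sE(\textbf{B}_\alpha)}$, and then uses the ``no-confusion'' axioms for constructions (Specifications 4.1--28) to argue, by induction on the size of $\textbf{A}_\alpha$, that the two wffs must be literally the same. Your approach instead passes through soundness and the semantics: you move to a normal general model, evaluate both quotations to $\sE(\textbf{A}_\alpha)$ and $\sE(\textbf{B}_\alpha)$, and appeal to the injectivity of $\sE$ already noted when $\sE$ was introduced.

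Your route is shorter and avoids the structural induction, at the price of invoking the Soundness Theorem and the existence of a normal general model (both already available at this point in the paper, so this is legitimate). The paper's route is self-contained within the proof system and shows concretely how the specifying axioms themselves encode the injectivity of quotation; this is arguably more informative about {\pfsysuqe}, and does not depend on any semantic detour. Both arguments are valid for the theorem as stated.
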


\begin{proof}  Assume $\provesa{}\synbrack{\textbf{A}_\alpha} =
\synbrack{\textbf{B}_\alpha}$.  By Specification 1 and Rule 1, this
implies $\provesa{}{\sE(\textbf{A}_\alpha) = \sE(\textbf{B}_\beta)}$.
From this and Specifications 4.1--28, we can prove that
$\textbf{A}_\alpha = \textbf{B}_\alpha$ by induction on the size of
$\textbf{A}_\alpha$.
\end{proof}

\begin{thm}[Disquotation Theorem] \label{thm:disquote}
\bsp If $\textbf{D}_\delta$ is evaluation-free, then
\mbox{$\proves{}{\sembrack{\synbrack{\textbf{D}_\delta}}_\alpha
  \QuasiEqual \textbf{D}_\delta}$}.\esp
\end{thm}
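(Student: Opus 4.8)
The plan is to induct on the size of $\textbf{D}_\delta$. Since $\textbf{D}_\delta$ is evaluation-free it is not itself an evaluation, so there are six cases: $\textbf{D}_\delta$ is a variable, a primitive constant, a function application, a function abstraction, a conditional, or a quotation. In the three compound non-quotation cases the components of $\textbf{D}_\delta$ do not lie within a quotation and hence are themselves evaluation-free, so the induction hypothesis applies to them; the quotation case will not invoke the hypothesis at all (and indeed the body of a quotation need not be evaluation-free). I will use two techniques throughout. First, Specification~1 together with Rule~1 lets me rewrite $\synbrack{\textbf{A}_\alpha}$ (which abbreviates $[\mname{q}\textbf{A}_\alpha]$) as $\sE(\textbf{A}_\alpha)$, and, when $\textbf{A}_\alpha$ is a function application, abstraction, conditional, or quotation, further as the corresponding application of $\mname{app}_{\epsilon\epsilon\epsilon}$, $\mname{abs}_{\epsilon\epsilon\epsilon}$, $\mname{cond}_{\epsilon\epsilon\epsilon\epsilon}$, or $\mname{quot}_{\epsilon\epsilon}$ to the quotations of the components; all occurrences I will need to touch --- the function or argument part of an application, the body of an abstraction, the argument of $\mname{quot}_{\epsilon\epsilon}$, and the \emph{first} argument of an evaluation --- are positions permitted by Rule~1, which forbids replacements only inside a quotation, in the first argument of an abstraction, or in the second argument of an evaluation. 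Second, I will use Lemma~\ref{lem:quasi-equality} with Rule~2 to pass from $=$ to $\QuasiEqual$, and the Tautology Theorem (Theorem~\ref{thm:tautology}) with Rule~2 to discharge the implicational side conditions appearing in Axiom~11. I will also need the auxiliary fact that $\proves{}{\mname{wff}^{\alpha}_{o\epsilon}\synbrack{\textbf{A}_\alpha}}$ for every wff $\textbf{A}_\alpha$, which is a straightforward parallel induction on size from Specifications~2.1 and~6 together with the rewriting just described.

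The base cases and the conditional and quotation cases are quick. If $\textbf{D}_\delta$ is a variable, Axiom~11.1 gives $\proves{}{\sembrack{\synbrack{\textbf{D}_\delta}}_\delta = \textbf{D}_\delta}$, whence the quasi-equality by Lemma~\ref{lem:quasi-equality}; the primitive constant case is identical via Axiom~11.2. If $\textbf{D}_\delta$ is a conditional $[\mname{c}\textbf{A}_o\textbf{B}_\gamma\textbf{C}_\gamma]$ (so $\delta = \gamma$), I rewrite $\synbrack{\textbf{D}_\delta}$ as $\mname{cond}_{\epsilon\epsilon\epsilon\epsilon}\synbrack{\textbf{A}_o}\synbrack{\textbf{B}_\gamma}\synbrack{\textbf{C}_\gamma}$, apply Axiom~11.5 (which has no side condition) to obtain $\sembrack{\synbrack{\textbf{D}_\delta}}_\gamma \QuasiEqual \If\,\sembrack{\synbrack{\textbf{A}_o}}_o\,\sembrack{\synbrack{\textbf{B}_\gamma}}_\gamma\,\sembrack{\synbrack{\textbf{C}_\gamma}}_\gamma$, and rewrite the three inner evaluations by the induction hypothesis using Rule~1; since $\If$ abbreviates $\mname{c}$, the right-hand side becomes $\textbf{D}_\delta$. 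If $\textbf{D}_\delta$ is a quotation $[\mname{q}\textbf{A}_\alpha]$ (so $\delta = \epsilon$ and $\textbf{D}_\delta$ is $\synbrack{\textbf{A}_\alpha}$), no recursion is needed: rewriting $\synbrack{\textbf{D}_\delta} = \synbrack{\synbrack{\textbf{A}_\alpha}}$ as $\mname{quot}_{\epsilon\epsilon}\synbrack{\textbf{A}_\alpha}$, the definedness hypothesis of Axiom~11.6 taken with first argument $\synbrack{\textbf{A}_\alpha}$ is exactly Axiom~6.9, so Axiom~11.6 and Rule~2 give $\proves{}{\sembrack{\synbrack{\textbf{D}_\delta}}_\epsilon = \synbrack{\textbf{A}_\alpha}}$, i.e.\ $\proves{}{\sembrack{\synbrack{\textbf{D}_\delta}}_\epsilon = \textbf{D}_\delta}$, and Lemma~\ref{lem:quasi-equality} finishes it.

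The remaining two cases each require discharging a side condition. If $\textbf{D}_\delta$ is a function application $[\textbf{A}_{\delta\beta}\textbf{B}_\beta]$, I rewrite $\synbrack{\textbf{D}_\delta}$ as $\mname{app}_{\epsilon\epsilon\epsilon}\synbrack{\textbf{A}_{\delta\beta}}\synbrack{\textbf{B}_\beta}$; the side condition $\mname{wff}^{\delta\beta}_{o\epsilon}\synbrack{\textbf{A}_{\delta\beta}}$ of Axiom~11.3 is supplied by the auxiliary fact, so Axiom~11.3 and Rule~2 give $\sembrack{\synbrack{\textbf{D}_\delta}}_\delta \QuasiEqual \sembrack{\synbrack{\textbf{A}_{\delta\beta}}}_{\delta\beta}\,\sembrack{\synbrack{\textbf{B}_\beta}}_\beta$, and rewriting the two inner evaluations by the induction hypothesis (Rule~1) turns the right-hand side into $\textbf{A}_{\delta\beta}\textbf{B}_\beta$, i.e.\ $\textbf{D}_\delta$. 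If $\textbf{D}_\delta$ is a function abstraction $[\lambda\textbf{x}_\beta\textbf{A}_\gamma]$ (so $\delta = \gamma\beta$), I rewrite $\synbrack{\textbf{D}_\delta}$ as $\mname{abs}_{\epsilon\epsilon\epsilon}\synbrack{\textbf{x}_\beta}\synbrack{\textbf{A}_\gamma}$; the side condition $\mname{not-free-in}_{o\epsilon\epsilon}\synbrack{\textbf{x}_\beta}\synbrack{\synbrack{\textbf{A}_\gamma}}$ of Axiom~11.4 follows from Specification~7.9 --- whose hypotheses $\mname{var}_{o\epsilon}\synbrack{\textbf{x}_\beta}$ and $\synbrack{\textbf{A}_\gamma}\IsDefApp$ are Specification~2.1 and Axiom~6.7 --- once $\mname{quot}_{\epsilon\epsilon}\synbrack{\textbf{A}_\gamma}$ is rewritten as $\synbrack{\synbrack{\textbf{A}_\gamma}}$ by Specification~1. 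Then Axiom~11.4 and Rule~2 give $\sembrack{\synbrack{\textbf{D}_\delta}}_{\gamma\beta} \QuasiEqual \lambda\textbf{x}_\beta\sembrack{\synbrack{\textbf{A}_\gamma}}_\gamma$, and rewriting the body with the induction hypothesis (the body of an abstraction is a legal Rule~1 position) makes the right-hand side $\lambda\textbf{x}_\beta\textbf{A}_\gamma$, i.e.\ $\textbf{D}_\delta$.

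The argument is not conceptually hard --- Axiom~11 is essentially designed so that each structural case reduces disquotation at a compound wff to disquotation at its components --- and the main obstacle is bookkeeping: keeping the syntactic form $\synbrack{\textbf{A}_\alpha} = [\mname{q}\textbf{A}_\alpha]$ carefully distinct from the construction form $\sE(\textbf{A}_\alpha)$, applying Specification~1 to bridge them only at subterms not inside a quotation, and verifying in the application and abstraction cases that the side conditions of Axioms~11.3 and~11.4 are provable, which is exactly where the auxiliary $\mname{wff}^{\alpha}_{o\epsilon}$ fact and Specification~7.9 come in.
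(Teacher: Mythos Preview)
Your proof is correct and follows essentially the same route as the paper's: induction on the size of $\textbf{D}_\delta$, dispatching each of the six structural forms via the matching clause of Axiom~11 after unfolding $\synbrack{\textbf{D}_\delta}$ to $\sE(\textbf{D}_\delta)$ via Specification~1. You are in fact more explicit than the paper about the side conditions in the application and abstraction cases --- the paper glosses over the $\mname{wff}^{\alpha\beta}_{o\epsilon}$ hypothesis of Axiom~11.3 and merely remarks that ``$\sE(\textbf{A}_\alpha)$ is semantically closed'' for the abstraction case, whereas you spell out the needed auxiliary fact and the not-free-in derivation. One small slip: the specification you invoke for $\mname{not\mbox{-}free\mbox{-}in}$ on a quotation should be Specification~7.8 (the $\mname{quot}$ clause), not~7.9 (the $\mname{eval}$ clause); the hypotheses you list --- $\mname{var}_{o\epsilon}\,\synbrack{\textbf{x}_\beta}$ and $\synbrack{\textbf{A}_\gamma}\IsDefApp$ --- are exactly those of~7.8.
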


\begin{proof}
The proof is by induction on the size of $\textbf{D}_\delta$.

\be

  \item[] \textbf{Case 1}: $\textbf{D}_\delta$ is $\textbf{x}_\alpha$.
    Then $\proves{}{\sembrack{\synbrack{\textbf{x}_\alpha}}_\alpha} =
    \textbf{x}_\alpha$ by Axiom 11.1.

  \item[] \textbf{Case 2}: $\textbf{D}_\delta$ is a primitive constant
    $\textbf{c}_\alpha$.  Then
    $\proves{}{\sembrack{\synbrack{\textbf{c}_\alpha}}_\alpha} =
    \textbf{c}_\alpha$ by Axiom~11.2.

  \item[] \textbf{Case 3}: $\textbf{D}_\delta$ is
    $\textbf{A}_{\alpha\beta}\textbf{B}_\beta$.  Assume (a)
    $\textbf{A}_{\alpha\beta}\textbf{B}_\beta$ is evaluation-free.
    (a) implies (b) $\textbf{A}_{\alpha\beta}$ and $\textbf{B}_\beta$
    are evaluation-free.  Then we can derive the conclusion of the
    theorem as follows:
    \begin{align} \setcounter{equation}{0}
    &\vdash
    \sembrack{\synbrack{\textbf{A}_{\alpha\beta}\textbf{B}_\beta}}_\alpha \QuasiEqual
    \sembrack{\synbrack{\textbf{A}_{\alpha\beta}\textbf{B}_\beta}}_\alpha. \\
    &\vdash
    \sembrack{\synbrack{\textbf{A}_{\alpha\beta}\textbf{B}_\beta}}_\alpha \QuasiEqual
    \sembrack{\sE(\textbf{A}_{\alpha\beta}\textbf{B}_\beta)}_\alpha. \\
    &\vdash
    \sembrack{\synbrack{\textbf{A}_{\alpha\beta}\textbf{B}_\beta}}_\alpha \QuasiEqual
    \sembrack{\mname{app}_{\epsilon\epsilon\epsilon} \,
    \sE(\textbf{A}_{\alpha\beta}) \, \sE(\textbf{B}_\beta)}_{\alpha}. \\
    &\vdash
    \sembrack{\synbrack{\textbf{A}_{\alpha\beta}\textbf{B}_\beta}}_\alpha \QuasiEqual
    \sembrack{\mname{app}_{\epsilon\epsilon\epsilon} \,
    \synbrack{\textbf{A}_{\alpha\beta}} \, \synbrack{\textbf{B}_\beta}}_{\alpha}. \\
    &\vdash
    \sembrack{\mname{app}_{\epsilon\epsilon\epsilon} \,
    \synbrack{\textbf{A}_{\alpha\beta}} \, \synbrack{\textbf{B}_\beta}}_{\alpha} \QuasiEqual
    \sembrack{\synbrack{\textbf{A}_{\alpha\beta}}}_{\alpha\beta}
    \sembrack{\synbrack{\textbf{B}_\beta}}_\beta. \\
    &\vdash
    \sembrack{\synbrack{\textbf{A}_{\alpha\beta}\textbf{B}_\beta}}_\alpha \QuasiEqual
    \sembrack{\synbrack{\textbf{A}_{\alpha\beta}}}_{\alpha\beta}
    \sembrack{\synbrack{\textbf{B}_\beta}}_\beta. \\
    &\vdash
    \sembrack{\synbrack{\textbf{A}_{\alpha\beta}\textbf{B}_\beta}}_\alpha \QuasiEqual
    \textbf{A}_{\alpha\beta}\textbf{B}_\beta.
    \end{align}
    (1) is by Proposition~\ref{prop:5200}; (2) and (4) follow from the
    (1) and (3), respectively, and Specification 1 by Rule 1; (3)
    follows from (2) by the definition of $\sE$; (5) is by Axiom 11.3;
    (6) follows from (4) and (5) by Rule~1; (7) follows from (b), the
    induction hypothesis, and (6) by Rule~1.

  \item[] \textbf{Case 4}: $\textbf{D}_\delta$ is
    $\lambda\textbf{x}_\beta\textbf{A}_\alpha$.  Similar to Case 2.
    It is necessary to use the fact that $\sE(\textbf{A}_\alpha)$ is
    semantically closed.

  \item[] \textbf{Case 5}: $\textbf{D}_\delta$ is $\If \, \textbf{A}_o
    \, \textbf{B}_\alpha \, \textbf{C}_\alpha$.  Similar to Case 2.

  \item[] \textbf{Case 6}: $\textbf{D}_\delta$ is
    $\synbrack{\textbf{A}_\alpha}$.  Then we can derive the conclusion
    of the theorem as follows:
    \begin{align} \setcounter{equation}{0}
    &\vdash
    \sembrack{\synbrack{\synbrack{\textbf{A}_\alpha}}}_\epsilon \QuasiEqual
    \sembrack{\synbrack{\synbrack{\textbf{A}_\alpha}}}_\epsilon. \\
    &\vdash
    \sembrack{\synbrack{\synbrack{\textbf{A}_\alpha}}}_\epsilon \QuasiEqual
    \sembrack{\sE(\synbrack{\textbf{A}_\alpha})}_\epsilon. \\
    &\vdash
    \sembrack{\synbrack{\synbrack{\textbf{A}_\alpha}}}_\epsilon \QuasiEqual
    \sembrack{\mname{quot}_{\epsilon\epsilon} \, \sE(\textbf{A}_\alpha)}_\epsilon. \\
    &\vdash
    \sembrack{\synbrack{\synbrack{\textbf{A}_\alpha}}}_\epsilon \QuasiEqual
    \sembrack{\mname{quot}_{\epsilon\epsilon} \, \synbrack{\textbf{A}_\alpha}}_\epsilon. \\
    &\vdash
    \sembrack{\mname{quot}_{\epsilon\epsilon} \, \synbrack{\textbf{A}_\alpha}}_\epsilon \QuasiEqual 
    \If \, \sembrack{\mname{quot}_{\epsilon\epsilon} \, \synbrack{\textbf{A}_\alpha}}_\epsilon \IsDefApp \, 
    \synbrack{\textbf{A}_\alpha} \, \Undefined_\epsilon. \\
    &\vdash
    \sembrack{\mname{quot}_{\epsilon\epsilon} \, \synbrack{\textbf{A}_\alpha}}_\epsilon \QuasiEqual 
    \If \, \sembrack{\synbrack{\synbrack{\textbf{A}_\alpha}}}_\epsilon \IsDefApp \, 
    \synbrack{\textbf{A}_\alpha} \, \Undefined_\epsilon. \\
    &\vdash
    \sembrack{\synbrack{\synbrack{\textbf{A}_\alpha}}}_\epsilon \IsDefApp. \\ 
    &\vdash
    \sembrack{\mname{quot}_{\epsilon\epsilon} \, \synbrack{\textbf{A}_\alpha}}_\epsilon \QuasiEqual 
    \synbrack{\textbf{A}_\alpha}.\\
    &\vdash
    \sembrack{\synbrack{\synbrack{\textbf{A}_\alpha}}}_\epsilon \QuasiEqual
    \synbrack{\textbf{A}_\alpha}.
    \end{align}
    (1) is by Proposition~\ref{prop:5200}; (2) and (4) follow from the
    (1) and (3), respectively, and Specification 1 by Rule 1; (3)
    follows from (2) by the definition of $\sE$; (5) is by Axiom 11.6;
    (6) follows from (4) and (5) by Rule~1; (7) is by Axiom~6.8; (8)
    follows from (6) and (7) by Axiom 10.1 and Rule~1; and (9) follows
    from (4) and (8) by Rule 1.

  \item[] \textbf{Case 7}: $\textbf{D}_\delta$ is
    $\sembrack{\textbf{A}_{\epsilon}}_\alpha$.  The theorem holds
    trivially in this case since $\textbf{D}_\delta$ is not
    evaluation-free.

\ee

\end{proof}

\section{Completeness} \label{sec:completeness}

{\pfsysuqe} is \emph{complete for {\qzerouqe}} if $\sH \modelsn
\textbf{A}_o$ implies $\proves{\sH}{\textbf{A}_o}$ whenever
$\textbf{A}_o$ is a formula and $\sH$ is a set of sentences of
       {\qzerouqe}.  However, {\pfsysuqe} is actually not complete for
       {\qzerouqe}.  For instance, let $A_o$ be the sentence
\[[\lambda x_\epsilon \lambda y_\epsilon 
\sembrack{\mname{app}_{\epsilon\epsilon\epsilon} \, x_\epsilon \,
  y_\epsilon}_\alpha]\synbrack{\NegAlt_{oo}}\synbrack{T_o} =
F_o.\] Then, as observed in subsection~\ref{subsec:undef-sub}, ${}
\modelsn A_o$ holds but $\proves{}{A_o}$ does not hold.

{\pfsysuqe} is \emph{evaluation-free complete for {\qzerouqe}} if $\sH
\modelsna \textbf{A}_o$ implies $\provesa{\sH}{\textbf{A}_o}$ whenever
$\textbf{A}_o$ is an evaluation-free formula and $\sH$ is a set of
syntactically closed evaluation-free formulas of {\qzerouqe}.  We will
prove that {\pfsysuqe} is evaluation-free complete.  Our proof will
closely follow the proof of Theorem 22 (Henkin's Completeness Theorem
for {\qzerou}) in~\cite{Farmer08a} which itself is based on the proof
of 5502 (Henkin's Completeness and Soundness Theorem)
in~\cite{Andrews02}.

\subsection{Extension Lemma}

For any set $S$, let $\mbox{card}(S)$ be the cardinality of $S$.  Let
$\sL(\mbox{\qzerouqe})$ be the set of wffs of {\qzerouqe}, let $\kappa
= \mbox{card}(\sL(\mbox{\qzerouqe}))$, let $\sC_\alpha$ be a
well-ordered set of cardinality $\kappa$ of new primitive constants of
type $\alpha$ for each $\alpha \in \sT$, and let $\sC =\bigcup_{\alpha
  \in {\cal T}} \sC_\alpha$.

Define {\qzerouqeplus} to be the logic that extends {\qzerouqe} as
follows.  The syntax of {\qzerouqeplus} is obtained from the syntax of
{\qzerouqe} by adding the members of $\sC$ to the primitive constants
of {\qzerouqe} without extending the set of quotations of {\qzerouqe}.
That is, $\synbrack{\textbf{c}_\alpha}$ is not a wff of
{\qzerouqeplus} for all $\textbf{c}_\alpha \in \sC$, and $\sE$ is
still only defined on the wffs of {\qzerouqe}.  Let
$\sL(\mbox{\qzerouqeplus})$ be the set of wffs of {\qzerouqeplus}.
Obviously, $\mbox{card}(\sL(\mbox{\qzerouqeplus})) = \kappa$.  The
semantics of {\qzerouqeplus} is the same as the semantics of
{\qzerouqe} except that a general or evaluation-free model for
{\qzerouqeplus} is a general or evaluation-free model
$\seq{\set{\sD_\alpha \;|\; \alpha \in \sT}, \sJ}$ for {\qzerouqe}
where the domain of $\sJ$ has been extended to include~$\sC$.  Let
{\pfsysuqeplus} be the proof system that is obtained from {\pfsysuqe}
by replacing the phrase ``primitive constant'' with the phrase
``primitive constant not in $\sC$'' in each formula schema in
Specifications 1--9 and Axioms 1--12 except Axiom 6.2.  Since
$\sL(\mbox{\qzerouqeplus})$ is a proper superset of
$\sL(\mbox{\qzerouqe})$, the axioms of {\pfsysuqeplus} are a proper
superset of the axioms of {\pfsysuqe}.  {\pfsysuqeplus} has the same
rules of inference as {\pfsysuqe}.  Let $\provesb{\sH}{\textbf{A}_o}$
mean there is an evaluation-free proof of $\textbf{A}_o$ from $\sH$ in
{\pfsysuqeplus}.  Assume {\qzerouqeplus} inherits all the other
definitions of {\qzerouqe}.

An \emph{xwff} of {\qzerouqeplus} is a syntactically closed
evaluation-free wff of {\qzerouqeplus}.  An $\textit{xwff}_\alpha$ is
an xwff of type $\alpha$.  Let $\sH$ be a set of $\xwffs{o}$ of
{\qzerouqeplus}.  $\sH$ is \emph{evaluation-free complete} in
{\pfsysuqeplus} if, for every $\xwff{o}$ $\textbf{A}_o$ of
{\qzerouqeplus}, either $\provesb{\sH}{\textbf{A}_o}$ or
$\provesb{\sH}{\NegAlt\textbf{A}_o}$.  $\sH$ is \emph{evaluation-free
  extensionally complete} in {\pfsysuqeplus} if, for every $\xwff{o}$
of the form $\textbf{A}_{\alpha\beta} = \textbf{B}_{\alpha\beta}$ of
{\qzerouqeplus}, there is an xwff $\textbf{C}_\beta$ such that:

\be

  \item $\provesb{\sH}{{\textbf{C}_\beta\IsDefApp}}$.

  \item $\provesb{\sH}{[{\textbf{A}_{\alpha\beta}\IsDefApp} \Andd
  {\textbf{B}_{\alpha\beta}\IsDefApp} \Andd [\textbf{A}_{\alpha\beta}
  \textbf{C}_\beta \QuasiEqual \textbf{B}_{\alpha\beta}
  \textbf{C}_\beta]] \Implies [\textbf{A}_{\alpha\beta} =
  \textbf{B}_{\alpha\beta}]}$.

\ee

\medskip

\begin{lem}[Extension Lemma]\bsp
Let {\sG} be a set of $\xwffs{o}$ of {\qzerouqe} consistent in
{\pfsysuqe}.  Then there is a set $\sH$ of $\xwffs{o}$ of
{\qzerouqeplus} such that:

\be

  \item $\sG \subseteq \sH$.

  \item $\sH$ is consistent in {\pfsysuqeplus}.

  \item $\sH$ is evaluation-free complete in {\pfsysuqeplus}.

  \item $\sH$ is evaluation-free extensionally complete in
    {\pfsysuqeplus}.

\ee
\esp
\end{lem}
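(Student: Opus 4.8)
The plan is to follow the classical Henkin construction, as used in the proof of 5502 in~\cite{Andrews02} and Theorem~22 in~\cite{Farmer08a}, adapted to the evaluation-free fragment of {\qzerouqe}. First I would fix an enumeration of all $\xwffs{o}$ of {\qzerouqeplus}; since $\mname{card}(\sL(\mbox{\qzerouqeplus})) = \kappa$, there are exactly $\kappa$ of them, say $\textbf{B}^{1}_o, \textbf{B}^{2}_o, \ldots$ indexed by ordinals below $\kappa$. I would then build an increasing transfinite chain of sets $\sH^{0} \subseteq \sH^{1} \subseteq \cdots$ of $\xwffs{o}$ of {\qzerouqeplus}, each consistent in {\pfsysuqeplus}, starting with $\sH^{0} = \sG$. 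At stage $\gamma + 1$, given the consistent set $\sH^{\gamma}$, I would consider $\textbf{B}^{\gamma}_o$: if $\sH^{\gamma} \cup \set{\textbf{B}^{\gamma}_o}$ is consistent in {\pfsysuqeplus}, I put it (plus possibly a witnessing xwff, see below) into $\sH^{\gamma+1}$; otherwise I put $\sH^{\gamma} \cup \set{\NegAlt\textbf{B}^{\gamma}_o}$. At limit stages I take unions. Finally $\sH = \bigcup_{\gamma < \kappa} \sH^{\gamma}$; it contains $\sG$, is consistent (any proof of $F_o$ uses finitely many hypotheses, hence lies in some $\sH^{\gamma}$, contradicting consistency of that set), and is evaluation-free complete by construction.

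The extensional-completeness requirement forces the extra witnessing step. When $\textbf{B}^{\gamma}_o$ has the form $\textbf{A}_{\alpha\beta} = \textbf{B}_{\alpha\beta}$ and we add it at stage $\gamma+1$, I would simultaneously add the sentence $[{\textbf{A}_{\alpha\beta}\IsDefApp} \Andd {\textbf{B}_{\alpha\beta}\IsDefApp} \Andd [\textbf{A}_{\alpha\beta}\textbf{c}_\beta \QuasiEqual \textbf{B}_{\alpha\beta}\textbf{c}_\beta]] \Implies [\textbf{A}_{\alpha\beta} = \textbf{B}_{\alpha\beta}]$ where $\textbf{c}_\beta \in \sC_\beta$ is a fresh primitive constant not occurring in $\sH^{\gamma}$ and not already used as a witness at an earlier stage. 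Such a fresh constant exists because at each stage only finitely many new constants have been consumed and only countably many more come from the finitely-generated $\sH^{\gamma}$, whereas $\mname{card}(\sC_\beta) = \kappa$ is large enough to outrun the construction. The key point is that adding this implication preserves consistency: if $\sH^{\gamma} \cup \set{\textbf{A}_{\alpha\beta} = \textbf{B}_{\alpha\beta}}$ is consistent but adding the witnessing implication made it inconsistent, then by the Deduction Theorem (Theorem~\ref{thm:deduction}, part restricted to evaluation-free proofs) and the Tautology Theorem we could derive from $\sH^{\gamma} \cup \set{\textbf{A}_{\alpha\beta} = \textbf{B}_{\alpha\beta}}$ the formula $[{\textbf{A}_{\alpha\beta}\IsDefApp} \Andd {\textbf{B}_{\alpha\beta}\IsDefApp} \Andd \textbf{A}_{\alpha\beta}\textbf{c}_\beta \QuasiEqual \textbf{B}_{\alpha\beta}\textbf{c}_\beta] \Andd [\textbf{A}_{\alpha\beta} \not= \textbf{B}_{\alpha\beta}]$; since $\textbf{c}_\beta$ is fresh, Universal Generalization (Theorem~\ref{thm:gen}) over $\textbf{c}_\beta$ — treating it as a variable — together with Axiom~3 (Extensionality) would then yield a contradiction in $\sH^{\gamma} \cup \set{\textbf{A}_{\alpha\beta} = \textbf{B}_{\alpha\beta}}$.

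I expect the main obstacle to be the handling of fresh constants under generalization, i.e., justifying rigorously that a proof using the new constant $\textbf{c}_\beta \in \sC$ can be turned into one where $\textbf{c}_\beta$ is replaced by a genuine variable so that Universal Generalization applies. This is exactly the role the auxiliary logic {\qzerouqeplus} and its proof system {\pfsysuqeplus} play: since {\pfsysuqeplus} is obtained from {\pfsysuqe} by treating the members of $\sC$ uniformly (the phrase ``primitive constant'' being replaced by ``primitive constant not in $\sC$'' everywhere except Axiom~6.2), a constant in $\sC$ behaves in every axiom and rule exactly as a variable of the same type would, and in particular never appears inside a quotation (the syntax of {\qzerouqeplus} deliberately excludes $\synbrack{\textbf{c}_\alpha}$ for $\textbf{c}_\alpha \in \sC$). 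Hence the standard substitution-of-a-variable-for-a-constant argument goes through; care is only needed to check that no schema with a syntactic side condition (e.g., the ``not a variable'' or ``not a primitive constant'' conditions) is disturbed, but the constants in $\sC$ are primitive constants and are never variables, so the side conditions remain satisfied. Once this freshness argument is in place, parts~1--4 of the lemma follow by the bookkeeping described above.
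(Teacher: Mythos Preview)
Your overall architecture is the right Henkin construction, and your discussion of why the constants in $\sC$ can be swapped for fresh variables in a {\pfsysuqeplus}-proof is exactly the point of the auxiliary system. But the witnessing step is placed in the wrong branch, and as written the construction does \emph{not} yield evaluation-free extensional completeness.

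You add the witnessing implication only when $\textbf{A}_{\alpha\beta} = \textbf{B}_{\alpha\beta}$ is consistent with $\sH^{\gamma}$ and hence goes into $\sH^{\gamma+1}$. In that branch the consequent $\textbf{A}_{\alpha\beta} = \textbf{B}_{\alpha\beta}$ is already in $\sH$, so the implication is provable for \emph{any} defined $\textbf{C}_\beta$; your added sentence is redundant and your consistency argument is vacuous. The branch that actually needs a witness is the opposite one: when $\NegAlt[\textbf{A}_{\alpha\beta} = \textbf{B}_{\alpha\beta}]$ goes into $\sH$, extensional completeness demands some $\textbf{C}_\beta$ making the antecedent refutable, and nothing in your construction supplies it. Completeness of $\sH$ alone does not help, since from $\NegAlt[\textbf{A}_{\alpha\beta} = \textbf{B}_{\alpha\beta}]$ and Axiom~3 you only get an existential $\Forsome \textbf{x}_\beta \NegAlt[\cdots]$, not a closed instance. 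The paper's construction (following Andrews' 5500) handles exactly this: in the case where $\sG_\tau \cup \set{\textbf{A}_{\alpha\beta} = \textbf{B}_{\alpha\beta}}$ is inconsistent, one adds
\[
\NegAlt[{\textbf{A}_{\alpha\beta}\IsDefApp} \Andd {\textbf{B}_{\alpha\beta}\IsDefApp} \Andd [\textbf{A}_{\alpha\beta}\textbf{c}_\beta \QuasiEqual \textbf{B}_{\alpha\beta}\textbf{c}_\beta]]
\]
with $\textbf{c}_\beta$ fresh, and the consistency check is precisely your constant-to-variable substitution argument followed by Universal Generalization and Axiom~3. So your key technical lemma is correct; it just has to be invoked in the negation branch. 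A minor separate point: ``only finitely many new constants have been consumed'' is false at limit stages; the correct bound is that fewer than $\kappa$ constants from $\sC_\beta$ appear in $\sG_\tau$ for each $\tau < \kappa$, which still leaves one unused.
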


\begin{proof}
The proof is very close to the proof of 5500 in~\cite{Andrews02}.  By
transfinite induction, a set $\sG_\tau$ of $\xwffs{\alpha}$ is
defined for each ordinal $\tau \le \kappa$.  The main difference
between our proof and the proof of 5500 is that, in case (c) of the
definition of $\sG_{\tau + 1}$,
\[\sG_{\tau + 1} = \sG_\tau \cup
\set{\NegAlt[{\textbf{A}_{\alpha\beta}\IsDefApp} \Andd
    {\textbf{B}_{\alpha\beta}\IsDefApp} \Andd [\textbf{A}_{\alpha\beta}
      \textbf{c}_\beta \QuasiEqual \textbf{B}_{\alpha\beta}
      \textbf{c}_\beta]]}\] where $\textbf{c}_\beta$ is the first
constant in $\sC_\beta$ that does not occur in $\sG_\tau$ or
$\textbf{A}_{\alpha\beta} = \textbf{B}_{\alpha\beta}$.  (Notice that
$\provesb{}{\textbf{c}_\beta\IsDefApp}$ by Axiom 6.2.)

To prove that $\sG_{\tau + 1}$ is consistent in {\pfsysuqeplus}
assuming $\sG_\tau$ is consistent in {\pfsysuqeplus} when $\sG_{\tau +
  1}$ is obtained by case (c) , it is necessary to show that, if
\[\provesb{\sG_\tau}{{\textbf{A}_{\alpha\beta}\IsDefApp} \Andd
{\textbf{B}_{\alpha\beta}\IsDefApp} \Andd [\textbf{A}_{\alpha\beta}
  \textbf{c}_\beta \QuasiEqual \textbf{B}_{\alpha\beta}
  \textbf{c}_\beta]},\] then
$\provesb{\sG_\tau}{\textbf{A}_{\alpha\beta} =
  \textbf{B}_{\alpha\beta}}$.  Assume the hypothesis of this
statement.  Let $P$ be an evaluation-free proof of
\[{\textbf{A}_{\alpha\beta}\IsDefApp} \Andd 
{\textbf{B}_{\alpha\beta}\IsDefApp} \Andd [\textbf{A}_{\alpha\beta}
  \textbf{c}_\beta \QuasiEqual \textbf{B}_{\alpha\beta}
  \textbf{c}_\beta]\] from a finite subset $\sS$ of $\sG_\tau$, and
let $\textbf{x}_\beta$ be a variable that does not occur in $P$ or
$\sS$.  Since $\textbf{c}_\beta$ does not occur in $\sG_\tau$,
$\textbf{A}_{\alpha\beta}$, or $\textbf{B}_{\alpha\beta}$ and
$\textbf{c}_\beta \in\sC$, the result of substituting
$\textbf{x}_\beta$ for each occurrence of $\textbf{c}_\beta$ in $P$
is an evaluation-free proof of
\[{\textbf{A}_{\alpha\beta}\IsDefApp} \Andd
{\textbf{B}_{\alpha\beta}\IsDefApp} \Andd [\textbf{A}_{\alpha\beta}
\textbf{x}_\beta \QuasiEqual \textbf{B}_{\alpha\beta}
\textbf{x}_\beta]\] from $\sS$.  Therefore,
\[\provesb{\sS}{{\textbf{A}_{\alpha\beta}\IsDefApp} \Andd
{\textbf{B}_{\alpha\beta}\IsDefApp} \Andd [\textbf{A}_{\alpha\beta}
\textbf{x}_\beta \QuasiEqual \textbf{B}_{\alpha\beta}
\textbf{x}_\beta]}.\] This implies
\[\provesb{\sS}{{\textbf{A}_{\alpha\beta}\IsDefApp}}, \sglsp
\provesb{\sS}{{\textbf{B}_{\alpha\beta}\IsDefApp}}, \sglsp
\provesb{\sS}{\forall \textbf{x}_\beta [\textbf{A}_{\alpha\beta}
    \textbf{x}_\beta \QuasiEqual \textbf{B}_{\alpha\beta}
    \textbf{x}_\beta]}\] by the Tautology Theorem and Universal
Generalization.  It follows from these that
$\provesb{\sG_\tau}{\textbf{A}_{\alpha\beta} =
  \textbf{B}_{\alpha\beta}}$ by Axiom 3, the Tautology Theorem, and
Rule~1.

The rest of the proof is essentially the same as the proof of 5500.
\end{proof}

\subsection{Henkin's Theorem}

\bsp A general or evaluation-free model $\seq{\set{\sD_\alpha \;|\;
    \alpha \in \sT}, \sJ}$ for {\qzerouqe} is \emph{frugal} if
$\mbox{card}(\sD_\alpha) \le \mbox{card}(\sL(\mbox{\qzerouqe}))$ for
all $\alpha \in \sT$. \esp

\begin{thm}[Henkin's Theorem for {\pfsysuqe}] \label{thm:henkin}
Every set of syntactically closed evaluation-free formulas of
{\qzerouqe} consistent in {\pfsysuqe} has a frugal normal
evaluation-free model.
\end{thm}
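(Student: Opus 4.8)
The plan is to follow the classical Henkin construction, as adapted by Andrews for {\qzero} and by Farmer for {\qzerou} in~\cite{Andrews02,Farmer08a}, taking care at each point that we only ever work with syntactically closed evaluation-free wffs so that the metatheorems of Section~\ref{sec:metatheorems} (which are stated in their $\provesa{}{}$ forms) apply. So let {\sG} be a set of syntactically closed evaluation-free formulas of {\qzerouqe} that is consistent in {\pfsysuqe}. First I would invoke the Extension Lemma to obtain a set $\sH \supseteq \sG$ of $\xwffs{o}$ of {\qzerouqeplus} that is consistent, evaluation-free complete, and evaluation-free extensionally complete in {\pfsysuqeplus}. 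All further work takes place relative to $\sH$ in {\pfsysuqeplus}.

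Next I would build the term model. For each $\alpha \in \sT$, define $\sD_\alpha$ to be the set of equivalence classes of $\xwffs{\alpha}$ of {\qzerouqeplus} under the relation $\textbf{A}_\alpha \equiv \textbf{B}_\alpha$ iff $\provesb{\sH}{\textbf{A}_\alpha = \textbf{B}_\alpha}$ --- but here the presence of undefinedness forces the standard {\qzerou} refinement: we must quotient only the \emph{defined} xwffs and let a single distinguished ``undefined'' object handle the rest, or equivalently track definedness via $\provesb{\sH}{\textbf{A}_\alpha\IsDefApp}$ as in~\cite{Farmer08a}. The domain $\sD_o$ is forced to be $\set{\mname{T},\mname{F}}$ using evaluation-free completeness of $\sH$ (exactly one of $\provesb{\sH}{\textbf{A}_o}$, $\provesb{\sH}{\NegAlt\textbf{A}_o}$). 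For $\sD_{(\alpha\beta)}$ one takes the functions induced by application, using Axiom 3 and extensional completeness to guarantee that distinct elements of $\sD_{\alpha\beta}$ really are distinguished by some argument $\textbf{C}_\beta$; for $\sD_{\seq{\alpha\beta}}$ one uses Axiom 9 and Lemma~\ref{lem:pairs} to see that every element is (the class of) a pair and that pairing is injective, so $\sD_{\seq{\alpha\beta}}$ is genuinely the set of ordered pairs as required by the frame conditions. The domain $\sD_\epsilon$ is the quotient of $\xwffs{\epsilon}$; the frame condition $\set{\sE(\textbf{A}_\alpha)\;|\;\textbf{A}_\alpha\text{ a wff}}\subseteq\sD_\epsilon$ holds because each $\sE(\textbf{A}_\alpha)$ is an image of $\sE$, hence a syntactically closed evaluation-free $\wff{\epsilon}$ of {\qzerouqe}, hence (its class) lies in $\sD_\epsilon$; crucially, by Theorem~\ref{lem:quo-inj} (Injectiveness of Quotation) distinct $\sE(\textbf{A}_\alpha)$ give distinct classes, so the frame condition is satisfied with no collapse. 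The interpretation function $\sJ$ sends each primitive constant to its own class; that $\sJ(\mname{Q})$, $\sJ(\iota)$, $\sJ(\mname{pair})$ are the identity relation, unique member selector, and pairing function follows from Axioms 2, 8, 9 together with the term-model construction, exactly as in Andrews.

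Then I would prove the key ``evaluation'' or ``fundamental'' lemma: that there is a valuation function $\sV^{\cal M}$ on the resulting interpretation $\sM$ satisfying conditions 1--6 in the definition of an evaluation-free model, and that for every evaluation-free $\xwff{\alpha}$ $\textbf{A}_\alpha$, $\sV^{\cal M}_\phi(\textbf{A}_\alpha)$ is the class of $\textbf{A}_\alpha$ if $\provesb{\sH}{\textbf{A}_\alpha\IsDefApp}$ and is undefined otherwise, and for a general evaluation-free wff one substitutes xwffs (from $\sC$) for the free variables according to $\phi$. This is proved by induction on the size of the wff, using the Beta-Reduction Theorem (part 1) and Lemma~\ref{lem:5206} ($\alpha$-conversion) for the abstraction and application cases, Axiom 10 and the conditional axioms for conditionals, Axiom 11 and the Disquotation Theorem for quotations, the definedness axioms (Axiom 6) throughout, and extensionality (Axiom 3) to pin down the abstraction case in the function domains. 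One then checks that $\sM$ is normal, i.e.\ validates every specifying axiom in Specifications 1--9; this is immediate since each such axiom lies in $\sH$ (it is an instance of Axiom 12, hence provable, hence in $\sH$ by consistency and evaluation-free completeness) and the fundamental lemma then gives $\sM\models$ it. Finally, $\sM$ restricted to the original signature is a normal evaluation-free model of {\qzerouqe}, it satisfies every member of {\sG} (since $\sG\subseteq\sH$), and it is frugal because each $\sD_\alpha$ is a quotient of a subset of $\sL(\mbox{\qzerouqeplus})$, which has cardinality $\kappa = \mbox{card}(\sL(\mbox{\qzerouqe}))$.

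The main obstacle I anticipate is the abstraction/application case of the fundamental lemma in the presence of \emph{partial} functions: I must show that the map $d\mapsto\sV^{\cal M}_{\phi[\textbf{x}_\beta\mapsto d]}(\textbf{A}_\alpha)$ genuinely lands in the chosen function domain $\sD_{\alpha\beta}$ and is correctly represented by the class of $[\lambda\textbf{x}_\beta\textbf{A}_\alpha]$, where ``undefined'' values must be handled uniformly --- this is where Axiom 3, Axiom 6.4--6.5, the Beta-Reduction Theorem, and the extensional completeness of $\sH$ all have to be orchestrated exactly as in the {\qzerou} proof, and where the definedness bookkeeping is most delicate. A secondary but routine subtlety is making sure that introducing the new constants $\sC$ does not disturb quotation: since $\synbrack{\textbf{c}_\alpha}$ is deliberately \emph{not} a wff of {\qzerouqeplus} for $\textbf{c}_\alpha\in\sC$ and $\sE$ is unchanged, the quotation-related axioms and Theorem~\ref{lem:quo-inj} transfer without modification, but this must be stated explicitly when verifying that $\sM$ is normal and when reading off the final {\qzerouqe}-model.
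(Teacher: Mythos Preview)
Your outline matches the paper's proof closely: Extension Lemma, term model built from xwffs with $\sV$ tracking definedness via $\provesb{\sH}{\textbf{A}_\alpha\IsDefApp}$, verification of the interpretation conditions for $\mname{Q}$, $\iota$, $\mname{pair}$, then the fundamental lemma by induction on wff structure using the Beta-Reduction Theorem, and finally normality, frugality, and $\sM\models\sG$. The abstraction/application case you flag as the main obstacle is indeed handled exactly as in the {\qzerou} proof, and your remark about the $\sC$-constants not disturbing quotation is correct.

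There is, however, one genuine gap in your construction of $\sD_\epsilon$. You propose to take $\sD_\epsilon$ as the quotient of $\xwffs{\epsilon}$ by provable equality, noting that Injectiveness of Quotation keeps distinct $\sE(\textbf{A}_\alpha)$ in distinct classes. But condition~6 in the definition of an evaluation-free model requires $\sV^{\cal M}_\phi(\synbrack{\textbf{A}_\alpha}) = \sE(\textbf{A}_\alpha)$ \emph{literally}, and the frame condition requires $\set{\sE(\textbf{A}_\alpha)}\subseteq\sD_\epsilon$ as actual elements, not as classes. A bare quotient does not give this. The paper instead sets $\sD_\epsilon = \set{\sE(\textbf{A}_\alpha) \mid \textbf{A}_\alpha \text{ a wff of } \mbox{\qzerouqe}}$ exactly, and then \emph{chooses} a map $f$ from defined $\xwffs{\epsilon}$ onto $\sD_\epsilon$ satisfying three constraints: (i)~$f$ respects provable equality; (ii)~if $\provesb{\sH}{\textbf{A}_\epsilon = \sE(\textbf{C}_\gamma)}$ then $f(\textbf{A}_\epsilon) = \sE(\textbf{C}_\gamma)$; (iii)~if $\provesb{\sH}{\mname{wff}^{\alpha}_{o\epsilon}\,\textbf{A}_\epsilon}$ then $f(\textbf{A}_\epsilon) = \sE(\textbf{C}_\alpha)$ for some $\textbf{C}_\alpha$. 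That such an $f$ exists uses Injectiveness of Quotation, Specification~6.13, and the cardinality equality $\mbox{card}(\sL(\mbox{\qzerouqeplus})) = \mbox{card}(\sL(\mbox{\qzerouqe}))$. This step is what makes condition~6 go through verbatim in Step~3 of the proof, and it is the one place where $\sD_\epsilon$ is treated differently from $\sD_\iota$; your proposal treats them uniformly and so misses it.
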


\begin{proof}
The proof is very close to the proof of Theorem 21
in~\cite{Farmer08a}.  Let $\sG$ be a set of $\xwffs{\alpha}$ of
{\qzerouqe} consistent in {\pfsysuqe}, and let $\sH$ be a set of
$\xwffs{\alpha}$ of {\qzerouqeplus} that satisfies the four statements
of the Extension Lemma.

\medskip

\noindent \textbf{Step 1} {\sglsp} We define simultaneously, by
recursion on $\gamma \in \sT$, a frame $\set{\sD_\alpha \;|\; \alpha
  \in \sT}$ and a partial function $\sV$ whose domain is the set of
xwffs of {\qzerouqeplus} so that the following conditions hold for all
$\gamma \in \sT$:

\bi

  \item [$(1^\gamma)$] $\sD_\gamma = \set{\sV(\textbf{A}_\gamma) \;|\;
  \textbf{A}_\gamma \mbox{ is a } \xwff{\gamma} \mbox{ and }
  \provesb{\sH}{{\textbf{A}_\gamma\IsDefApp}}}$.

  \item [$(2^\gamma)$] $\sV(\textbf{A}_\gamma)$ is defined \,iff\,
  $\provesb{\sH}{{\textbf{A}_\gamma\IsDefApp}}$ for all xwffs
  $\textbf{A}_\gamma$.

  \item [$(3^\gamma)$] $\sV(\textbf{A}_\gamma) =
  \sV(\textbf{B}_\gamma)$ \,iff\, $\provesb{\sH}{\textbf{A}_\gamma =
  \textbf{B}_\gamma}$ for all xwffs $\textbf{A}_\gamma$ and
  $\textbf{B}_\gamma$.

\ei
Let $\sV(x) \QuasiEqual \sV(y)$ mean either $\sV(x)$ and $\sV(y)$ are
both defined and equal or $\sV(x)$ and $\sV(y)$ are both undefined.

\medskip

\noindent \textbf{Step 1.1} {\sglsp} We define $\sD_{\iotaAltS}$
and $\sV$ on $\xwffs{\iotaAltS}$.  For each xwff
$\textbf{A}_{\iotaAltS}$, if
$\provesb{\sH}{{\textbf{A}_{\iotaAltS}\IsDefApp}}$, let
\[\sV(\textbf{A}_{\iotaAltS}) = \set{\textbf{B}_{\iotaAltS} \;|\;
\textbf{B}_{\iotaAltS} \mbox{ is a } \xwff{\iotaAltS} \mbox{ and }
\provesb{\sH}{\textbf{A}_{\iotaAltS} = \textbf{B}_{\iotaAltS}}},\]
and otherwise let $\sV(\textbf{A}_{\iotaAltS})$ be undefined.  Also, let
\[\sD_{\iotaAltS} = \set{\sV(\textbf{A}_{\iotaAltS}) \;|\; 
\textbf{A}_{\iotaAltS} \mbox{ is a } \xwff{\iotaAltS} \mbox{ and }
\provesb{\sH}{\textbf{A}_{\iotaAltS}\IsDefApp}}.\]
$(1^{\iotaAltS})$, $(2^{\iotaAltS})$, and $(3^{\iotaAltS})$ are clearly
satisfied.

\medskip

\noindent \textbf{Step 1.2} {\sglsp} We define $\sD_o$ and $\sV$ on
$\xwffs{o}$.  For each xwff $\textbf{A}_o$, if
$\provesb{\sH}{\textbf{A}_o}$, let $\sV(\textbf{A}_o) = \mname{T}$,
and otherwise let $\sV(\textbf{A}_o) = \mname{F}$.  Also, let $\sD_o =
\set{\mname{T},\mname{F}}$.  By the consistency and evaluation-free
completeness of $\sH$, exactly one of $\provesb{\sH}{\textbf{A}_o}$
and $\provesb{\sH}{\NegAlt\textbf{A}_o}$ holds.  By
Proposition~\ref{prop:o-defined},
$\provesb{\sH}{\textbf{A}_o\IsDefApp}$ for all wffs $\textbf{A}_o$.
Hence $(1^o)$, $(2^o)$, and $(3^o)$ are satisfied.

\medskip

\noindent \textbf{Step 1.3} {\sglsp} We define $\sD_\epsilon$ and
$\sV$ on $\xwffs{\epsilon}$.  Let \[\sD_\epsilon =
\set{\sE(\textbf{A}_\alpha) \;|\; \textbf{A}_\alpha \mbox{ is a wff of
    {\qzerouqe}}}.\] Choose a mapping $f$ from
$\set{\textbf{A}_\epsilon \;|\; \textbf{A}_\epsilon \mbox{ is an }
  \xwff{\epsilon} \mbox{ and }
  \provesb{\sH}{\textbf{A}_{\epsilon}\IsDefApp}}$ to $\sD_\epsilon$
such that:

\be

  \item $f(\textbf{A}_\epsilon) = f(\textbf{B}_\epsilon)$ iff
    $\provesb{\sH}{\textbf{A}_\epsilon = \textbf{B}_\epsilon}$.

  \item If $\provesb{\sH}{\textbf{A}_\epsilon = \sE(\textbf{C}_\gamma)}$, 
    then $f(\textbf{A}_\epsilon) = \sE(\textbf{C}_\gamma)$.

  \item If $\provesb{\sH}{\mname{wff}^{\alpha}_{o\epsilon} \,
    \textbf{A}_\epsilon}$, then $f(\textbf{A}_\epsilon) =
    \sE(\textbf{C}_\alpha)$ for some wff $\textbf{C}_\alpha$.

\ee

\bsp \noindent It is possible to choose such a mapping by
Lemma~\ref{lem:quo-inj}, Specification 6.13, and the fact that
$\mbox{card}(\sL(\mbox{\qzerouqe})) =
\mbox{card}(\sL(\mbox{\qzerouqeplus}))$.  For each xwff
$\textbf{A}_\epsilon$, if
$\provesb{\sH}{{\textbf{A}_{\epsilon}\IsDefApp}}$, let
$\sV(\textbf{A}_\epsilon) = f(\textbf{A}_\epsilon)$, and otherwise let
$\sV(\textbf{A}_\epsilon$ be undefined.  $(2^{\epsilon})$ and
$(3^{\epsilon})$ are clearly satisfied; $(1^{\epsilon})$ is satisfied
since, for all wffs $\textbf{A}_\alpha$ of {\qzerouqe},
$\sE(\textbf{A}_\alpha)$ is an $\xwff{\epsilon}$ by the definition of
$\sE$ and Specification~7 and
$\provesb{\sH}{\sE(\textbf{A}_\alpha)\IsDefApp}$ by Axiom 6.7 and
Specification~1.\esp

\medskip

\noindent \textbf{Step 1.4} {\sglsp} We define $\sD_{\alpha\beta}$
and $\sV$ on $\xwffs{\alpha\beta}$ for all $\alpha, \beta \in \sT$.
Now suppose that $\sD_\alpha$ and $\sD_{\beta}$ are defined and that
the conditions hold for $\alpha$ and $\beta$.  For each xwff
$\textbf{A}_{\alpha\beta}$, if
$\provesb{\sH}{{\textbf{A}_{\alpha\beta}\IsDefApp}}$, let
$\sV(\textbf{A}_{\alpha\beta})$ be the (partial or total) function
from $\sD_{\beta}$ to $\sD_\alpha$ whose value, for any argument
$\sV(\textbf{B}_\beta) \in \sD_\beta$, is
$\sV(\textbf{A}_{\alpha\beta}\textbf{B}_\beta)$ if
$\sV(\textbf{A}_{\alpha\beta}\textbf{B}_\beta)$ is defined and is
undefined if $\sV(\textbf{A}_{\alpha\beta}\textbf{B}_\beta)$ is
undefined, and otherwise let $\sV(\textbf{A}_{\alpha\beta})$ be
undefined.  We must show that this definition is independent of the
particular xwff $\textbf{B}_\beta$ used to represent the argument.  So
suppose $\sV(\textbf{B}_\beta) = \sV(\textbf{C}_\beta)$; then
$\provesb{\sH}{\textbf{B}_\beta = \textbf{C}_\beta}$ by $(3^{\beta})$,
so $\provesb{\sH}{\textbf{A}_{\alpha\beta}\textbf{B}_\beta \QuasiEqual
  \textbf{A}_{\alpha\beta}\textbf{C}_\beta}$ by
Lemma~\ref{lem:quasi-equality} and the Quasi-Equality Rules, and so
$\sV(\textbf{A}_{\alpha\beta}\textbf{B}_\beta) \QuasiEqual
\sV(\textbf{A}_{\alpha\beta}\textbf{C}_\beta)$ by $(2^{\alpha})$ and
$(3^{\alpha})$.  Finally, let \[\sD_{\alpha\beta} =
\set{\sV(\textbf{A}_{\alpha\beta}) \;|\; \textbf{A}_{\alpha\beta}
  \mbox{ is a } \xwff{\alpha\beta} \mbox{ and }
  \provesb{\sH}{{\textbf{A}_{\alpha\beta}\IsDefApp}}}.\]
$(1^{\alpha\beta})$ and $(2^{\alpha\beta})$ are clearly satisfied; we
must show that $(3^{\alpha\beta})$ is satisfied.  Suppose
$\sV(\textbf{A}_{\alpha\beta}) = \sV(\textbf{B}_{\alpha\beta})$.  Then
$\provesb{\sH}{{\textbf{A}_{\alpha\beta}\IsDefApp}}$ and
$\provesb{\sH}{{\textbf{B}_{\alpha\beta}\IsDefApp}}$.  Since $\sH$ is
evaluation-free extensionally complete, there is a $\textbf{C}_\beta$
such that $\provesb{\sH}{{\textbf{C}_{\beta}\IsDefApp}}$ and
\[\provesb{\sH}{[{\textbf{A}_{\alpha\beta}\IsDefApp} \Andd
{\textbf{B}_{\alpha\beta}\IsDefApp} \Andd [\textbf{A}_{\alpha\beta}
  \textbf{C}_\beta \QuasiEqual \textbf{B}_{\alpha\beta}
  \textbf{C}_\beta]] \Implies [\textbf{A}_{\alpha\beta} =
    \textbf{B}_{\alpha\beta}]}.\] Then
$\sV(\textbf{A}_{\alpha\beta}\textbf{C}_\beta) \QuasiEqual
\sV(\textbf{A}_{\alpha\beta})(\sV(\textbf{C}_\beta)) \QuasiEqual
\sV(\textbf{B}_{\alpha\beta})(\sV(\textbf{C}_\beta)) \QuasiEqual
\sV(\textbf{B}_{\alpha\beta}\textbf{C}_\beta),$ so
$\provesb{\sH}{\textbf{A}_{\alpha\beta}\textbf{C}_\beta \QuasiEqual
  \textbf{B}_{\alpha\beta}\textbf{C}_\beta}$ by $(2^\alpha)$ and
$(3^\alpha)$, and so $\provesb{\sH}{\textbf{A}_{\alpha\beta} =
  \textbf{B}_{\alpha\beta}}$.  Now suppose
$\provesb{\sH}{\textbf{A}_{\alpha\beta} = \textbf{B}_{\alpha\beta}}$.
Then, for all xwffs $\textbf{C}_\beta \in \sD_\beta$,
$\provesb{\sH}{\textbf{A}_{\alpha\beta}\textbf{C}_\beta \QuasiEqual
  \textbf{B}_{\alpha\beta}\textbf{C}_\beta}$ by
Lemma~\ref{lem:quasi-equality} and the Quasi-Equality Rules, and so
$\sV(\textbf{A}_{\alpha\beta})(\sV(\textbf{C}_\beta)) \QuasiEqual
\sV(\textbf{A}_{\alpha\beta}\textbf{C}_\beta) \QuasiEqual
\sV(\textbf{B}_{\alpha\beta}\textbf{C}_\beta) \QuasiEqual
\sV(\textbf{B}_{\alpha\beta})(\sV(\textbf{C}_\beta)).$ Hence
$\sV(\textbf{A}_{\alpha\beta}) = \sV(\textbf{B}_{\alpha\beta})$.

\medskip

\noindent \textbf{Step 1.5} {\sglsp} We define
$\sD_{\seq{\alpha\beta}}$ and $\sV$ on $\xwffs{\seq{\alpha\beta}}$ for
all $\alpha, \beta \in \sT$.  Now suppose that $\sD_\alpha$ and
$\sD_{\beta}$ are defined and that the conditions hold for $\alpha$
and $\beta$.  For each xwff $\textbf{A}_{\seq{\alpha\beta}}$, if
$\provesb{\sH}{{\textbf{A}_{\seq{\alpha\beta}}\IsDefApp}}$, let
\[\sV(\textbf{A}_{\seq{\alpha\beta}}) = 
\seq{\sV(\mname{fst}_{\alpha\seq{\alpha\beta}} \,
\textbf{A}_{\seq{\alpha\beta}}),
\sV(\mname{snd}_{\beta\seq{\alpha\beta}} \,
\textbf{A}_{\seq{\alpha\beta}})},\] and otherwise let
$\sV(\textbf{A}_{\seq{\alpha\beta}})$ be undefined.  Also, let
\[\sD_{\seq{\alpha\beta}} =
\set{\sV(\textbf{A}_{\seq{\alpha\beta}}) \;|\;
  \textbf{A}_{\seq{\alpha\beta}} \mbox{ is a }
  \xwff{\seq{\alpha\beta}} \mbox{ and }
  \provesb{\sH}{{\textbf{A}_{\seq{\alpha\beta}}\IsDefApp}}}.\]
$(1^{\seq{\alpha\beta}})$ and $(2^{\seq{\alpha\beta}})$ are clearly
satisfied; we must show that $(3^{\seq{\alpha\beta}})$ is satisfied.
\begin{align} \setcounter{equation}{0}
&
\sV(\textbf{A}_{\seq{\alpha\beta}}) = \sV(\textbf{B}_{\seq{\alpha\beta}}) \\
&\mathrel{\mbox{iff}} 
\langle(\sV(\mname{fst}_{\alpha\seq{\alpha\beta}} \,
\textbf{A}_{\seq{\alpha\beta}}),
\sV(\mname{snd}_{\beta\seq{\alpha\beta}} \,
\textbf{A}_{\seq{\alpha\beta}})), \notag\\
&\hspace*{4.2ex}
(\sV(\mname{fst}_{\alpha\seq{\alpha\beta}} \,
\textbf{B}_{\seq{\alpha\beta}}),
\sV(\mname{snd}_{\beta\seq{\alpha\beta}} \,
\textbf{B}_{\seq{\alpha\beta}}))\rangle \\
&\mathrel{\mbox{iff}} 
\sV(\mname{fst}_{\alpha\seq{\alpha\beta}} \, \textbf{A}_{\seq{\alpha\beta}}) =
\sV(\mname{fst}_{\alpha\seq{\alpha\beta}} \, \textbf{B}_{\seq{\alpha\beta}}) \mbox{ and} \notag\\
&\hspace*{3.3ex}
\sV(\mname{snd}_{\beta\seq{\alpha\beta}} \, \textbf{A}_{\seq{\alpha\beta}}) =
\sV(\mname{snd}_{\beta\seq{\alpha\beta}} \, \textbf{B}_{\seq{\alpha\beta}}) \\
&\mathrel{\mbox{iff}}  
\provesb{\sH}{\mname{fst}_{\alpha\seq{\alpha\beta}} \, \textbf{A}_{\seq{\alpha\beta}} =
\mname{fst}_{\alpha\seq{\alpha\beta}} \, \textbf{B}_{\seq{\alpha\beta}}}  \mbox{ and} \notag\\
&\hspace*{3.3ex} 
\provesb{\sH}{\mname{snd}_{\beta\seq{\alpha\beta}} \, \textbf{A}_{\seq{\alpha\beta}} =
\mname{snd}_{\beta\seq{\alpha\beta}} \, \textbf{B}_{\seq{\alpha\beta}}} \\
&\mathrel{\mbox{iff}} 
\provesb{\sH}{\mname{pair}_{\seq{\alpha\beta}\beta\alpha} \,
[\mname{fst}_{\alpha\seq{\alpha\beta}} \, \textbf{A}_{\seq{\alpha\beta}}] \,
[\mname{snd}_{\beta\seq{\alpha\beta}} \, \textbf{A}_{\seq{\alpha\beta}}] = \notag\\
&\hspace{9.6ex}
\mname{pair}_{\seq{\alpha\beta}\beta\alpha} \,
[\mname{fst}_{\alpha\seq{\alpha\beta}} \, \textbf{B}_{\seq{\alpha\beta}}] \,
[\mname{snd}_{\beta\seq{\alpha\beta}} \, \textbf{B}_{\seq{\alpha\beta}}}] \\
&\mathrel{\mbox{iff}} 
\provesb{\sH}{\textbf{A}_{\seq{\alpha\beta}} = \textbf{B}_{\seq{\alpha\beta}}}.
\end{align}
(2) is by the definition of $\sV$ on $\xwffs{\seq{\alpha\beta}}$; (3)
is by definition of ordered pairs; (4) is by $(3^\alpha)$ and
$(3^\beta)$; (5) is by Axiom 9.1; and (6) is by Axioms 9.1 and
9.2. Hence $(3^{\seq{\alpha\beta}})$ is satisfied.

\medskip

\noindent \textbf{Step 2} {\sglsp} We claim that $\sM =
\seq{\set{\sD_\alpha \;|\; \alpha \in \sT}, \sV}$ is an
interpretation.  For each primitive constant $\textbf{c}_\gamma$ of
{\qzerouqeplus}, $\textbf{c}_\gamma$ is an $\xwff{\gamma}$ and
$\provesb{\sH}{\textbf{c}_\gamma\IsDefApp}$ by Axiom 6.2, and thus
$\sV$ maps each primitive constant of {\qzerouqeplus} of type $\gamma$
into $\sD_\gamma$ by $(1^{\gamma})$ and $(2^{\gamma})$.

\medskip

\bsp \noindent \textbf{Step 2.1} {\sglsp} We must show
$\sV(\mname{Q}_{o\alpha\alpha}) = \sJ(\mname{Q}_{o\alpha\alpha})$,
i.e., that $\sV(\mname{Q}_{o\alpha\alpha})$ is the identity relation
on $\sD_\alpha$.  Let $\sV(\textbf{A}_\alpha)$ and
$\sV(\textbf{B}_\alpha)$ be arbitrary members of $\sD_\alpha$.  Then
$\sV(\textbf{A}_\alpha) = \sV(\textbf{B}_\alpha)$ iff
$\provesb{\sH}{\textbf{A}_\alpha = \textbf{B}_\alpha}$ iff
$\provesb{\sH}{\mname{Q}_{o\alpha\alpha}\textbf{A}_\alpha\textbf{B}_\alpha}$
iff $\mname{T} =
\sV(\mname{Q}_{o\alpha\alpha}\textbf{A}_\alpha\textbf{B}_\alpha) =
\sV(\mname{Q}_{o\alpha\alpha})
(\sV(\textbf{A}_\alpha))(\sV(\textbf{B}_\alpha)).$ Thus
$\sV(\mname{Q}_{o\alpha\alpha})$ is the identity relation on
$\sD_\alpha$. \esp

\medskip

\noindent \textbf{Step 2.2} {\sglsp} We must show that
$\sV(\iota_{\alpha(o\alpha)}) = \sJ(\iota_{\alpha(o\alpha)})$, i.e.,
that, for $\alpha \not= o$, $\sV(\iota_{\alpha(o\alpha)})$ is the
unique member selector on $\sD_\alpha$.  For $\alpha \not= o$, let
$\sV(\textbf{A}_{o\alpha})$ be an arbitrary member of $\sD_{o\alpha}$
and $\textbf{x}_\alpha$ be a variable that does not occur in
$\textbf{A}_{o\alpha}$.  Suppose $\sV(\textbf{A}_{o\alpha}) =
\sV(\mname{Q}_{o\alpha\alpha} \textbf{B}_\alpha)$.  We must show that
$\sV(\iota_{\alpha(o\alpha)})(\sV(\textbf{A}_{o\alpha})) = \sV
(\textbf{B}_\alpha)$.  The hypothesis implies
$\provesb{\sH}{\textbf{A}_{o\alpha} = \mname{Q}_{o\alpha\alpha}
  \textbf{B}_\alpha}$, so $\provesb{\sH}{\Forsome_1 \textbf{x}_\alpha
  [\textbf{A}_{o\alpha}\textbf{x}_\alpha]}$ by the definition of
$\Forsome_1$, and so $\provesb{\sH}{\textbf{A}_{o\alpha}[\Iota
    \textbf{x}_\alpha \textbf{A}_{o\alpha}]}$ by Axiom 8.1 and Axiom
8.2.  Hence
$\provesb{\sH}{\mname{Q}_{o\alpha\alpha}\textbf{B}_\alpha[\Iota
    \textbf{x}_\alpha \textbf{A}_{o\alpha}]}$ by Rule 1, and so
$\sV(\textbf{B}_\alpha) = \sV(\Iota \textbf{x}_\alpha
\textbf{A}_{o\alpha}) =
\sV(\iota_{\alpha(o\alpha)}\textbf{A}_{o\alpha}) =
\sV(\iota_{\alpha(o\alpha)})(\sV(\textbf{A}_{o\alpha}))$.

\bsp Now suppose that $\sV(\Forall \textbf{x}_\alpha [\textbf{A}_{o\alpha}
  \not= \mname{Q}_{o\alpha\alpha} \textbf{x}_\alpha]) = \mname{T}$.
We must show that
$\sV(\iota_{\alpha(o\alpha)})(\sV(\textbf{A}_{o\alpha}))$ is
undefined.  The hypothesis implies $\provesb{\sH}{\Forall \textbf{x}_\alpha
  [\textbf{A}_{o\alpha} \not= \mname{Q}_{o\alpha\alpha}
    \textbf{x}_\alpha]}$, so $\provesb{\sH}{\NegAlt[\Forsome_1
    \textbf{x}_\alpha [\textbf{A}_{o\alpha}\textbf{x}_\alpha]]}$ by
the definition of $\Forsome_1$, and so $\provesb{\sH}{[\Iota
    \textbf{x}_\alpha
           [\textbf{A}_{o\alpha}\textbf{x}_\alpha]]\IsUndefApp}$ by
Axiom 8.1.  Hence $\sV(\Iota \textbf{x}_\alpha
[\textbf{A}_{o\alpha}\textbf{x}_\alpha]) \QuasiEqual
\sV(\iota_{\alpha(o\alpha)}\textbf{A}_{o\alpha}) \QuasiEqual
\sV(\iota_{\alpha(o\alpha)})(\sV(\textbf{A}_{o\alpha}))$ is undefined. \esp

\medskip

\bsp \noindent \textbf{Step 2.4} {\sglsp} We must show that
$\sV(\mname{pair}_{\seq{\alpha\beta}\beta\alpha}) =
\sJ(\mname{pair}_{\seq{\alpha\beta}\beta\alpha})$.  Let
$\sV(\textbf{A}_\alpha)$ be an arbitrary member of $\sD_\alpha$ and
$\sV(\textbf{B}_\beta)$ be an arbitrary member of $\sD_\beta$.  We
must show that $\sV(\mname{pair}_{\seq{\alpha\beta}\beta\alpha}\,
\textbf{A}_\alpha, \textbf{B}_\beta) = \seq{\sV(\textbf{A}_\alpha),
\sV(\textbf{B}_\beta)}$.  $\sV(\mname{fst}_{\alpha\seq{\alpha\beta}}
\, [\mname{pair}_{\seq{\alpha\beta}\beta\alpha}\, \textbf{A}_\alpha,
  \textbf{B}_\beta]) = \sV(\textbf{A}_\alpha)$ iff
$\provesb{\sH}{\mname{fst}_{\alpha\seq{\alpha\beta}} \,
  [\mname{pair}_{\seq{\alpha\beta}\beta\alpha}\, \textbf{A}_\alpha,
    \textbf{B}_\beta] = \textbf{A}_\alpha}$, which holds by the
definition of $\mname{fst}_{\alpha\seq{\alpha\beta}}$ and Axiom 9.1.
Similarly, $\sV(\mname{snd}_{\beta\seq{\alpha\beta}} \,
[\mname{pair}_{\seq{\alpha\beta}\beta\alpha}\, \textbf{A}_\alpha,
  \textbf{B}_\beta]) = \sV(\textbf{B}_\beta)$ holds by the definition
of $\mname{snd}_{\beta\seq{\alpha\beta}}$ and Axiom 9.1.  Hence
\begin{align*}
&
\sV(\mname{pair}_{\seq{\alpha\beta}\beta\alpha} \, \textbf{A}_\alpha, \textbf{B}_\beta) \\
&= 
\seq{\sV(\mname{fst}_{\alpha\seq{\alpha\beta}} \,
[\mname{pair}_{\seq{\alpha\beta}\beta\alpha} \, \textbf{A}_\alpha, \textbf{B}_\beta]),
\sV(\mname{snd}_{\beta\seq{\alpha\beta}} \,
[\mname{pair}_{\seq{\alpha\beta}\beta\alpha} \, \textbf{A}_\alpha, \textbf{B}_\beta])} \\
&=
\seq{\sV(\textbf{A}_\alpha), \sV(\textbf{B}_\beta)}.
\end{align*}
\esp

\noindent Thus $\sM$ is an interpretation.  

\medskip

\noindent \textbf{Step 3} {\sglsp} We claim further that $\sM$ is an
evaluation-free model for {\qzerouqeplus}.  For each assignment $\phi
\in \mname{assign}(\sM)$ and evaluation-free wff $\textbf{D}_\delta$,
let \[\textbf{D}^{\phi}_\delta = \mname{S}^{{\bf x}^{1}_{\delta_1}
  \cdots {\bf x}^{n}_{\delta_n}}_{{\bf E}^{1}_{\delta_1} \cdots {\bf
    E}^{n}_{\delta_n}} \textbf{D}_\delta = \mname{S}^{{\bf
    x}^{1}_{\delta_1}}_{{\bf E}^{1}_{\delta_1}} \mname{S}^{{\bf
    x}^{2}_{\delta_2}}_{{\bf E}^{2}_{\delta_2}} \cdots \mname{S}^{{\bf
    x}^{n}_{\delta_n}}_{{\bf E}^{n}_{\delta_n}} \textbf{D}_\delta\]
where ${\bf x}^{1}_{\delta_1} \cdots {\bf x}^{n}_{\delta_n}$ are the
free variables of $\textbf{D}_\delta$ and ${\bf E}^{i}_{\delta_i}$ is
the first xwff (in some fixed enumeration) of
$\sL(\mbox{\qzerouqeplus})$ such that $\phi({\bf x}^{i}_{\delta_i}) =
\sV({\bf E}^{i}_{\delta_i})$ for all $i$ with $1 \le i \le n$.  Since
each ${\bf E}^{i}_{\delta_i}$ is syntactically closed,
$\textbf{D}^{\phi}_\delta$ is always defined.

Let $\sV^{\cal M}_{\phi}(\textbf{D}_\delta) \QuasiEqual
\sV(\textbf{D}^{\phi}_\delta)$.  $\textbf{D}^{\phi}_\delta$ is
clearly a $\xwff{\delta}$, so $\sV^{\cal M}_{\phi}(\textbf{D}_\delta) \in
\sD_\delta$ if $\sV^{\cal M}_{\phi}(\textbf{D}_\delta)$ is defined.  We will show
that the six conditions of an evaluation-free model are satisfied as
follows:.

\be

  \item Let $\textbf{D}_\delta$ be a variable
  $\textbf{x}_\delta$.  Choose $\textbf{E}_\delta$ so that
  $\phi(\textbf{x}_\delta) = \sV(\textbf{E}_\delta)$ as above.  Then
  $\sV^{\cal M}_{\phi}(\textbf{D}_\delta) = \sV^{\cal M}_{\phi}(\textbf{x}_\delta) =
  \sV(\textbf{x}^{\phi}_{\delta}) = \sV(\textbf{E}_\delta) =
  \phi(\textbf{x}_\delta)$.

  \item Let $\textbf{D}_\delta$ be a primitive constant.  Then
    $\sV^{\cal M}_{\phi}(\textbf{D}_\delta) = \sV(\textbf{D}^{\phi}_\delta) =
    \sV(\textbf{D}_\delta) = \sJ(\textbf{D}_\delta)$.

  \item \bsp Let $\textbf{D}_\delta$ be
    $[\textbf{A}_{\alpha\beta} \textbf{B}_\beta]$.  If
    $\sV^{\cal M}_{\phi}(\textbf{A}_{\alpha\beta})$ is defined,
    $\sV^{\cal M}_{\phi}(\textbf{B}_\beta)$ is defined, and
    $\sV^{\cal M}_{\phi}(\textbf{A}_{\alpha\beta})$ is defined at
    $\sV^{\cal M}_{\phi}(\textbf{B}_\beta)$, then
    $\sV^{\cal M}_{\phi}(\textbf{D}_\delta) =
    \sV^{\cal M}_{\phi}(\textbf{A}_{\alpha\beta}\textbf{B}_\beta) =
    \sV(\textbf{A}^{\phi}_{\alpha\beta}\textbf{B}^{\phi}_\beta) =
    \sV(\textbf{A}^{\phi}_{\alpha\beta})(\sV(\textbf{B}^{\phi}_\beta))
    =
    \sV^{\cal M}_{\phi}(\textbf{A}_{\alpha\beta})(\sV^{\cal M}_{\phi}(\textbf{B}_\beta))$.
    Now assume $\sV^{\cal M}_{\phi}(\textbf{A}_{\alpha\beta})$ is undefined,
    $\sV^{\cal M}_{\phi}(\textbf{B}_\beta)$ is undefined, or
    $\sV^{\cal M}_{\phi}(\textbf{A}_{\alpha\beta})$ is not defined at
    $\sV^{\cal M}_{\phi}(\textbf{B}_\beta)$.  Then
    $\provesb{\sH}{{\textbf{A}^{\phi}_{\alpha\beta}\IsUndefApp}}$,
    $\provesb{\sH}{{\textbf{B}^{\phi}_{\beta}\IsUndefApp}}$, or
    $\sV(\textbf{A}^{\phi}_{\alpha\beta}\textbf{B}^{\phi}_{\beta})$ is
    undefined.
    $\provesb{\sH}{{\textbf{A}^{\phi}_{\alpha\beta}\IsUndefApp}}$ or
    $\provesb{\sH}{{\textbf{B}^{\phi}_{\beta}\IsUndefApp}}$ implies
    $\provesb{\sH}{\textbf{A}^{\phi}_{\alpha\beta}\textbf{B}^{\phi}_{\beta}
      \QuasiEqual \Undefined_\alpha}$ by Axiom 6.4.  If $\alpha = o$, then
    $\sV^{\cal M}_{\phi}(\textbf{D}_\delta) =
    \sV^{\cal M}_{\phi}(\textbf{A}_{\alpha\beta}\textbf{B}_\beta) =
    \sV(\textbf{A}^{\phi}_{\alpha\beta}\textbf{B}^{\phi}_{\beta}) =
    \sV(F_o) = \mname{F}$.  If $\alpha \not= o$, then
    $\sV^{\cal M}_{\phi}(\textbf{D}_\delta) \QuasiEqual
    \sV^{\cal M}_{\phi}(\textbf{A}_{\alpha\beta}\textbf{B}_\beta) \QuasiEqual
    \sV(\textbf{A}^{\phi}_{\alpha\beta}\textbf{B}^{\phi}_{\beta})
    \QuasiEqual \sV(\Undefined_\alpha)$ is undefined by Axiom 6.11.

 \esp

  \item \bsp Let $\textbf{D}_\delta$ be
    $[\lambda\textbf{x}_\alpha\textbf{B}_\beta]$.  Let
    $\sV(\textbf{E}_\alpha)$ be an arbitrary member of $\sD_\alpha$,
    and so $\textbf{E}_\alpha$ is an xwff and
    $\provesb{\sH}{{\textbf{E}_\alpha\IsDefApp}}$.  Given an
    assignment $\phi \in \mname{assign}(\sM)$, let $\psi = \phi[
      \textbf{x}_\alpha \mapsto \sV(\textbf{E}_\alpha)]$.  It follows
    from the Beta-Reduction Theorem (part 1) that
    $\provesb{\sH}{[\lambda \textbf{x}_\alpha \textbf{B}_\beta]^{\phi}
      \textbf{E}_{\alpha} \QuasiEqual \textbf{B}^{\psi}_{\beta}}$.
    Then $\sV^{\cal
      M}_{\phi}(\textbf{D}_\delta)(\sV(\textbf{E}_\alpha)) \QuasiEqual
    \sV^{\cal M}_{\phi}([\lambda\textbf{x}_\alpha\textbf{B}_\beta])
    (\sV(\textbf{E}_\alpha)) \QuasiEqual
    \sV([\lambda\textbf{x}_\alpha\textbf{B}_\beta]^{\phi})(\sV(\textbf{E}_\alpha))
    \QuasiEqual \sV([\lambda\textbf{x}_\alpha\textbf{B}_\beta]^{\phi}
    \textbf{E}_\alpha)) \QuasiEqual \sV(\textbf{B}^{\psi}_{\beta})
    \QuasiEqual \sV^{\cal M}_\psi(\textbf{B}_{\beta})$ as required. \esp

  \item \bsp Let $\textbf{D}_\delta$ be
    $[\mname{c}\textbf{A}_o\textbf{B}_\alpha\textbf{C}_\alpha]$.  If
    $\sV^{\cal M}_{\phi}(\textbf{A}_o) = \mname{T}$, then
    $\sV^{\cal M}_{\phi}(\textbf{D}_\delta)
    \QuasiEqual
    \sV^{\cal M}_{\phi}(\mname{c}\textbf{A}_o\textbf{B}_\alpha\textbf{C}_\alpha)
    \QuasiEqual
    \sV([\mname{c}\textbf{A}_o\textbf{B}_\alpha\textbf{C}_\alpha]^\phi)
    \QuasiEqual
    \sV(\mname{c}\textbf{A}^{\phi}_{o}\textbf{B}^{\phi}_{\alpha}\textbf{C}^{\phi}_{\alpha})
    \QuasiEqual
    \sV(\mname{c}\,T_o\textbf{B}^{\phi}_{\alpha}\textbf{C}^{\phi}_{\alpha})
    \QuasiEqual
    \sV(\textbf{B}^{\phi}_{\alpha})
    \QuasiEqual
    \sV^{\cal M}_{\phi}(\textbf{B}_\alpha)$
    by Axiom 10.1.  Similarly, if $\sV^{\cal M}_{\phi}(\textbf{A}_o) =
    \mname{F}$, then $\sV^{\cal M}_{\phi}(\textbf{D}_\delta)
    \QuasiEqual \sV^{\cal M}_{\phi}(\textbf{B}_\alpha)$ by Axiom
    10.2.\esp

  \item \bsp Let $\textbf{D}_\delta$ be
    $[\mname{q}\textbf{A}_\alpha]$.  Then 
    $\sV^{\cal M}_{\phi}(\textbf{D}_\delta) 
    = 
    \sV^{\cal M}_{\phi}([\mname{q}\textbf{A}_\alpha]) 
    =
    \sV([\mname{q}\textbf{A}_\alpha]^\phi) 
    =
    \sV([\mname{q}\textbf{A}_\alpha]) 
    =
    \sE(\textbf{A}_\alpha)$.\esp

\ee
Thus $\sM$ is an evaluation-free model for {\qzerouqeplus}.

\medskip

\bsp \noindent \textbf{Step 4} {\sglsp} We must show that $\sM$ is
normal and frugal.  If $\textbf{A}_o$ is an evaluation-free specifying
axiom given by Specifications 1--9, then $\provesb{\sH}{\textbf{A}_o}$
by Axiom 12, so $\sV(\textbf{A}_o) = \mname{T}$ and $\sM \models
\textbf{A}_o$, and so $\sM$ is normal.  Clearly,
(a)~$\mbox{card}(\sD_\alpha) \le \mbox{card}(\sL(\mbox{\qzerouqe}))$
since $\sV$ maps a subset of the $\xwffs{\alpha}$ of {\qzerouqeplus}
onto $\sD_\alpha$ and (b)~$\mbox{card}(\sL(\mbox{\qzerouqeplus})) =
\mbox{card}(\sL(\mbox{\qzerouqe}))$, and so $\sM$ is frugal.\esp

\medskip

\noindent \textbf{Step 5} {\sglsp} We must show that $\sM$ is a frugal
normal evaluation-free model for $\sG$.  We have shown that $\sM$ is a
frugal normal evaluation-free model for {\qzerouqeplus}.  Clearly,
$\sM$ is also a frugal normal evaluation-free model for {\qzerouqe}.
If $\textbf{A}_o \in \sG$, then $\textbf{A}_o \in \sH$, so
$\provesb{\sH}{\textbf{A}_o}$, so $\sV(\textbf{A}_o) = \mname{T}$ and
$\sM \models \textbf{A}_o$, and so $\sM$ is an evaluation-free model
for $\sG$.
\end{proof}

\subsection{Evaluation-Free Completeness Theorem}

\begin{thm}[Evaluation-Free Completeness Theorem for {\pfsysuqe}] \label{thm:completeness} \hspace{1ex}\\
{\pfsysuqe} is evaluation-free complete for {\qzerouqe}.
\end{thm}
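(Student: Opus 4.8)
The plan is to derive the Evaluation-Free Completeness Theorem as a contrapositive of Henkin's Theorem for {\pfsysuqe} (Theorem~\ref{thm:henkin}), following the classical route used for {\qzero} in~\cite{Andrews02} and for {\qzerou} in~\cite{Farmer08a}. Suppose $\sH \modelsna \textbf{A}_o$ where $\textbf{A}_o$ is an evaluation-free formula and $\sH$ is a set of syntactically closed evaluation-free formulas of {\qzerouqe}. I want to show $\provesa{\sH}{\textbf{A}_o}$.

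First I would reduce to the case where everything is syntactically closed. Using Lemma~\ref{lem:uni-close}, replace $\textbf{A}_o$ with a universal closure $\textbf{B}_o$; by part~2 of that lemma $\sH \modelsna \textbf{B}_o$ still holds, by part~1 $\textbf{B}_o$ is syntactically closed, and by Universal Generalization and Universal Instantiation (Theorems~\ref{thm:gen} and~\ref{thm:5215}, evaluation-free parts) $\provesa{\sH}{\textbf{A}_o}$ iff $\provesa{\sH}{\textbf{B}_o}$. So without loss of generality $\textbf{A}_o$ is an $\xwff{o}$ of {\qzerouqe}.

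Next, argue by contradiction: assume $\textbf{A}_o$ is \emph{not} evaluation-free provable from $\sH$, i.e.\ it is not the case that $\provesa{\sH}{\textbf{A}_o}$. Then I would show the set $\sG = \sH \cup \set{\NegAlt\textbf{A}_o}$ is consistent in {\pfsysuqe}: if $\provesa{\sG}{F_o}$, then by the Deduction Theorem (Theorem~\ref{thm:deduction}, applicable since $\NegAlt\textbf{A}_o$ is a syntactically closed evaluation-free formula) we get $\provesa{\sH}{\NegAlt\textbf{A}_o \Implies F_o}$, and by the Tautology Theorem (Theorem~\ref{thm:tautology}) $\provesa{\sH}{\textbf{A}_o}$, contradicting our assumption. (Here $\sG$ is a set of $\xwffs{o}$ of {\qzerouqe} as required, and consistency in {\pfsysuqe} means no proof of $F_o$; note that an evaluation-free proof of $F_o$ is in particular a proof of $F_o$, and conversely if there is any proof of $F_o$ from the evaluation-free hypotheses one can check the relevant reductions go through evaluation-free — this matching of "consistent" with "evaluation-free consistent" for evaluation-free hypothesis sets is the one bookkeeping point to verify carefully.) Now apply Henkin's Theorem for {\pfsysuqe} to $\sG$: there is a frugal normal evaluation-free model $\sM$ with $\sM \models \sG$, hence $\sM \models \sH$ and $\sM \models \NegAlt\textbf{A}_o$. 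Since $\sM$ is a normal evaluation-free model for $\sH$ and $\sH \modelsna \textbf{A}_o$, we also have $\sM \models \textbf{A}_o$. But $\sM \models \textbf{A}_o$ and $\sM \models \NegAlt\textbf{A}_o$ is impossible in any general or evaluation-free model, since $\sV^{\cal M}_\phi$ takes values in $\set{\mname{T},\mname{F}}$ on formulas and $\NegAlt$ has its usual meaning. This contradiction establishes $\provesa{\sH}{\textbf{A}_o}$, completing the proof.

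The main obstacle, and the part I would spend the most care on, is not the skeleton above — which is entirely routine once Henkin's Theorem is in hand — but making sure the interface conditions line up: that the Deduction Theorem and Universal Generalization/Instantiation are invoked only in their evaluation-free forms with syntactically closed hypotheses (so that the evaluation-free proof discipline is preserved throughout), and that "consistent in {\pfsysuqe}" as used in the hypothesis of Henkin's Theorem is equivalent, for a set of syntactically closed evaluation-free formulas, to the absence of an evaluation-free proof of $F_o$. Everything genuinely hard has already been absorbed into Theorem~\ref{thm:henkin} (the term-model construction, the Extension Lemma, and the delicate treatment of type $\epsilon$, ordered pairs, and quotations there); the completeness theorem itself is just the standard model-existence-implies-completeness packaging.
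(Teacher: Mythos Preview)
Your proposal is correct and follows essentially the same route as the paper: pass to a universal closure via Lemma~\ref{lem:uni-close}, argue that $\sH \cup \set{\NegAlt\textbf{B}_o}$ is inconsistent by applying Henkin's Theorem (Theorem~\ref{thm:henkin}) and deriving a model-theoretic contradiction, and then extract $\provesa{\sH}{\textbf{B}_o}$ via the Deduction Theorem and the Tautology Theorem before descending back to $\textbf{A}_o$ by Universal Instantiation. The bookkeeping point you flag about matching general consistency with evaluation-free consistency is indeed the only delicate spot, and the paper handles it in the same implicit way you describe.
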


\begin{proof} 
Let $\textbf{A}_o$ be an evaluation-free formula and $\sH$ be a set of
syntactically closed evaluation-free formulas of {\qzerouqe}.  Assume
$\sH \modelsna \textbf{A}_o$, and let $\textbf{B}_o$ be a universal
closure of $\textbf{A}_o$.  Then $\textbf{B}_o$ is syntactically
closed and $\sH \modelsna \textbf{B}_o$ by Lemma~\ref{lem:uni-close}.
Suppose $\sH \cup \{\NegAlt \textbf{B}_o\}$ is consistent in
{\pfsysuqe}.  Then, by Henkin's Theorem, there is a normal
evaluation-free model $\sM$ for $\sH \cup \set{\NegAlt \textbf{B}_o}$,
and so $\sM \models \NegAlt \textbf{B}_o$.  Since $\sM$ is also a
normal evaluation-free model for $\sH$, $\sM \models \textbf{B}_o$.
From this contradiction it follows that $\sH \cup \set{\NegAlt
  \textbf{B}_o}$ is inconsistent in {\pfsysuqe}.  Hence
$\provesa{\sH}{\textbf{B}_o}$ by the Deduction Theorem and the
Tautology Theorem.  Therefore, $\provesa{\sH}{\textbf{A}_o}$ by
Universal Instantiation (part 1) and Axiom 6.1.
\end{proof}

\section{Applications} \label{sec:applications}

We will now look at some applications of the machinery in {\qzerouqe}
for reasoning about the interplay of the syntax and semantics of
{\qzerouqe} expressions (i.e., wffs).  We will consider three kinds of
applications.  The first kind uses the type $\epsilon$ machinery to
reason about the syntactic structure of wffs; see the examples in
subsection~\ref{subsec:implication}.  The second kind uses evaluation
applied to variables of type~$\epsilon$ to express syntactic variables
as employed, for example, in schemas; see the examples in
subsections~\ref{subsec:lem} and~\ref{subsec:beta-red}.  The third
kind uses the full machinery of {\qzerouqe} to formalize syntax-based
mathematical algorithms in the manner described in~\cite{Farmer13}; see the
example in subsection~\ref{subsec:conj-cons}.

\subsection{Example: Implications} \label{subsec:implication}

We will illustrate how the type $\epsilon$ machinery in {\qzerouqe}
can be used to reason about the syntactic structure of wffs by
defining some useful constants for analyzing and manipulating
\emph{implications}, i.e., formulas of the form $\textbf{A}_o \Implies
\textbf{B}_o$.

Let $\mname{implies}_{\epsilon\epsilon\epsilon}$ be a defined constant
that stands for \[\lambda x_\epsilon \lambda y_\epsilon
[\mname{app}_{\epsilon\epsilon\epsilon} \,
  [\mname{app}_{\epsilon\epsilon\epsilon} \, \synbrack{\Implies_{ooo}}
    \, x_\epsilon] \, y_\epsilon].\]

\begin{lem} \label{lem:implies}
For all formulas $\textbf{A}_o$ and $\textbf{B}_o$, 
\bi

  \item[] $\proves{}{\mname{implies}_{\epsilon\epsilon\epsilon} \,
    \synbrack{\textbf{A}_o} \, \synbrack{\textbf{B}_o} =
    \synbrack{\textbf{A}_o \Implies \textbf{B}_o}}$.

\ei
\end{lem}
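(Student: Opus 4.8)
The plan is to unfold the definition of $\mname{implies}_{\epsilon\epsilon\epsilon}$ and beta-reduce twice, then identify the resulting wff with $\synbrack{\textbf{A}_o \Implies \textbf{B}_o}$ by computing $\sE(\textbf{A}_o \Implies \textbf{B}_o)$ explicitly. First I would observe that $\mname{implies}_{\epsilon\epsilon\epsilon} \, \synbrack{\textbf{A}_o} \, \synbrack{\textbf{B}_o}$ is $[\lambda x_\epsilon \lambda y_\epsilon [\mname{app}_{\epsilon\epsilon\epsilon} \, [\mname{app}_{\epsilon\epsilon\epsilon} \, \synbrack{\Implies_{ooo}} \, x_\epsilon] \, y_\epsilon]] \, \synbrack{\textbf{A}_o} \, \synbrack{\textbf{B}_o}$. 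Since $\synbrack{\textbf{A}_o}$ and $\synbrack{\textbf{B}_o}$ are quotations, they are defined (Axiom 6.7) and evaluation-free and syntactically closed, so the beta-reductions pose no variable-capture problems; I would carry them out using the Beta-Reduction Theorem (part 1) together with the structural Axioms 4.5, 4.9, 4.3 for pushing the substitution through applications, quotations, and variables — or more economically, just apply part 1 of the Beta-Reduction Theorem directly, since $\mname{S}^{x_\epsilon}_{\synbrack{\textbf{A}_o}}$ and then $\mname{S}^{y_\epsilon}_{\synbrack{\textbf{B}_o}}$ are clearly defined here (no evaluations occur, so Requirement 5 / Lemma on evaluation-free substitution applies). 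The result is $\mname{app}_{\epsilon\epsilon\epsilon} \, [\mname{app}_{\epsilon\epsilon\epsilon} \, \synbrack{\Implies_{ooo}} \, \synbrack{\textbf{A}_o}] \, \synbrack{\textbf{B}_o}$.

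Next I would compute the right-hand side. By Specification 1, $\synbrack{\textbf{A}_o \Implies \textbf{B}_o} = \sE(\textbf{A}_o \Implies \textbf{B}_o)$ is provably equal to that wff (via Axiom 12 and Rule 1). Now $\textbf{A}_o \Implies \textbf{B}_o$ abbreviates $\mname{\Implies}_{ooo}\textbf{A}_o\textbf{B}_o$, i.e. $[[\Implies_{ooo}\textbf{A}_o]\textbf{B}_o]$, so by the inductive definition of $\sE$ on function applications, $\sE(\textbf{A}_o \Implies \textbf{B}_o) = \mname{app}_{\epsilon\epsilon\epsilon} \, \sE(\Implies_{ooo}\textbf{A}_o) \, \sE(\textbf{B}_o) = \mname{app}_{\epsilon\epsilon\epsilon} \, [\mname{app}_{\epsilon\epsilon\epsilon} \, \sE(\Implies_{ooo}) \, \sE(\textbf{A}_o)] \, \sE(\textbf{B}_o)$. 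Since $\Implies_{ooo}$ is itself a defined constant (not primitive), I would need to be slightly careful: $\sE(\Implies_{ooo})$ is not literally $\synbrack{\Implies_{ooo}}$ in the sense of a quotation of a primitive constant, but $\synbrack{\Implies_{ooo}}$ as used in the definition of $\mname{implies}_{\epsilon\epsilon\epsilon}$ means exactly $\mname{q}\,\Implies_{ooo}$, which after expanding the abbreviation $\Implies_{ooo}$ is $\mname{q}$ applied to $[\lambda x_o \lambda y_o [x_o = [x_o \Andd y_o]]]$, and $\sE$ applied to that same wff gives the same construction. So both sides reduce to the identical $\wff{\epsilon}$, and equality follows from Proposition~\ref{prop:5200} (reflexivity) together with Rule 1.

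The main obstacle, such as it is, is purely bookkeeping: making sure the abbreviation-unfolding of $\Implies_{ooo}$ is handled consistently on both sides and that the two nested beta-reductions are justified by the right instances of Axiom 4 (in particular that the first beta-reduction, substituting into $[\lambda y_\epsilon \ldots]$, is Axiom 4.6/4.7-style and produces no capture since $y_\epsilon$ does not occur in $\synbrack{\textbf{A}_o}$). I expect to assemble the proof as a short $\vdash$-chain: (1) reflexivity of $\mname{implies}_{\epsilon\epsilon\epsilon} \, \synbrack{\textbf{A}_o} \, \synbrack{\textbf{B}_o}$; (2) rewrite via the definition of $\mname{implies}_{\epsilon\epsilon\epsilon}$; (3) two applications of the Beta-Reduction Theorem (part 1) plus the structural Axioms 4.3, 4.5, 4.9 and Rule 1 to normalize to $\mname{app}_{\epsilon\epsilon\epsilon} \, [\mname{app}_{\epsilon\epsilon\epsilon} \, \synbrack{\Implies_{ooo}} \, \synbrack{\textbf{A}_o}] \, \synbrack{\textbf{B}_o}$; (4) Specification 1 (via Axiom 12) and the definition of $\sE$ to recognize this as $\synbrack{\textbf{A}_o \Implies \textbf{B}_o}$; (5) transitivity of equality. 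No step requires anything beyond the metatheorems already proved, and since everything in sight is evaluation-free, one even gets $\provesa{}{}$ rather than merely $\proves{}{}$ — though the statement as given only claims the latter.
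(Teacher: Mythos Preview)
Your proposal is correct and follows essentially the same route as the paper: start from reflexivity, unfold the definition of $\mname{implies}_{\epsilon\epsilon\epsilon}$, beta-reduce the two nested abstractions (the paper cites Axioms 4.2--5 directly where you invoke the Beta-Reduction Theorem and the structural clauses 4.5, 4.7, 4.9), and then use Specification~1 twice to collapse $\mname{app}_{\epsilon\epsilon\epsilon}\,[\mname{app}_{\epsilon\epsilon\epsilon}\,\synbrack{\Implies_{ooo}}\,\synbrack{\textbf{A}_o}]\,\synbrack{\textbf{B}_o}$ into $\synbrack{\textbf{A}_o \Implies \textbf{B}_o}$. Your extra remark that $\Implies_{ooo}$ is a defined rather than primitive constant, so that $\synbrack{\Implies_{ooo}}$ must be read as the quotation of its defining lambda term, is a genuine bookkeeping point the paper leaves implicit, and your observation that the whole argument lives in the evaluation-free fragment (yielding $\provesa{}{\cdot}$) is also correct though not needed for the stated claim.
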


\begin{proof}
\begin{align} \setcounter{equation}{0}
&\vdash
\mname{implies}_{\epsilon\epsilon\epsilon} \, \synbrack{\textbf{A}_o} \, \synbrack{\textbf{B}_o} \QuasiEqual
\mname{implies}_{\epsilon\epsilon\epsilon} \, \synbrack{\textbf{A}_o} \, \synbrack{\textbf{B}_o}. \\
&\vdash
\mname{implies}_{\epsilon\epsilon\epsilon} \, \synbrack{\textbf{A}_o} \, \synbrack{\textbf{B}_o}
\QuasiEqual {} \notag\\
&\hspace*{2.7ex}
[\lambda x_\epsilon \lambda y_\epsilon [\mname{app}_{\epsilon\epsilon\epsilon} \,
[\mname{app}_{\epsilon\epsilon\epsilon} \, \synbrack{\Implies_{ooo}} \, x_\epsilon] \, 
y_\epsilon]] \, \synbrack{\textbf{A}_o} \, \synbrack{\textbf{B}_o}. \\
&\vdash
\mname{implies}_{\epsilon\epsilon\epsilon} \, \synbrack{\textbf{A}_o} \, \synbrack{\textbf{B}_o} \QuasiEqual
{\mname{app}_{\epsilon\epsilon\epsilon} \,
[\mname{app}_{\epsilon\epsilon\epsilon} \, \synbrack{\Implies_{ooo}} \, \synbrack{\textbf{A}_o}] \, 
\synbrack{\textbf{B}_o}}. \\
&\vdash
\mname{implies}_{\epsilon\epsilon\epsilon} \, \synbrack{\textbf{A}_o} \, \synbrack{\textbf{B}_o} \QuasiEqual
\mname{app}_{\epsilon\epsilon\epsilon} \,
\synbrack{\Implies_{ooo} \, \textbf{A}_o} \, \synbrack{\textbf{B}_o}. \\
&\vdash
\mname{implies}_{\epsilon\epsilon\epsilon} \, \synbrack{\textbf{A}_o} \, \synbrack{\textbf{B}_o} =
\synbrack{\Implies_{ooo} \textbf{A}_o \textbf{B}_o}. \\
&\vdash
\mname{implies}_{\epsilon\epsilon\epsilon} \, \synbrack{\textbf{A}_o} \, \synbrack{\textbf{B}_o} =
\synbrack{\textbf{A}_o \Implies \textbf{B}_o}.
\end{align} 
(1) is by Proposition~\ref{prop:5200}; (2) follows from (1) by the
definition of $\mname{implies}_{\epsilon\epsilon\epsilon}$; (3)
follows from (2) and Axioms 4.2--5 and 6.6 by Rules 1 and 2; (4)
follows from (3) by Specification 1; (5) follows from (4) by
Specification 1; and (6) follows from (5) by abbreviation.
\end{proof}

\bigskip

That is, $\mname{implies}_{\epsilon\epsilon\epsilon}$ is an
\emph{implication constructor}: the application of it to the syntactic
representations of two formulas $\textbf{A}_o$ and $\textbf{B}_o$
denotes the syntactic representation of the implication $\textbf{A}_o
\Implies \textbf{B}_o$.

Let $\mname{is-implication}_{o\epsilon}$ be a defined constant that
stands for \[\lambda x_\epsilon \Forsome y_\epsilon \Forsome
z_\epsilon [x_\epsilon = \mname{implies}_{\epsilon\epsilon\epsilon}
  \,y_\epsilon \, z_\epsilon].\] That is,
$\mname{is-implication}_{o\epsilon}$ is an \emph{implication recognizer}:
the application of it to the syntactic representation of a formula
$\textbf{A}_o$ has the value $\mname{T}$ iff $\textbf{A}_o$ has the
form $\textbf{B}_o \Implies \textbf{C}_o$.

Let $\mname{antecedent}_{\epsilon\epsilon}$ and
$\mname{succedent}_{\epsilon\epsilon}$ be the defined constants that,
respectively, stand for \[\lambda x_\epsilon \Iota y_\epsilon \Forsome
z_\epsilon [x_\epsilon = \mname{implies}_{\epsilon\epsilon\epsilon}
  \,y_\epsilon \, z_\epsilon]\] and \[\lambda x_\epsilon \Iota
z_\epsilon \Forsome y_\epsilon [x_\epsilon =
  \mname{implies}_{\epsilon\epsilon\epsilon} \,y_\epsilon \,
  z_\epsilon].\] Then 
\[\proves{}{\mname{antecedent}_{\epsilon\epsilon} \, \synbrack{\textbf{A}_o \Implies
  \textbf{B}_o} = \synbrack{\textbf{A}_o}}\] and
\[\proves{}{\mname{succedent}_{\epsilon\epsilon} \, \synbrack{\textbf{A}_o \Implies
  \textbf{B}_o} = \synbrack{\textbf{B}_o}}.\]  That is,
$\mname{antecedent}_{\epsilon\epsilon}$ and $\mname{succedent}_{\epsilon\epsilon}$
are \emph{implication deconstructors}: the applications of them to the
syntactic representation of a formula $\textbf{A}_o$ denote the
syntactic representations of the antecedent and succedent,
respectively, of $\textbf{A}_o$ if $\textbf{A}_o$ is an implication
and are undefined otherwise.

Let $\mname{converse}_{\epsilon\epsilon}$ be a defined constant that
stands for \[\lambda x_\epsilon
[\mname{implies}_{\epsilon\epsilon\epsilon} \,
  [\mname{succedent}_{\epsilon\epsilon} \, x_\epsilon] \,
  [\mname{antecedent}_{\epsilon\epsilon} \, x_\epsilon]].\]
Then \[\proves{}{\mname{converse}_{\epsilon\epsilon} \,
  \synbrack{\textbf{A}_o \Implies \textbf{B}_o} =
  \synbrack{\textbf{B}_o \Implies \textbf{A}_o}}.\] That is,
$\mname{converse}_{\epsilon\epsilon}$ is an \emph{implication
  converser}: the application of it to the syntactic representation of
a formula $\textbf{A}_o$ denotes the syntactic representation of the
converse of $\textbf{A}_o$ if $\textbf{A}_o$ is an implication and is
undefined otherwise.

\subsection{Example: Law of Excluded Middle}\label{subsec:lem}

The value of a wff of the form $\sembrack{\textbf{x}_\epsilon}_\alpha$
ranges over the values of wffs of type $\alpha$.  Thus wffs like
$\sembrack{\textbf{x}_\epsilon}_\alpha$ can be used in other wffs as
syntactic variables.  It is thus possible to express schemas as single
wffs in {\qzerouqe}.  As an example, let us consider the \emph{law of
  excluded middle (LEM)} which is usually written as a formula schema
like
\[\textbf{A}_o \Or \NegAlt\textbf{A}_o\] where $\textbf{A}_o$ 
ranges over all formulas.  LEM can be naively represented in
{\qzerouqe} as \[\Forall x_\epsilon [\sembrack{x_\epsilon}_o \Or
  \NegAlt\sembrack{x_\epsilon}_o].\] The variable $x_\epsilon$ ranges
over the syntactic representations of all wffs, not just formulas.
However, $\sembrack{x_\epsilon}_o$ is false when the value of
$x_\epsilon$ is not an evaluation-free formula.  A more intensionally
correct representation of LEM is \[\Forall
x_\epsilon[\mname{eval-free}^{o}_{o\epsilon} \, x_\epsilon \Implies
  [\sembrack{x_\epsilon}_o \Or \NegAlt\sembrack{x_\epsilon}_o]]\]
where $x_\epsilon$ is restricted to the syntactic representations of
evaluation-free formulas.  This representation of LEM is a theorem of
{\pfsysuqe}:

\begin{lem}
$\proves{}{\Forall x_\epsilon[\mname{eval-free}^{o}_{o\epsilon} \,
      x_\epsilon \Implies [\sembrack{x_\epsilon}_o \Or
        \NegAlt\sembrack{x_\epsilon}_o]]}$.
\end{lem}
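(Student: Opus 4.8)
The plan is to reduce the statement to the Tautology Theorem (Theorem~\ref{thm:tautology}, part~2) applied to the instance $\textbf{A}_o \Or \NegAlt\textbf{A}_o$ after discharging the antecedent $\mname{eval-free}^{o}_{o\epsilon} \, x_\epsilon$. First I would apply Universal Generalization (Theorem~\ref{thm:gen}, part~2), so it suffices to prove
\[\proves{}{\mname{eval-free}^{o}_{o\epsilon} \, x_\epsilon \Implies
[\sembrack{x_\epsilon}_o \Or \NegAlt\sembrack{x_\epsilon}_o]}.\]
By the Deduction Theorem (Theorem~\ref{thm:deduction}) --- noting that the hypothesis $\mname{eval-free}^{o}_{o\epsilon} \, x_\epsilon$ is syntactically closed and evaluation-free, since $x_\epsilon$ occurs only as an argument and there are no evaluations --- it is enough to show
\[\mname{eval-free}^{o}_{o\epsilon} \, x_\epsilon \proves{}
\sembrack{x_\epsilon}_o \Or \NegAlt\sembrack{x_\epsilon}_o.\]

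The key observation is that the only subtlety here is definedness: for a general formula-valued wff of type $o$, disjunction with negation is a tautology \emph{once both disjuncts are defined}, and Axiom~6.8 guarantees $\sembrack{\textbf{A}_\epsilon}_o \IsDefApp$ for every $\textbf{A}_\epsilon$. So in fact I do not even need the hypothesis $\mname{eval-free}^{o}_{o\epsilon} \, x_\epsilon$ to obtain the conclusion --- but I will carry it along to stay within the Deduction Theorem's format. Concretely: by Axiom~6.8, $\proves{}{\sembrack{x_\epsilon}_o \IsDefApp}$, i.e., $\proves{}{\sembrack{x_\epsilon}_o = \sembrack{x_\epsilon}_o}$. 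Since $\sD_o = \set{\mname{T},\mname{F}}$ and the propositional connectives have their usual meanings in a general model, the formula $[\textbf{p}_o = \textbf{p}_o] \Implies [\textbf{p}_o \Or \NegAlt\textbf{p}_o]$ is tautologous when $\textbf{p}_o$ is a genuine truth value; more to the point, I can instantiate a tautology schema. The cleanest route: $[\textbf{A}_o \IsDefApp] \Implies [\textbf{A}_o \Or \NegAlt\textbf{A}_o]$ is a tautologous formula in the sense of {\qzerouqe} (every propositional valuation making $\textbf{A}_o \IsDefApp$ true, which forces $\textbf{A}_o$ to take a classical truth value, also makes $\textbf{A}_o \Or \NegAlt\textbf{A}_o$ true), so it is an instance of Axiom~5 with $\textbf{A}_o$ taken to be $\sembrack{x_\epsilon}_o$. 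Then Axiom~6.8 gives $\proves{}{\sembrack{x_\epsilon}_o \IsDefApp}$, and Rule~2 (Modus Ponens) yields $\proves{}{\sembrack{x_\epsilon}_o \Or \NegAlt\sembrack{x_\epsilon}_o}$; adjoining $\mname{eval-free}^{o}_{o\epsilon} \, x_\epsilon$ as a harmless hypothesis and applying the Deduction Theorem and Universal Generalization finishes the argument.

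The main obstacle, and the point requiring a little care, is confirming that $[\textbf{A}_o \IsDefApp] \Implies [\textbf{A}_o \Or \NegAlt\textbf{A}_o]$ genuinely qualifies as ``tautologous'' under the definition used for Axiom~5. In {\qzero} and {\qzerou} one reasons about tautologies via propositional variables ranging over $\set{\mname{T},\mname{F}}$; here $\textbf{A}_o \IsDefApp$ abbreviates $\textbf{A}_o = \textbf{A}_o$, which is always $\mname{T}$ since $\sD_o$ contains no undefined element, so the implication is trivially a tautology and in fact $\textbf{A}_o \Or \NegAlt\textbf{A}_o$ is itself tautologous for any $\textbf{A}_o$ of type $o$. (This matches the remark in the proof of Axiom~5 in Lemma~\ref{lem:axioms} that ``$T_o$, $F_o$ and the propositional connectives have their usual meanings,'' together with Proposition~\ref{prop:o-defined} which says $\proves{}{\textbf{A}_o \IsDefApp}$ outright.) So an even shorter proof simply takes $\sembrack{x_\epsilon}_o \Or \NegAlt\sembrack{x_\epsilon}_o$ as a direct instance of Axiom~5, then applies the Tautology Theorem to add the vacuous implication from $\mname{eval-free}^{o}_{o\epsilon} \, x_\epsilon$, and then Universal Generalization. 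I would present it in this short form, mentioning Axiom~5, the Tautology Theorem, and Theorem~\ref{thm:gen}.
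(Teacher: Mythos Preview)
Your final short proof works: since every $\wff{o}$ denotes a truth value, the formula $\sembrack{x_\epsilon}_o \Or \NegAlt\sembrack{x_\epsilon}_o$ is tautologous in the usual sense (it is an instance of $P \Or \NegAlt P$) and hence an instance of Axiom~5; adding the vacuous implication via the Tautology Theorem (part~2) and applying Universal Generalization (part~2 of Theorem~\ref{thm:gen}) finishes the job. One correction along the way: your intermediate route via the Deduction Theorem is broken, because $\mname{eval-free}^{o}_{o\epsilon} \, x_\epsilon$ is \emph{not} syntactically closed --- the variable $x_\epsilon$ is free in it --- so Theorem~\ref{thm:deduction} does not apply. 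You correctly abandon that path, but the justification you gave for it (``$x_\epsilon$ occurs only as an argument'') is simply mistaken.

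The paper argues differently. Rather than declaring the evaluated formula itself tautologous, it starts from $x_o \Or \NegAlt x_o$ (a tautology in an ordinary propositional variable), universally generalizes over $x_o$, and then \emph{instantiates} with the conditional term $\If \, [\mname{eval-free}^{o}_{o\epsilon} \, x_\epsilon] \, \sembrack{x_\epsilon}_o \, \Undefined_o$ via part~3 of Universal Instantiation, carrying out the required beta-reduction by hand with Axioms~4.2--4. Axiom~10 and the Tautology Theorem then strip the conditional under the hypothesis $\mname{eval-free}^{o}_{o\epsilon} \, x_\epsilon$. Your route is shorter and is legitimate under the broad (Andrews-style) reading of ``tautologous''; the paper's route buys something else: it shows concretely how, inside {\pfsysuqe}, to push an evaluation-bearing wff into a universally quantified schema without invoking $\mname{sub}_{\epsilon\epsilon\epsilon\epsilon}$ on a non-evaluation-free argument. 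That manoeuvre --- instantiate with a conditional wrapper, beta-reduce via Axioms~4.2--10, then unwrap with Axiom~10 --- is precisely what one needs for schemas whose conclusion is \emph{not} already a propositional tautology.
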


\begin{proof}
\begin{align} \setcounter{equation}{0}
&\vdash 
x_o \Or \NegAlt x_o. \\
&\vdash 
\Forall x_o [x_o \Or \NegAlt x_o]. \\
&\vdash 
[\lambda x_o [x_o \Or \NegAlt x_o]] 
[\If \, [\mname{eval-free}^{o}_{o\epsilon} \, x_\epsilon] \, \sembrack{x_\epsilon}_o \, \Undefined_o] 
\QuasiEqual {} \notag\\
&\hspace*{2.7ex}
[\If \, [\mname{eval-free}^{o}_{o\epsilon} \, x_\epsilon] \,
\sembrack{x_\epsilon}_o \, \Undefined_o] \Or 
\NegAlt[\If \, [\mname{eval-free}^{o}_{o\epsilon} \, x_\epsilon] \,
\sembrack{x_\epsilon}_o \, \Undefined_o]. \\
&\vdash 
[\If \, [\mname{eval-free}^{o}_{o\epsilon} \, x_\epsilon] \,
\sembrack{x_\epsilon}_o \, \Undefined_o] \Or 
\NegAlt[\If \, [\mname{eval-free}^{o}_{o\epsilon} \, x_\epsilon] \,
\sembrack{x_\epsilon}_o \, \Undefined_o]. \\
&\vdash  
\mname{eval-free}^{o}_{o\epsilon} \, x_\epsilon \Implies
[\sembrack{x_\epsilon}_o \Or \NegAlt\sembrack{x_\epsilon}_o]. \\
&\vdash  
\Forall x_\epsilon[\mname{eval-free}^{o}_{o\epsilon} \, x_\epsilon \Implies
  [\sembrack{x_\epsilon}_o \Or \NegAlt\sembrack{x_\epsilon}_o]].
\end{align}
(1) is by Axiom 5; (2) follows from (1) by Universal Generalization;
(3) follows from Axioms 4.2--4 and Proposition~\ref{prop:o-defined} by
Rules 1 and 2; (4) follows from (2) and (3) by Universal Instantiation
(part 3); (5) follows from (4) by the Axiom 10 and the Tautology
Theorem; and (6) follows from (5) by Universal Generalization.
\end{proof}

\subsection{Example: Law of Beta-Reduction}\label{subsec:beta-red}

Axiom 4.1, the law of beta-reduction for {\qzerouqe}, can be expressed
as the following schema whose only syntactic variables are $\alpha$
and $\beta$:

\begin{align*}
&
\Forall x_\epsilon \Forall y_\epsilon \Forall z_\epsilon \Forall z'_\epsilon 
[{\sembrack{x_\epsilon}_\alpha \IsDefApp} \Andd
[\mname{var}^{\alpha}_{o\epsilon} \, y_\epsilon] \Andd
[\mname{wff}^{\beta}_{o\epsilon} \, z_\epsilon] \Andd
[\mname{wff}^{\beta}_{o\epsilon} \, z'_\epsilon] \Andd {} \\
&\hspace*{4ex}
\mname{sub}_{\epsilon\epsilon\epsilon\epsilon} \, 
x_\epsilon \, y_\epsilon \, z_\epsilon = z'_\epsilon] \Implies
\sembrack{\mname{app}_{\epsilon\epsilon\epsilon}
[\mname{abs}_{\epsilon\epsilon\epsilon} \,  y_\epsilon \, z_\epsilon] \, x_\epsilon}_\beta \QuasiEqual 
\sembrack{z'_\epsilon}_\beta
\end{align*}
Each instance of this schema (for a chosen $\alpha$ and $\beta$) is
valid in {\qzerouqe} but not provable in {\pfsysuqe}.  Moreover, the
instances of an instance $\textbf{A}_o$ of this schema are not
provable in {\pfsysuqe} from $\textbf{A}_o$ since $\textbf{A}_o$
contains the evaluation
\[\sembrack{\mname{app}_{\epsilon\epsilon\epsilon}
  [\mname{abs}_{\epsilon\epsilon\epsilon} \, y_\epsilon \, z_\epsilon]
  \, x_\epsilon}_\beta\] in which more than one variable is free.

Using the technique of grouping variables together described in
Example~1 in subsection~\ref{subsec:undef-sub}, we can also express
Axiom 4.1 as the following schema that contains just the single
variable $x_{\seq{\seq{\epsilon\epsilon}\seq{\epsilon\epsilon}}}$:
\begin{align*}
&
\Forall x_{\seq{\seq{\epsilon\epsilon}\seq{\epsilon\epsilon}}}
[{[\sembrack{\textbf{X}_\epsilon}_\alpha \IsDefApp} \Andd
[\mname{var}^{\alpha}_{o\epsilon} \, \textbf{Y}_\epsilon] \Andd
[\mname{wff}^{\beta}_{o\epsilon} \, \textbf{Z}_\epsilon] \Andd
[\mname{wff}^{\beta}_{o\epsilon} \, \textbf{Z}'_\epsilon] \Andd {} \\
&\hspace*{4ex}
\mname{sub}_{\epsilon\epsilon\epsilon\epsilon} \, 
\textbf{X}_\epsilon \, \textbf{Y}_\epsilon \, \textbf{Z}_\epsilon = \textbf{Z}'_\epsilon] \Implies 
\sembrack{\mname{app}_{\epsilon\epsilon\epsilon}
[\mname{abs}_{\epsilon\epsilon\epsilon} \, 
\textbf{Y}_\epsilon \, \textbf{Z}_\epsilon] \, \textbf{X}_\epsilon}_\beta \QuasiEqual 
\sembrack{\textbf{Z}'_\epsilon}_\beta] \\
&\mbox{where:}\\
&\hspace*{4ex}
\textbf{X}_\epsilon \sglsp \mbox{is} \sglsp 
\mname{fst}_{\epsilon\seq{\epsilon\epsilon}} \, 
[\mname{fst}_{\seq{\epsilon\epsilon}\seq{\seq{\epsilon\epsilon}\seq{\epsilon\epsilon}}} \,
x_{\seq{\seq{\epsilon\epsilon}\seq{\epsilon\epsilon}}}] \\
&\hspace*{4ex}
\textbf{Y}_\epsilon \sglsp \mbox{is} \sglsp 
\mname{snd}_{\epsilon\seq{\epsilon\epsilon}} \, 
[\mname{fst}_{\seq{\epsilon\epsilon}\seq{\seq{\epsilon\epsilon}\seq{\epsilon\epsilon}}} \,
x_{\seq{\seq{\epsilon\epsilon}\seq{\epsilon\epsilon}}}] \\
&    \hspace*{4ex}
\textbf{Z}_\epsilon \sglsp \mbox{is} \sglsp 
\mname{fst}_{\epsilon\seq{\epsilon\epsilon}} \, 
[\mname{snd}_{\seq{\epsilon\epsilon}\seq{\seq{\epsilon\epsilon}\seq{\epsilon\epsilon}}} \,
x_{\seq{\seq{\epsilon\epsilon}\seq{\epsilon\epsilon}}}] \\
&\hspace*{4ex}
\textbf{Z}'_\epsilon \sglsp \mbox{is} \sglsp 
\mname{snd}_{\epsilon\seq{\epsilon\epsilon}} \, 
[\mname{snd}_{\seq{\epsilon\epsilon}\seq{\seq{\epsilon\epsilon}\seq{\epsilon\epsilon}}} \,
x_{\seq{\seq{\epsilon\epsilon}\seq{\epsilon\epsilon}}}].
\end{align*}
Like the first schema, each instance of this second schema is valid in
{\qzerouqe} but not provable in {\pfsysuqe}.  However, unlike the
first schema, the instances of an instance $\textbf{A}_o$ of the
second schema are provable in {\pfsysuqe} from $\textbf{A}_o$.

\subsection{Example: Conjunction Construction} \label{subsec:conj-cons}

Suppose $A$ is an algorithm that, given two formulas $\textbf{A}_o$
and $\textbf{B}_o$ as input, returns as output (1) $\textbf{B}_o$ if
$\textbf{A}_o$ is $T_o$, (2) $\textbf{A}_o$ if $\textbf{B}_o$ is
$T_o$, (3) $F_o$ if either $\textbf{A}_o$ or $\textbf{B}_o$ is $F_o$,
or (4) $\textbf{A}_o \Andd \textbf{B}_o$ otherwise.  Although this is
a trivial algorithm, we can use it to illustrate how a syntax-based
mathematical algorithm can be formalized in {\qzerouqe}.  As described
in~\cite{Farmer13} we need to do the following three things to
formalize $A$ in {\qzerouqe}.

\be

  \item Define an operator $O_A$ in {\qzerouqe} as a constant that
    represents $A$.

  \item Prove in {\pfsysuqe} that $O_A$ is mathematically correct.

  \item Devise a mechanism for using $O_A$ in {\qzerouqe}.

\ee

Let $\mname{and}_{\epsilon\epsilon\epsilon}$ be a defined
constant that stands for
\[\lambda x_\epsilon \lambda y_\epsilon
[\mname{app}_{\epsilon\epsilon\epsilon} \,
  [\mname{app}_{\epsilon\epsilon\epsilon} \, \synbrack{\wedge_{ooo}}
    \, x_\epsilon] \, y_\epsilon].\]
Then \[\proves{}{\mname{and}_{\epsilon\epsilon\epsilon} \,
  \synbrack{\textbf{A}_o} \, \synbrack{\textbf{B}_o} =
  \synbrack{\textbf{A}_o \Andd \textbf{B}_o}}\] for all formulas
$\textbf{A}_o$ and $\textbf{B}_o$ as shown by a derivation similar to
the one for $\mname{implies}_{\epsilon\epsilon\epsilon}$ in the proof
of Lemma~\ref{lem:implies}.  Define $O_A$ to be
$\mname{and-simp}_{\epsilon\epsilon\epsilon}$, a defined constant that
stands for
\begin{align*}
&
\lambda x_\epsilon \lambda y_\epsilon [\If \, [x_\epsilon = \synbrack{T_o}] \, y_\epsilon \\
&\hspace*{7.1ex}
[\If \, [y_\epsilon = \synbrack{T_o}] \, x_\epsilon \\
&\hspace*{7.1ex}
[\If \, [x_\epsilon = \synbrack{F_o}] \, \synbrack{F_o} \\
&\hspace*{7.1ex}
[\If \, [y_\epsilon = \synbrack{F_o}] \, \synbrack{F_o} \\
&\hspace*{7.1ex}
[\mname{and}_{\epsilon\epsilon\epsilon} \, x_\epsilon \, y_\epsilon]]]]].
\end{align*}
The sentence
\begin{align*}
&
\Forall x_\epsilon \Forall y_\epsilon 
[[\mname{wff}^{o}_{o\epsilon} \, x_\epsilon \Andd
\mname{wff}^{o}_{o\epsilon} \, y_\epsilon] \Implies {}\\
&\hspace*{4ex}
[[x_\epsilon = \synbrack{T_o} \Implies 
\mname{and-simp}_{\epsilon\epsilon\epsilon} \, x_\epsilon \, y_\epsilon =  y_\epsilon] \Andd {}\\
&\hspace*{4.6ex}
[y_\epsilon = \synbrack{T_o} \Implies 
\mname{and-simp}_{\epsilon\epsilon\epsilon} \, x_\epsilon \, y_\epsilon =  x_\epsilon] \Andd {}\\
&\hspace*{4.6ex}
[[x_\epsilon = \synbrack{F_o} \Or y_\epsilon = \synbrack{F_o}] \Implies 
\mname{and-simp}_{\epsilon\epsilon\epsilon} \, x_\epsilon \, y_\epsilon =  \synbrack{F_o}] \Andd {}\\
&\hspace*{4.6ex}
[[x_\epsilon \not = \synbrack{T_o} \Andd y_\epsilon \not= \synbrack{T_o} \Andd
x_\epsilon \not = \synbrack{F_o} \Andd y_\epsilon \not= \synbrack{F_o}] \Implies {} \\
&\hspace{8ex}
\mname{and-simp}_{\epsilon\epsilon\epsilon} \, x_\epsilon \, y_\epsilon =  
\mname{and}_{\epsilon\epsilon\epsilon} \, x_\epsilon \, y_\epsilon]]],
\end{align*}
called \mname{CompBehavior}, specifies the intended
\emph{computational behavior} of $O_A$.  

\begin{thm} [Computational Behavior of $\mname{and-simp}_{\epsilon\epsilon\epsilon}$]
\[\proves{}{\mname{CompBehavior}}.\]
\end{thm}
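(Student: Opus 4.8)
The plan is to prove $\proves{}{\mname{CompBehavior}}$ by unwinding the definition of $\mname{and\text{-}simp}_{\epsilon\epsilon\epsilon}$ and discharging each of the four conjuncts separately using the basic properties of lambda-notation (Axioms 4.2--9), the properties of conditionals (Axiom 10), and the elementary proof machinery (Tautology Theorem, Universal Generalization, Universal Instantiation, Rules 1 and 2). Since $\mname{CompBehavior}$ is a universally quantified formula, by Universal Generalization (Theorem~\ref{thm:gen}) it suffices to prove the matrix, i.e., the implication whose antecedent is $\mname{wff}^{o}_{o\epsilon} \, x_\epsilon \Andd \mname{wff}^{o}_{o\epsilon} \, y_\epsilon$ and whose succedent is the four-way conjunction. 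Actually, the antecedent turns out to be unnecessary for the computational-behavior conjuncts as stated (each is an equation gated by a hypothesis like $x_\epsilon = \synbrack{T_o}$), so I would prove the stronger unconditioned statement and weaken it by the Tautology Theorem.

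The heart of the argument is a beta-reduction computation. First I would establish the key lemma that, for any $\wffs{\epsilon}$ $\textbf{B}_\epsilon$ and $\textbf{C}_\epsilon$,
\[
\proves{}{\mname{and\text{-}simp}_{\epsilon\epsilon\epsilon} \, \textbf{B}_\epsilon \, \textbf{C}_\epsilon \QuasiEqual
\If \, [\textbf{B}_\epsilon = \synbrack{T_o}] \, \textbf{C}_\epsilon \,
[\If \, [\textbf{C}_\epsilon = \synbrack{T_o}] \, \textbf{B}_\epsilon \,
[\If \, [\textbf{B}_\epsilon = \synbrack{F_o}] \, \synbrack{F_o} \,
[\If \, [\textbf{C}_\epsilon = \synbrack{F_o}] \, \synbrack{F_o} \,
[\mname{and}_{\epsilon\epsilon\epsilon} \, \textbf{B}_\epsilon \, \textbf{C}_\epsilon]]]]}.
\]
This is obtained exactly as in the derivations for $\mname{implies}_{\epsilon\epsilon\epsilon}$ (Lemma~\ref{lem:implies}) and $\mname{and}_{\epsilon\epsilon\epsilon}$: expand the definition by Proposition~\ref{prop:5200} and Rule~1, then push the two arguments $\textbf{B}_\epsilon$ and $\textbf{C}_\epsilon$ through the nested $\lambda$-abstractions and conditionals using Axioms 4.5, 4.8, 4.9, 4.3, 4.2, and the definedness facts Axioms 6.6 and 6.7 (quotations are defined), discharging the definedness side conditions on Axiom~4 instances via the Tautology Theorem and Rule~2. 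The wffs $\synbrack{T_o}$, $\synbrack{F_o}$, $\textbf{B}_\epsilon$, $\textbf{C}_\epsilon$ and the equalities between them are all of type $\epsilon$ or $o$ and hence defined (Axioms 6.7, 6.3), so no undefinedness obstructions arise.

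With this lemma in hand, the four conjuncts follow by case analysis on the conditionals. For the first conjunct, assume $x_\epsilon = \synbrack{T_o}$; then by Rule~1 the outermost conditional in the right-hand side reduces, via Axiom 10.1, to $y_\epsilon$, so $\mname{and\text{-}simp}_{\epsilon\epsilon\epsilon} \, x_\epsilon \, y_\epsilon = y_\epsilon$ (upgrading $\QuasiEqual$ to $=$ via Lemma~\ref{lem:equality}, since both sides are of type $\epsilon$ and hence defined). The second conjunct is similar but requires first knowing $x_\epsilon \not= \synbrack{T_o}$ is \emph{not} assumed --- rather we assume $y_\epsilon = \synbrack{T_o}$, and must reduce past the first conditional; here one does a sub-case split on whether $x_\epsilon = \synbrack{T_o}$ (in which case the answer is $y_\epsilon = \synbrack{T_o} = x_\epsilon$ by Rule~1) or not (in which case Axiom 10.2 drops to the second conditional which then fires by Axiom 10.1). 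The third and fourth conjuncts go the same way, working through the four-level conditional tower with repeated applications of Axioms 10.1 and 10.2 and the Tautology Theorem to manage the case splits, and using the $\mname{and}_{\epsilon\epsilon\epsilon}$ computation only in the fourth (the ``otherwise'' branch, where all four equality tests fail). Finally, assemble the four conjuncts by the Tautology Theorem and apply Universal Generalization twice to obtain $\proves{}{\mname{CompBehavior}}$.

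The main obstacle is purely bookkeeping: the right-hand side of $\mname{and\text{-}simp}_{\epsilon\epsilon\epsilon}$ is a four-level nested conditional, so each conjunct requires carefully threading the case analysis through all the layers, and the beta-reduction lemma itself requires a longish chain of Axiom~4 applications with definedness conditions discharged at each step. There is no conceptual difficulty --- everything is evaluation-free, so the semantics-dependent substitution mechanism and its pathologies play no role, and $\mname{sub}_{\epsilon\epsilon\epsilon\epsilon}$ need not be invoked at all; the proof stays entirely within the ``basic properties of lambda-notation'' fragment (Axioms 4.2--4.9) plus conditionals. The one place to be slightly careful is ensuring the $\QuasiEqual$-to-$=$ upgrades are licensed: each intermediate and final wff in the computation is of type $\epsilon$ or $o$, so Axioms 6.7, 6.3, 6.6 and Proposition~\ref{prop:o-defined} guarantee definedness, and Lemma~\ref{lem:equality} then applies.
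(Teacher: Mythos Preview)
Your approach is correct and matches the paper's, which simply states that $\mname{CompBehavior}$ ``follows easily in {\pfsysuqe} from the definition of $\mname{and-simp}_{\epsilon\epsilon\epsilon}$''; you have filled in exactly the details one would expect (expand the definition via Axioms 4.2--9, reduce the nested conditionals via Axioms 10.1--10.2, then generalize). One small correction: your claim that the antecedent $\mname{wff}^{o}_{o\epsilon} \, x_\epsilon \Andd \mname{wff}^{o}_{o\epsilon} \, y_\epsilon$ is unnecessary is not quite right for the fourth conjunct, since upgrading $\mname{and-simp}_{\epsilon\epsilon\epsilon} \, x_\epsilon \, y_\epsilon \QuasiEqual \mname{and}_{\epsilon\epsilon\epsilon} \, x_\epsilon \, y_\epsilon$ to an equality requires $\mname{and}_{\epsilon\epsilon\epsilon} \, x_\epsilon \, y_\epsilon$ to be defined, which in turn (via Specifications 6.3--6.5 on $\mname{app}_{\epsilon\epsilon\epsilon}$) needs $\mname{wff}^{o}_{o\epsilon} \, x_\epsilon$ and $\mname{wff}^{o}_{o\epsilon} \, y_\epsilon$; but since the antecedent is available in $\mname{CompBehavior}$ anyway, this does not affect the argument.
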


\begin{proof}
\mname{CompBehavior} follows easily in {\pfsysuqe} from the definition
of $\mname{and-simp}_{\epsilon\epsilon\epsilon}$.  \phantom{XX}
\end{proof}

\bigskip

\noindent 
Hence $O_A$ represents $A$ by virtue of having the same computational
behavior as that of $A$.

Let us make the following definitions:
\bi

  \item[] $\textbf{P}_o$ is 
  $\sembrack{\mname{and}_{\epsilon\epsilon\epsilon} \,
  [\mname{fst}_{\epsilon\seq{\epsilon\epsilon}} \, x_{\seq{\epsilon\epsilon}}]  \,
  [\mname{snd}_{\epsilon\seq{\epsilon\epsilon}} \, x_{\seq{\epsilon\epsilon}}]}_o$.

  \item[] $\textbf{Q}_o$ is 
  $\sembrack{\mname{and-simp}_{\epsilon\epsilon\epsilon} \,
  [\mname{fst}_{\epsilon\seq{\epsilon\epsilon}} \, x_{\seq{\epsilon\epsilon}}]  \,
  [\mname{snd}_{\epsilon\seq{\epsilon\epsilon}} \, x_{\seq{\epsilon\epsilon}}]}_o$.

  \item[] $\textbf{R}_o$ is 
  $\sembrack{\mname{fst}_{\epsilon\seq{\epsilon\epsilon}} \, x_{\seq{\epsilon\epsilon}}}_o  \Andd
  \sembrack{\mname{snd}_{\epsilon\seq{\epsilon\epsilon}} \, x_{\seq{\epsilon\epsilon}}}_o$.

  \item[] $\textbf{S}_o$ is 
  $[\If \, [[\mname{fst}_{\epsilon\seq{\epsilon\epsilon}} \, x_{\seq{\epsilon\epsilon}}] = 
  \synbrack{T_o}] \, 
  \sembrack{\mname{snd}_{\epsilon\seq{\epsilon\epsilon}} \, x_{\seq{\epsilon\epsilon}}}_o \\[.5ex]
  \hspace*{5.7ex}
  [\If \, [[\mname{snd}_{\epsilon\seq{\epsilon\epsilon}} \, x_{\seq{\epsilon\epsilon}}] = \synbrack{T_o}] \, 
  \sembrack{\mname{fst}_{\epsilon\seq{\epsilon\epsilon}} \, x_{\seq{\epsilon\epsilon}}}_o \\[.5ex]
  \hspace*{5.7ex}
  [\If \, [[\mname{fst}_{\epsilon\seq{\epsilon\epsilon}} \, x_{\seq{\epsilon\epsilon}}] = \synbrack{F_o}] \, 
  \sembrack{\synbrack{F_o}}_o \\[.5ex]
  \hspace*{5.7ex}
  [\If \, [[\mname{snd}_{\epsilon\seq{\epsilon\epsilon}} \, x_{\seq{\epsilon\epsilon}}] = \synbrack{F_o}] \, 
  \sembrack{\synbrack{F_o}}_o \\[.5ex]
  \hspace*{5.7ex}
  \textbf{P}_o]]]]$.

\ei
The formula \[\Forall x_{\seq{\epsilon\epsilon}}
[[\mname{wff}^{o}_{o\epsilon} \,
    [\mname{fst}_{\epsilon\seq{\epsilon\epsilon}} \,
      x_{\seq{\epsilon\epsilon}}] \Andd \mname{wff}^{o}_{o\epsilon} \,
    [\mname{snd}_{\epsilon\seq{\epsilon\epsilon}} \,
      x_{\seq{\epsilon\epsilon}}]] \Implies [\textbf{Q}_o \Iff
    \textbf{R}_o]],\] called \mname{MathMeaning}, expresses the intended
\emph{mathematical meaning} of $O_A$.  We will show that
\mname{MathMeaning} is a theorem of {\pfsysuqe} via a series of
lemmas.

The first lemma asserts that the analog of the \mname{MathMeaning} for
$\mname{and}_{\epsilon\epsilon\epsilon}$ is a theorem of
  {\pfsysuqe}:

\begin{lem} \label{lem:sem-and}
$\proves{}{\Forall x_{\seq{\epsilon\epsilon}}
[[\mname{wff}^{o}_{o\epsilon} \,
  [\mname{fst}_{\epsilon\seq{\epsilon\epsilon}} \, x_{\seq{\epsilon\epsilon}}] \Andd
  \mname{wff}^{o}_{o\epsilon} \,      
  [\mname{snd}_{\epsilon\seq{\epsilon\epsilon}} \, x_{\seq{\epsilon\epsilon}}]] \Implies 
  [\textbf{P}_o \Iff \textbf{R}_o]]}$.
\end{lem}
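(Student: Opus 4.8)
The goal is a syntactic derivation in {\pfsysuqe}. Write $\textbf{a}_\epsilon$ for $[\mname{fst}_{\epsilon\seq{\epsilon\epsilon}} \, x_{\seq{\epsilon\epsilon}}]$ and $\textbf{b}_\epsilon$ for $[\mname{snd}_{\epsilon\seq{\epsilon\epsilon}} \, x_{\seq{\epsilon\epsilon}}]$, and let $\textbf{H}_o$ be $[\mname{wff}^{o}_{o\epsilon} \, \textbf{a}_\epsilon \Andd \mname{wff}^{o}_{o\epsilon} \, \textbf{b}_\epsilon]$, so that $\textbf{P}_o$ is $\sembrack{\mname{and}_{\epsilon\epsilon\epsilon} \, \textbf{a}_\epsilon \, \textbf{b}_\epsilon}_o$ and $\textbf{R}_o$ is $\sembrack{\textbf{a}_\epsilon}_o \Andd \sembrack{\textbf{b}_\epsilon}_o$, which by the abbreviation of $\Andd$ is the wff $[\wedge_{ooo} \, \sembrack{\textbf{a}_\epsilon}_o \, \sembrack{\textbf{b}_\epsilon}_o]$. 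By Universal Generalization (Theorem~\ref{thm:gen}) it is enough to derive $\proves{}{\textbf{H}_o \Implies [\textbf{P}_o = \textbf{R}_o]}$, since for wffs of type $o$ the abbreviations ``$\textbf{A}_o = \textbf{B}_o$'' and ``$\textbf{A}_o \Iff \textbf{B}_o$'' denote the same wff.

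I would first record three auxiliary facts, provable without hypotheses. (i) $\proves{}{{\textbf{a}_\epsilon \IsDefApp} \Andd {\textbf{b}_\epsilon \IsDefApp}}$, from Axiom 6.1, Axiom 9.2 (so $x_{\seq{\epsilon\epsilon}}$ equals a $\mname{pair}$), and Lemma~\ref{lem:pairs}(2,3) (so $\mname{fst}$ and $\mname{snd}$ of a pair return its defined components). (ii) $\proves{}{\mname{and}_{\epsilon\epsilon\epsilon} \, \textbf{a}_\epsilon \, \textbf{b}_\epsilon \QuasiEqual \mname{app}_{\epsilon\epsilon\epsilon} \, [\mname{app}_{\epsilon\epsilon\epsilon} \, \synbrack{\wedge_{ooo}} \, \textbf{a}_\epsilon] \, \textbf{b}_\epsilon}$, the two-step $\beta$-reduction of the defined constant $\mname{and}_{\epsilon\epsilon\epsilon}$: this can be carried out by two applications of the Beta-Reduction Theorem (part 1) together with fact (i), using that $y_\epsilon$ occurs in neither $\textbf{a}_\epsilon$ nor $\synbrack{\wedge_{ooo}}$ so the substitutions act without renaming; alternatively, the displayed quasi-equality is an evaluation-free valid formula, hence provable by the Evaluation-Free Completeness Theorem (Theorem~\ref{thm:completeness}). (iii) $\proves{}{\mname{wff}^{ooo}_{o\epsilon} \, \synbrack{\wedge_{ooo}}}$ and $\proves{}{\sembrack{\synbrack{\wedge_{ooo}}}_{ooo} = \wedge_{ooo}}$: the former by rewriting $\synbrack{\wedge_{ooo}}$ as $\sE(\wedge_{ooo})$ (Specification 1) and peeling off the construction of the fixed $\lambda$-term $\wedge_{ooo}$ with Specifications 6.1, 6.2, 6.3, 6.6; the latter by the Disquotation Theorem (Theorem~\ref{thm:disquote}, $\wedge_{ooo}$ being evaluation-free) and Axiom 6.5.

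Next I would argue under the hypothesis $\textbf{H}_o$. Its first conjunct together with (iii) and Specification 6.3 gives $\mname{wff}^{oo}_{o\epsilon} \, [\mname{app}_{\epsilon\epsilon\epsilon} \, \synbrack{\wedge_{ooo}} \, \textbf{a}_\epsilon]$. Two instances of Axiom 11.3 (first with function part $[\mname{app}_{\epsilon\epsilon\epsilon} \, \synbrack{\wedge_{ooo}} \, \textbf{a}_\epsilon]$ and argument $\textbf{b}_\epsilon$, then with function part $\synbrack{\wedge_{ooo}}$ and argument $\textbf{a}_\epsilon$, their $\mname{wff}$ premises discharged from the above and from (iii)), chained by Rule~1, yield
\[
\sembrack{\mname{app}_{\epsilon\epsilon\epsilon} \, [\mname{app}_{\epsilon\epsilon\epsilon} \, \synbrack{\wedge_{ooo}} \, \textbf{a}_\epsilon] \, \textbf{b}_\epsilon}_o \QuasiEqual \sembrack{\synbrack{\wedge_{ooo}}}_{ooo} \, \sembrack{\textbf{a}_\epsilon}_o \, \sembrack{\textbf{b}_\epsilon}_o .
\]
Using (iii) and Rule~1 to replace $\sembrack{\synbrack{\wedge_{ooo}}}_{ooo}$ by $\wedge_{ooo}$ makes the right side $\textbf{R}_o$; combining with (ii) by Rule~1 (which permits substitution inside the first argument of an evaluation) and threading through the implication with the Tautology Theorem gives $\proves{}{\textbf{H}_o \Implies [\textbf{P}_o \QuasiEqual \textbf{R}_o]}$. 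Finally, $\textbf{P}_o$ is defined by Axiom 6.8 and $\textbf{R}_o$ by Axiom 6.3, so Lemma~\ref{lem:equality} and the Tautology Theorem upgrade this to $\proves{}{\textbf{H}_o \Implies [\textbf{P}_o = \textbf{R}_o]}$, and Universal Generalization over $x_{\seq{\epsilon\epsilon}}$ completes the proof.

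The main obstacle is fact (ii). Unlike in Lemma~\ref{lem:implies}, where $\mname{implies}_{\epsilon\epsilon\epsilon}$ was applied to quotations (automatically defined and semantically closed), the arguments here are the function applications $[\mname{fst}_{\epsilon\seq{\epsilon\epsilon}} \, x_{\seq{\epsilon\epsilon}}]$ and $[\mname{snd}_{\epsilon\seq{\epsilon\epsilon}} \, x_{\seq{\epsilon\epsilon}}]$, so fact (i) has to be established first and every $\beta$-reduction step needs its definedness and not-free-in side conditions explicitly discharged; routing (ii) through Theorem~\ref{thm:completeness} avoids this. A lesser chore is fact (iii), a mechanical but lengthy unfolding of the construction of $\wedge_{ooo}$. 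Nothing above is disturbed if $\textbf{a}_\epsilon$ or $\textbf{b}_\epsilon$ happens to denote a nonstandard construction, or a construction of a wff of type other than $o$: Axiom 11.3 is an axiom and is applied purely syntactically, and in such degenerate situations both $\textbf{P}_o$ and $\textbf{R}_o$ come out provably $\mname{F}$, so the equality still holds.
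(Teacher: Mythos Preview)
Your proposal is correct and follows essentially the same route as the paper's proof: beta-reduce the definition of $\mname{and}_{\epsilon\epsilon\epsilon}$, apply Axiom~11.3 twice to push the evaluation through the two nested $\mname{app}$'s, disquote $\wedge_{ooo}$, and finish with Universal Generalization. You are somewhat more explicit than the paper about the side conditions (your facts (i) and (iii)), and you correctly invoke the Disquotation Theorem for $\sembrack{\synbrack{\wedge_{ooo}}}_{ooo}$ rather than Axiom~11.2, since $\wedge_{ooo}$ is a defined abbreviation, not a primitive constant; the paper's citation of Axiom~11.2 at that step is a slight imprecision that your version avoids.
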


\begin{proof}
\begin{align} \setcounter{equation}{0}
&\vdash 
[\mname{wff}^{o}_{o\epsilon} \,
[\mname{fst}_{\epsilon\seq{\epsilon\epsilon}} \, x_{\seq{\epsilon\epsilon}}] \Andd
\mname{wff}^{o}_{o\epsilon} \,      
[\mname{snd}_{\epsilon\seq{\epsilon\epsilon}} \, x_{\seq{\epsilon\epsilon}}]] \Implies
\textbf{P}_o \Iff \textbf{P}_o \\
&\vdash
[\mname{wff}^{o}_{o\epsilon} \,
[\mname{fst}_{\epsilon\seq{\epsilon\epsilon}} \, x_{\seq{\epsilon\epsilon}}] \Andd
\mname{wff}^{o}_{o\epsilon} \,      
[\mname{snd}_{\epsilon\seq{\epsilon\epsilon}} \, x_{\seq{\epsilon\epsilon}}]] \Implies \notag\\
&\hspace{2.7ex}
\textbf{P}_o \Iff 
\sembrack{\mname{app}_{\epsilon\epsilon\epsilon} \,
[\mname{app}_{\epsilon\epsilon\epsilon} \, \synbrack{\wedge_{ooo}} \,
[\mname{fst}_{\epsilon\seq{\epsilon\epsilon}} \, x_{\seq{\epsilon\epsilon}}]] \, 
[\mname{snd}_{\epsilon\seq{\epsilon\epsilon}} \, x_{\seq{\epsilon\epsilon}}]}_o. \\
&\vdash 
[\mname{wff}^{o}_{o\epsilon} \,
[\mname{fst}_{\epsilon\seq{\epsilon\epsilon}} \, x_{\seq{\epsilon\epsilon}}] \Andd
\mname{wff}^{o}_{o\epsilon} \,      
[\mname{snd}_{\epsilon\seq{\epsilon\epsilon}} \, x_{\seq{\epsilon\epsilon}}]] \Implies \notag\\
&\hspace{2.7ex}
\textbf{P}_o \Iff
\sembrack{[\mname{app}_{\epsilon\epsilon\epsilon} \, \synbrack{\wedge_{ooo}} \,
[\mname{fst}_{\epsilon\seq{\epsilon\epsilon}} \, x_{\seq{\epsilon\epsilon}}]]}_{oo}
\sembrack{\mname{snd}_{\epsilon\seq{\epsilon\epsilon}} \, x_{\seq{\epsilon\epsilon}}}_o. \\
&\vdash 
[\mname{wff}^{o}_{o\epsilon} \,
[\mname{fst}_{\epsilon\seq{\epsilon\epsilon}} \, x_{\seq{\epsilon\epsilon}}] \Andd
\mname{wff}^{o}_{o\epsilon} \,      
[\mname{snd}_{\epsilon\seq{\epsilon\epsilon}} \, x_{\seq{\epsilon\epsilon}}]] \Implies \notag\\
&\hspace{2.7ex}
\textbf{P}_o \Iff
\sembrack{\synbrack{\wedge_{ooo}}}_{ooo}
\sembrack{\mname{fst}_{\epsilon\seq{\epsilon\epsilon}} \, x_{\seq{\epsilon\epsilon}}}_o
\sembrack{\mname{snd}_{\epsilon\seq{\epsilon\epsilon}} \, x_{\seq{\epsilon\epsilon}}}_o. \\
&\vdash  
[\mname{wff}^{o}_{o\epsilon} \,
[\mname{fst}_{\epsilon\seq{\epsilon\epsilon}} \, x_{\seq{\epsilon\epsilon}}] \Andd
\mname{wff}^{o}_{o\epsilon} \,      
[\mname{snd}_{\epsilon\seq{\epsilon\epsilon}} \, x_{\seq{\epsilon\epsilon}}]] \Implies\notag\\
&\hspace{2.7ex}
\textbf{P}_o \Iff
\wedge_{ooo}
\sembrack{\mname{fst}_{\epsilon\seq{\epsilon\epsilon}} \, x_{\seq{\epsilon\epsilon}}}_o
\sembrack{\mname{snd}_{\epsilon\seq{\epsilon\epsilon}} \, x_{\seq{\epsilon\epsilon}}}_o. \\
&\vdash  
[\mname{wff}^{o}_{o\epsilon} \,
[\mname{fst}_{\epsilon\seq{\epsilon\epsilon}} \, x_{\seq{\epsilon\epsilon}}] \Andd
\mname{wff}^{o}_{o\epsilon} \,      
[\mname{snd}_{\epsilon\seq{\epsilon\epsilon}} \, x_{\seq{\epsilon\epsilon}}]] \Implies 
\textbf{P}_o \Iff \textbf{R}_o. \\
&\vdash 
\Forall x_{\seq{\epsilon\epsilon}}
[[\mname{wff}^{o}_{o\epsilon} \,
[\mname{fst}_{\epsilon\seq{\epsilon\epsilon}} \, x_{\seq{\epsilon\epsilon}}] \Andd
\mname{wff}^{o}_{o\epsilon} \,      
[\mname{snd}_{\epsilon\seq{\epsilon\epsilon}} \, x_{\seq{\epsilon\epsilon}}]] \Implies
[\textbf{P}_o \Iff \textbf{R}_o]].
\end{align}
(1) is by Lemmas~\ref{lem:equality}, Propositions~\ref{prop:5200}
and~\ref{prop:o-defined}, and the Tautology Theorem; (2) follows from
(1) by the definition of $\mname{and}_{\epsilon\epsilon\epsilon}$,
Axioms 4.2--5, and Lemma~\ref{lem:pairs}; (3) and (4) follow from (2)
and (3), respectively, by Axiom 11.3, Lemma~\ref{lem:pairs}, and
Specification 6; (5) follows from (4) by Axiom 11.2; (6) is by
abbreviation; and (7) is by Universal Generalization.
\end{proof}

\bigskip

The second lemma shows how $\textbf{Q}_o$ can be reduced:

\begin{lem} \label{lem:Q-reduction}
$\proves{}{\textbf{Q}_o \Iff \textbf{S}_o}$.
\end{lem}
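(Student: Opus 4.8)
The plan is to prove $\vdash \textbf{Q}_o \Iff \textbf{S}_o$ by unfolding the definition of $\mname{and-simp}_{\epsilon\epsilon\epsilon}$ and repeatedly beta-reducing, then pushing the evaluation operator through the nested conditionals using Axiom 10.3. Recall that $\textbf{Q}_o$ is the evaluation $\sembrack{\mname{and-simp}_{\epsilon\epsilon\epsilon} \, [\mname{fst}_{\epsilon\seq{\epsilon\epsilon}} \, x_{\seq{\epsilon\epsilon}}] \, [\mname{snd}_{\epsilon\seq{\epsilon\epsilon}} \, x_{\seq{\epsilon\epsilon}}]}_o$, and that $\textbf{S}_o$ is exactly the result of (a) replacing $\mname{and-simp}_{\epsilon\epsilon\epsilon}$ by its $\lambda$-definition, (b) beta-reducing the two applications, and (c) distributing $\sembrack{-}_o$ over the four conditionals, leaving $\textbf{P}_o$ (the evaluation of $\mname{and}_{\epsilon\epsilon\epsilon}$ applied to the two projections) at the innermost position. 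So the proof is a bookkeeping derivation, and every step is justified by an axiom or metatheorem already available.

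First I would start from reflexivity, $\vdash \textbf{Q}_o \QuasiEqual \textbf{Q}_o$ (Proposition~\ref{prop:5200}), and rewrite $\mname{and-simp}_{\epsilon\epsilon\epsilon}$ by its definition, obtaining $\textbf{Q}_o \QuasiEqual \sembrack{[\lambda x_\epsilon \lambda y_\epsilon [\cdots]] \, [\mname{fst}_{\epsilon\seq{\epsilon\epsilon}} \, x_{\seq{\epsilon\epsilon}}] \, [\mname{snd}_{\epsilon\seq{\epsilon\epsilon}} \, x_{\seq{\epsilon\epsilon}}]}_o$ by Rule~1. Next I would beta-reduce the outer and inner applications using Axioms 4.2--4.9 (together with the definedness facts Axioms 6.1--6.6 and Lemma~\ref{lem:pairs} to discharge the $\IsDefApp$ hypotheses on instances of Axiom~4 — here the arguments $\mname{fst}_{\epsilon\seq{\epsilon\epsilon}} \, x_{\seq{\epsilon\epsilon}}$ and $\mname{snd}_{\epsilon\seq{\epsilon\epsilon}} \, x_{\seq{\epsilon\epsilon}}$ are applications of type $\epsilon$, hence defined by Axiom 6.3), arriving at $\textbf{Q}_o \QuasiEqual \sembrack{\If \, [[\mname{fst}_{\epsilon\seq{\epsilon\epsilon}} \, x_{\seq{\epsilon\epsilon}}] = \synbrack{T_o}] \, [\mname{snd}_{\epsilon\seq{\epsilon\epsilon}} \, x_{\seq{\epsilon\epsilon}}] \, [\cdots]}_o$, where the body is the explicit four-way nested $\mname{c}$-term in the definition with the syntactic variables $x_\epsilon,y_\epsilon$ replaced by the two projections. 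Then I would apply Axiom 10.3 (which says $\sembrack{\If \, \textbf{A}_o \, \textbf{B}_\epsilon \, \textbf{C}_\epsilon}_\alpha \QuasiEqual \If \, \textbf{A}_o \, \sembrack{\textbf{B}_\epsilon}_\alpha \, \sembrack{\textbf{C}_\epsilon}_\alpha$) four times, via Rule~1, to distribute $\sembrack{-}_o$ through the four conditionals; note that the test wffs $[\mname{fst}_{\epsilon\seq{\epsilon\epsilon}} \, x_{\seq{\epsilon\epsilon}}] = \synbrack{T_o}$ etc.\ are formulas of type $o$ and pass through unchanged. The innermost $\sembrack{\mname{and}_{\epsilon\epsilon\epsilon} \, [\mname{fst}\ldots] \, [\mname{snd}\ldots]}_o$ is precisely $\textbf{P}_o$, so after these rewrites I have $\vdash \textbf{Q}_o \QuasiEqual \textbf{S}_o$. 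Finally, since $\textbf{Q}_o$ and $\textbf{S}_o$ are formulas of type $o$, Proposition~\ref{prop:o-defined} gives $\vdash {\textbf{Q}_o \IsDefApp}$, so Lemma~\ref{lem:equality} upgrades $\QuasiEqual$ to $=$, hence $\vdash \textbf{Q}_o = \textbf{S}_o$, which by the definition of $\Iff$ and the Tautology Theorem gives $\vdash \textbf{Q}_o \Iff \textbf{S}_o$.

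The main obstacle I anticipate is purely clerical rather than conceptual: the body of $\mname{and-simp}_{\epsilon\epsilon\epsilon}$ is a deeply nested conditional, so the beta-reduction phase requires careful repeated use of Axiom 4.8 (beta-reduction through $\If$) to push the two substitutions $[\lambda x_\epsilon\lambda y_\epsilon\,\cdot\,]\,\synbrack{\textbf{A}}\,\synbrack{\textbf{B}}$ inside each branch, and one must track that every subterm to which $x_\epsilon$ or $y_\epsilon$ is bound gets the projections substituted correctly and that each intermediate application is defined (to license the $\QuasiEqual$ steps). Because the two arguments are applications of type $\epsilon$ (defined by Axiom 6.3), none of the definedness side conditions fails, so there is no genuine difficulty — it is the kind of routine derivation that the paper elsewhere (e.g.\ the proof of Lemma~\ref{lem:implies}) performs explicitly and then abbreviates. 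One small point to get right is that distributing the evaluation via Axiom 10.3 must be done from the outside in (or equivalently the quasi-equalities can be chained in either direction using the Quasi-Equality Rules), and that the test subformulas like $[\mname{fst}_{\epsilon\seq{\epsilon\epsilon}} \, x_{\seq{\epsilon\epsilon}}] = \synbrack{T_o}$ are preserved verbatim — they are not touched by the evaluation since Axiom 10.3 only acts on the two $\epsilon$-typed branches.
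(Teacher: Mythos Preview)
Your approach is essentially identical to the paper's: unfold the definition of $\mname{and-simp}_{\epsilon\epsilon\epsilon}$, beta-reduce through the nested conditionals using Axioms 4.2--5 and 4.8, and then push the evaluation inward four times via Axiom~10.3. One minor correction: you justify definedness of $\mname{fst}_{\epsilon\seq{\epsilon\epsilon}}\,x_{\seq{\epsilon\epsilon}}$ and $\mname{snd}_{\epsilon\seq{\epsilon\epsilon}}\,x_{\seq{\epsilon\epsilon}}$ by Axiom~6.3, but that axiom only covers applications of type~$o$, not type~$\epsilon$; the definedness of the projections comes instead from Lemma~\ref{lem:pairs} (together with Axioms~8 and~9), which you also cite, so the derivation still goes through.
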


\begin{proof}
The right side of the equation is obtained from the left side in three
steps.  First, $\mname{and-simp}_{\epsilon\epsilon\epsilon}$ is
replaced by its definition.  Second, the resulting formula is
beta-reduced using Axioms 4.2--5 and 4.8 and parts 2 and 3 of
Lemma~\ref{lem:pairs}.  And third, evaluations are pushed inward using
Axiom 10.3.
\end{proof}

\bigskip 

The next lemma consists of five theorems of {\pfsysuqe}:

\begin{lem} \label{lem:math-meaning}

\be

  \item[]

  \item $\vdash x_{\seq{\epsilon\epsilon}} = 
  [\mname{pair}_{\seq{oo}oo} \, \synbrack{T_o} \, \synbrack{T_o}] \Implies 
  [\textbf{Q}_o \Iff \textbf{R}_o]$.

  \item $\vdash x_{\seq{\epsilon\epsilon}} = 
  [\mname{pair}_{\seq{oo}oo} \, \synbrack{F_o} \, \synbrack{T_o}] \Implies 
  [\textbf{Q}_o \Iff \textbf{R}_o]$.

  \item $\vdash x_{\seq{\epsilon\epsilon}} = 
  [\mname{pair}_{\seq{oo}oo} \, \synbrack{T_o} \, \synbrack{F_o}] \Implies 
  [\textbf{Q}_o \Iff \textbf{R}_o]$.

  \item $\vdash x_{\seq{\epsilon\epsilon}} = 
  [\mname{pair}_{\seq{oo}oo} \, \synbrack{F_o} \, \synbrack{F_o}] \Implies 
  [\textbf{Q}_o \Iff \textbf{R}_o]$.

  \item $\vdash [{x_{\seq{\epsilon\epsilon}} \not= 
  [\mname{pair}_{\seq{oo}oo} \, \synbrack{T_o} \, \synbrack{T_o}]} \Andd
  {x_{\seq{\epsilon\epsilon}} \not= 
  [\mname{pair}_{\seq{oo}oo} \, \synbrack{F_o} \, \synbrack{T_o}]} \Andd {} \\[.5ex]
  \hspace*{2.7ex}
  {x_{\seq{\epsilon\epsilon}} \not= 
  [\mname{pair}_{\seq{oo}oo} \, \synbrack{T_o} \, \synbrack{F_o}]} \Andd
  x_{\seq{\epsilon\epsilon}} \not= 
  [\mname{pair}_{\seq{oo}oo} \, \synbrack{F_o} \, \synbrack{F_o}] \Andd {} \\[.5ex]
  \hspace*{2.1ex}
  [\mname{wff}^{o}_{o\epsilon} \,
  [\mname{fst}_{\epsilon\seq{\epsilon\epsilon}} \, x_{\seq{\epsilon\epsilon}}] \Andd
  \mname{wff}^{o}_{o\epsilon} \,      
  [\mname{snd}_{\epsilon\seq{\epsilon\epsilon}} \, x_{\seq{\epsilon\epsilon}}]]] \Implies 
  [\textbf{Q}_o \Iff \textbf{R}_o]$.

\ee
\end{lem}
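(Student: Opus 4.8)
The plan is to prove the five parts of Lemma~\ref{lem:math-meaning} by a uniform strategy: in each part the hypothesis fixes (up to provable equality) the values of the two projections $\mname{fst}_{\epsilon\seq{\epsilon\epsilon}} \, x_{\seq{\epsilon\epsilon}}$ and $\mname{snd}_{\epsilon\seq{\epsilon\epsilon}} \, x_{\seq{\epsilon\epsilon}}$, so that both $\textbf{Q}_o$ and $\textbf{R}_o$ can be reduced to concrete closed formulas and compared by the Tautology Theorem. Throughout I would work with the reduction $\textbf{Q}_o \Iff \textbf{S}_o$ supplied by Lemma~\ref{lem:Q-reduction}, so that the real work is analyzing the nested conditional $\textbf{S}_o$, and with Lemma~\ref{lem:sem-and}, which already identifies $\textbf{P}_o$ (the innermost branch of $\textbf{S}_o$) with $\textbf{R}_o$ under the $\mname{wff}^{o}_{o\epsilon}$ hypotheses.

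First I would dispatch parts 1--4. Here the hypothesis has the form $x_{\seq{\epsilon\epsilon}} = [\mname{pair}_{\seq{oo}oo} \, \synbrack{\textbf{U}_o} \, \synbrack{\textbf{V}_o}]$ with $\textbf{U}_o, \textbf{V}_o \in \set{T_o, F_o}$. Using Rule~1 to substitute this into $\textbf{S}_o$, together with parts~2 and~3 of Lemma~\ref{lem:pairs} to evaluate the projections to $\synbrack{\textbf{U}_o}$ and $\synbrack{\textbf{V}_o}$, and the specifying axioms/Tautology Theorem to decide each guard $[\cdots = \synbrack{T_o}]$ and $[\cdots = \synbrack{F_o}]$ (using Specification~4 for distinctness of quotations of distinct wffs), the conditional $\textbf{S}_o$ collapses: in part~1 to $\sembrack{\synbrack{T_o}}_o$, in parts~2 and~4 to $\sembrack{\synbrack{F_o}}_o$, and in part~3 to $\sembrack{\synbrack{T_o}}_o$. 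By the Disquotation Theorem (Theorem~\ref{thm:disquote}) each of these equals $T_o$ or $F_o$. On the other side, substituting the pair into $\textbf{R}_o$ and applying Lemma~\ref{lem:pairs} and the Disquotation Theorem gives $\sembrack{\synbrack{\textbf{U}_o}}_o \Andd \sembrack{\synbrack{\textbf{V}_o}}_o \Iff \textbf{U}_o \Andd \textbf{V}_o$, which is provably $T_o$ exactly when $\textbf{U}_o = \textbf{V}_o = T_o$ and $F_o$ otherwise. A final appeal to the Tautology Theorem matches the two sides in each of the four cases. Note these four sentences are evaluation-free after the disquotation steps, so there is no subtlety about whether the relevant wffs are semantically closed.

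Part~5 is the main obstacle and requires a different idea, because the hypothesis no longer pins down $\mname{fst}_{\epsilon\seq{\epsilon\epsilon}} \, x_{\seq{\epsilon\epsilon}}$ and $\mname{snd}_{\epsilon\seq{\epsilon\epsilon}} \, x_{\seq{\epsilon\epsilon}}$ to specific quotations --- it only says they are the syntactic representations of some formulas and that $x_{\seq{\epsilon\epsilon}}$ is not one of the four excluded pairs. The plan is: from $\mname{wff}^{o}_{o\epsilon}$ on each projection, derive (via Axiom~9.2, the pairing axiom, and Lemma~\ref{lem:pairs}) that $x_{\seq{\epsilon\epsilon}} = [\mname{pair}_{\seq{\epsilon\epsilon}\epsilon\epsilon} \, [\mname{fst}_{\epsilon\seq{\epsilon\epsilon}} \, x_{\seq{\epsilon\epsilon}}] \, [\mname{snd}_{\epsilon\seq{\epsilon\epsilon}} \, x_{\seq{\epsilon\epsilon}}]]$, and then argue that the four inequality hypotheses, together with Axiom~9.1 (injectivity of $\mname{pair}$) and the injectivity of quotation (Theorem~\ref{lem:quo-inj}), force all four guards of $\textbf{S}_o$ to be false --- i.e.\ $[\mname{fst}_{\epsilon\seq{\epsilon\epsilon}} \, x_{\seq{\epsilon\epsilon}}] \not= \synbrack{T_o}$, $\not= \synbrack{F_o}$, and similarly for $\mname{snd}$. (Here one must be careful: the hypothesis $x_{\seq{\epsilon\epsilon}} \not= [\mname{pair} \, \synbrack{T_o} \, \synbrack{T_o}]$ by itself does not give $[\mname{fst}\,x] \not= \synbrack{T_o}$; one needs to combine all four inequalities. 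The cleanest route is a case analysis: if $[\mname{fst}\,x_{\seq{\epsilon\epsilon}}] = \synbrack{T_o}$ then, since the first two excluded pairs are exactly those whose first component is $\synbrack{T_o}$, the hypotheses force $[\mname{snd}\,x_{\seq{\epsilon\epsilon}}] \not= \synbrack{T_o}$ and $\not= \synbrack{F_o}$, so $\textbf{S}_o$ reduces to the $y_\epsilon = \synbrack{T_o}$ branch and evaluates $[\mname{snd}\,x_{\seq{\epsilon\epsilon}}]$, while $\textbf{R}_o$ becomes $T_o \Andd \sembrack{[\mname{snd}\,x_{\seq{\epsilon\epsilon}}]}_o$; these agree by the Tautology Theorem, using Axiom 11.1 to disquote $\sembrack{\synbrack{T_o}}_o$. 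The symmetric sub-case and the sub-case where neither projection is $\synbrack{T_o}$ nor $\synbrack{F_o}$ --- where $\textbf{S}_o$ falls all the way through to $\textbf{P}_o$ and one invokes Lemma~\ref{lem:sem-and} --- complete the argument.) Assembling the case analysis by the Tautology Theorem and Rule~2 yields part~5. In all parts the only machinery used is Axioms 4, 9, 10, 11, the specifications, Lemmas~\ref{lem:pairs}, \ref{lem:quo-inj}, \ref{lem:sem-and}, \ref{lem:Q-reduction}, the Disquotation Theorem, and the Tautology Theorem, so the five parts are genuinely routine once the case split in part~5 is organized correctly.
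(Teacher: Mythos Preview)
Your overall strategy is right, but there is one genuine gap in Parts~1--4 and some bookkeeping to fix in Part~5.

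For Parts~1--4 you write ``Using Rule~1 to substitute this into $\textbf{S}_o$,'' where ``this'' is the hypothetical equality $x_{\seq{\epsilon\epsilon}} = [\mname{pair}\,\synbrack{\textbf{U}_o}\,\synbrack{\textbf{V}_o}]$. Rule~1 only licenses replacement from a \emph{proven} quasi-equality, not from the antecedent of the implication you are trying to establish; and you cannot temporarily assume the equality and discharge it, since the Deduction Theorem (Theorem~\ref{thm:deduction}) requires syntactically closed hypotheses while $x_{\seq{\epsilon\epsilon}}$ is free. The paper's device --- absent from your tool list --- is Axiom~2 (Leibniz' Law): take $\textbf{H}_{o\seq{\epsilon\epsilon}} = \lambda x_{\seq{\epsilon\epsilon}}[\textbf{Q}_o \Iff \textbf{R}_o]$ to obtain
\[
x_{\seq{\epsilon\epsilon}} = \mname{pair}(\ldots) \Implies \bigl[[\lambda x_{\seq{\epsilon\epsilon}}[\textbf{Q}_o\Iff\textbf{R}_o]]\,x_{\seq{\epsilon\epsilon}} \Iff [\lambda x_{\seq{\epsilon\epsilon}}[\textbf{Q}_o\Iff\textbf{R}_o]]\,\mname{pair}(\ldots)\bigr],
\]
then reduce the left application by Axiom~4.10 and the right one (on the closed argument) by part~2 of the Beta-Reduction Theorem together with Lemma~\ref{lem:pairs} and Axiom~10. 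With this one correction your Parts~1--4 go through. (One arithmetic slip: in Part~3 the first guard fires since $\mname{fst}\,x_{\seq{\epsilon\epsilon}} = \synbrack{T_o}$, so $\textbf{S}_o$ collapses to $\sembrack{\mname{snd}\,x_{\seq{\epsilon\epsilon}}}_o = \sembrack{\synbrack{F_o}}_o$, not $\sembrack{\synbrack{T_o}}_o$.)

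For Part~5 your case analysis is actually more scrupulous than the paper's argument, which passes from $\textbf{A}_o \Implies [\textbf{P}_o \Iff \textbf{R}_o]$ to $\textbf{A}_o \Implies [\textbf{S}_o \Iff \textbf{R}_o]$ citing only ``Axiom~10.2, Lemma~\ref{lem:pairs}, and the Tautology Theorem'' --- a justification that reads as collapsing $\textbf{S}_o$ to $\textbf{P}_o$ through all-false guards, whereas (as you correctly observe) the four inequalities in $\textbf{A}_o$ do not by themselves force each guard false. Your sub-case split is the right remedy; just tidy the bookkeeping: the excluded pairs whose \emph{first} component is $\synbrack{T_o}$ are the first and third listed (not ``the first two''), and when the first guard fires $\textbf{S}_o$ lands in its first branch $\sembrack{\mname{snd}\,x_{\seq{\epsilon\epsilon}}}_o$, not ``the $y_\epsilon = \synbrack{T_o}$ branch'' (that label belongs to the second guard). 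Also note that, once a guard fires, the inequalities on the \emph{other} projection are irrelevant to matching that branch with $\textbf{R}_o$ --- the match follows already from disquotation (Theorem~\ref{thm:disquote}) and the Tautology Theorem --- so your sub-cases can be streamlined.
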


\begin{proof}

\medskip

\noindent \textbf{Part 1} {\sglsp}
\begin{align} \setcounter{equation}{0}
&\vdash 
x_{\seq{\epsilon\epsilon}} =  [\mname{pair}_{\seq{oo}oo} \, \synbrack{T_o} \, \synbrack{T_o}] \Implies {} \notag\\
&\hspace*{2.5ex}
[[\lambda x_{\seq{\epsilon\epsilon}} [\textbf{Q}_o \Iff \textbf{R}_o]] x_{\seq{\epsilon\epsilon}} \Iff
[\lambda x_{\seq{\epsilon\epsilon}} [\textbf{Q}_o \Iff \textbf{R}_o]] 
[\mname{pair}_{\seq{oo}oo} \, \synbrack{T_o} \, \synbrack{T_o}]]. \\
&\vdash 
x_{\seq{\epsilon\epsilon}} =  [\mname{pair}_{\seq{oo}oo} \, \synbrack{T_o} \, \synbrack{T_o}] \Implies {} \notag\\
&\hspace*{2.5ex}
[[\textbf{Q}_o \Iff \textbf{R}_o] \Iff
[\lambda x_{\seq{\epsilon\epsilon}} [\textbf{Q}_o \Iff \textbf{R}_o]] 
[\mname{pair}_{\seq{oo}oo} \, \synbrack{T_o} \, \synbrack{T_o}]]. \\
&\vdash 
x_{\seq{\epsilon\epsilon}} =  [\mname{pair}_{\seq{oo}oo} \, \synbrack{T_o} \, \synbrack{T_o}] \Implies {} \notag\\
&\hspace*{2.5ex}
[[\textbf{Q}_o \Iff \textbf{R}_o] \Iff
[\lambda x_{\seq{\epsilon\epsilon}} [\textbf{S}_o \Iff \textbf{R}_o]] 
[\mname{pair}_{\seq{oo}oo} \, \synbrack{T_o} \, \synbrack{T_o}]]. \\
&\vdash 
x_{\seq{\epsilon\epsilon}} =  [\mname{pair}_{\seq{oo}oo} \, \synbrack{T_o} \, \synbrack{T_o}] \Implies {} \notag\\
&\hspace*{2.5ex}
[[\textbf{Q}_o \Iff \textbf{R}_o] \Iff
[\sembrack{\synbrack{T_o}}_o \Iff
\sembrack{\synbrack{T_o}}_o  \Andd \sembrack{\synbrack{T_o}}_o]]. \\
&\vdash 
x_{\seq{\epsilon\epsilon}} =  [\mname{pair}_{\seq{oo}oo} \, \synbrack{T_o} \, \synbrack{T_o}] \Implies {} \notag\\
&\hspace*{2.5ex}
[[\textbf{Q}_o \Iff \textbf{R}_o] \Iff T_o]. \\
&\vdash 
x_{\seq{\epsilon\epsilon}} =  [\mname{pair}_{\seq{oo}oo} \, \synbrack{T_o} \, \synbrack{T_o}] \Implies 
[\textbf{Q}_o \Iff \textbf{R}_o].
\end{align}
(1) is by Axiom 2; (2) follows from (1) by Axiom 4.10; (3) follows
from (2) by Lemma~\ref{lem:Q-reduction} and Rule~1; (4) follows from
(3) by Lemma~\ref{lem:pairs}, part 2 of the Beta-Reduction Theorem,
Axiom 10.1, and the Tautology Theorem; (5) follows (4) by Axiom 11.2
and the Tautology Theorem; (6) follows from (5) by the Tautology
Theorem.

\medskip

\noindent \textbf{Part 2} {\sglsp} Similar to Part 1.

\medskip

\noindent \textbf{Part 3} {\sglsp} Similar to Part 1.

\medskip

\noindent \textbf{Part 4} {\sglsp} Similar to Part 1.

\medskip

\noindent \textbf{Part 5} {\sglsp} Let $\textbf{A}_o$ be the
antecedent of the implication in part 5 of the lemma.
\begin{align} \setcounter{equation}{0}
&\vdash
\textbf{P}_o \Iff \textbf{R}_o \\
&\vdash
\textbf{A}_o \Implies [\textbf{P}_o \Iff \textbf{R}_o] \\
&\vdash
\textbf{A}_o \Implies [\textbf{S}_o \Iff \textbf{R}_o]  \\
&\vdash
\textbf{A}_o \Implies [\textbf{Q}_o \Iff \textbf{R}_o] 
\end{align}
(1) is by Lemma~\ref{lem:sem-and} and part 4 of Universal
Instantiation; (2) follows from (1) by the Tautology Theorem; (3)
follows from (2) by Axiom 10.2, Lemma~\ref{lem:pairs}, and the
Tautology Theorem; and (4) follows from (3) by
Lemma~\ref{lem:Q-reduction} and Rule~1.
\end{proof}

\bigskip

Finally, the theorem below shows that \mname{MathMeaning} is a theorem
of {\pfsysuqe}:

\begin{thm} [Mathematical Meaning of $\mname{and-simp}_{\epsilon\epsilon\epsilon}$]
\[\proves{}{\mname{MathMeaning}}.\]
\end{thm}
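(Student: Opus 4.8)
The plan is to prove \mname{MathMeaning} by a straightforward case analysis on the value of the variable $x_{\seq{\epsilon\epsilon}}$, driven by the fact that $\sD_o = \set{\mname{T},\mname{F}}$ gives only finitely many ``boolean pair'' cases to consider, with all remaining cases handled uniformly by the purely semantic Lemma~\ref{lem:sem-and}. The five cases required are exactly the five theorems packaged in Lemma~\ref{lem:math-meaning}: the four cases where $x_{\seq{\epsilon\epsilon}}$ is the pair of quotations of two truth-value constants (so that $\mname{and-simp}_{\epsilon\epsilon\epsilon}$ takes one of its four ``simplifying'' branches), and the one remaining case where neither component is $\synbrack{T_o}$ or $\synbrack{F_o}$ but both components are syntactic representations of formulas (so $\mname{and-simp}_{\epsilon\epsilon\epsilon}$ falls through to $\mname{and}_{\epsilon\epsilon\epsilon}$, whose correctness is Lemma~\ref{lem:sem-and}).

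First I would invoke Axiom 9.2 to obtain $\proves{}{\textbf{x}_{\seq{\epsilon\epsilon}} \IsDefApp \Implies \Forsome u_\epsilon \Forsome v_\epsilon [x_{\seq{\epsilon\epsilon}} = \mname{pair}_{\seq{\epsilon\epsilon}\epsilon\epsilon} \, u_\epsilon \, v_\epsilon]}$, and combine it with the hypotheses $\mname{wff}^{o}_{o\epsilon} \, [\mname{fst}_{\epsilon\seq{\epsilon\epsilon}} \, x_{\seq{\epsilon\epsilon}}]$ and $\mname{wff}^{o}_{o\epsilon} \, [\mname{snd}_{\epsilon\seq{\epsilon\epsilon}} \, x_{\seq{\epsilon\epsilon}}]$ together with Lemma~\ref{lem:pairs}. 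Next I would observe that in any normal general model the two components, being representations of formulas of type $o$, either equal $\synbrack{T_o}$, equal $\synbrack{F_o}$, or do not; in {\pfsysuqe} this ``trichotomy'' is obtained by Tautology Theorem reasoning using Axiom 6 (definedness of the components) together with the distinctness specifying axioms (Specification 4.22/4.23) to know $\synbrack{T_o} \not= \synbrack{F_o}$. This yields a $3 \times 3$ split on the pair $(\mname{fst}, \mname{snd})$; the four corners where both components lie in $\set{\synbrack{T_o},\synbrack{F_o}}$ are discharged by parts 1--4 of Lemma~\ref{lem:math-meaning}, and the remaining five cells are all subsumed by part 5 of that lemma, whose antecedent is precisely ``neither component is $\synbrack{T_o}$ and neither is $\synbrack{F_o}$, and both are $\mname{wff}^o$.'' Finally a Tautology Theorem step assembles the implication and Universal Generalization closes off $x_{\seq{\epsilon\epsilon}}$, giving $\proves{}{\mname{MathMeaning}}$.

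The technical heart of each case is the reduction of $\textbf{Q}_o = \sembrack{\mname{and-simp}_{\epsilon\epsilon\epsilon} \, [\mname{fst}_{\epsilon\seq{\epsilon\epsilon}} \, x_{\seq{\epsilon\epsilon}}] \, [\mname{snd}_{\epsilon\seq{\epsilon\epsilon}} \, x_{\seq{\epsilon\epsilon}}]}_o$ via Lemma~\ref{lem:Q-reduction} to $\textbf{S}_o$, followed by collapsing the nested conditionals in $\textbf{S}_o$ using Axioms 10.1 and 10.2 once we know which equalities $[\mname{fst}_{\epsilon\seq{\epsilon\epsilon}} \, x_{\seq{\epsilon\epsilon}}] = \synbrack{T_o}$ etc.\ hold or fail; on the ``otherwise'' branch, $\textbf{S}_o$ collapses to $\textbf{P}_o$ and Lemma~\ref{lem:sem-and} identifies $\textbf{P}_o$ with $\textbf{R}_o$. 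The main obstacle I anticipate is bookkeeping rather than conceptual: one must be careful that the beta-reductions performed inside $\textbf{S}_o$ go through with $\mname{sub}_{\epsilon\epsilon\epsilon\epsilon}$ or, where that is blocked by evaluations, via the lambda-notation Axioms 4.2--4.10 and the grouped-variable trick of Example~1 in subsection~\ref{subsec:undef-sub} — which is exactly why $\textbf{P}_o$, $\textbf{Q}_o$, $\textbf{S}_o$ were defined using a single paired variable $x_{\seq{\epsilon\epsilon}}$ rather than two separate variables of type $\epsilon$. Since each individual case is, as stated in the proofs of Lemmas~\ref{lem:Q-reduction} and~\ref{lem:math-meaning}, a routine sequence of such steps, the overall argument is just: (i) trichotomy on each component, (ii) nine cells, (iii) four by Lemma~\ref{lem:math-meaning} parts 1--4, five by part 5, (iv) Tautology Theorem and Universal Generalization.

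\begin{proof}
By Axiom 9.2, Lemma~\ref{lem:pairs}, and the hypotheses
$\mname{wff}^{o}_{o\epsilon} \,
[\mname{fst}_{\epsilon\seq{\epsilon\epsilon}} \,
x_{\seq{\epsilon\epsilon}}]$ and $\mname{wff}^{o}_{o\epsilon} \,
[\mname{snd}_{\epsilon\seq{\epsilon\epsilon}} \,
x_{\seq{\epsilon\epsilon}}]$, we may reason by cases on whether each
of $[\mname{fst}_{\epsilon\seq{\epsilon\epsilon}} \,
x_{\seq{\epsilon\epsilon}}]$ and
$[\mname{snd}_{\epsilon\seq{\epsilon\epsilon}} \,
x_{\seq{\epsilon\epsilon}}]$ equals $\synbrack{T_o}$, equals
$\synbrack{F_o}$, or equals neither; by Specifications 4.22--23
and the Tautology Theorem, $\synbrack{T_o} \not= \synbrack{F_o}$, so
these alternatives are exhaustive and give nine cases.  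In the four
cases where both components lie in
$\set{\synbrack{T_o},\synbrack{F_o}}$, we have
$\proves{}{x_{\seq{\epsilon\epsilon}} =
[\mname{pair}_{\seq{oo}oo} \, \textbf{c}_1 \, \textbf{c}_2]}$ with
$\textbf{c}_1, \textbf{c}_2 \in \set{\synbrack{T_o},\synbrack{F_o}}$
by Axiom 9.1 and Lemma~\ref{lem:pairs}, and
$\proves{}{\textbf{Q}_o \Iff \textbf{R}_o}$ follows from the
corresponding part 1, 2, 3, or 4 of Lemma~\ref{lem:math-meaning}
by the Tautology Theorem.  In the remaining five cases, the
antecedent of part 5 of Lemma~\ref{lem:math-meaning} holds (using
Axiom 9.1 and Lemma~\ref{lem:pairs} to derive the requisite
disequalities), so $\proves{}{\textbf{Q}_o \Iff \textbf{R}_o}$ by
part 5.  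Hence, in every case,
\[[\mname{wff}^{o}_{o\epsilon} \,
[\mname{fst}_{\epsilon\seq{\epsilon\epsilon}} \,
x_{\seq{\epsilon\epsilon}}] \Andd \mname{wff}^{o}_{o\epsilon} \,
[\mname{snd}_{\epsilon\seq{\epsilon\epsilon}} \,
x_{\seq{\epsilon\epsilon}}]] \Implies [\textbf{Q}_o \Iff
\textbf{R}_o]\]
is provable in {\pfsysuqe} by the Tautology Theorem, and
$\proves{}{\mname{MathMeaning}}$ follows by Universal
Generalization.
\end{proof}
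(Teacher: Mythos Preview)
Your proposal is correct and arrives at the same conclusion the paper does, but you take a more roundabout path. The paper's proof is a two-line affair: the five parts of Lemma~\ref{lem:math-meaning} have antecedents that are propositionally exhaustive (either $x_{\seq{\epsilon\epsilon}}$ equals one of the four specific pairs, or it equals none of them), so the Tautology Theorem combines them directly, together with Lemma~\ref{lem:pairs}, to yield the un-quantified implication, and Universal Generalization finishes. No case split on components, no Axiom~9.2, no trichotomy, and no need for $\synbrack{T_o} \not= \synbrack{F_o}$.

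Your $3\times 3$ decomposition is valid but adds machinery the paper avoids. In particular: (i) the appeal to Axiom~9.2 to write $x_{\seq{\epsilon\epsilon}}$ as a pair is unnecessary, since parts 1--5 of Lemma~\ref{lem:math-meaning} are already stated in terms of $x_{\seq{\epsilon\epsilon}}$ itself; (ii) the distinctness $\synbrack{T_o} \not= \synbrack{F_o}$ plays no role (the five antecedents exhaust all possibilities regardless); and (iii) in your informal discussion you misstate the antecedent of part~5 as ``neither component is $\synbrack{T_o}$ and neither is $\synbrack{F_o}$'' --- in fact it says the \emph{pair} differs from each of the four specific pairs, a strictly weaker condition. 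Your formal proof handles this correctly by deriving the four pair-disequalities in each of your five ``remaining'' cells, so the argument goes through; but the simpler route is to observe that parts 1--4 together with part~5 already form a tautological case split and skip the componentwise analysis entirely.
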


\begin{proof}
\[\proves{}{[\mname{wff}^{o}_{o\epsilon} \,
  [\mname{fst}_{\epsilon\seq{\epsilon\epsilon}} \,
    x_{\seq{\epsilon\epsilon}}] \Andd \mname{wff}^{o}_{o\epsilon} \,
  [\mname{snd}_{\epsilon\seq{\epsilon\epsilon}} \,
    x_{\seq{\epsilon\epsilon}}]] \Implies [\textbf{Q}_o \Iff
    \textbf{R}_o]}\] follows from Lemmas~\ref{lem:pairs} and
\ref{lem:math-meaning} and the Tautology Theorem.  Then
\[\proves{}{\Forall x_{\seq{\epsilon\epsilon}} 
[[\mname{wff}^{o}_{o\epsilon} \,
  [\mname{fst}_{\epsilon\seq{\epsilon\epsilon}} \, x_{\seq{\epsilon\epsilon}}] \Andd
  \mname{wff}^{o}_{o\epsilon} \,      
  [\mname{snd}_{\epsilon\seq{\epsilon\epsilon}} \, x_{\seq{\epsilon\epsilon}}]]] \Implies
[\textbf{Q}_o \Iff \textbf{R}_o]]}\] follows from this by Universal Generalization.
\end{proof}

\bigskip

\noindent 
Hence $O_A$ is mathematically correct. 

While $A$ manipulates formulas,
$\mname{and-simp}_{\epsilon\epsilon\epsilon}$ manipulates syntactic
representations of formulas.  An application of $O_A$ has the form
$\mname{and-simp}_{\epsilon\epsilon\epsilon} \,
\synbrack{\textbf{A}_o} \, \synbrack{\textbf{B}_o}$.  Its value can be
computed by expanding its definition, beta-reducing using Axiom 4, and
then rewriting the resulting wff using Axiom 10 and Specification~1.
If $\textbf{A}_o$ and $\textbf{B}_o$ are evaluation-free, its meaning
can be obtained by instantiating the universal formula
\mname{MathMeaning} with the wff
$\seq{\synbrack{\textbf{A}_o},\synbrack{\textbf{B}_o}}$ and then
simplifying.

\section{Conclusion} \label{sec:conclusion}

\subsection{Summary of Results}

We have presented a version of simple type theory called {\qzerouqe}
that admits undefined expressions, quotations, and evaluations.
{\qzerouqe} is based on {\qzero}, a version of Church's type
theory~\cite{Church40} developed by Peter~B.~Andrews~\cite{Andrews02}.
{\qzerouqe} directly formalizes the traditional approach to
undefinedness~\cite{Farmer04} in which undefined expressions are
treated as legitimate, nondenoting expressions that can be components
of meaningful statements.  It has the same facility for reasoning
about undefinedness as {\qzerou}~\cite{Farmer08a} that is derived from
{\qzero}.  In addition, it has a facility for reasoning about the
syntax of expressions based on quotation and evaluation.

The syntax of {\qzerouqe} differs from the syntax of {\qzero} by
having the following new machinery: a base type $\epsilon$ that
denotes a domain of syntactic values, a quotation operator, an
evaluation operator, and several constants involving the type
$\epsilon$.  {\qzerouqe} also has some additional new machinery for
ordered pairs and conditionals: a type constructor for forming types
that denote domains of ordered pairs, a constant for forming ordered
pairs, and an expression constructor for forming conditionals.  The
semantics of {\qzerouqe} is based on Henkin-style general
models~\cite{Henkin50} that include partial functions as well as total
functions and in which expressions may be undefined.  The expression
constructor for conditionals is nonstrict with respect to
undefinedness.  An application of the quotation operator to an
expression denotes a syntactic value that represents the expression.
An application of the evaluation operator to an expression $E$ denotes
the value of the expression represented by the value of $E$.  To avoid
the Evaluation Problem mentioned in the Introduction, an evaluation
$\sembrack{\synbrack{\textbf{A}_\epsilon}}_\alpha$ is undefined when
$\textbf{A}_\epsilon$ is not evaluation-free.

The syntax and semantics of {\qzerouqe} are modest modifications of
the syntax and semantics of {\qzerou}, but {\pfsysuqe}, the proof
system of {\qzerouqe}, is a major modification of {\pfsysu}, the proof
system of {\qzerou}.  The substitution operation that is needed to
perform beta-reduction is defined in the metalogic of {\qzerou}, while
it is represented in {\qzerouqe} by a primitive constant
$\mname{sub}_{\epsilon\epsilon\epsilon\epsilon}$.  To avoid the
Variable Problem mentioned in the Introduction,
$\mname{sub}_{\epsilon\epsilon\epsilon\epsilon}$ defines a
semantics-dependent form of substitution.  Moreover, the syntactic
side conditions concerning free variables and substitution that are
expressed in the metalogic of a traditional logic are expressed in the
language of {\qzerouqe}.  We prove that {\pfsysuqe} is sound with
respect to the semantics of {\qzerouqe} (Theorem~\ref{thm:soundness}),
but it is not complete.  However, it is complete for evaluation-free
formulas (Theorem~\ref{thm:completeness}).

{\qzerouqe} is not complete because it is not possible to beta-reduce
all applications of function abstraction.  There are two ways of
performing beta-reduction in {\qzerouqe}.  The first way uses the
specifying axioms of the primitive constant
$\mname{sub}_{\epsilon\epsilon\epsilon\epsilon}$ to perform
substitution as expressed by Axiom 4.1.  This first way works for all
applications of function abstraction involving just evaluation-free
wffs, but it works for only some applications involving evaluations.
The second way uses the basic properties of lambda-notation as
expressed by Axioms 4.2--10.  Like the first way, this second way
works for all applications of function abstraction involving just
evaluation-free wffs, but it works for only some applications
involving evaluations.  However, the two ways complement each other
because they work for different applications of function abstraction
involving evaluations.

\subsection{Significance of Results}

The construction of {\qzerouqe} demonstrates how the replete approach
to reasoning about syntax~\cite{Farmer13} --- in which it is possible
to reason about the syntax of the entire language of the logic using
quotation and evaluation operators defined in the logic --- can be
implemented in Church's type theory~\cite{Church40}.  Moreover, the
implementation ideas employed in {\qzerouqe} can be applied to other
traditional logics like first-order logic.  Even though the proof
system of {\qzerouqe} is not complete, it is powerful enough to be
useful.  We have illustrated how {\qzerouqe} can be used to (1)~reason
about the syntactic structure of expressions, (2) represent and
instantiate schemas with syntactic variables, and (3) formalize
syntax-based mathematical algorithms in the sense given
in~\cite{Farmer13}.  We believe {\qzerouqe} is the first
implementation of the replete approach in a traditional logic.

The most innovative and complex part of {\qzerouqe} is the
semantics-based form of substitution represented by the primitive
constant $\mname{sub}_{\epsilon\epsilon\epsilon\epsilon}$.  It
provides the means to instantiate both variables occurring in
evaluations and variables resulting from evaluations.  In particular,
it enables schemas expressed using evaluation (e.g., as given in
subsections~\ref{subsec:lem} and~\ref{subsec:beta-red}) to be
instantiated.  We showed that the substitution mechanism is correct by
proving the law of beta-reduction formulated using
$\mname{sub}_{\epsilon\epsilon\epsilon\epsilon}$
(Theorem~\ref{thm:beta-red-sub}).  The proof of this theorem is
intricate and involves many lemmas.

{\qzerouqe} is intended primarily for theoretical purposes; it is not
designed to be used in practice.  A more practical version of
{\qzerouqe} could be obtained by extending it in some of the ways
discussed in~\cite{Farmer08}.  For instance, {\qzerouqe} could be
extended to include type variables as in the logic of the {\hol}
theorem proving system~\cite{GordonMelham93} and its
successors~\cite{Harrison09,Lemma1Ltd00,Paulson94} or
subtypes as in the logic of the {\imps} theorem proving
system~\cite{FarmerEtAl93,FarmerEtAl96}.  These additions would
significantly raise the practical expressivity of the logic but would
further raise the complexity of the logic.  Many of these kinds of
practical measures are implemented together in the logic
Chiron~\cite{Farmer07a,Farmer07}, a derivative of
von-Neumann-Bernays-G\"odel ({\nbg}) set theory that admits undefined
expressions, has a rich type system, and is equipped with a facility
of reasoning about syntax that is very similar to {\qzerouqe}'s.

\subsection{Related Work}

\subsubsection*{Reasoning in Logic about Syntax}

Reasoning in a logic about syntax begins with Kurt G\"odel's famous
use of \emph{G\"odel numbers} in~\cite{Goedel31} to encode
expressions.  G\"odel, Tarski, and others used reasoning about syntax
to show some of the limits of formal logic by \emph{reflecting} the
metalogic of a logic into the logic itself. \emph{Reflection} is a
technique to embed reasoning about a reasoning system (i.e.,
metareasoning) in the reasoning system itself.  It very often involves
the syntactic manipulation of expressions.  Reflection has been
employed in logic both for theoretical purposes~\cite{Koellner09} and
practical purposes~\cite{Harrison95}.

The technique of \emph{deep embedding} is used to reason in a logic
about the syntax of a particular
language~\cite{BoultonEtAl93,ContejeanEtAl07,WildmoserNipkow04}.  This
is usually done with the local approach but could also be done with
the global approach.  A deep embedding can also provide a basis for
formalizing syntax-based mathematical algorithms.  Examples include
the ring tactic implemented in Coq~\cite{Coq8.4} and Wojciech
Jedynak's semiring solver in
Agda~\cite{Norell07,Norell09,VanDerWalt12}.

Florian Rabe proposes in~\cite{Rabe15} a method for freely adding
literals for the values in a given semantic domain.  This method can
be used for reasoning about syntax by choosing a language of
expressions as the semantic domain.  Rabe's approach provides a
quotation operation that is more general than the quotation operation
we have defined for {\qzerouqe}.  However, his approach does not
provide an escape from obstacles like the Evaluation Problem and the
Variable Problem described in section 1.

\subsubsection*{Reasoning in the Lambda Calculus about Syntax}

Corrado B\"ohm and Alessandro Berarducci present
in~\cite{BoehmBerarducci85} a method for representing an inductive
type of values as a collection of lambda-terms.  Then functions
defined on the members of the inductive type can also be represented
as lambda terms.  Both the lambda terms representing the values and
those representing the functions defined on the values can be typed in
the second-order lambda calculus (System F)~\cite{Girard72,Reynolds74}
as shown in~\cite{BoehmBerarducci85}.  C. B\"ohm and his collaborators
present in~\cite{BerarducciBoehm93,BoehmEtAl94} a second, more
powerful method for representing inductive types as collections of
lambda-terms in which the lambda terms are not as easily typeable as
in the first method.  These two methods provide the means to
efficiently formalize syntax-based mathematical algorithms in the
lambda calculus.

Using the fact that inductive types can be directly represented in the
lambda calculus, Torben \AE. Mogensen in~\cite{Mogensen94} represents
the inductive type of lambda terms in lambda calculus itself as well
as defines an evaluation operator in the lambda calculus.  He thus
shows that the replete approach to reasoning about syntax, minus the
presence of a built-in quotation operator, can be realized in the
lambda calculus.  (See Henk Barendregt's survey
paper~\cite{Barendregt97} on the impact of the lambda calculus for a
nice description of this work.)

\subsubsection*{Metaprogramming}


\emph{Metaprogramming} is writing computer programs to manipulate and
generate computer programs in some programming language $L$.
Metaprogramming is especially useful when the ``metaprograms'' can be
written in $L$ itself.  This is facilitated by implementing in $L$
metareasoning techniques for $L$ that involve the manipulation of
program code.  See~\cite{DemersMalenfant95} for a survey of how this
kind of ``reflection'' can be done for the major programming
paradigms.  The programming languages we mentioned in the Introduction
support metaprogramming: Lisp, Agda~\cite{Norell07,Norell09},
Elixir~\cite{Elixir15}, F\#~\cite{FSharp15},
MetaML~\cite{TahaSheard00}, MetaOCaml~\cite{MetaOCaml11},
reFLect~\cite{GrundyEtAl06}, and Template
Haskell~\cite{SheardJones02}.  These languages represent fragments of
computer code as values in an inductive type and include quotation,
quasiquotation, and evaluation operations.  For example, these
operations are called \emph{quote}, \emph{backquote}, and \emph{eval}
in the Lisp programming language.  Thus metaprogramming languages
take, more or less, the replete approach to reasoning about the syntax
of programs.  The metaprogramming language Archon~\cite{Stump09}
developed by Aaron Stump offers an interesting alternate approach in
which program code is manipulated directly instead of manipulating
representations of computer code.

\subsubsection*{Theories of Truth}

Truth is a major subject in philosophy~\cite{Truth13}.  A theory of
truth seeks to explain what truth is and how the liar and other
related paradoxes can be resolved.  A \emph{semantics theory of truth}
defines a truth predicate for a formal language, while an
\emph{axiomatic theory of truth}~\cite{Halbach11,AxiomThyTruth13}
specifies a truth predicate for a formal language by means of an
axiomatic theory.  We have mentioned in Note~\ref{note:truth} that an
evaluation of the form $\sembrack{\textbf{A}_\epsilon}_o$ is a truth
predicate on $\wffs{\epsilon}$ $\textbf{A}_\epsilon$ that represent
formulas.  Thus {\qzerouqe} provides a semantic theory of truth via it
semantics and an axiomatic theory of truth via its proof system
{\pfsysuqe}.

Since our goal is not to explicate the nature of truth, it is not
surprising that the semantic and axiomatic theories of truth provided
by {\qzerouqe} are not very innovative.  Theories of truth ---
starting with Tarski's work~\cite{Tarski33,Tarski35,Tarski35a} in the
1930s --- have traditionally been restricted to the truth of
sentences, i.e., formulas with no free variables.  However, the
{\qzerouqe} semantic and axiomatic theories of truth admit formulas
with free variables.

\subsection{Future Work}

2Our future research will seek to answer the following questions:

\be

  \item Can nontrivial syntax-based mathematical algorithms --- such
    as those that compute derivatives symbolically --- be formalized
    in {\qzerouqe} in the sense given in~\cite{Farmer13}?

  \item Can a logic equipped with the machinery of {\qzerouqe} for
    reasoning about undefinedness and syntax be effectively
    implemented as a software system?

  \item Can the replete approach to reasoning about syntax serve as a
    basis to integrate axiomatic and algorithmic mathematics?

\ee
We will discuss each of these research questions in turn.

\subsubsection*{Formalizing Syntax-Based Mathematical Algorithms}

We conjecture that it is possible to formalize nontrivial syntax-based
mathematical algorithms in {\qzerouqe} in the sense given
in~\cite{Farmer13}.  We intend to work out the details for the
well-known algorithm for the symbolic differentiation of polynomials
as described in~\cite{Farmer13}.  First, we will define a theory $R$
of the real numbers in {\qzerouqe}.  Second, we will define in $R$ the
basic ideas of calculus including the notions of a derivative and a
polynomial.  Third, we will define a constant in $R$ that represents
the symbolic differentiation algorithm for polynomials.  Fourth, we
will specify in $R$ the intended computational behavior of the
algorithm and prove that the constant satisfies that specification.
Fifth, we will specify in $R$ the intended mathematical meaning of the
algorithm and prove that the constant satisfies that specification.
And, finally, we will show how the constant can be used to compute
derivatives of polynomial functions in $R$.

Polynomial functions are total (i.e., they are defined at all points
on the real line) and their derivatives are also total.  Hence no
issues of definedness arise in the specification of the mathematical
meaning of the differentiation algorithm for polynomials.  However,
functions more general than polynomial functions as well as their
derivatives may be undefined at some points.  This means that
specifying the mathematical meaning of a symbolic differentiation
algorithm for more general functions will require using the
undefinedness facility of {\qzerouqe}.

\subsubsection*{Implementation of the {\qzerouqe} Machinery}

It remains an open question whether a logic like {\qzerouqe} can be
effectively implemented as a computer program.  The undefinedness
component of {\qzerouqe} has been implemented in the {\imps} theorem
proving system~\cite{FarmerEtAl93,FarmerEtAl96} which has been
successfully used to prove hundreds of theorems in traditional
mathematics, especially in mathematical analysis.  However, quotation
and evaluation would add another level of complexity to a theorem
proving system like {\imps} that can deal directly with undefinedness.

There are three approaches for implementing the syntax reasoning
machinery of {\qzerouqe}.  The first is to implement part of the
machinery of {\qzerouqe} in an existing implementation of Church's
type theory such as John Harrison's HOL Light~\cite{Harrison09} in
order to conduct experiments concerning reasoning about syntax.  For
example, a worthy experiment would be to formalize a syntax-based
mathematical algorithm like the symbolic differentiation algorithm for
polynomials mentioned above.  The second is to directly implement
{\qzerouqe} --- a version of {\qzerouqe} with perhaps some practical
additions --- to test the entire design of {\qzerouqe}.  And the third
is to implement {\qzerouqe}'s syntax reasoning machinery as part of
the implementation of a general purpose logic for mechanized
mathematics.  We have engineered Chiron~\cite{Farmer07a,Farmer07} to
be just such as logic.  It contains essentially the same syntax
reasoning machinery as {\qzerouqe}, and we have a rudimentary
implementation of it~\cite{CaretteEtAl11}.

Implementing the ideas in {\qzerouqe} will be challenging.  Reasoning
about the interplay of syntax and semantics in a logic instead of a
metalogic is tricky.  It is easy for both developers and users to
become confused --- just ask any Lisp programmer.  A practical proof
system will require new axioms and rules of inference as well as an
effective means to perform substitution in the presence of
evaluations.  The latter, as we have seen, is fraught with
difficulties.  Finally, new notation and user-interface techniques are
needed to shield the user, as much as possible, from low-level
syntactic manipulations.

\subsubsection*{Integration of Axiomatic and Algorithmic Mathematics}

The MathScheme project~\cite{CaretteEtAl11}, led by Jacques Carette
and the author, is a long-term project being pursued at McMaster
University with the aim of producing a framework in which formal
deduction and symbolic computation are tightly integrated.  A key part
of the framework is the notion of a \emph{biform
  theory}~\cite{CaretteFarmer08,Farmer07b} that is a combination of an
axiomatic theory and an algorithm theory.  A biform theory is a basic
unit of mathematical knowledge that consists of a set of
\emph{concepts} that denote mathematical values, \emph{transformers}
that denote syntax-based algorithms, and \emph{facts} about the
concepts and transformers.  Since transformers manipulate the syntax
of expressions, biform theories are difficult to formalize in a
traditional logic.  One of the main goals of the MathScheme project is
to see if a logic like {\qzerouqe} that implements the replete
approach to syntax reasoning can be used develop a library of biform
theories.

\addcontentsline{toc}{section}{Acknowledgments}
\section*{Acknowledgments} 

The author is grateful to Marc Bender, Jacques Carette, Michael
Kohlhase, Pouya Larjani, and Florian Rabe for many valuable
discussions on the use of quotation and evaluation in logic.  Peter
Andrews deserves special thanks for writing \emph{An Introduction to
  Mathematical Logic and Type Theory: To Truth through
  Proof}~\cite{Andrews02}.  The ideas embodied in {\qzerouqe} heavily
depend on the presentation of {\qzero} given in this superb textbook.

\addcontentsline{toc}{section}{References}
\bibliography{$HOME/research/lib/imps}

\begin{thebibliography}{10}

\bibitem{AbadiEtAl91}
M.~Abadi, L.~Cardelli, P.-L. Curien, and J.-J. L\'evy.
\newblock Explicit substitution.
\newblock {\em Journal of Functional Programming}, 1:375--416, 1991.

\bibitem{Andrews02}
P.~B. Andrews.
\newblock {\em An Introduction to Mathematical Logic and Type Theory: To Truth
  through Proof, Second Edition}.
\newblock Kluwer, 2002.

\bibitem{Barendregt97}
H.~Barendregt.
\newblock The impact of the lambda calculus in logic and computer science.
\newblock {\em Bulletin of Symbolic Logic}, 3:181--215, 1997.

\bibitem{Bawden99}
A.~Bawden.
\newblock Quasiquotation in {Lisp}.
\newblock In O.~Danvy, editor, {\em Proceedings of the 1999 ACM SIGPLAN
  Symposium on Partial Evaluation and Semantics-Based Program Manipulation},
  pages 4--12, 1999.
\newblock Technical report BRICS-NS-99-1, University of Aarhus, 1999.

\bibitem{BerarducciBoehm93}
A.~Berarducci and C.~B\"ohm.
\newblock A self-interpreter of lambda calculus having a normal form.
\newblock In E.~B\"orger, G.~J\"ager, H.~Kleine B\"uning, S.~Martini, and M.~M.
  Richter, editors, {\em Computer Science Logic}, volume 702 of {\em Lecture
  Notes in Computer Science}, pages 85--99. Springer, 1993.

\bibitem{BoehmBerarducci85}
C.~B\"ohm and A.~Berarducci.
\newblock Automatic synthesis of typed lambda-programs on term algebras.
\newblock {\em Theoretical Computer Science}, 39:135--154, 1985.

\bibitem{BoehmEtAl94}
C.~B\"ohm, A.~Piperno, and S.~Guerrini.
\newblock Lambda-definition of function(al)s by normal forms.
\newblock In D.~Sannella, editor, {\em Programming Languages and Systems ---
  ESOP'94}, volume 788 of {\em Lecture Notes in Computer Science}, pages
  135--149. Springer, 1994.

\bibitem{BoultonEtAl93}
R.~Boulton, A.~Gordon, M.~Gordon, J.~Harrison, J.~Herbert, and J.~Van Tassel.
\newblock Experience with embedding hardware description languages in {HOL}.
\newblock In V.~Stavridou, T.~F. Melham, and R.~T. Boute, editors, {\em
  Proceedings of the {IFIP} {TC10/WG 10.2} International Conference on Theorem
  Provers in Circuit Design: Theory, Practice and Experience}, volume A-10 of
  {\em IFIP Transactions A: Computer Science and Technology}, pages 129--156.
  North-Holland, 1993.

\bibitem{Quotation12}
H.~Cappelen and E.~LePore.
\newblock Quotation.
\newblock In E.~N. Zalta, editor, {\em The Stanford Encyclopedia of
  Philosophy}. {Spring 2012} edition, 2012.

\bibitem{CaretteFarmer08}
J.~Carette and W.~M. Farmer.
\newblock High-level theories.
\newblock In A.~Autexier, J.~Campbell, J.~Rubio, M.~Suzuki, and F.~Wiedijk,
  editors, {\em Intelligent Computer Mathematics}, volume 5144 of {\em Lecture
  Notes in Computer Science}, pages 232--245. Springer, 2008.

\bibitem{CaretteEtAl11}
J.~Carette, W.~M. Farmer, and R.~O'Connor.
\newblock Mathscheme: Project description.
\newblock In J.~H. Davenport, W.~M. Farmer, F.~Rabe, and J.~Urban, editors,
  {\em Intelligent Computer Mathematics}, volume 6824 of {\em Lecture Notes in
  Computer Science}, pages 287--288. Springer, 2011.

\bibitem{Church40}
A.~Church.
\newblock A formulation of the simple theory of types.
\newblock {\em Journal of Symbolic Logic}, 5:56--68, 1940.

\bibitem{ContejeanEtAl07}
E.~Contejean, P.~Courtieu, J.~Forest, O.~Pons, and X.~Urbain.
\newblock Certification of automated termination proofs.
\newblock In {\em Frontiers of Combining Systems}, volume 4720 of {\em Lecture
  Notes in Computer Science}, pages 148--162. Springer, 2007.

\bibitem{Coq8.4}
{Coq Development Team}.
\newblock {\em The {Coq} Proof Assistant Reference Manual, Version~8.4}, 2012.
\newblock Available at \url{http://coq.inria.fr/distrib/V8.4/refman/}.

\bibitem{DemersMalenfant95}
F.-N. Demers and J.~Malenfant.
\newblock Reflection in logic, functional and object-oriented programming: A
  short comparative study.
\newblock In {\em IJCAI '95 Workshop on Reflection and Metalevel Architectures
  and their Applications in AI}, pages 29--38, 1995.

\bibitem{Farmer90}
W.~M. Farmer.
\newblock A partial functions version of {Church's} simple theory of types.
\newblock {\em Journal of Symbolic Logic}, 55:1269--91, 1990.

\bibitem{Farmer93b}
W.~M. Farmer.
\newblock A simple type theory with partial functions and subtypes.
\newblock {\em Annals of Pure and Applied Logic}, 64:211--240, 1993.

\bibitem{Farmer94}
W.~M. Farmer.
\newblock Theory interpretation in simple type theory.
\newblock In J.~Heering et~al., editor, {\em Higher-Order Algebra, Logic, and
  Term Rewriting}, volume 816 of {\em Lecture Notes in Computer Science}, pages
  96--123. Springer, 1994.

\bibitem{Farmer04}
W.~M. Farmer.
\newblock Formalizing undefinedness arising in calculus.
\newblock In D.~Basin and M.~Rusinowitch, editors, {\em Automated
  Reasoning---IJCAR 2004}, volume 3097 of {\em Lecture Notes in Computer
  Science}, pages 475--489. Springer, 2004.

\bibitem{Farmer07b}
W.~M. Farmer.
\newblock Biform theories in {Chiron}.
\newblock In M.~Kauers, M.~Kerber, R.~R. Miner, and W.~Windsteiger, editors,
  {\em Towards Mechanized Mathematical Assistants}, volume 4573 of {\em Lecture
  Notes in Computer Science}, pages 66--79. Springer, 2007.

\bibitem{Farmer07a}
W.~M. Farmer.
\newblock Chiron: {A} multi-paradigm logic.
\newblock In R.~Matuszewski and A.~Zalewska, editors, {\em From Insight to
  Proof: Festschrift in Honour of Andrzej Trybulec}, volume 10(23) of {\em
  Studies in Logic, Grammar and Rhetoric}, pages 1--19. University of
  Bia{\l}ystok, 2007.

\bibitem{Farmer07}
W.~M. Farmer.
\newblock Chiron: {A} set theory with types, undefinedness, quotation, and
  evaluation.
\newblock SQRL Report No.~38, McMaster University, 2007.
\newblock Revised 2012. Available at
  \url{http://imps.mcmaster.ca/doc/chiron-tr.pdf}.

\bibitem{Farmer08a}
W.~M. Farmer.
\newblock Andrews' type system with undefinedness.
\newblock In C.~Benzm\"uller, C.~Brown, J.~Siekmann, and R.~Statman, editors,
  {\em Reasoning in Simple Type Theory: Festschrift in Honor of Peter B.
  Andrews on his 70th Birthday}, Studies in Logic, pages 223--242. College
  Publications, 2008.

\bibitem{Farmer08}
W.~M. Farmer.
\newblock The seven virtues of simple type theory.
\newblock {\em Journal of Applied Logic}, 6:267--286, 2008.

\bibitem{Farmer13}
W.~M. Farmer.
\newblock The formalization of syntax-based mathematical algorithms using
  quotation and evaluation.
\newblock In J.~Carette, D.~Aspinall, C.~Lange, P.~Sojka, and W.~Windsteiger,
  editors, {\em Intelligent Computer Mathematics}, volume 7961 of {\em Lecture
  Notes in Computer Science}, pages 35--50. Springer, 2013.

\bibitem{FarmerEtAl93}
W.~M. Farmer, J.~D. Guttman, and F.~J. Thayer.
\newblock {IMPS}: An {I}nteractive {M}athematical {P}roof {S}ystem.
\newblock {\em Journal of Automated Reasoning}, 11:213--248, 1993.

\bibitem{FarmerEtAl96}
W.~M. Farmer, J.~D. Guttman, and F.~J. {Thayer F\'abrega}.
\newblock {IMPS}: An updated system description.
\newblock In M.~McRobbie and J.~Slaney, editors, {\em Automated
  Deduction---CADE-13}, volume 1104 of {\em Lecture Notes in Computer Science},
  pages 298--302. Springer, 1996.

\bibitem{FarmerLarjani13}
W.~M. Farmer and P.~Larjani.
\newblock Frameworks for reasoning about syntax that utilize quotation and
  evaluation.
\newblock McSCert Report No.~9, McMaster University, 2013.
\newblock Available at \url{http://imps.mcmaster.ca/doc/syntax.pdf}.

\bibitem{GabbayPitts02}
M.~J. Gabbay and A.~M. Pitts.
\newblock A new approach to abstract syntax involving binders.
\newblock {\em Formal Aspects of Computing}, 13:341--363, 2002.

\bibitem{GieseBuchberger07}
M.~Giese and B.~Buchberger.
\newblock Towards practical reflection for formal mathematics.
\newblock RISC Report Series 07-05, Research Institute for Symbolic Computation
  (RISC), Johannes Kepler University, 2007.

\bibitem{Girard72}
J.-Y. Girard.
\newblock {\em Interpr\'etation fonctionelle et \'elimination des coupures de
  l'arithmétique d'ordre sup\'erieur}.
\newblock PhD thesis, Universit\'e Paris 7, 1972.

\bibitem{Truth13}
M.~Glanzberg.
\newblock Truth.
\newblock In E.~N. Zalta, editor, {\em The Stanford Encyclopedia of
  Philosophy}. Spring 2013 edition, 2013.

\bibitem{Goedel31}
K.~G{\"o}del.
\newblock {\"U}ber formal unentscheidbare {S}\"atze der {P}rincipia
  {M}athematica und verwandter {S}ysteme {I}.
\newblock {\em Monatshefte f\"ur {M}athematik und {P}hysik}, 38:173--198, 1931.

\bibitem{GordonMelham93}
M.~J.~C. Gordon and T.~F. Melham.
\newblock {\em Introduction to {HOL}: A Theorem Proving Environment for Higher
  Order Logic}.
\newblock Cambridge University Press, 1993.

\bibitem{GrundyEtAl06}
J.~Grundy, T.~Melham, and J.~O'Leary.
\newblock A reflective functional language for hardware design and theorem
  proving.
\newblock {\em Journal of Functional Programming}, 16, 2006.

\bibitem{Halbach09}
V.~Halbach.
\newblock Reducing compositional to disquotational truth.
\newblock {\em The Review of Symbolic Logic}, 2:786--798, 2009.

\bibitem{Halbach11}
V.~Halbach.
\newblock {\em Axiomatic Theories of Truth}.
\newblock Cambridge University Press, 2011.

\bibitem{AxiomThyTruth13}
V.~Halbach and G.~E. Leigh.
\newblock Axiomatic theories of truth.
\newblock In E.~N. Zalta, editor, {\em The Stanford Encyclopedia of
  Philosophy}. Winter 2013 edition, 2013.

\bibitem{Harrison95}
J.~Harrison.
\newblock Metatheory and reflection in theorem proving: A survey and critique.
\newblock Technical Report CRC-053, SRI Cambridge, 1995.
\newblock Available at
  \url{http://www.cl.cam.ac.uk/~jrh13/papers/reflect.ps.gz}.

\bibitem{Harrison09}
J.~Harrison.
\newblock {HOL Light}: {A}n overview.
\newblock In S.~Berghofer, T.~Nipkow, C.~Urban, and M.~Wenzel, editors, {\em
  Theorem Proving in Higher Order Logics}, volume 5674 of {\em Lecture Notes in
  Computer Science}, pages 60--66. Springer, 2009.

\bibitem{Henkin50}
L.~Henkin.
\newblock Completeness in the theory of types.
\newblock {\em Journal of Symbolic Logic}, 15:81--91, 1950.

\bibitem{Koellner09}
P.~Koellner.
\newblock On reflection principles.
\newblock {\em Annals of Pure and Applied Logic}, 157:206--219, 2009.

\bibitem{Leitgeb07}
H.~Leitgeb.
\newblock What theories of truth should be like (but cannot be).
\newblock {\em Philosophy Compass}, 2:276--290, 2007.

\bibitem{Lemma1Ltd00}
{Lemma 1 Ltd.}
\newblock {\em ProofPower: Description}, 2000.
\newblock Available at {\tt http://www.lemma-one.com/ProofPower/doc/doc.html}.

\bibitem{MelhamEtAl13}
T.~Melham, R.~Cohn, and I.~Childs.
\newblock On the semantics of {ReFLect} as a basis for a reflective theorem
  prover.
\newblock preprint, \url{http://arxiv.org/abs/1309.5742}, 2013.

\bibitem{Miller00}
D.~Miller.
\newblock Abstract syntax for variable binders: An overview.
\newblock In J.~Lloyd et~al., editor, {\em Computational Logic --- {CL} 2000},
  volume 1861 of {\em Lecture Notes in Computer Science}, pages 239--253.
  Springer, 2000.

\bibitem{Mogensen94}
T.~\AE. Mogensen.
\newblock Efficient self-interpretation in lambda calculus.
\newblock {\em Journal of Functional Programming}, 2:345--364, 1994.

\bibitem{NanevskiPfenning05}
A.~Nanevski and F.~Pfenning.
\newblock Staged computation with names and necessity.
\newblock {\em Journal of Functional Programmming}, 15:893--939, 2005.

\bibitem{Norell07}
U.~Norell.
\newblock {\em Towards a Practical Programming Language based on Dependent Type
  Theory}.
\newblock PhD thesis, Chalmers University of Technology, 2007.

\bibitem{Norell09}
U.~Norell.
\newblock Dependently typed programming in {Agda}.
\newblock In A.~Kennedy and A.~Ahmed, editors, {\em Proceedings of {TLDI'09}},
  pages 1--2. ACM, 2009.

\bibitem{Paulson94}
L.~C. Paulson.
\newblock {\em Isabelle: {A} Generic Theorem Prover}, volume 828 of {\em
  Lecture Notes in Computer Science}.
\newblock Springer, 1994.

\bibitem{PfenningElliot88}
F.~Pfenning and C.~Elliot.
\newblock Higher-order abstract syntax.
\newblock In {\em Proceedings of the ACM SIGPLAN 1988 conference on Programming
  Language design and Implementation}, pages 199--208. ACM Press, 1988.

\bibitem{Pitts03}
A.~M. Pitts.
\newblock {Nominal Logic}, a first order theory of names and binding.
\newblock {\em Information and Computation}, 186:165--193, 2003.

\bibitem{Elixir15}
Plataformatec.
\newblock Elixir.
\newblock \url{http://elixir-lang.org/}, 2015.

\bibitem{Polonsky11}
A.~Polonsky.
\newblock {Axiomatizing the Quote}.
\newblock In M.~Bezem, editor, {\em Computer Science Logic (CSL'11) --- 25th
  International Workshop/20th Annual Conference of the EACSL}, volume~12 of
  {\em Leibniz International Proceedings in Informatics (LIPIcs)}, pages
  458--469. Schloss Dagstuhl --- Leibniz-Zentrum f\"ur Informatik, 2011.

\bibitem{Quine03}
W.~V.~O. Quine.
\newblock {\em Mathematical Logic: Revised Edition}.
\newblock Harvard University Press, 2003.

\bibitem{Rabe15}
F.~Rabe.
\newblock Generic literals.
\newblock In M.~Kerber, J.~Carette, C.~Kaliszyk, F.~Rabe, and V.~Sorge,
  editors, {\em Intelligent Computer Mathematics}, volume 9150 of {\em Lecture
  Notes in Computer Science}, pages 102--117. Springer, 2015.

\bibitem{Reynolds74}
J.~C. Reynolds.
\newblock Towards a theory of type structure.
\newblock In B.~Robinet, editor, {\em Programming Symposium}, volume~19 of {\em
  Lecture Notes in Computer Science}, pages 408--425. Springer, 1974.

\bibitem{MetaOCaml11}
{Rice University Programming Languages Team}.
\newblock Metaocaml: A compiled, type-safe, multi-stage programming language.
\newblock \url{http://www.metaocaml.org/}, 2011.

\bibitem{SheardJones02}
T.~Sheard and S.~P. Jones.
\newblock Template meta-programming for {Haskell}.
\newblock {\em ACM SIGPLAN Notices}, 37:60--75, 2002.

\bibitem{Stump09}
A.~Stump.
\newblock Directly reflective meta-programming.
\newblock {\em Higher-Order and Symbolic Computation}, 22:115--144, 2009.

\bibitem{TahaSheard00}
W.~Taha and T.~Sheard.
\newblock Meta{ML} and multi-stage programming with explicit annotations.
\newblock {\em Theoretical Computer Science}, 248:211--242, 2000.

\bibitem{Tarski33}
A.~Tarski.
\newblock Poj\rc{e}cie prawdy w j\rc{e}zykach nauk dedukcyjnych ({The} concept
  of truth in the languages of the deductive sciences).
\newblock {\em Prace Towarzystwa Naukowego Warszawskiego}, 3(34), 1933.

\bibitem{Tarski35}
A.~Tarski.
\newblock {Der Wahrheitsbegriff in den formalisierten Sprachen}.
\newblock {\em Studia Philosophica}, 1:261--405, 1935.

\bibitem{Tarski35a}
A.~Tarski.
\newblock The concept of truth in formalized languages.
\newblock In J.~Corcoran, editor, {\em Logic, Semantics, Meta-Mathematics},
  pages 152--278. Hackett, second edition, 1983.

\bibitem{FSharp15}
{The F\# Software Foundation}.
\newblock F\#.
\newblock \url{http://fsharp.org/}, 2015.

\bibitem{VanDerWalt12}
P.~van~der Walt.
\newblock Reflection in {Agda}.
\newblock Master's thesis, Universiteit Utrecht, 2012.

\bibitem{WildmoserNipkow04}
M.~Wildmoser and T.~Nipkow.
\newblock Certifying machine code safety: {S}hallow versus deep embedding.
\newblock In K.~Slind, A.~Bunker, and G.~Gopalakrishnan, editors, {\em Theorem
  Proving in Higher Order Logics ({TPHOLs} 2004)}, volume 3223 of {\em Lecture
  Notes in Computer Science}, pages 305--320. Springer, 2004.

\end{thebibliography}
\bibliographystyle{plain}

\end{document}